\documentclass[12pt]{amsart}
\usepackage[utf8]{inputenc}
\usepackage{amsfonts,amsmath,amssymb,amsxtra,stmaryrd}

\usepackage{a4wide}

\usepackage{colonequals}
\usepackage[utf8]{inputenc}
\usepackage[british]{babel}
\usepackage{hyperref}
\hypersetup{colorlinks=true,urlcolor=blue,citecolor=blue,linkcolor=blue}

\usepackage[all,cmtip]{xy}
\usepackage{float}

\usepackage[dvipsnames]{xcolor}
\usepackage[normalem]{ulem}
\usepackage{cleveref}
\Crefname{equation}{}{}

\newcommand{\quat}[2]{\displaystyle{\biggl(\frac{#1}{#2}\biggr)}}

\usepackage{tikz-cd}
\usetikzlibrary{patterns,shapes,cd,arrows}
\usepackage[auto]{contour}
\contourlength{2pt}

\newcommand{\calC}{\mathsf{C}}
\newcommand{\calG}{G}
\newcommand{\calA}{\mathsf{A}}
\newcommand{\calJ}{\mathsf{A}}
\newcommand{\frakA}{\mathfrak{A}}

\newcommand{\calT}{\mathcal{T}}
\newcommand{\calX}{\mathcal{X}}
\newcommand{\calY}{\mathcal{Y}}
\newcommand{\frakp}{\mathfrak{p}}
\newcommand{\frakP}{\mathfrak{P}}
\newcommand{\Cprim}{\mathsf{D}}
\newcommand{\Jprim}{B}
\newcommand{\calJprim}{\mathsf{B}}
\newcommand{\frakJprim}{\mathfrak{B}}
\newcommand{\Cbfour}{\Gamma^{(g-4)}}
\newcommand{\Cbtwo}{\Gamma^{(g-2)}}
\newcommand{\QuotC}{P}
\newcommand{\calK}{\Q(a)}
\newcommand{\calL}{\Q(b)}
\newcommand{\Q}{\mathbb{Q}}
\newcommand{\QQ}{\Q}
\newcommand{\A}{\mathbb{A}}
\newcommand{\C}{\mathbb{C}}
\newcommand{\F}{\mathbb{F}}

\newcommand{\PP}{\mathbb{P}}
\newcommand{\R}{\mathbb{R}}
\newcommand{\Z}{\mathbb{Z}}
\newcommand{\Qbar}{\Q^{\textup{al}}}
\newcommand{\Kbar}{K^{\textup{al}}}
\newcommand{\Abar}{A^{\textup{al}}}

\newcommand{\Cbar}{C^{\textup{al}}}
\newcommand{\calO}{\mathcal{O}}
\newcommand{\al}{\textup{al}}
\newcommand{\AAp}{U}
\newcommand{\BB}{V}

\newcommand{\aunder}{a}

\DeclareMathOperator{\End}{End}
\DeclareMathOperator{\Lie}{Lie}
\DeclareMathOperator{\GL}{GL}
\DeclareMathOperator{\SL}{SL}
\DeclareMathOperator{\SO}{SO}
\DeclareMathOperator{\GO}{GO}
\DeclareMathOperator{\GSO}{GSO}
\DeclareMathOperator{\Sp}{Sp}
\DeclareMathOperator{\Hom}{Hom}
\DeclareMathOperator{\Gal}{Gal}

\DeclareMathOperator{\GSp}{GSp}
\DeclareMathOperator{\opchar}{char}
\DeclareMathOperator{\Aut}{Aut}
\DeclareMathOperator{\Hg}{Hg}
\DeclareMathOperator{\MT}{MT}

\DeclareMathOperator{\M}{M}
\DeclareMathOperator{\Nm}{Nm}
\DeclareMathOperator{\Frob}{Frob}
\DeclareMathOperator{\Jac}{Jac}
\DeclareMathOperator{\PGL}{PGL}
\DeclareMathOperator{\Spec}{Spec}
\DeclareMathOperator{\Sym}{Sym}
\DeclareMathOperator{\trd}{trd}
\DeclareMathOperator{\nrd}{nrd}

\newcommand{\tors}{{\rm tors}}
\newcommand{\Kconnup}{K^{\textup{conn}}}
\newcommand{\Kconn}{K(\varepsilon_A)}
\newcommand{\KEnd}{K(\End A)} 

\newcommand{\Qconn}{\Q(\varepsilon_A)}
\newcommand{\QEnd}{\Q(\End A)} 
\newcommand{\Endzero}[1]{{\End(#1)_\Q}}

\usepackage{mathrsfs}

\numberwithin{equation}{subsection}

\newtheorem{theorem}[equation]{Theorem}

\newtheorem{conjecture}[equation]{Conjecture}
\newtheorem{corollary}[equation]{Corollary}
\newtheorem{cor}[equation]{Corollary}

\newtheorem{lemma}[equation]{Lemma}
\newtheorem{lem}[equation]{Lemma}

\newtheorem{prop}[equation]{Proposition}
\newtheorem{proposition}[equation]{Proposition}
\newtheorem{question}[equation]{Question}

\newcommand{\tbigwedge}{\textstyle{\bigwedge}}

\newcommand{\defi}[1]{\textsf{#1}} 	

\theoremstyle{definition}
\newtheorem{definition}[equation]{Definition}
\newtheorem{setup}[equation]{Setup}

\theoremstyle{remark}
\newtheorem{remark}[equation]{Remark}
\newtheorem{example}[equation]{Example}

\newenvironment{enumalph}
{\begin{enumerate}}
{\end{enumerate}}

\newenvironment{enumroman}
{\begin{enumerate}}
{\end{enumerate}}

\setcounter{tocdepth}{1}

\title[Monodromy groups of Jacobians with definite QM]{Monodromy groups of Jacobians with definite quaternionic multiplication}

\author{Victoria Cantoral-Farf\'an}
\address{Mathematisches Institut, Georg-August Universit\"at G\"ottingen,
Bunsenstra\ss e 3-5, 37073 Goettingen, Germany}
\email{victoria.cantoralfarfan@mathematik.uni-goettingen.de}

\author{Davide Lombardo}
\address{Dipartimento di Matematica, Universit\`a di Pisa, Largo Bruno Pontecorvo 5, 56127 Pisa, Italy}
\email{davide.lombardo@unipi.it}

\author{John Voight}
\address{Department of Mathematics, Dartmouth College, Kemeny Hall, Hanover, NH 03755, USA}
\email{jvoight@gmail.com}

\keywords{Hyperelliptic curves, $\ell$-adic monodromy groups, Galois representations, Jacobians, definite quaternionic multiplication}

\subjclass{11F80, 11G10, 14G25}

\begin{document}

\begin{abstract}
Let $A$ be an abelian variety over a number field. The connected monodromy field of $A$ is the minimal field over which the images of all of the $\ell$-adic torsion representations have connected Zariski closure. We show that for all even $g \geq 4$, there exist infinitely many geometrically nonisogenous abelian varieties $A$ over $\Q$ of dimension $g$ where the connected monodromy field is strictly larger than the field of definition of the endomorphisms of $A$. Our construction arises from explicit families of hyperelliptic Jacobians with definite quaternionic multiplication.
\end{abstract}

\maketitle

\tableofcontents

\section{Introduction}\label{sec: introduction}

\subsection{Motivation}

Let $K$ be a number field with algebraic closure $\Kbar$ and absolute Galois group $\Gal_K \colonequals \Gal(\Kbar\,|\,K)$.  Let $A$ be an abelian variety defined over $K$ with $g \colonequals \dim A$, and write $\Abar \colonequals A \times_K \Kbar$.
Let $\KEnd$ be the minimal extension of $K$ over which the geometric endomorphism ring $\End \Abar$ is defined, called the \defi{endomorphism field of $A$}.  By a result of Silverberg~\cite[Proposition 2.2]{MR1154704}, for all $m \geq 3$ we have
\begin{equation} \label{eqn:Silv}
\KEnd \subseteq K(A[m]),
\end{equation}
where $K(A[m])$ is the field generated by the $m$-torsion of $A$.

Let $\ell$ be prime.  The rational $\ell$-adic Tate module $V_\ell(A) \colonequals T_\ell(A) \otimes_{\Z_\ell} \Q_\ell$ of $A$ has a natural $\Gal_K$ action, furnishing an $\ell$-adic representation $\rho_{A,\ell} \colon \Gal_K \to \GL(V_\ell(A))$.  Let $\calG_{A,\ell}$ be the Zariski closure of the image of $\rho_{A,\ell}$, a linear algebraic group over $\Q_\ell$ called the \defi{$\ell$-adic monodromy group of $A$}.  Let $\calG_{A,\ell}^0 \leq \calG_{A,\ell}$ be the connected component of the identity.  Serre~\cite[no.~133]{serre-IV} (see also Larsen--Pink~\cite[Propositions (6.12) and (6.14)]{LP12}) proved that there exists a finite group $\Phi_A$ and a surjective group homomorphism
\begin{equation} 
\varepsilon_A \colon \Gal_K \to \Phi_A, 
\end{equation}
such that the induced map to the component group of $G_{A,\ell}$
\begin{equation}\label{eq:EpsilonEll}
    \varepsilon_{A,\ell} : \Gal_K \to \pi_0(\calG_{A,\ell})=\frac{\calG_{A,\ell}(\Q_\ell) }{ \calG_{A,\ell}^0(\Q_\ell)}
\end{equation}
factors through $\varepsilon_A$ via a canonical group isomorphism $\Phi_A \simeq \pi_0(\calG_{A,\ell})$.  The finite extension of $K$ cut out by $\ker \varepsilon_A$ is accordingly denoted $\Kconn$ (also denoted $\Kconnup_A$ by other authors) and called the \defi{connected monodromy field} of $A$.  The field $\Kconn$ is the minimal extension over which the $\ell$-adic monodromy groups of the base change of $A$ become connected for all primes $\ell$.  By a theorem of Larsen--Pink~\cite[Theorem 0.1]{MR1441234}, the connected monodromy field is also described as the intersection over all primes $\ell$ of the fields generated by all $\ell$-power torsion points of $A$:
\begin{equation} \label{eqn:LP}
\Kconn = \bigcap_{\ell} K(A[\ell^\infty]).
\end{equation}
Combining~\cref{eqn:Silv,eqn:LP}, we see that 
\begin{equation}  \label{eqn:KendKconn}
\KEnd \subseteq \Kconn.
\end{equation}

We are therefore led to the following (folklore) question.

\begin{question} \label{ques:kendkconn}
Let $A$ be an abelian variety over a number field $K$.  Under what conditions on $A$ does the equality $\KEnd = \Kconn$ hold?  
\end{question}

In other words, when is it the case that the $\ell$-adic monodromy groups are connected whenever all endomorphisms of $A$ are defined?  Banaszak--Kedlaya~\cite[Theorem 6.10]{MR3320526} proved that equality holds when $\dim A \leq 3$.  Of course, equality holds for any principally polarized $A$ with maximal $\ell$-adic image $\GSp_{2g}(\Z_\ell)$ for some prime $\ell$, since then the $\ell$-adic monodromy group $G_{A, \ell}=\GSp_{2g, \Q_\ell}$ is already connected; hence `most' abelian varieties $A$ over $K$ have $K(\End A)=K(\varepsilon_A)=K$.  

Silverberg--Zarhin \cite[Examples 4.1, 4.2]{SZ} constructed examples of abelian varieties $A$ where $\KEnd \subsetneq \Kconn$, coming from twists and the theory of complex multiplication.  
They give general criteria  \cite[Theorems 3.4, 3.5]{SZ} yielding $\Kconn \neq \KEnd$ when a CM field embeds in the center of the geometric endomorphism algebra or among quadratic twists of a product of two abelian varieties admitting a common CM subfield.  

\subsection{Results}

Our first main result is as follows.  

\begin{theorem}\label{thm:InfinitelyManyExamplesInequality}
For all even integers $g \geq 4$, there exist infinitely many geometrically nonisogenous, geometrically simple abelian varieties $A$ over $\Q$ with $\dim A=g$ and $\QEnd \subsetneq \Qconn$.
\end{theorem}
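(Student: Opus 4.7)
The plan is to produce, for each even $g \geq 4$, an explicit family of hyperelliptic curves $\{C_t\}$ over $\Q$ parametrised by a rational parameter $t$ whose Jacobians $A_t = \Jac(C_t)$ are geometrically simple of dimension $g$, carry a definite quaternion algebra $B$ over $\Q$ as their geometric endomorphism algebra, and satisfy $\QEnd \subsetneq \Qconn$. The construction will be arranged so that the quaternionic action is defined only over a fixed quadratic extension $L/\Q$, so that $\QEnd = L$, while the analysis of the Galois action on the polarised $B$-module structure on $V_\ell(A_t)$ will produce a character of $\Gal_\Q$ that factors through $\Phi_{A_t}$ but not through $\Gal(L/\Q)$, forcing $\Qconn \supsetneq L$. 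Finally, a Hilbert irreducibility (or specialisation) argument will ensure that infinitely many parameters $t$ yield pairwise geometrically nonisogenous Jacobians.

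The construction goes through hyperelliptic curves $y^2 = f(x)$ where $f$ is engineered so that, over a quadratic extension $L$, the function field acquires an involution and an extra correspondence whose composition satisfies the relations of a definite quaternion order. Combined with the hyperelliptic involution, this produces an embedding of a definite quaternion algebra $B$ into $\End(\overline{A_t})\otimes\Q$. For generic $t$ the embedding is an equality: I would verify this by exhibiting a specialisation whose endomorphism algebra can be computed (for instance via reduction modulo a prime of good reduction where the characteristic polynomial of Frobenius is incompatible with any splitting of $A_t$ up to isogeny) and by showing that the space of deformations preserving the QM structure has positive dimension, so that the generic member of the family is geometrically simple with $\End(\overline{A_t})\otimes\Q = B$.

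Once the endomorphism structure is in hand, the monodromy analysis proceeds as follows. Since $B$ is definite and the polarisation induces a Rosati involution on $B$, the $B$-module $V_\ell(A_t)$, together with the Weil pairing, carries a rigid structure whose symmetry group $H$ inside $\GSp_{2g,\Q_\ell}$ is proper. For generic $t$ I expect $\calG_{A_t,\ell}^0 = H^0$, which can be established by a Mumford--Tate / Bogomolov-type argument; what matters is the quotient $H/H^0$, which detects, for example, the Galois action on the set of maximal isotropic $B$-submodules of $V_\ell(A_t)$ when $\ell$ splits in $B$. This discrete invariant is moved by $\Gal_\Q$ through a quadratic character that in general is \emph{not} the character cutting out $L$; the compatibility between $\ell$ for varying primes (and the description \cref{eqn:LP}) then pins down $\Qconn$ as the compositum of $L$ with the field cut out by this extra character, giving $\QEnd \subsetneq \Qconn$.

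The hardest step will almost certainly be producing a geometrically \emph{simple} abelian variety whose geometric endomorphism algebra is exactly the definite quaternion algebra $B$: the standard hyperelliptic constructions that force a quaternion action typically make the Jacobian isogenous to a self-product of a lower-dimensional factor. Ruling this out requires a delicate choice of $f$ and a careful reduction argument, showing that at a well-chosen prime of good reduction the Newton polygon and the Frobenius characteristic polynomial are incompatible with any isogeny decomposition of $\overline{A_t}$. A secondary technical difficulty is matching the field of definition of the quaternion action with the field cut out by the Galois action on the discrete invariant above, so as to guarantee strict containment rather than equality; this will be achieved by choosing the family so that the two characters are manifestly independent.
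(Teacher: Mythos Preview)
Your overall strategy is correct and aligns with the paper's: construct hyperelliptic curves whose automorphism group contains $Q_8$ over $\Q(i)$, so the Jacobian has definite QM by $\quat{-1,-1}{\Q}$ with endomorphism field $\Q(i)$, then detect an extra quadratic character in the monodromy. But several of your steps are too optimistic or imprecise to constitute a proof.

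The ``discrete invariant'' you describe via maximal isotropic $B$-submodules is not quite the mechanism. What actually happens is that for $\ell$ splitting $B$ one has $V_\ell(A) \simeq W_\ell^{\oplus 2}$ with $W_\ell$ carrying a symmetric form $\psi_\ell$; the monodromy lands in $\GO_{W_\ell,\psi_\ell}$, the connected component is $\GSO$, and the relevant character is $\omega(\gamma) = \det(\gamma)\chi_\ell(\gamma)^{-g/2}$ on $\GO/\GSO$. The paper pins $\omega$ down as cutting out $\Q(\zeta_8)$ by (i) an explicit Frobenius computation showing $\omega(\Frob_\frakp) = -1$ at a prime split in $\Q(i)$ but not in $\Q(\zeta_8)$, and (ii) constructing explicit algebraic cycles \`a la Schoen whose cohomology classes are minimally defined over $\Q(\zeta_8)$. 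You cannot simply ``choose the family so that the two characters are manifestly independent'': the family is essentially forced by the $Q_8$-action, and the independence is a theorem, not a design parameter.

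Your proposal also omits two ingredients that turn out to be indispensable. First, the upper bound $[\Qconn:\QEnd] \leq 2$ comes from a theorem of Banaszak--Cantoral-Farf\'an which requires that $A$ be \emph{fully of Lefschetz type}; in particular the Mumford--Tate conjecture must be established for $A$, and a vague ``Bogomolov-type argument'' does not deliver this. The paper proves Mumford--Tate by an inductive rank comparison, degenerating to genus $g-2$ and invoking the theorem there. Second, your plan to prove geometric simplicity via a single good-reduction Frobenius works for a fixed specialisation in low genus (the paper does exactly this for $g=4,6$), but there is no mechanism to make it work uniformly for all even $g$. The paper's solution is an induction: degenerate the family along one-parameter subfamilies so that the limiting Jacobian splits as $X \times E^2$ (resp.\ $X' \times E_c^4$) with $X, X'$ simple of dimension $g-2$ (resp.\ $g-4$) by the inductive hypothesis, and then argue that no splitting of the generic fibre is compatible with \emph{both} degenerations simultaneously. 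This degeneration argument is the genuinely new geometric input, and your proposal has no substitute for it.
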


Recently, Banaszak--Cantoral-Farf\'an~\cite[Corollary 3.7]{ComponentsSatoTate} showed that, for a class of abelian varieties that includes ours, the relative degree $[\Kconn : \KEnd]$ is at most $2$.  (For a precise statement, see \Cref{thm:UpperBoundConnectedField} below.)   \Cref{thm:InfinitelyManyExamplesInequality} implies that this bound is in general sharp within this class. 

We now briefly indicate our construction.  We find our examples among those with geometric endomorphism algebra of Type III (definite quaternionic multiplication) in the Albert classification.  (In particular, our abelian varieties do not overlap with the classes studied by Silverberg--Zarhin \cite{SZ}.)  More precisely, we work with Jacobians of curves whose automorphism group contains the quaternion group $Q_8$ of order~$8$. 

For $g \geq 4$ even, let $d \colonequals g/2-1$, and define the family of nice curves with affine model
\begin{equation} \label{eqn:Cag}
\calC^{(g)} : \; y^2 = x(x^4-1)\left( x^{2g-4}+1 + \sum_{j=1}^{d} a_j( x^{2g-4-2j} + x^{2j}) \right)
\end{equation}
over the complement of the discriminant locus $\Delta \subset \A_{\Q}^d=\Spec(\Q[a_1,\dots,a_d])$.  We will be particularly interested in the case $g=4$ (with $d=1$), given by 
\[ y^2=x(x^4-1)(x^4+2a_1x^2+1) \] 
with $\Delta=-2^{40}(a_1^2-1)^6$.  The fibers $\calC_a^{(g)}$ for $a \in (\A^d \smallsetminus \Delta)(K)$ are nice curves of genus $g$ with $Q_8 \hookrightarrow \Aut \calC^{(g)}_a$ whenever $i=\sqrt{-1} \in K$; accordingly, their Jacobians $\calA^{(g)}_a \colonequals \Jac \calC^{(g)}_a$ have endomorphisms by the Lipschitz (quaternion) order 
\begin{equation}
\calO \colonequals \Z + \Z i + \Z j + \Z k \subset \quat{-1,-1}{\Q}
\end{equation}
whenever $i \in K$.  

The family $\calC^{(g)}$ has many pleasant properties.  This family is `universal' in the sense that every hyperelliptic curve of genus $g$ with a certain action~\cref{eqn:Q8rho} by $Q_8$ over $K$ is isomorphic to a specialization $\calC^{(g)}_a$ for some $a \in K^d$, up to quadratic twist (\Cref{prop:gcwitha0univ}).  We also observe that, whenever a curve $C$ has geometrically simple Jacobian and an action by $Q_8$, then it is hyperelliptic (\Cref{lem:jacsimp}).  Finally, for $g=4$, we show the family is universal in a second sense: the closure $\PP^1$ of the base $\A^1 \smallsetminus \Delta$ is in fact the coarse space of a certain moduli space of abelian varieties of explicit PEL type (\Cref{thm: from the moduli space to the Neron model}); it is striking that this Shimura variety admits such an explicit description.

With this family in hand, our main result---one that implies a precise form of \Cref{thm:InfinitelyManyExamplesInequality}---is as follows.

\begin{theorem}\label{thm:MainHigherGenus}
Let $g \geq 4$ be even.  Then for a density $1$ subset of points $\aunder=(a_1,\ldots,a_d) \in \Q^d$ ordered by height, the curve $\calC^{(g)}_a$ over $\Q$ is nice of genus $g$ and its Jacobian $A \colonequals \calA^{(g)}_a$ satisfies all of the following:
\begin{enumroman}
\item the geometric endomorphism ring is $\End \Abar = \mathcal{O}$;
\item the endomorphism field of $A$ is $\Q(\End A)=\Q(i)$;
\item the connected monodromy field of $A$ is $\Q(\varepsilon_{A})=\Q(\zeta_8)$;
\item $A$ satisfies the Hodge, Tate, and Mumford--Tate conjectures; and
\item the Mumford--Tate group of $A$ is a $\Q$-form of $\GSO_{g}$.
\end{enumroman}
Moreover, as $\aunder \in \Q^d$ varies within this subset, the Jacobians $\calA^{(g)}_a$ fall into infinitely many distinct $\Qbar$-isogeny classes.
\end{theorem}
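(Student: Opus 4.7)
The $Q_8$-action on $\calC^{(g)}$ yields, over any field containing $i$, an embedding $\calO \hookrightarrow \End \calA^{(g)}_a$, so we begin with the a~priori inclusions $\calO \subseteq \End \bar A$ and $\Q(i) \subseteq \Q(\End A)$ for every smooth fiber $\calA_a^{(g)}$. The plan is to establish matching upper bounds on a density-$1$ subset of the base and then to read off the remaining conclusions.

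Since $\HH = \calO \otimes \Q$ is a definite quaternion algebra, the group of $\HH$-linear symplectic similitudes of $H_1(A,\Q)$ is a connected reductive $\Q$-group, and by Albert's classification (Type~III with $g$ even) it is precisely a $\Q$-form of $\GSO_g$; this is an a~priori upper bound for $\MT(A)$ regardless of~$a$. The crux is to show the bound is generically attained. I would first exhibit a single parameter $a_0 \in \Q^d$ at which $\MT(\calA_{a_0})$ equals the full $\GSO_g$-form, most directly by reducing modulo a few small primes of good reduction and verifying that the Frobenius characteristic polynomials already force the $\ell$-adic monodromy group to be Zariski-dense in the centralizer of $\calO$; this simultaneously pins down $\End \bar{\calA}_{a_0} = \calO$, since endomorphisms and MT are centralizers of each other. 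Propagation to a density-$1$ subset of parameters follows from a standard specialization argument (Hilbert irreducibility applied to the $\ell$-adic representation of the generic fiber, or equivalently Cadoret--Tamagawa): the locus of $a \in \Q^d$ where $\MT$ drops is thin in the sense of Serre, hence of density zero. This yields (i) and (v) simultaneously; assertion~(ii) is then immediate, since $\Gal(\Q(i)\,|\,\Q)$ acts on $\calO$ with $\Z$-fixed ring, forcing $\End A_\Q = \Z$ and $\Q(\End A) = \Q(i)$.

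For~(iii), Banaszak--Cantoral-Farf\'an~\cite[Corollary 3.7]{ComponentsSatoTate} (whose hypotheses are met by our Type~III~$A$) already confines $\Qconn$ to a quadratic extension of $\Q(i)$. I would then (a)~rule out $\Qconn = \Q(i)$ by exhibiting a Frobenius (Chebotarev) whose image in $\pi_0(\calG_{A,\ell})$ is non-trivial, detectable from its characteristic polynomial on $V_\ell(A)$, and (b)~identify the correct quadratic as $\Q(\zeta_8)$ rather than $\Q(i,\sqrt{-1})$ or $\Q(i,\sqrt{-2})$ by tracing the induced $\Gal_\Q$-action on $\pi_0$ through the explicit $Q_8$-action on~\cref{eqn:Cag}: the appearance of $\sqrt{2}$ stems from the way complex conjugation permutes the four fourth-roots of unity in $x(x^4-1)$, which in turn controls the Galois action on the connected components of $\calG_{A,\ell}$.

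Part~(iv) is then essentially formal once the MT-group is known: the Mumford--Tate conjecture follows from Deligne's inclusion $\calG^0_{A,\ell} \subseteq \MT(A) \otimes \Q_\ell$ combined with a rank argument (a sufficiently generic Frobenius already generates a maximal torus in $\MT(A)$); the Hodge conjecture for all powers of~$A$ follows from the Hazama/Ichikawa theorem for Type~III abelian varieties with maximal Hodge group; the Tate conjecture combines MT with Faltings. Finally, for the statement that the $\calA_a^{(g)}$ fall into infinitely many $\Qbar$-isogeny classes, I would invoke a Faltings-finiteness argument: over any fixed common field of definition, each $\Qbar$-isogeny class can account for only a sparse set of parameters $a$ (either via a Faltings-height computation showing the height varies nontrivially in the family, or via a direct Tate-module argument showing that Frobenius traces at a suitable prime $p$ take infinitely many values as $a$ varies in the density-$1$ subset), so a density-$1$ subset must meet infinitely many isogeny classes. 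The principal obstacle in all this is Step~1---the verification of maximal $\MT$ at a single explicit fiber---where the concrete form of~\cref{eqn:Cag} together with the structural results earlier in the paper will be essential.
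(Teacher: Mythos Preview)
Your proposal has a genuine gap in part~(iv), and the same gap undermines~(iii).

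You claim that once the Mumford--Tate group is known to be maximal, the Hodge conjecture ``follows from the Hazama/Ichikawa theorem for Type~III abelian varieties with maximal Hodge group.'' No such theorem exists. For Type~III varieties, Murty~\cite[Theorem~3.2]{murty-84} shows that $H^g(A,\Q)$ contains Hodge classes not generated by divisors (Weil classes for quadratic subfields of $B$), and nothing in the knowledge of $\MT(A)$ alone tells you these are algebraic. The paper proves their algebraicity by invoking Schoen's construction~\cite{MR930145}: on the specific family~\eqref{eqn:Cag}, one writes $\calC_a$ as a cyclic $4$-cover of $\PP^1$ and builds explicit cycles on $\Sym^g\calC_a$ (hence on $A$) representing these classes (\Cref{thm:LowerBoundSatoTate}, \Cref{lemma:TwoDimensionalSubspaceAlgebraicClasses}). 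Only then, via~\eqref{eq:HodgeClasses} and Abdulali~\cite{MR1715177}, does one get Hodge for all powers of~$A$, and from there Tate (\Cref{thm:UpperBoundSatoTate}). This is not formal bookkeeping; it is the geometric heart of the argument and depends essentially on the explicit $Q_8$-equation.

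The same Schoen cycles are what make~(iii) work uniformly. Your plan---compute a Frobenius to show $\Qconn\neq\Q(i)$ and then ``trace the $Q_8$-action'' to identify $\Q(\zeta_8)$---would at best handle one fiber at a time (as the paper does for $g=4$, $a=1/2$ in \Cref{cor:FieldsOfDefinition}), and the second step is not an argument. The paper instead shows via a function-field computation (\cref{appendix:A}) that the Schoen classes have minimal field of definition exactly $K(\zeta_8)$; combined with the Tate conjecture and \Cref{cor:apres proposition}, this pins down $\Kconn=K(\zeta_8)$ for every fiber satisfying the hypotheses of \Cref{thm:UpperBoundSatoTate}.

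A smaller structural point: your ``single explicit fiber $a_0$'' strategy for~(i) and~(v) is what the paper does for $g=4,6$, but it does not scale. For general even $g$ the paper proceeds by induction: one degenerates $\calC^{(g)}$ so that its stable limit has components of genus $g-2$ (and separately $g-4$), whose Jacobians satisfy the theorem by inductive hypothesis; a rank count then forces the generic $\ell$-adic monodromy group to be as large as possible (\Cref{prop:GeometricallySimpleHigherGenus} and the proof of \Cref{thm:GenericMonodromy}). Without this degeneration step there is no mechanism to verify maximality of $\MT$ at any fiber once $g$ is large.
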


Theorem \ref{thm:MainHigherGenus} relates to several recent papers.
\begin{itemize}
\item We complete a recent example of Zywina~\cite[Section 1.8]{Zywina2020}: we prove that for the Jacobian of the curve $y^2 = x(x^{20}+7x^{18}-7x^2-1)$ over $\Q$, the connected monodromy field is $\Q(\zeta_8)$, as claimed.  For details, see~\Cref{rmk: ExampleZywina}, with key input provided by the theorem of Banaszak--Cantoral-Farf\'an mentioned above.
\item In upcoming work by Gajda--Hindry \cite{HindryGajda}, they prove that Mumford--Tate holds for certain Jacobians of the form $\calA_a^{(g)}$. \Cref{thm:MainHigherGenus}(i) proves their conjecture that the geometric endomorphism ring of $\calA_a^{(g)}$ is equal to $\mathcal{O}$ for generic values of $a$.
\item Goodson \cite[\S 5.2.3]{Goodson} gives an example of an abelian variety of Fermat type where the Sato--Tate group (see~\Cref{rmk:Sato-Tate group}) becomes connected over a degree $2$ extension of the endomorphism field. (However, her identification of this extension is only conjectural, based on computational evidence.)
\end{itemize}

The family \cref{eqn:Cag} is indeed special, but our approach also suggests a general strategy to understand the arithmetic of families of Jacobians of curves with large automorphism group.  See also work of van Geemen--Verra \cite{MR1928644} for abelian $8$-folds with quaternionic endomorphisms, parametrizing certain Prym varieties in genus $17$ and
related work of Donagi--Livn\'e \cite{DonLiv}.

\subsection{Context and discussion}

It remains a fundamental problem that certain abelian varieties carry exceptional classes in their (Betti or $\ell$-adic \'etale) cohomology which are not as yet known to be linear combinations of algebraic classes~\cite{murty-84}. Such exceptional classes can only exist on abelian varieties of dimension $g\geq 4$.
This can be explained quickly by combining Poincar\'e duality with the Lefschetz theorem on $(1,1)$ classes (or its $\ell$-adic counterpart, due to Faltings): all Hodge classes in $H^2(A)$ are algebraic, hence by duality the same is true for classes in $H^{2g-2}(A)$, and this suffices to prove the Hodge conjecture for abelian varieties of dimension $g \leq 3$. Therefore, the first nontrivial case to consider is $H^4(A)$ with $\dim A=4$, which is the topic of this paper.

By work of Murty~\cite[Theorem 3.1]{murty-84}, the following are equivalent for a simple abelian variety $A$:
\begin{itemize}
    \item every algebraic class on any power of $A$ is generated by divisor classes; and
    \item the Hodge and Lefschetz groups of $A$ coincide (see~\Cref{def:Lefschetz}), and $A$ is not of Type III in the sense of Albert.
\end{itemize}
One can show that all divisor classes are defined over the field $K(\End A)$: working in \'etale cohomology, we have that $H^2_{\text{\'et}}(\Abar, \mathbb{Q}_\ell)(1)$ is a direct summand of 
\begin{equation}\label{eq: Tate}
\left(H^1_{\text{\'et}}(\Abar, \mathbb{Q}_\ell) \otimes H^1_{\text{\'et}}(\Abar, \mathbb{Q}_\ell) \right)(1)=\Hom_{\Q_\ell}(V_\ell(A), V_\ell(A)),
\end{equation}
see for example Tate~\cite[p.~143]{MR206004}.
It follows that any (divisor) class in $H^2_{\text{\'et}}(\Abar, \mathbb{Q}_\ell)(1)$ fixed by a finite-index subgroup $\Gal_L \leq \Gal_K$ lies in the subspace of $\Gal_L$-fixed points 
\[ \Hom_{\Q_\ell}(V_\ell(A), V_\ell(A))^{\Gal_L} = \End(A_L) \otimes \Q_\ell, \] 
hence it is fixed by $\Gal_{K(\End A)}$.
Thus simple abelian varieties $A$ with $K(\End A) \neq K(\varepsilon_A)$ should either have different Hodge and Lefschetz groups or they should be of Type III.  The former seems challenging to construct explicitly, so in this paper we focus on the case of varieties of Type III.  For these, Murty~\cite[Theorem 3.2]{murty-84} shows the existence of Hodge classes in $A^2$ not generated by divisors.  These classes underlie many of the constructions in this paper.  We describe them geometrically (see~\cref{sect:SatoTateField}) as Weil classes for a quadratic subfield of the endomorphism ring (\Cref{rmk: exceptional class as Weil class}) and as a geometric incarnation of a certain determinant on a submodule of cohomology (\Cref{rmk:DeterminantIsAlgebraicClass}).

\subsection{Strategy of proof} 

The overall strategy of the proof of our main result (\Cref{thm:MainHigherGenus}) is first to give an essentially direct argument in genus $g=4,6$, spreading out from explicit computation on a specialization, then to argue by induction to treat the general case $g \geq 4$.  Significant input is given by the aforementioned theorem of 
Banaszak--Cantoral-Farf\'an~\cite{ComponentsSatoTate} and the construction of cycles provided by Schoen~\cite{MR930145}.  As an alternative to our inductive approach, it may be possible to establish some parts of our main result by studying the Galois action on the $2$-torsion points (as in Zarhin \cite{Zarhin}).

We now make a few comments about the proof of part (iv) of~\Cref{thm:MainHigherGenus}.  On one hand, it is known in many cases that every `sufficiently general' member of a family of abelian varieties satisfies the Mumford--Tate conjecture. For example, Zywina~\cite{ZywinaFamilies} shows that, in a family of abelian varieties over a rational base, a density 1 subset of the fibres have the same $\ell$-adic monodromy group as the generic fibre; this can often be used to show that a density 1 subset of the fibres satisfy the Mumford--Tate conjecture.  We also adopt this approach. Noot~\cite{MR1355123} (based on ideas of Serre) constructs families of abelian varieties whose Mumford--Tate group is contained in a given group $G$, and he shows that `most' members of these families (meaning away from a thin set on the base) satisfy the Mumford--Tate conjecture. Moreover, the Mumford--Tate conjecture is known for certain abelian varieties of Type III~\cite{MR2663452,MR1603865,Cantoral}.

On the other hand, there are not many known cases of the Hodge and Tate conjectures for abelian varieties of Type III---this is essentially due to the existence of exceptional classes. To prove these conjectures for the Jacobians studied in this paper, we rely on previous work of Schoen~\cite{MR930145} on abelian varieties with an automorphism.  However, Schoen's result is not enough to establish part (iii) of~\Cref{thm:MainHigherGenus}, because we also need to know the minimal field of definition of these algebraic classes---we determine this field by writing down explicit cycles representing them.

\subsection{Organization}

After dispensing with preliminaries in~\cref{sec:prelim}, in~\cref{sect:models} we present our family of hyperelliptic curves with automorphisms by $Q_8$.  In~\cref{sect:g4}, we consider a specialization in $g=4$ and prove parts of~\Cref{thm:MainHigherGenus} in this special case.  In~\cref{sec: endosclasses} and~\cref{sec: higher genera}, we proceed with the general case $g \geq 4$ by an inductive argument.  We conclude in~\cref{sec: a Shimura variety} with some final remarks, including the interpretation in $g=4$ as a moduli space of abelian varieties.  

\subsection{Acknowledgements}

The authors would like to thank Alex Betts, J\c{e}drzej Garnek, Marc Hindry, Igor Shparlinski, Alice Silverberg, Bert van Geemen, Jeff Yelton, Yuri Zarhin, and David Zywina for helpful conversations, and Giulio Bresciani for his many pertinent comments on~\cref{sec:explrec}. Lombardo is a member of the INdAM group GNSAGA and was supported by a MIUR grant (PRIN 2017 ``Geometric, algebraic and analytic methods in arithmetic"). Voight was supported by a Simons Collaboration grant (550029). 

\section{Preliminaries} \label{sec:prelim}

In this section, we establish the setup and notation.

\subsection{Notation} For a vector space $V$ over a field $K$ we denote by $\GL_V$ the algebraic group given as a functor on $K$-algebras $B$ by $B \mapsto \operatorname{Aut}(V \otimes_K B)$. We denote by $\GL(V)$ the group of points $\GL_V(K) = \operatorname{Aut}_K(V)$.  We use similar notation when $V$ is a free module over a ring $R$. When $V$ is a $\Q$-vector space and $F$ is any field of characteristic 0, we write $V_F$ for $V \otimes_\Q F$. If $G$ is an algebraic subgroup of $\GL_V$, we similarly write $G_F$ for $G \times_{\Spec \Q} \Spec F$.

\subsection{Abelian varieties and associated groups}\label{subsec: abelian varieties}

Let $K \subset \C$ be a finitely generated subfield with algebraic closure $\Kbar \subset \C$.  (The choice of embedding $K\hookrightarrow \C$ does not affect any of the following constructions by work of Deligne~\cite[Theorem 2.11]{MR654325}.)  Let $A$ be an abelian variety of dimension $g$ defined over $K$, with $\Abar \colonequals A \times_K \Kbar$ its base change to $\Kbar$.  For an algebraic extension $K' \supseteq K$, we denote by $\End(A_{K'})$ the endomorphism ring of $A_{K'} \colonequals A \times_K K'$ (endomorphisms defined over $K'$), and $\Endzero{A_{K'}} \colonequals \End(A_{K'}) \otimes \Q$.  In particular, $B \colonequals \Endzero{\Abar}$ is the geometric endomorphism algebra of $A$.  

We briefly recall the definition of several algebraic groups naturally attached to $A$.  For further general reference, see Banaszak--Kedlaya~\cite[\S 4]{MR3320526}. 
A polarisation of $A$ yields a symplectic bilinear form $\psi$ on $V \colonequals H_1(A(\mathbb{C}), \Q)$. There is also a natural action of $\Endzero{\Abar}$ on $V$ preserving $\psi$, giving a homomorphism $B^\times \to \Sp_{V,\psi}(\Q)$, where $\Sp_{V,\psi}$ is the algebraic subgroup of $\GL_V$ given by the condition of preserving $\psi$.

\begin{definition}\label{def:Lefschetz}
The \defi{Lefschetz group} of $A$ is $L(A) \colonequals C_{\Sp_{V,\psi}}(B^\times)^0$, the identity component of the centraliser of the image of $B^\times$ (as an algebraic group) in $\Sp_{V,\psi}$.
\end{definition}

The Lefschetz group is a connected, reductive $\Q$-algebraic subgroup of $\Sp_{V,\psi}$. It was first introduced by Murty~\cite[\S 2]{murty-84}. The group $L(A)$ is the identity component of the group denoted by $S(A)$ in Milne~\cite{MR1671217}. 

The Lefschetz group can be meaningfully compared with another algebraic group attached to $A$: namely, its Hodge group, defined as follows.  Recall that $V$ is naturally a Hodge structure of weight $-1$, i.e., we have $V_{\C}=V^{-1,0}\oplus V^{0,-1}$ with the property that $\overline{V^{-1,0}}=V^{0,-1}$. 
Let $\mathbb S \colonequals \operatorname{Res}_{\C|\R}(\mathbb{G}_{m,\C})$ be the Deligne torus.  Then equivalently we have a Hodge cocharacter
\begin{equation}
    h: \mathbb{S} \to \GL_{V_{\R}}
\end{equation}
where $z=(z_1,z_2)\in \mathbb{S}(\mathbb{C})=\mathbb{C}^{\times}\times \mathbb{C}^{\times}$ acts as multiplication by $z_1$ in $V^{-1,0}$ and as multiplication by $z_2$ in $V^{0,-1}$~\cite[\S 2.1]{MR3735565}.

\begin{definition}
The \defi{Mumford--Tate} group of $A$, denoted $\MT(A)$, is the smallest $\Q$-algebraic subgroup of $\GL_V$ such that $h$ factors via $\MT(A)_{\mathbb{R}}$. The \defi{Hodge group} of $A$, denoted $\Hg(A)$, is the identity component of the intersection $\MT(A) \cap \Sp_{V,\psi}$. 
\end{definition}

By construction, $\MT(A)$ is the almost-direct product inside $\GL_V$ of $\Hg(A)$ with $\mathbb{G}_m$, considered as the central torus of homotheties.

For a prime number $\ell$, let $T_\ell = T_{\ell}(A) \colonequals \varprojlim_n A[\ell^n] \simeq H^1_{\textup{\'et}}(\Abar,\Z_\ell)\spcheck$ be the $\ell$-adic Tate module of $A$.  The absolute Galois group $\Gal_K \colonequals \Gal(\Kbar\,|\,K)$ acts naturally on the \'etale cohomology of $\Abar$ and therefore we have a continuous $\ell$-adic representation
\begin{equation}
    \rho_{A,\ell}: \Gal_K \to \GL(T_\ell)< \GL(V_\ell),
\end{equation}
where $V_\ell = V_\ell(A) \colonequals T_\ell \otimes_{\Z_\ell} \Q_\ell$.  

\begin{definition}\label{def: monodromy group}
The \defi{$\ell$-adic monodromy group} of $A$, denoted $G_{A, \ell}$, is $\overline{\rho_{A,\ell}(\Gal_K)}^{\text{Zar}} \leq \GL_{V_\ell}$, the Zariski closure of the image of the $\ell$-adic representation.  
\end{definition}

We abbreviate $G_\ell = G_{A,\ell}$ when $A$ is clear from context, and we write $G_\ell^0=G_{A,\ell}^0$ for its identity component.  We similarly define the \defi{special $\ell$-adic monodromy group} $G_{\ell,1} \colonequals G_{\ell}\cap \SL_{V_{\ell}}$.

\begin{remark}\label{rmk:Sato-Tate group}
We should also mention the \defi{Sato--Tate group} of $A$, a maximal compact subgroup of $G_{\ell,1}(\C)$ \cite[\S 8.3.3]{serre-lectures} (unique up to conjugation) obtained from a chosen embedding $\Q_{\ell}^{\al} \hookrightarrow \C$.  Although the Sato--Tate group will not play a featured role here, our main result informs its description for fibers in our family.
\end{remark}

The Lefschetz, Mumford--Tate, Hodge, and $\ell$-adic monodromy groups of $A$ can be compared to one another in the following precise sense.  First, we may identify $V_{\Q_\ell} \simeq V_\ell(A)$ using Artin's comparison isomorphism in \'etale cohomology. 

\begin{theorem}\label{thm:ContainmentsGlMTLefschetz}
The following hold:
\begin{enumalph}
    \item For all $\ell$, we have $G_{\ell}^0 \leq \MT(A)_{\Q_{\ell}}$ and $G_{\ell,1}^0 \leq \Hg(A)_{\Q_{\ell}}$.
    \item We have $\Hg(A) \leq L(A)$. 
\end{enumalph}
\end{theorem}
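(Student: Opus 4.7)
The plan is to establish (a) and (b) separately. The central input for (a) is Deligne's theorem that every Hodge cycle on a complex abelian variety is absolutely Hodge, while (b) is essentially a tannakian computation exploiting the fact that elements of $B$ act as morphisms of Hodge structure on $V$.

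For (a), I would invoke the tannakian characterization of $\MT(A)$: it is the common stabilizer in $\GL_V$ of all rational Hodge classes appearing in the tensor constructions $V^{\otimes a} \otimes (V^{\vee})^{\otimes b}$. This characterization is immediate from the definition, since an element of $\GL_V$ stabilizes every such Hodge class if and only if it preserves the Hodge decomposition of every tensor space, if and only if the Hodge cocharacter $h$ factors through it. By Deligne's theorem, each rational Hodge class is also an absolutely Hodge cycle, hence after applying Artin's comparison isomorphism its image in the corresponding $\ell$-adic tensor space is fixed by an open subgroup of $\Gal_K$, and therefore by the Zariski closure $G_\ell^0$ of this subgroup. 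Consequently, $G_\ell^0$ lies in the common $\Q_\ell$-stabilizer of all these classes, which is $\MT(A)_{\Q_\ell}$. To deduce the refinement $G_{\ell,1}^0 \leq \Hg(A)_{\Q_\ell}$, I would use the Weil pairing: $\rho_{A,\ell}$ preserves $\psi$ up to the cyclotomic character, so $G_\ell \leq \GSp_{V_\ell,\psi}$ and hence $G_{\ell,1} \leq \Sp_{V_\ell,\psi}$. Combining, $G_{\ell,1}^0 \leq (\MT(A) \cap \Sp_{V,\psi})_{\Q_\ell}$, and connectedness places it in the identity component $\Hg(A)_{\Q_\ell}$.

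For (b), the approach is to show that $\MT(A)$ commutes with the action of $B$ on $V$, and then invoke connectedness. Each $\varphi \in \End(\Abar)$ induces a morphism of rational Hodge structures on $V = H_1(\Abar(\C), \Q)$, equivalently a $(0,0)$-Hodge class in $\End(V) \simeq V^\vee \otimes V$. By the tannakian characterization from the previous paragraph, $\MT(A)$ fixes every such class under its natural representation on $\End(V)$, which is precisely to say that it commutes with the corresponding element of $B$. Since $\Hg(A) \leq \MT(A) \cap \Sp_{V,\psi}$ by definition, we obtain $\Hg(A) \leq C_{\Sp_{V,\psi}}(B^\times)$; connectedness of $\Hg(A)$ then places it inside the identity component, namely $L(A)$.

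The only substantive obstacle is the appeal to Deligne's theorem on absolutely Hodge cycles in part (a); this is a deep input which I would simply quote. Everything else is formal, depending only on the tannakian characterization of $\MT(A)$, the Weil pairing, and the identification $\Endzero{\Abar}$ with the endomorphisms of the rational Hodge structure on $H_1(\Abar(\C),\Q)$.
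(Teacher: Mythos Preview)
Your proposal is correct and takes essentially the same approach as the paper: both arguments for (a) rest on Deligne's theorem that Hodge cycles on abelian varieties are absolutely Hodge, combined with the tannakian description of $\MT(A)$ as the stabilizer of all Hodge classes, and for (b) the paper simply cites Milne and Murty, whose argument is exactly the one you spell out (endomorphisms are Hodge classes in $\End(V)$, hence fixed by $\MT(A)$, so $\Hg(A)$ centralizes $B^\times$ inside $\Sp_{V,\psi}$). One small wording issue in (a): when you write ``the Zariski closure $G_\ell^0$ of this subgroup,'' note that the Zariski closure of an open subgroup of $\rho_{A,\ell}(\Gal_K)$ need not equal $G_\ell^0$ but only \emph{contains} it (as its identity component); your conclusion is unaffected, since anything fixed by that closure is in particular fixed by $G_\ell^0$.
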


\begin{proof}
For part (a), see Deligne~\cite[Proposition 2.9, Theorem 2.11]{MR654325}. More precisely, every Hodge cycle on $A$ is absolutely Hodge \cite[Theorem 2.11]{MR654325} and (the \'etale components of) all absolutely Hodge cycles are defined over a finite extension $L$ of $K$ \cite[Proposition 2.9(b)]{MR654325}. Thus, the absolute Galois group of $L$ acts trivially on all Hodge cycles, which are precisely the fixed points of the action of the Mumford--Tate group. By Galois theory, this implies that $G_{A_L, \ell} \leq \MT(A)_{\Q_\ell}$, and in particular $G_{\ell}^0 \leq G_{A_L, \ell}$ is contained in $\MT(A)_{\Q_\ell}$, which proves (a).
For part (b), see Milne~\cite[p.665, before Proposition 4.8]{MR1671217} or Murty~\cite[Section 2, before Lemma 2.1]{murty-84}.
\end{proof}

\Cref{thm:ContainmentsGlMTLefschetz}(a) leads naturally to the following well-known conjecture.

\begin{conjecture}[Mumford--Tate]
Let $A$ be an abelian variety over a finitely generated field $K \subset \C$.  Then for all prime numbers $\ell$, the natural embedding $\calG_{A,\ell}^0 \hookrightarrow \MT(A)_{\Q_{\ell}}$ is an isomorphism.
\end{conjecture}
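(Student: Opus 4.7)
As stated, this is the full Mumford--Tate conjecture, which is open in general; my proposal is therefore a description of the standard strategy together with an identification of the step that obstructs a general argument. One inclusion $\calG_{A,\ell}^0 \subseteq \MT(A)_{\Q_\ell}$ is already recorded in \Cref{thm:ContainmentsGlMTLefschetz}(a), via Deligne's theorem that every Hodge cycle on an abelian variety is absolutely Hodge. The task is the reverse inclusion: every $\Gal_K$-invariant tensor in $\ell$-adic cohomology should be Hodge.

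The first move is to exploit reductivity on both sides. The group $\MT(A)$ is reductive because $A$ is polarisable, and $\calG_{A,\ell}^0$ is reductive by Faltings. Faltings' isogeny theorem, together with Deligne's theorem cited above, gives that the two groups have the same commutant in $\GL_{V_\ell}$:
\begin{equation*}
C_{\GL_{V_\ell}}\!\bigl(\calG_{A,\ell}^0\bigr) = \Endzero{\Abar} \otimes_\Q \Q_\ell = C_{\GL_{V}}\!\bigl(\MT(A)\bigr)_{\Q_\ell}.
\end{equation*}
By the double-centraliser theorem applied inside $\End(V_\ell)$, the two reductive groups generate the same semisimple $\Q_\ell$-subalgebra of $\End(V_\ell)$, and in particular their connected centres coincide. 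Pink's classification of connected reductive subgroups of $\GL_V$ stabilising a prescribed commutant and acting through a minuscule faithful representation then cuts the possibilities for $\calG_{A,\ell}^0$ down to a short list of reductive subgroups of $\MT(A)_{\Q_\ell}$ of the same derived type.

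The second move is to promote this constrained containment to equality. What remains is to show that the ranks coincide, after which the previous combinatorial input forces $\calG_{A,\ell}^0 = \MT(A)_{\Q_\ell}$. Here one would deploy Serre's theory of Frobenius tori: for each prime $\frakp$ of good reduction, the Zariski closure of the subgroup generated by the semisimple part of $\rho_{A,\ell}(\Frob_\frakp)$ is a torus $T_\frakp \leq \calG_{A,\ell}^0$, and by Chebotarev one wants to arrange primes for which the $T_\frakp$ generate a subgroup of rank equal to $\operatorname{rk}\MT(A)$. The plan is to compare such Frobenius tori to the Hodge cocharacter, using a CM specialisation in a containing Shimura datum when available, to conclude $\operatorname{rk}\calG_{A,\ell}^0 \geq \operatorname{rk}\MT(A)$; Serre's partial independence-of-$\ell$ results then propagate a conclusion at one $\ell$ to all primes.

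The decisive obstacle, and the reason the conjecture is not a theorem, is exactly this rank comparison: there is in general no unconditional mechanism guaranteeing that Frobenius tori span a maximal torus of $\MT(A)_{\Q_\ell}$, and the difficulty is concentrated precisely on abelian varieties carrying Hodge classes not generated by divisors---the Type III situation discussed in the introduction, together with certain CM and twisted configurations. The strategy therefore completes only when $\MT(A)$ is sufficiently rigid (for instance a full symplectic or orthogonal similitude group), when independent algebraic-cycle information is available (as in the results of Tankeev, Serre, Pink, Vasiu, Moonen--Zarhin, Banaszak--Gajda--Kraso\'n and others), or for family-theoretic specialisation arguments of the kind used elsewhere in this paper. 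An unconditional treatment at the generality of the stated conjecture remains out of reach.
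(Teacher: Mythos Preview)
Your assessment is correct: the statement is a \emph{conjecture} in the paper, not a theorem, and the paper gives no proof of it. You rightly identify it as the general Mumford--Tate conjecture, which remains open, and your outline of the standard strategy (Deligne's absolute-Hodge theorem for one inclusion; reductivity via Faltings, commutant comparison, Frobenius tori and rank arguments for the other) together with the identification of the rank-comparison step as the essential obstruction is an accurate and well-organized summary of the state of the art.

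One small quibble: the claim that the double-centraliser theorem forces the connected centres of $\calG_{A,\ell}^0$ and $\MT(A)_{\Q_\ell}$ to coincide is not quite right as stated. Having the same commutant in $\End(V_\ell)$ ensures the two reductive groups generate the same associative subalgebra, but this by itself does not pin down the connected centre of the smaller group inside the larger one; additional input (e.g.\ Bogomolov's theorem that $\calG_{A,\ell}^0$ contains the homotheties, together with the structure of the centre of $\MT(A)$) is needed. This does not affect your overall point, since you correctly locate the genuine obstruction at the rank comparison, but the intermediate step deserves a more careful formulation.
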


On the other hand, in general we cannot expect the Lefschetz and Hodge groups to agree.  Accordingly, we make the following definition.

\begin{definition}\label{def:FullyLefschetz}
We say that $A$ is \defi{fully of Lefschetz type} if the following two conditions hold:
\begin{enumroman}
\item the inclusion $\Hg(A) \leq L(A)$ is an equality; and
\item the Mumford--Tate Conjecture holds for $A$.
\end{enumroman}
\end{definition}

\subsection{Type III}

Recall that if $A$ is simple, then $A$ is \defi{of Type III} in the Albert classification if its geometric endomorphism algebra $B=\End(\Abar)_\Q$ is a totally definite quaternion algebra. The center of $B$ is then a totally real number field $F \colonequals Z(B)$, and the \defi{relative dimension} of $A$ is 
\begin{equation} 
\frac{\dim A}{2[F:\Q]}=\frac{g}{2e} \in \Z_{\geq 1}
\end{equation}
where $e \colonequals [F:\Q]$, and in particular $g=\dim A$ is even.  

\begin{lemma}\label{lemma:Lefschetz}
Suppose $A$ is simple of Type III, and suppose its geometric endomorphism algebra has center $Z(B)=\Q$.  Then the Lefschetz group $L(A)$ has rank $g/2$ and dimension $(g^2-g)/2$, and $L(A)$ is a $\Q$-form of $\SO_g$.
\end{lemma}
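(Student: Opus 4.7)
The plan is to compute $L(A)$ explicitly by decomposing $V \colonequals H_1(A(\C),\Q)$ using the $B$-module structure and then descending from $\overline{\Q}$. Since $A$ is simple of Type III with $Z(B)=\Q$, the algebra $B$ is a totally definite quaternion algebra over $\Q$, so $\dim_\Q B = 4$ and $V$ is a left $B$-module of rank $\dim_\Q V/\dim_\Q B = 2g/4 = g/2$, i.e.\ $V \cong B^{g/2}$. The Rosati involution attached to the polarisation $\psi$ is a positive involution of the first kind on a totally definite quaternion algebra, and there is essentially only one such: the standard (conjugation) involution $\bar{\phantom{x}}$ on $B$. The form $\psi$ is characterised by antisymmetry together with the compatibility $\psi(bx,y)=\psi(x,\bar b y)$ for $b \in B$.

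The next step is to pass to $\overline{\Q}$. Since $B$ is a quaternion algebra, $B_{\overline{\Q}} \cong M_2(\overline{\Q})$, and under this splitting the standard involution of $B$ becomes the symplectic involution $X \mapsto J^{-1} X^T J$ of $M_2(\overline{\Q})$, with $J$ the standard symplectic matrix. Up to isomorphism, the only simple $M_2(\overline{\Q})$-module is $S = \overline{\Q}^2$, so I would write
\[
V_{\overline{\Q}} \;\cong\; M \otimes_{\overline{\Q}} S
\]
for a multiplicity space $M$ of dimension $\dim V_{\overline{\Q}}/\dim S = 2g/2 = g$. By the double centraliser theorem, the centraliser of $B^\times$ inside $\GL_{V,\overline{\Q}}$ is $\GL(M) \cong \GL_{g,\overline{\Q}}$.

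Now I would analyse the form $\psi$ via this decomposition. The $M_2(\overline{\Q})$-invariant bilinear forms on $S$ that are compatible with the symplectic involution are, up to scalar, exactly the standard symplectic form $\omega$ on $\overline{\Q}^2$. Consequently
\[
\psi_{\overline{\Q}} \;=\; b_M \otimes \omega
\]
for a unique (up to the chosen scalar) non-degenerate bilinear form $b_M$ on $M$; non-degeneracy of $b_M$ follows from that of $\psi$. Since $\omega$ is antisymmetric and $\psi$ is antisymmetric, $b_M$ must be \emph{symmetric}. Therefore the subgroup of $\GL(M)$ preserving the tensor product $b_M \otimes \omega$ is exactly the isometry group of $b_M$, giving
\[
C_{\Sp_{V,\psi}}(B^\times)_{\overline{\Q}} \;=\; \mathrm{O}(M, b_M) \;\cong\; \mathrm{O}_{g,\overline{\Q}}.
\]
Taking the identity component yields $L(A)_{\overline{\Q}} \cong \SO_{g,\overline{\Q}}$, so $L(A)$ is a $\Q$-form of $\SO_g$, of rank $g/2$ and dimension $g(g-1)/2$, as claimed.

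The step I expect to require the most care is the bookkeeping in the Morita-type decomposition: keeping the dimension count consistent (the factor of $2$ between $\dim_B V = g/2$ and $\dim M = g$ is easy to mis-tally), and verifying that the standard involution on $B$ really matches the symplectic involution on $M_2(\overline{\Q})$ under the splitting, so that the forced symmetry/antisymmetry of $b_M$ and $\omega$ come out on the correct sides. Everything else is a direct consequence of the double centraliser theorem and the uniqueness of invariant forms on the simple $M_2$-module $S$.
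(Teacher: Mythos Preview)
Your argument is correct and self-contained. The paper's proof, by contrast, is a one-line citation to Milne's tabulation of the Lefschetz group for each Albert type; what you have written is essentially a direct reconstruction of that computation in the Type~III, centre-$\Q$ case. The Morita decomposition $V_{\overline{\Q}} \cong M \otimes S$, the identification of the Rosati involution with the standard (symplectic) involution on $B$, and the resulting factorisation $\psi_{\overline{\Q}} = b_M \otimes \omega$ with $b_M$ forced to be symmetric, are exactly the ingredients in the reference. Your approach buys explicitness and spares the reader a trip to another paper; the paper's approach is more economical. One point worth a sentence of justification in a polished write-up is the uniqueness of the positive involution: an orthogonal involution $x \mapsto c\bar{x}c^{-1}$ with $c$ pure fails positivity over $\R$ (for $j$ pure and anticommuting with $c$ one computes $\sigma(j)=j$ and hence $\operatorname{trd}(j\,\sigma(j)) = \operatorname{trd}(j^2) = -2 < 0$), so the standard involution is indeed the only positive one---but this is classical and your claim stands.
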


\begin{proof}
We refer to Milne~\cite[Summary of Section 2]{MR1671217}. Notice that the dimension and rank of an algebraic group only depend on its identity component, and our $L(A)$ is the identity component of Milne's group $S(A)$.
\end{proof}

\begin{remark}
Let $A$ be a simple abelian variety of Type III defined over a number field $K$.  By a theorem of Banaszak--Gajda--Kraso\'n~\cite[Theorem 5.11]{MR2663452}, the Mumford--Tate conjecture holds for $A$ if the relative dimension of $A$ is odd and not in an explicit list of exceptions (see Lombardo~\cite[Remark 2.27]{MR3494170} and Cantoral-Farf\'an~\cite[Corollary 1.11]{Cantoral}). 

The Mumford--Tate conjecture also holds for simple abelian fourfolds of Type III, by work of Moonen--Zarhin~\cite[\S 6]{MR1324634}.  On the other hand, for infinitely many even values of $g$, the Mumford--Tate conjecture is not known to hold for a general, simple $g$-dimensional abelian variety of Type III. 
\end{remark}

The following result was part of the inspiration for the investigations in this paper.  

\begin{theorem}\label{thm:UpperBoundConnectedField}
Let $A$ be a simple abelian variety over $K$ of Type III, fully of Lefschetz type. Then $[\Kconn : \KEnd] \leq 2$.
\end{theorem}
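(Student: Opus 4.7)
The plan is to identify $\calG_{A,\ell}^0$ with the identity component of the centralizer of the endomorphism algebra inside $\GSp_{V,\psi}$, and then bound $[\Kconn:\KEnd]$ by the order of the relevant image in the component group of this centralizer.

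First, by the Mumford--Tate conjecture (which holds by assumption), $\calG_{A,\ell}^0 = \MT(A)_{\Q_\ell}$. Combined with $\Hg(A) = L(A)$, this gives $\MT(A) = L(A) \cdot \mathbb{G}_m$, which by the very definition $L(A) = C_{\Sp_{V,\psi}}(B^\times)^0$ coincides with $C^0$, where $C \colonequals C_{\GSp_{V,\psi}}(B^\times)$ and $B \colonequals \Endzero{\Abar}$. Over $\KEnd$, every element of $B$ is Galois-equivariant, so $\rho_{A,\ell}(\Gal_\KEnd) \subseteq C(\Q_\ell)$. Hence the image of $\Gal_\KEnd$ in $\Phi_A = \calG_{A,\ell}/\calG_{A,\ell}^0 \simeq \Gal(\Kconn/K)$, which is exactly the subgroup $\Gal(\Kconn/\KEnd)$ controlling $[\Kconn:\KEnd]$, injects into $\pi_0(C)(\Q_\ell)$.

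Second, I would analyze $\pi_0(C)$ using the Type III structure. Writing $V \otimes \Qbar = \bigoplus_\sigma V_\sigma$ indexed by embeddings of $F \colonequals Z(B)$, each $V_\sigma$ decomposes as $W_\sigma \otimes_{\Qbar} \Qbar^2$ with $W_\sigma$ carrying a symmetric form $q_\sigma$ induced by $\psi$; the centralizer $C_{\Qbar}$ is then the subgroup of $\prod_\sigma \GO(W_\sigma,q_\sigma)$ with common similitude factor, so $\pi_0(C)_{\Qbar} \simeq \mu_2^{[F:\Q]}$. As a $\Q$-group scheme, $\pi_0(C) \simeq \operatorname{Res}_{F/\Q}(\mu_2)$.

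The main obstacle is the final bounding step. For the case $F = \Q$ the claim is immediate, since $\pi_0(C) \simeq \mu_2$ as a $\Q$-group scheme; this already covers all the applications envisaged in the present paper, where the Jacobians have endomorphism algebra the Lipschitz quaternions over $\Q$. For general $F$, an \emph{a priori} bound $2^{[F:\Q]}$ is immediate from $\pi_0(C)(\Qbar) = \mu_2^{[F:\Q]}$, but one must exploit $\Q$-rationality to reduce it to $2$: the ``signed reduced determinant'' character $\det/\lambda^{d} \colon \GO(W_\sigma) \to \mu_2$ (with $d = \dim W_\sigma / 2$) distinguishing the components is not individually $\Q$-rational, and only assembles into a $\Q$-rational character after combining it with its $\Gal_K$-conjugates obtained from the action on the embeddings $\sigma$ of $F$; since $\Gal_\KEnd$ fixes every embedding, its image in $\operatorname{Res}_{F/\Q}(\mu_2)$ must then lie in a single $\Q$-rational $\mu_2$-subgroup. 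The bulk of the technical work lies here, in carefully matching the $\Q$-forms, and this step is where one would most directly appeal to (or mimic) the detailed analysis of Banaszak--Cantoral-Farf\'an~\cite{ComponentsSatoTate}.
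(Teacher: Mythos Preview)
The paper's own ``proof'' is a bare citation to Banaszak--Cantoral-Farf\'an~\cite[Corollaries~3.7 and~3.8]{ComponentsSatoTate}, so there is no argument to compare against. Your outline is presumably close to what that reference does, and the first two steps are correct: under the Lefschetz and Mumford--Tate hypotheses one has $\calG_{A,\ell}^0 = C^0_{\Q_\ell}$ for $C \colonequals C_{\GSp_{V,\psi}}(B^\times)$, and over $\KEnd$ the Galois image lies in $C(\Q_\ell)$, so $\Gal(\Kconn\,|\,\KEnd)$ embeds in $\pi_0(C)(\Qbar_\ell)$. When $F = Z(B) = \Q$ this immediately gives $\pi_0(C) \simeq \mu_2$ and hence the bound $2$; this is the only case invoked anywhere in the present paper (cf.\ \Cref{lemma:Lefschetz} and \cref{sect:MoreGenus4}), so for the purposes of this paper your argument is complete.

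For general $F$, however, the heuristic in your last paragraph does not do what you claim. That $\Gal_{\KEnd}$ commutes with $F$ says exactly that it preserves each eigenspace $W_\sigma$, i.e., that its image lands in the full product $\prod_\sigma \GO(W_\sigma)$ with common similitude---which is $C$ itself. It does \emph{not} force the component-group image to sit in a single diagonal copy of $\mu_2 \subset \mu_2^{[F:\Q]}$: a Frobenius element could in principle have opposite orientations in different $W_\sigma$. Cutting from $2^{[F:\Q]}$ down to $2$ genuinely requires an extra ingredient (for instance, an argument that the individual sign characters $\omega_\sigma$ coincide on the image, or descend to a single $\Q$-rational character), and this is precisely what the cited work supplies. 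You are right to flag this as the technical crux and to defer to~\cite{ComponentsSatoTate} there; just be aware that the sentence ``since $\Gal_{\KEnd}$ fixes every embedding, its image \ldots\ must then lie in a single $\Q$-rational $\mu_2$-subgroup'' is not a proof and, as written, is not correct.
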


\begin{proof}
See Banaszak--Cantoral-Farf\'an~\cite[Corollary 3.7, Corollary 3.8]{ComponentsSatoTate}.
\end{proof}

\begin{remark}
\Cref{thm:UpperBoundConnectedField} can be extended to abelian varieties isogenous to a product of simple abelian varieties of Type III.
\end{remark}

\subsection{Connected monodromy field}

The identity component $\calG_{A,\ell}^0$ of the $\ell$-adic monodromy group is invariant under finite extensions of the ground field $K$. Moreover, as we observed in the introduction~\cref{eq:EpsilonEll}, there is a minimal extension $\Kconn$ of $K$ over which all $\ell$-adic monodromy groups become connected, and we have canonical group isomorphisms
\begin{equation}
\Gal(\Kconn\,|\,K) \simeq \pi_0(\calG_{A,\ell})
\end{equation}
for each prime $\ell$.  

We now describe the field $\Kconn$ in terms of the geometry of $A$ and of its algebraic cycles. Recall that the cycle class map of \'etale cohomology attaches to each codimension $d$ cycle $Z$ on $\Abar$ a class $[Z]$ in $H_{\text{\'et}}^{2d}(\Abar, \Q_\ell(d))$, invariant under a subgroup of $\Gal_K$ of finite index. The \defi{(minimal) field of definition} of $\delta \in  H_{\text{\'et}}^{2d}(\Abar,\Q_\ell(d))$ is the fixed field of the finite-index subgroup of $\Gal_K$ that fixes $\delta$.

In order to better understand the Galois action on $H_{\textup{\'et}}^{2d}(\Abar,\Q_{\ell}(d))$, recall that, for all $i \geq 0$, we have an isomorphism
\begin{equation}
H_{\text{\'et}}^i(\Abar, \Q_\ell) \simeq \tbigwedge^i H_{\textup{\'et}}^1(\Abar, \Q_\ell) 
\end{equation}
as $\Gal_K$-representations.

\begin{proposition}\label{prop:LowerBoundConnectedField}
The field of definition of $\delta \in H_{\textup{\'et}}^{2d}(\Abar, \Q_\ell(d))$ is contained in $\Kconn$.
\end{proposition}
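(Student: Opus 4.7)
The plan is to show that $\Gal_{\Kconn}$ acts trivially on $\delta$; by Galois theory this is precisely the statement $K(\delta) \subseteq \Kconn$. First set $H \colonequals \operatorname{Stab}_{\Gal_K}(\delta)$, which by the definition of ``field of definition'' is an open subgroup of $\Gal_K$ of finite index. The Galois action on $H^{2d}_{\text{\'et}}(\Abar, \Q_\ell(d))$ factors through the natural action of $G_{A,\ell} \leq \GL_{V_\ell}$ on the tensor construction $\bigl(\tbigwedge^{2d} V_\ell^\vee\bigr)(d) \simeq H^{2d}_{\text{\'et}}(\Abar, \Q_\ell(d))$ (the Tate twist is absorbed by the similitude factor in $G_{A,\ell}$). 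Let $S \leq G_{A,\ell}$ denote the algebraic stabilizer of $\delta$, a Zariski-closed algebraic subgroup of $G_{A,\ell}$; then $\rho_{A,\ell}(H) \subseteq S(\Q_\ell)$.

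The heart of the argument is to show that $G_{A,\ell}^0 \leq S$. I would decompose $\rho_{A,\ell}(\Gal_K)$ as a finite disjoint union of cosets of $\rho_{A,\ell}(H)$, then take Zariski closures. Since $\rho_{A,\ell}(\Gal_K)$ is Zariski-dense in $G_{A,\ell}$ by definition, this exhibits $G_{A,\ell}$ as a finite union of translates of the Zariski closure $\overline{\rho_{A,\ell}(H)}^{\mathrm{Zar}}$. The latter is a Zariski-closed subgroup (closures of subgroups of a linear algebraic group are subgroups) of finite index in $G_{A,\ell}$, and any such subgroup necessarily contains the identity component $G_{A,\ell}^0$. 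Combined with the chain $\overline{\rho_{A,\ell}(H)}^{\mathrm{Zar}} \subseteq S$, this yields $G_{A,\ell}^0 \leq S$.

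To conclude, the defining property of $\Kconn$ recalled in~\cref{eq:EpsilonEll} is precisely that $\rho_{A,\ell}(\Gal_{\Kconn}) \subseteq G_{A,\ell}^0(\Q_\ell)$. Therefore $\rho_{A,\ell}(\Gal_{\Kconn}) \subseteq S(\Q_\ell)$, so $\Gal_{\Kconn}$ fixes $\delta$, and hence $K(\delta) \subseteq \Kconn$. I anticipate no serious obstacle: the argument is formal and relies only on two standard facts, namely that the Zariski closure of a subgroup of a linear algebraic group is a subgroup, and that a Zariski-closed subgroup of finite index contains the identity component. The only point deserving care is the compatibility of the Galois-theoretic stabilizer of $\delta$ with its algebraic stabilizer inside $G_{A,\ell}$, which is automatic once one notes that the representation $H^{2d}_{\text{\'et}}(\Abar, \Q_\ell(d))$ is obtained from $V_\ell$ by an algebraic tensor construction.
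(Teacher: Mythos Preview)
Your proof is correct and follows essentially the same approach as the paper: both show that the algebraic stabilizer of $\delta$ contains $G_{A,\ell}^0$, then use $\rho_{A,\ell}(\Gal_{\Kconn}) \subseteq G_{A,\ell}^0(\Q_\ell)$. The only difference is packaging: the paper invokes the invariance of the identity component under finite extensions (so $G_{A_L,\ell}^0 = G_{A_{\Kconn},\ell}^0 = G_{A_{\Kconn},\ell}$), whereas you unfold this fact via the coset/Zariski-closure argument.
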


\begin{proof}
Let $L$ be the field of definition of $\delta$.  By definition, $\delta$ is invariant under the action of $\Gal_{L}$ hence also under the action of $\calG_{A_L, \ell}$ (via the $2d$-th exterior power of its defining representation, tensored with $\Q_\ell(d)$). In particular, it is invariant under 
\[
\calG_{A_L, \ell}^0 = \calG_{A_{\Kconn}, \ell}^0 = \calG_{A_{\Kconn}, \ell}.
\]
Thus $\Gal_{\Kconn}$ acts trivially on $\delta$, hence $L \subseteq \Kconn$.
\end{proof}

From the preceding proposition, it is not hard to characterise the field $\Kconn$ in geometric terms, at least under the assumption of the Tate conjecture.

\begin{corollary}\label{cor:apres proposition} 
Let $L$ be the minimal extension of $K$ such that, for all $k \in \Z_{\geq 1}$ and $d \in \Z_{\geq 0}$, and for all codimension $d$ cycles $Z$ on $A^k$, the field of definition of $[Z]$ is contained in $L$. Then the following statements hold:
\begin{enumalph}
\item $L \subseteq \Kconn$.
\item If the Tate conjecture for algebraic cycles holds for $A^r$ for all $r \geq 1$, then $L \supseteq \Kconn$ (and hence $L=\Kconn$).
\end{enumalph}
\end{corollary}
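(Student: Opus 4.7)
For part (a), the plan is to apply \Cref{prop:LowerBoundConnectedField} to each power $A^k$ after observing that the connected monodromy field is stable under taking powers. Indeed, $V_\ell(A^k) \simeq V_\ell(A)^{\oplus k}$ as $\Gal_K$-modules, so $\rho_{A^k,\ell}$ factors through the diagonal copy of $\rho_{A,\ell}$; formation of the Zariski closure and of the identity component both commute with this diagonal embedding, so $\calG_{A^k,\ell}^0$ and $\calG_{A,\ell}^0$ have the same preimage in $\Gal_K$, whence $K(\varepsilon_{A^k}) = K(\varepsilon_A) = \Kconn$. Applying \Cref{prop:LowerBoundConnectedField} to any class $[Z] \in H^{2d}_{\et}(\Abar^k, \Q_\ell(d))$ then gives that its field of definition lies in $\Kconn$; taking the compositum over all $k$, $d$, and $Z$ yields $L \subseteq \Kconn$.

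For part (b), it suffices to show $\Gal_L \subseteq \Gal_{\Kconn}$, or equivalently that $\rho_{A,\ell}(\Gal_L) \subseteq \calG_{A,\ell}^0(\Q_\ell)$ for every prime $\ell$. Fix such an $\ell$. Since $\calG_{A,\ell}^0$ is reductive, standard Tannakian duality identifies it with the subgroup of $\GL_{V_\ell}$ that fixes every tensor in $\bigoplus_{a,b \geq 0} V_\ell^{\otimes a} \otimes (V_\ell^*)^{\otimes b}$ already fixed by $\calG_{A,\ell}^0$ itself. Via K\"unneth, Poincar\'e duality, and Tate twists, each such invariant tensor can be realised as a class $\delta_i \in H^{2d_i}_{\et}(\Abar^{k_i}, \Q_\ell(d_i))$, and $\calG_{A,\ell}^0$-invariance of $\delta_i$ is equivalent to $\delta_i$ being fixed by an open subgroup of $\Gal_K$, i.e., to $\delta_i$ being a Tate class. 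Granted the Tate conjecture for all $A^r$, each $\delta_i$ is a $\Q_\ell$-linear combination of algebraic cycle classes $[Z_{i,j}]$ on $\Abar^{k_i}$. By the defining property of $L$, every $[Z_{i,j}]$---and hence every $\delta_i$---is fixed by $\Gal_L$. Consequently $\rho_{A,\ell}(\Gal_L)$ fixes all the tensors that cut out $\calG_{A,\ell}^0$, so $\rho_{A,\ell}(\Gal_L) \subseteq \calG_{A,\ell}^0(\Q_\ell)$, as required.

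The main obstacle is the bookkeeping that passes from abstract $\calG_{A,\ell}^0$-invariants in the mixed tensor algebra on $V_\ell$ and $V_\ell^*$ to Tate classes concretely realised in $H^{2d}_{\et}(\Abar^k, \Q_\ell(d))$ for some fixed $k$, $d$ with a compatible Tate twist. The ingredients are standard: the isomorphism $H^1_{\et}(\Abar, \Q_\ell) \simeq V_\ell^*$ together with K\"unneth on powers of $A$ produces the $V_\ell^*$-factors, the Poincar\'e isomorphism $H^{2g-1}_{\et}(\Abar, \Q_\ell(g)) \simeq V_\ell$ supplies the $V_\ell$-factors, and tracking the cyclotomic character places each invariant in a single cohomology group to which the Tate conjecture applies. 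Once this translation is set up, the corollary follows by combining the Tannakian characterisation of $\calG_{A,\ell}^0$, the Tate conjecture for powers of $A$, and the defining property of $L$.
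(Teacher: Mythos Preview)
Your proof is correct and follows essentially the same strategy as the paper: both arguments identify $\calG_{A,\ell}^0$-invariant tensors in mixed tensor powers of $V_\ell$ with Tate classes in the \'etale cohomology of powers of $A$ (via K\"unneth and the polarization isomorphism $V_\ell^* \simeq V_\ell(-1)$), then invoke the Tate conjecture to write these as combinations of algebraic cycle classes, which by hypothesis are defined over $L$. The only cosmetic difference is packaging---the paper argues by contradiction (assuming $G_{A_L,\ell}$ disconnected and producing, via Chevalley's theorem, a single tensor space on which $\Gal_L$ acts nontrivially through $G/G^0$), whereas you argue directly using the Tannakian characterisation of the reductive group $\calG_{A,\ell}^0$ as the pointwise stabiliser of all its tensor invariants; your explicit observation in part~(a) that $K(\varepsilon_{A^k})=K(\varepsilon_A)$ is a detail the paper leaves implicit.
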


\begin{proof}
Part (a) follows from~\Cref{prop:LowerBoundConnectedField}. Conversely, we now show that $K(\varepsilon_A)\subseteq L$, or equivalently, that $G_{A_L,\ell}$ is connected for all $\ell$, assuming the Tate conjecture for algebraic cycles on all powers of $A$. 

For simplicity of notation, set $G \colonequals G_{A_L, \ell}$ and assume for purposes of contradiction that $G$ is not connected. Then the finite group $G/G^0$ admits a faithful representation in some tensor construction over $V \colonequals H^1(\Abar, \Q_\ell)$.  Explicitly, by a theorem of Chevalley~\cite[Proposition 3.1(c)]{MR654325}, there exists a finite collection of pairs $(m_i, n_i)$ such that the subspace
\begin{equation}
W \colonequals \left\{ x \in \bigoplus_i V^{\otimes m_i} \otimes (V^*)^{\otimes n_i} : g \cdot x = x\text{ for all $g \in G^0$} \right\}
\end{equation}
has the property that the subgroup of $G$ fixing $W$ pointwise is $G^0$. Equivalently, as stated, $G$ acts on $W$ via $G/G^0$, and the induced map $G/G^0 \to \GL_W$ is injective. Now observe that, due to the existence of the $G$-equivariant bilinear form coming from a polarization, we have $V\spcheck \simeq V(-1)$, so that $W$ as above is contained in a direct sum of spaces of the form $V^{\otimes n}(-d)$.

Next, we notice that any $V^{\otimes n}$ occurs in the cohomology of some power of $A$, because it is a direct summand of $\tbigwedge^n (V^{\oplus n}) \simeq H^n( (\Abar)^n, \Q_\ell)$. The kernel of the natural map
\[
\Gal_L \to G(\Q_\ell) \to (G/G^0)(\Q_\ell)
\]
defines a finite extension $L'$ of $L$ with the property that $\Gal_{L'}$ acts trivially on $W$. By  the Tate conjecture on algebraic cycles for $A^r$, this implies that $W$ is spanned by classes of algebraic cycles on certain powers of $A$. On the other hand, $\Gal_L$ does not act trivially on $W$, since its image in $G(\Q_\ell)$ (hence in $(G/G^0)(\Q_\ell)$) is dense. This contradicts the very definition of $L$ and finishes the proof.
\end{proof}

\begin{remark}
This also gives another proof of the containment $\KEnd \subseteq \Kconn$ \cref{eqn:KendKconn}, since the graphs of endomorphisms are in particular algebraic cycles on $A^2$ (cf.~\cref{eq: Tate}).
\end{remark}

\subsection{Connected monodromy field: type III}\label{sect:MoreGenus4}

For the remainder of this section, suppose that $A$ is of type III, fully of Lefschetz type, with $Z(\End(A^\al)_\Q)=\Q$.  In particular, $g=\dim A$ is even.  
Then \Cref{thm:UpperBoundConnectedField} shows that $[\Kconn:\KEnd] \leq 2$. We prove some properties of the extension $\Kconn \supseteq \KEnd$.  Since we are only interested in this extension, we will suppose that $\KEnd = K$.

Recall from Banaszak--Gajda--Kraso\'n~\cite{MR2663452} that for all sufficiently large primes $\ell$, there is a $\Q_\ell$-vector space $W_\ell(A)$ equipped with a $\Gal_K$-action together with a $\Gal_K$-equivariant isomorphism
\begin{equation}
V_\ell(A) \simeq W_\ell(A)^{\oplus 2};
\end{equation} 
moreover, there is a nondegenerate, symmetric bilinear form 
\begin{equation}
\psi_\ell \colon W_\ell(A) \times W_\ell(A) \to \Q_\ell
\end{equation}
that is also $\Gal_K$-equivariant.  We have $\dim W_\ell(A) = (\dim V_\ell(A))/2=g$.  
Let $\GO_{W_\ell(A), \psi_\ell}$ be the group of similitudes of $\psi_\ell$, also known as the \defi{conformal group} of $\psi_\ell$, with similitude character
\begin{equation} 
\chi_\ell \colon \GO_{W_\ell(A),\psi_\ell} \to \Q_\ell^\times 
\end{equation}
so that
\[ \psi_\ell(\gamma x, \gamma y) = \chi_\ell(\gamma) \psi_\ell(x, y) \]
and $\chi_\ell(\gamma)^g=\det(\gamma)^2$ for all $\gamma \in \GO_{W_\ell(A),\psi_\ell}$ and $x,y \in W_\ell(A)$.  The restriction of $\chi_\ell$ to $G_{A,\ell}$ is the cyclotomic character.  

The algebraic monodromy group is a subgroup of $\GO_{W_\ell(A), \psi_\ell}$.  Moreover, as $A$ is fully of Lefschetz type, for every $\ell$ the connected component $\calG_{A,\ell}^0$ is the group of \emph{oriented} similitudes of $\psi_\ell$, denoted by authors as one of
\[ \GO^0_{W_\ell(A),\psi_\ell}=\GO^+_{W_\ell(A),\psi_\ell} = \GSO_{W_\ell(A),\psi_\ell}, \]
defined as the kernel of the character
\begin{equation} \label{eqn:omegaell}
\begin{aligned}
\omega_{A,\ell}=\omega_\ell \colon \GO_{W_\ell(A), \psi_\ell} &\to \{\pm 1\} \\
\omega_\ell(\gamma) &= \det(\gamma)\chi_\ell(\gamma)^{-g/2}.
\end{aligned}
\end{equation}
(The character $\omega_\ell$ is trivial on scalars,
and upon restriction to $\operatorname{O}_{W_\ell(A),\psi_\ell}$ it agrees with $\det$; of course $\operatorname{SO}_{W_\ell(A),\psi_\ell}$ is the kernel of $\det$ inside $\operatorname{O}_{W_\ell(A),\psi_\ell}$, so the description follows.)

Restricting this character to $\Gal_K$ gives the following.  Let $\mathfrak{p}$ be a prime of good reduction for $A$ with absolute norm $q \colonequals \Nm(\frakp)$, and let $\Frob_\frakp \in \Gal_K$ be a $\frakp$-Frobenius element.  Then 
\begin{equation} 
\det(1-\Frob_\frakp T \,|\, V_\ell(A))
= \det(1-\Frob_\frakp T\,|\, W_\ell(A))^2 
\end{equation}
and $\det(\rho_{A,\ell}(\Frob_\frakp))=q^g$, so $\det(\Frob_\frakp\,|\, W_\ell(A))=\pm q^{g/2}=\omega_\ell(\Frob_\frakp)\chi_\ell(\Frob_\frakp)^{g/2}$.  In particular, the character $\omega=\omega_A \colonequals \omega_{A,\ell} \circ \rho_{A,\ell}$ of $\Gal_K$ is independent of $\ell$.  

\begin{lemma} \label{lem:isinGOplus}
We have $\rho_{A,\ell}(\Frob_\frakp) \in \calG_{A,\ell}^0(\Q_\ell)$ if and only if $\det(\Frob_\frakp\,|\, W_\ell(A))=q^{g/2}$.
\end{lemma}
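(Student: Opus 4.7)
The plan is to simply unwind the definitions, using the explicit formula \cref{eqn:omegaell} for the orientation character $\omega_\ell$ and the identification of $\calG_{A,\ell}^0$ with $\GSO_{W_\ell(A),\psi_\ell}=\ker(\omega_\ell)$ provided by the fully-of-Lefschetz-type hypothesis.

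First, I would note that $\rho_{A,\ell}(\Frob_\frakp)$ automatically lies in $\calG_{A,\ell}(\Q_\ell)$, so that the issue is precisely whether it lies in the identity component. Because $\calG_{A,\ell}^0=\GSO_{W_\ell(A),\psi_\ell}$ is by definition the kernel of the orientation character $\omega_{A,\ell}$ restricted to $\calG_{A,\ell}$, the membership in $\calG_{A,\ell}^0(\Q_\ell)$ is equivalent to $\omega_{A,\ell}(\rho_{A,\ell}(\Frob_\frakp))=1$.

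Next, I would evaluate $\omega_{A,\ell}(\rho_{A,\ell}(\Frob_\frakp))$ by plugging into the defining formula $\omega_\ell(\gamma)=\det(\gamma)\chi_\ell(\gamma)^{-g/2}$, where $\det$ and $\chi_\ell$ refer to the action on $W_\ell(A)$. The key input here is that the similitude character $\chi_\ell$, when restricted to $\calG_{A,\ell}$, is the cyclotomic character (as stated just before the lemma), and on a Frobenius $\Frob_\frakp$ at a prime of good reduction this takes the value $\chi_\ell(\Frob_\frakp)=q=\Nm(\frakp)$. Therefore
\[
\omega_{A,\ell}(\rho_{A,\ell}(\Frob_\frakp))=\det(\Frob_\frakp\,|\,W_\ell(A))\cdot q^{-g/2}.
\]
Since we already know $\det(\Frob_\frakp\,|\,W_\ell(A))=\pm q^{g/2}$, the above expression equals $\pm 1$, and it equals $1$ if and only if $\det(\Frob_\frakp\,|\,W_\ell(A))=q^{g/2}$. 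Combined with the first paragraph, this yields the desired biconditional.

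There is no real obstacle here: the statement is essentially a bookkeeping identity, and everything needed has been set up in the paragraph just before the lemma. The only point requiring a moment of care is ensuring that $\det$ and $\chi_\ell$ in the formula for $\omega_\ell$ are indeed the determinant and similitude factor on $W_\ell(A)$ (rather than on $V_\ell(A)$), so that the previously recorded computation $\det(\Frob_\frakp\,|\,W_\ell(A))=\pm q^{g/2}$ can be matched directly against $\chi_\ell(\Frob_\frakp)^{g/2}=q^{g/2}$.
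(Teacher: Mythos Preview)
Your proof is correct and takes essentially the same approach as the paper: you identify $\calG_{A,\ell}^0$ with $\GSO_{W_\ell(A),\psi_\ell}=\ker(\omega_\ell)$ and then evaluate $\omega_\ell$ on Frobenius using the cyclotomic value of $\chi_\ell$. The paper's own proof is a single sentence to the same effect, and your version simply spells out the computation in more detail.
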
 

\begin{proof}
We have $\gamma \in \calG_{A,\ell}^0(\Q_\ell)$ if and only if $\gamma$, restricted to one copy of $W_\ell(A)$, is in $\GSO_{W_\ell(A), \psi_\ell}$. \end{proof}

\begin{lemma} \label{lem:unramprimes}
$\omega$ is unramified at primes of (good or) semistable reduction of $A$.
\end{lemma}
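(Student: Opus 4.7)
The plan is straightforward: since $\omega$ has just been shown to be independent of $\ell$, I will compute $\omega|_{I_\mathfrak{p}}$ using a single convenient prime and combine the inertia action on $V_\ell(A)$ with the defining formula \cref{eqn:omegaell}.

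First, fix a prime $\mathfrak{p}$ of good or semistable reduction, and choose a prime $\ell$ that is (a)~coprime to the residue characteristic of $\mathfrak{p}$, and (b)~sufficiently large that the Banaszak--Gajda--Kraso\'n decomposition $V_\ell(A) \simeq W_\ell(A)^{\oplus 2}$ holds as $\Gal_K$-representations. Only finitely many primes are excluded by each condition, so such an $\ell$ certainly exists.

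Second, I would invoke the classical reduction criteria: by N\'eron--Ogg--Shafarevich for good reduction, and by Grothendieck's semistable reduction theorem in general, the inertia subgroup $I_\mathfrak{p} \leq \Gal_K$ acts unipotently on $V_\ell(A)$. Because the decomposition $V_\ell(A) \simeq W_\ell(A)^{\oplus 2}$ is $\Gal_K$-equivariant, $I_\mathfrak{p}$ preserves each summand, and the unipotent action on $V_\ell(A)$ restricts to a unipotent action on each copy of $W_\ell(A)$. Hence $\det(\sigma \,|\, W_\ell(A)) = 1$ for every $\sigma \in I_\mathfrak{p}$. Moreover, the cyclotomic character $\chi_\ell$ is unramified at $\mathfrak{p}$ since $\ell \neq \opchar \F_\mathfrak{p}$. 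Substituting both facts into
\[
\omega(\sigma) = \det(\sigma \,|\, W_\ell(A)) \cdot \chi_\ell(\sigma)^{-g/2}
\]
yields $\omega(\sigma) = 1$ for all $\sigma \in I_\mathfrak{p}$, so $\omega$ is unramified at $\mathfrak{p}$.

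No serious obstacle is expected: both ingredients (the decomposition $V_\ell \simeq W_\ell^{\oplus 2}$ and Grothendieck's criterion) have been recalled in the section, and the rest is bookkeeping. The only subtle point is that the $W_\ell$-decomposition is asserted only for sufficiently large $\ell$; this is precisely why it is crucial that we have already proved the $\ell$-independence of $\omega$, which lets us freely pick an $\ell$ simultaneously large and coprime to the residue characteristic of $\mathfrak{p}$.
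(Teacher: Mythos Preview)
Your proposal is correct and follows essentially the same route as the paper: use the $\ell$-independence of $\omega$ to pick a convenient $\ell$, invoke the (N\'eron--Ogg--Shafarevich or) Grothendieck criterion to get a unipotent inertia action, and read off that both the determinant on $W_\ell$ and the cyclotomic character are trivial on $I_\mathfrak{p}$. The paper separates the good-reduction case (handled by unramifiedness of $\rho_{A,\ell}$ away from $\ell$) from the bad-semistable case, whereas you treat both uniformly; you are also slightly more explicit than the paper about needing $\ell$ large enough for the $V_\ell \simeq W_\ell^{\oplus 2}$ decomposition to hold, which is a nice touch.
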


\begin{proof}
Let $\ell$ be prime.  Since $\rho_{A,\ell}$ is unramified outside the set consisting of the primes above $\ell$ and the primes of bad reduction of $A$, so too is $\omega$.  By independence of $\ell$, we conclude $\omega$ is only ramified at primes of bad reduction.

Let now $\mathfrak{p}$ be a (nonzero) prime of the ring of integers of $K$ at which $A$ has bad but semistable reduction.  Let $\ell > 2$ be coprime to $\frakp$.  By the Galois criterion for semistable reduction, the inertia group at $\mathfrak{p}$ acts on $T_{\ell}(A)$ via unipotent matrices of echelon 2. Both the determinant and the multiplier of such a matrix are equal to 1, so by definition $\omega$ is trivial on the inertia group at $\mathfrak{p}$, as desired.
\end{proof}

\section{Curves with \texorpdfstring{$Q_8$}{Q8} automorphisms}\label{sect:models}

In this section, we present a certain universal family of curves with automorphisms by the quaternion group $Q_8$ of order $8$.  

\subsection{Setup}

We begin with a brief setup.  Throughout, we may relax our hypothesis on the field $K$, asking only that $\opchar K \neq 2$.  For $S$ a variety over $K$, a \defi{family of nice curves of genus $g$} over $S$ is a flat morphism $\calC \to S$ whose fibres are \defi{nice} (smooth, projective, geometrically connected) curves of genus $g$.

To frame our family, we begin with the following lemma.

\begin{lem} \label{lem:jacsimp}
Let $C$ be a nice curve over $K$ of genus $g \geq 2$ and let $G \leq \Aut(\Cbar)$ be a subgroup isomorphic to $Q_8$, stable under $\Gal_K$.  Suppose further that $\Jac(C)$ is \emph{simple}.  Then $C$ is hyperelliptic and $g$ is even.
\end{lem}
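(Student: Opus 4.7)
The plan is to split the argument into two parts: first show that $C$ is hyperelliptic, and then deduce that $g$ is even. Both steps exploit the fact that $Q_8$ has a unique element of order $2$, namely its nontrivial central element $\iota$.

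For the hyperellipticity, first observe that $\iota$ is characterised intrinsically inside $G \simeq Q_8$, so it is fixed by the $\Gal_K$-action on $G$ and hence defines an automorphism of $C$ over $K$. The degree-$2$ quotient $\pi \colon C \to C' \colonequals C/\langle \iota \rangle$ is then also defined over $K$. Pullback of divisor classes gives a morphism $\pi^* \colon \Jac(C') \to \Jac(C)$ of $K$-abelian varieties with finite kernel (since $\pi_* \circ \pi^* = [2]$), so $\pi^*\Jac(C')$ is an abelian subvariety of $\Jac(C)$. Since $\Jac(C)$ is simple by hypothesis, either $\Jac(C') = 0$---in which case $g(C') = 0$ and $C$ is hyperelliptic with hyperelliptic involution $\iota$---or $\pi^*$ is an isogeny, forcing $g(C') = g$. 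The latter possibility is ruled out by Riemann--Hurwitz: a degree-$2$ cover between smooth projective curves of the same genus $g \geq 2$ would require ramification divisor of degree $2 - 2g < 0$, which is absurd.

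For the parity of $g$, I would study the action of $Q_8$ on the $g$-dimensional $\Kbar$-vector space $H^0(\Cbar, \Omega^1_{\Cbar})$. Since $\iota$ is the hyperelliptic involution and $\opchar K \neq 2$, it acts as $-1$ on this space (no nonzero regular differential on $C$ descends to $\PP^1$). As $|Q_8| = 8$ is invertible in $\Kbar$, Maschke's theorem applies, and the absolutely irreducible $\Kbar$-representations of $Q_8$ consist of four one-dimensional characters factoring through the abelianisation $Q_8/\{\pm 1\} \simeq V_4$ (on each of which $\iota$ acts trivially) together with the single two-dimensional irreducible representation (on which $\iota$ acts as $-1$). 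Since $\iota$ acts as $-1$ on all of $H^0(\Omega^1_{\Cbar})$, only the two-dimensional isotypic component can appear, and $g$ is therefore even.

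The only technical point is verifying the representation theory of $Q_8$ over the possibly positive-characteristic field $\Kbar$. Since $\opchar \Kbar \neq 2$, the group order $|Q_8|$ is invertible; one can then either appeal to Brauer's theorem (irreducible representations in characteristic coprime to the group order match those in characteristic zero) or write down the two-dimensional irrep explicitly via matrices, which requires only $\sqrt{-1} \in \Kbar$, automatic for $\Kbar$ algebraically closed of characteristic $\neq 2$. I do not anticipate any serious obstacle in either step.
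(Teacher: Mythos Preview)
Your argument for the parity of $g$ is correct and actually takes a cleaner route than the paper: you work with the $g$-dimensional $\Kbar$-representation $H^0(\Cbar,\Omega^1)$, while the paper uses the $2g$-dimensional $\Q_2$-representation $V_2(A)$ and the Wedderburn decomposition $\Q_2[Q_8]\simeq\Q_2^4\times(-1,-1\,|\,\Q_2)$, noting that the quaternion factor is a division algebra over $\Q_2$ so its unique simple module has $\Q_2$-dimension $4$, forcing $4\mid 2g$. Both work over any $K$ with $\opchar K\neq 2$; yours is more elementary.

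There is, however, a genuine gap in the hyperellipticity step. Showing $g(C')=0$ only gives that $C$ is \emph{geometrically} hyperelliptic: over $K$ the quotient $C'=C/\langle\iota\rangle$ is a priori just a conic, and in the paper ``hyperelliptic'' means $C'\simeq\PP^1_K$. The paper fills this in \emph{after} establishing that $g$ is even: the canonical map of $C$ factors through $C'$ and embeds it as a curve of degree $g-1$ in $\PP^{g-1}_K$, so a hyperplane section gives a $K$-rational divisor of odd degree $g-1$ on $C'$; combining with multiples of the canonical class (degree $-2$) yields a $K$-rational divisor of degree $1$, hence $C'(K)\neq\emptyset$ since $C'$ has genus $0$. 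Your proof of $g$ even only uses geometric hyperellipticity, so you can simply reorder your two steps and append this argument at the end.
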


\begin{proof}
Let $\iota \in G \simeq Q_8$ be the nontrivial central element.  Then $\iota$ is characteristic in $G$ so defined over $K$.  Then the morphism $C \to C' \colonequals C/\langle \iota \rangle$ is a map of curves over $K$ of degree $2$.  Let $g'$ be the genus of $C'$.  If $g' \geq 1$, then $\Jac(C')$ is an isogeny factor of $\Jac(C)$; by Riemann--Hurwitz we have $g' < g$, so the cofactor is nontrivial, which contradicts that $\Jac(C)$ is simple.  So $g'=0$, the curve $C$ is geometrically hyperelliptic, and $\iota$ is the hyperelliptic involution.  

To prove that $G$ is in fact hyperelliptic (i.e., we have $C' \simeq \PP_K^1$), we look at the action of $G$ on the rational $2$-adic Tate module $V_2(A)$.  Since $G$ is Galois stable, $V_2(A)$ has the structure of a $\Q_2[G]$-module compatible with the $\Gal_K$-action.  In the decomposition of $V_2(A)$ into irreducibles under $G$, in the previous paragraph we proved that the trivial representations does not occur; in fact, there can also be no one-dimensional characters because $\langle \iota \rangle$ acts trivially on such characters. But we have the Wedderburn decomposition $\Q_2[Q_8] \simeq \Q_2^4 \times \quat{-1,-1}{\Q_2}$, and the quaternion algebra $\quat{-1,-1}{\Q_2}$ is a division algebra.
Thus $V_2(A)$ is a direct sum of copies of this remaining irreducible, and by dimensions we get $2g \equiv 0 \pmod{4}$.  Thus $g$ is even.  But then it is well-known that geometrically hyperelliptic implies hyperelliptic: the image $C'$ of $C$ under the canonical embedding has odd degree $g-1$, so $C'$ has an odd degree point, and from a multiple of the canonical class we obtain a divisor on $C'$ over $K$ of degree $1$ and hence $C'(K) \neq \emptyset$, whence $C' \simeq \PP_K^1$.  
\end{proof}

\begin{remark}
The classification of hyperelliptic curves with $Q_8$ action is classical. For actions on non-hyperelliptic curves, see Kimura~\cite{MR1222174}, van Geemen--Verra \cite[\S 1]{MR1928644}, or Donagi--Livn\'e \cite{DonLiv}.
\end{remark}

\begin{lem}\label{lem:Q8 action gives splitting of quaternion algebra}
Let $C$ be a hyperelliptic curve over $K$ with $Q_8 \hookrightarrow \Aut(C)$ defined over $K$.  Then $\quat{-1,-1}{K} \simeq \M_2(K)$.
\end{lem}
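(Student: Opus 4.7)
The plan is to show $\quat{-1,-1}{K} \simeq \M_2(K)$ by combining two pieces of information extracted from the $Q_8$-action on $C$: a Klein four subgroup of $\PGL_2(K)$ coming from the hyperelliptic quotient, and the constraint that $Q_8$ is the specific non-abelian central extension of $(\Z/2\Z)^2$.

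First I would identify the hyperelliptic involution $\iota$ with the central element of $Q_8$. The hyperelliptic projection $\pi\colon C \to \PP^1_K$ induces $Q_8 \hookrightarrow \Aut(C) \to \PGL_2(K)$, with kernel $Q_8 \cap \langle \iota \rangle$. If this kernel were trivial then $Q_8$ would embed into $\PGL_2(\Kbar)$, contradicting the classification of finite subgroups of $\PGL_2$ in $\opchar \ne 2$ (cyclic, dihedral, $A_4$, $S_4$, $A_5$). Hence $\iota \in Q_8$, and since the only element of order~$2$ in $Q_8$ is its center, $\iota$ is this central element, and $Q_8/\langle \iota\rangle \simeq (\Z/2\Z)^2 \hookrightarrow \PGL_2(K)$. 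Lifting generators $\bar\imath, \bar\jmath$ of this Klein four to trace-zero matrices $A_i, A_j \in \GL_2(K)$ gives $A_i^2 = \lambda_i I$, $A_j^2 = \lambda_j I$ for some $\lambda_i, \lambda_j \in K^\times$; commutation in $\PGL_2$ with distinct images forces $A_i A_j = -A_j A_i$ (taking determinants in $A_i A_j = c A_j A_i$ gives $c^2 = 1$, and $c = 1$ would make $A_i, A_j$ simultaneously diagonalizable, hence proportional). The $K$-subalgebra $K\langle A_i, A_j\rangle \subseteq \M_2(K)$ is a $4$-dimensional homomorphic image of $\quat{\lambda_i, \lambda_j}{K}$, so equals both, giving $(\lambda_i, \lambda_j)_K = 1$ in the Brauer group.

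Next I would exploit the $Q_8$ lift to pin down $\lambda_i$ and $\lambda_j$ modulo squares. Lift $\bar\imath$ to $i \in Q_8$ so that $i^2 = \iota$, and write the $i$-action on a Weierstrass model $y^2 = f(x)$ as $(x, y) \mapsto (\phi_i(x), \psi_i(x) y)$ with $\psi_i \in K(x)$ satisfying $\psi_i(x)^2 = f(\phi_i(x))/f(x)$; the condition $i^2 = \iota$ translates to $\psi_i(\phi_i(x)) \psi_i(x) = -1$. Evaluating at a fixed point $x_0$ of $\phi_i$---which lies in $\PP^1\bigl(K(\sqrt{\lambda_i})\bigr)$, as fixed points of $\bar\imath$ are eigenvectors of $A_i$, and which is forced to be a branch point of $\pi$ since otherwise $\psi_i(x_0)^2$ would equal both $1$ and $-1$---a local Taylor expansion using the standard identity $\phi_i'(x_0) = -1$ at the fixed point of an order-$2$ M\"obius transformation yields a finite, nonzero value $\psi_i(x_0) \in K(\sqrt{\lambda_i})$ with $\psi_i(x_0)^2 = -1$. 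Hence $\sqrt{-1} \in K(\sqrt{\lambda_i})$, so either $\sqrt{-1} \in K$ or $\lambda_i \equiv -1 \pmod{(K^\times)^2}$; the same analysis for $\bar\jmath$ gives $\sqrt{-1} \in K$ or $\lambda_j \equiv -1$ modulo squares.

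Combining the two steps, if $\sqrt{-1} \in K$ then $\quat{-1,-1}{K}$ splits since $-1 = (\sqrt{-1})^2 + 0^2$ is a sum of two squares in $K$; otherwise $\lambda_i \equiv \lambda_j \equiv -1$ modulo squares, so $(-1,-1)_K = (\lambda_i, \lambda_j)_K = 1$ by the first step, again giving $\quat{-1,-1}{K} \simeq \M_2(K)$. The main technical point is the fixed-point evaluation: one must verify that $\psi_i(x_0)$ is regular and nonzero even though $f(x_0) = 0$, which is handled by the local computation $\psi_i(x_0)^2 = \phi_i'(x_0) = -1$.
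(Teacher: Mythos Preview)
Your argument is correct and self-contained, but it takes a rather different route from the paper's.  The paper observes that the quotient map $Q_8 \to Q_8/\{\pm 1\} \hookrightarrow \PGL_2(K)$ lifts to an honest linear representation $Q_8 \hookrightarrow \GL_2(K)$ (citing \cite[Corollary 2.5]{TwistsHyperelliptic}); the induced surjection $K[Q_8] \twoheadrightarrow \M_2(K)$ must then factor through the unique noncommutative Wedderburn factor $\quat{-1,-1}{K}$, which gives the splitting in one stroke.

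You instead stay in $\PGL_2$, extract the split quaternion algebra $\quat{\lambda_i,\lambda_j}{K} \simeq \M_2(K)$ from the Klein four, and then use the hyperelliptic equation to pin down $\lambda_i,\lambda_j$ modulo squares via the local computation $\psi_i(x_0)^2 = \phi_i'(x_0) = -1$ at a ramified fixed point.  This is more hands-on but avoids the external citation: in effect, your fixed-point calculation reproves the lifting lemma in this setting, since once $\lambda_i \equiv \lambda_j \equiv -1 \pmod{(K^\times)^2}$ one can rescale $A_i, A_j$ to satisfy $A_i^2 = A_j^2 = -I$ and recover the embedding $Q_8 \hookrightarrow \GL_2(K)$ directly.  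The paper's route is shorter; yours makes transparent exactly where the hyperelliptic structure and the specific central extension $Q_8$ (as opposed to $D_4$) enter.
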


\begin{proof}
The $Q_8$ action on $C$ gives an action by $Q_8/\{\pm 1\} \simeq C_2 \times C_2$ on $\PP^1_K$, i.e., we have a homomorphism $Q_8 \to \PGL_2(K)$ with kernel $\{\pm 1\}$.  This projective representation lifts (by a direct calculation~\cite[Corollary 2.5]{TwistsHyperelliptic}) to $Q_8 \hookrightarrow \GL_2(K)$, hence to a surjective $K$-algebra homomorphism $K[Q_8] \to \M_2(K)$.  Again by Wedderburn, we have $K[Q_8] \simeq K^4 \times \quat{-1,-1}{K}$ so we must have $\M_2(K) \simeq \quat{-1,-1}{K}$.  
\end{proof}

\begin{cor}\label{cor:nohypqq8}
There is no hyperelliptic curve over $\Q$ with $Q_8$ action defined over $\Q$. 
\end{cor}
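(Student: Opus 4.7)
The plan is to apply the previous lemma directly and observe that its conclusion fails for $K = \Q$. First, I would suppose for contradiction that $C$ is a hyperelliptic curve over $\Q$ with a $Q_8$-action defined over $\Q$. Then by \Cref{lem:Q8 action gives splitting of quaternion algebra}, we would have $\HH \simeq \M_2(\Q)$, so it suffices to argue that the rational Hamilton quaternions $\HH = \quat{-1,-1}{\Q}$ form a division algebra and are therefore not split.

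The quickest way to see this is to recall that in $\HH$ the reduced norm of $x + yi + zj + wk$ equals $x^2 + y^2 + z^2 + w^2$, which is a positive definite quadratic form over $\Q$, hence vanishes only at the origin. Consequently every nonzero element has nonzero reduced norm and is therefore invertible, so $\HH$ is a division algebra; since $\M_2(\Q)$ contains zero divisors, $\HH \not\simeq \M_2(\Q)$, producing the required contradiction. (Equivalently, one can invoke that $\HH$ is ramified at $\infty$ and at $2$, or simply note that it embeds into the classical Hamilton quaternions over $\R$, which are a division algebra.)

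I do not anticipate any real obstacle here: the result is a two-line corollary of the preceding lemma once one recalls the classical fact that the rational Hamilton quaternions are a division algebra. The only small care needed is to phrase the contradiction in terms of the algebra isomorphism class rather than just the existence of an embedding, which is exactly the output of \Cref{lem:Q8 action gives splitting of quaternion algebra}.
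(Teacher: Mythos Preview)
Your proposal is correct and matches the paper's proof exactly: the paper simply says the corollary is immediate from \Cref{lem:Q8 action gives splitting of quaternion algebra} since $\quat{-1,-1}{\Q}$ is a division algebra. Your additional justification via the positive-definite norm form is a welcome elaboration of a fact the paper takes as known.
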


\begin{proof}
Immediate from~\Cref{lem:Q8 action gives splitting of quaternion algebra}, since $\quat{-1,-1}{\Q}$ is a division algebra.
\end{proof}

\subsection{Definition of family}\label{subsec: definition of the family}

By the results in the previous section, in order to define a family of hyperelliptic curves of even genus over $\Q$ with an action by $Q_8$ over an extension of $\Q$, we will have to choose a splitting field for the quaternion algebra $\quat{-1,-1}{\Q}$.

We choose $\Q(i)$ as a splitting field for $\quat{-1,-1}{\Q}$ and the homomorphism
\begin{equation} \label{eqn:Q8rho}
\begin{aligned} 
\rho \colon Q_8 &\to \GL_2(\Q(i)) \\
i,j &\mapsto \begin{pmatrix} i & 0 \\ 0 & -i \end{pmatrix}, \begin{pmatrix} 0 & i \\ i & 0 \end{pmatrix}.
\end{aligned}
\end{equation}
This choice is convenient for our calculations; any other such map over $\Q(i)$ will be conjugate by an element of $\GL_2(\Q(i))$, giving an isomorphic family below.  Another choice of splitting field would correspondingly twist this family.

For $Q_8$ to act on a hyperelliptic curve $y^2=f(x)$ with induced action~\cref{eqn:Q8rho} on the underlying $\PP^1$, we need 
\begin{equation} \label{eqn:havetosatisfy}
f(-x)=-f(x) \quad \text{and} \quad f(1/x)=-f(x)/x^{2g+2}.
\end{equation}
A calculation of invariants then leads to the following family.

Let $g \geq 2$ be an even integer and let $d \colonequals g/2-1$. We first consider the (relatively projective) curve over $\mathbb{A}_{\Q}^{d+1}$ obtained as the completion of the relative affine curve with hyperelliptic equation
\begin{equation} \label{eqn:Cgwitha0}
y^2 = x(x^4-1)\left( \sum_{j=0}^{d} a_j( x^{2g-4-2j} + x^{2j} ) \right).
\end{equation}

\begin{proposition} \label{prop:gcwitha0univ}
Let $X$ be a hyperelliptic curve of even genus $g$ over $K$ with an action by $Q_8$ defined over $K(i)$ as in~\cref{eqn:Q8rho}.  Then $X$ arises as a specialization of the family defined in~\cref{eqn:Cgwitha0}.
\end{proposition}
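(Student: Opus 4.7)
The plan is to extract from the hypothesized $Q_8$-action a pair of symmetry conditions on the defining polynomial $h(x)$ of a hyperelliptic model $y^2 = h(x)$ of $X$, and then verify that the set of admissible $h$'s coincides with the image of the family map $a \mapsto h_a$. Since $X$ is hyperelliptic over $K$ with hyperelliptic involution $\iota \colon (x,y) \mapsto (x,-y)$, and the central element $-1 \in Q_8$ acts as $\iota$, the quotient $Q_8/\{\pm 1\} \simeq C_2 \times C_2$ acts on $X/\iota \simeq \PP^1$ via the involutions $x \mapsto -x$ and $x \mapsto 1/x$ prescribed by the projectivisation of \eqref{eqn:Q8rho}. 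This induced action is Galois-stable, because Galois conjugation sends $\rho(i) \mapsto -\rho(i)$ (and similarly $\rho(j) \mapsto -\rho(j)$), which have the same images as $\rho(i), \rho(j)$ in $\PGL_2$; hence the $x$-coordinate descends to $K$ and we may take $h \in K[x]$.

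The main work is to extract the two symmetry conditions. Any $K(i)$-rational lift to $X_{K(i)}$ of the involution $x \mapsto -x$ has the form $(x, y) \mapsto (-x, \alpha(x) y)$ with $\alpha(x) \in K(i)(x)$ satisfying $\alpha(x)^2 = h(-x)/h(x)$. Since $h$ is squarefree (because $X$ is smooth), comparing orders of zero and pole at the distinct roots of $h$ forces the rational function $h(-x)/h(x)$ to be a nonzero constant $c$, so $\alpha = \pm\sqrt{c}$. The requirement that the lift have order $4$ with square $\iota$ (matching $i^2 = -1$ in $Q_8$) then gives $c = -1$, i.e., $h(-x) = -h(x)$. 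A parallel analysis for $x \mapsto 1/x$, using lifts of the form $(x, y) \mapsto (1/x, \beta y/x^{g+1})$ with $\beta \in K(i)^\times$ and $\beta^2 = -1$, produces the palindromic condition $x^{2g+2} h(1/x) = -h(x)$. A short direct computation then confirms that the two order-$4$ lifts so obtained satisfy the quaternion commutation relation $\tilde\imath\tilde\jmath = \iota\,\tilde\jmath\tilde\imath$, so together with $\iota$ they generate a copy of $Q_8$ realising \eqref{eqn:Q8rho}, and no further constraints emerge from the $Q_8$ relations.

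It then remains to identify $h$ with a member of the family. Writing $h(x) = \sum_k c_k x^k$, the two conditions force $c_k = 0$ for $k$ even and $c_{2g+2-k} = -c_k$ (whence in particular $c_{g+1} = 0$), leaving the $d+1 = g/2$ coefficients $c_1, c_3, \ldots, c_{g-1}$ as free parameters. Thus $h$ ranges over a $(d+1)$-dimensional $K$-subspace $V \subseteq K[x]$. The family map $a \mapsto h_a(x) \colonequals x(x^4-1) \sum_{j=0}^d a_j(x^{2g-4-2j} + x^{2j})$ is $K$-linear, takes values in $V$, and an explicit comparison of coefficients shows that it induces a linear bijection $K^{d+1} \xrightarrow{\sim} V$. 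Therefore $h = h_a$ for some $a \in K^{d+1}$, and $X \simeq \calC^{(g)}_a$ over $K$. The subtlest step in this plan is the local analysis that forces $\alpha$ (and the analogous rational function for the $x \mapsto 1/x$ lift) to be constant rather than a nontrivial rational function; once this point is secured by the squarefreeness of $h$, the remainder of the argument is a routine coefficient computation.
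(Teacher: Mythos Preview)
Your proof is correct and follows essentially the same route as the paper: extract the two symmetry conditions $h(-x)=-h(x)$ and $x^{2g+2}h(1/x)=-h(x)$ from the $Q_8$-action, then identify the admissible polynomials with the family. You supply considerably more detail than the paper on \emph{why} those conditions hold (analysing lifts of the two involutions on $\PP^1$ and invoking the order-$4$ requirement to pin down the sign), whereas the paper simply records the conditions in the text preceding the proposition and takes them as the starting point. In the identification step the paper factors out $x(x^4-1)$ and notes the cofactor is an even self-reciprocal polynomial of degree $2g-4$, while you run a linear-algebra dimension count on the space of polynomials satisfying the two constraints; these are equivalent descriptions of the same $(d+1)$-dimensional space.
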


\begin{proof}
The first condition in~\cref{eqn:havetosatisfy} yields that every monomial in $f(x)$ has odd degree, hence in particular $f(0)=0$. The two conditions in~\cref{eqn:havetosatisfy} taken together also imply that $f(x)$ vanishes for $x=\pm 1, \pm i$. Thus we can factor $f(x)=x(x^4-1)g(x)$, and~\cref{eqn:havetosatisfy} then yields that $g(x)$ is a self-reciprocal polynomial in $x^2$, so that $f(x)$ is as in the right-hand side of~\cref{eqn:Cgwitha0}.
\end{proof}

Up to twist, in the family~\cref{eqn:Cgwitha0} we may take $a_0=1$.  So for the remainder of the paper, we work with the following setup.  

\begin{setup}\label{setup}
Let $g \geq 4$ be even and let $d \colonequals g/2-1$, so $g=2d+2$.  Let 
\[ f(x)=f^{(g)}(x) \colonequals x(x^4-1)\left(x^{2g-4} + 1 + \sum_{j=1}^{d} a_j( x^{2g-4-2j} + x^{2j} ) \right) \in \Q[a_1,\dots,a_d][x] \]
and let $\Delta = \Delta^{(g)}(a_1,\dots,a_d) \in \Q[a_1,\dots,a_d]$ be the discriminant of $f(x)$.  Define the family of nice curves 
\begin{equation} \label{eq:HyperellipticEquationGeneralg}
\calC=\calC^{(g)} \colon y^2 = f(x)
\end{equation}
over $\AAp$, where $\AAp \colonequals \Spec \Q[a_1,\dots,a_d,\Delta^{-1}] \subseteq \A^d_\Q$ is the complement of the discriminant $\Delta$.  Let $\calJ^{(g)} \to \AAp$ be the abelian scheme given by the relative Jacobian of $\calC^{(g)}$.  (We drop the superscript whenever the genus $g$ is fixed.)

Let $C=C^{(g)}$ be the generic fiber of $\calC$ over $\Q(a)=\Q(a_1,\dots,a_d)=\Q(U)$ and $A=A^{(g)}$ its Jacobian.  

For every field extension $K \supseteq \Q$ and every $\aunder = (a_1, \ldots, a_d) \in \AAp(K)$, we denote by $\calC_\aunder$ the fiber of $\calC$ over $\aunder$ and by $\calA_\aunder$ the Jacobian of $\calC_\aunder$ (over $K$).
\end{setup}

\begin{lemma}\label{lem: discriminant}
The subscheme $\AAp \subseteq \mathbb{A}^d_\Q$ is nonempty and open.
\end{lemma}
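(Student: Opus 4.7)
The openness of $\AAp$ is immediate from its construction: it is the complement in $\mathbb{A}^d_{\Q}$ of the closed subscheme $V(\Delta)$ cut out by the discriminant polynomial $\Delta \in \Q[a_1,\dots,a_d]$, so it remains only to prove nonemptiness.

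For nonemptiness I would exhibit a single rational point of $\AAp$, namely the origin $\aunder = (0,\dots,0)$. At this point the defining polynomial specialises to
\[
f^{(g)}(x)\big|_{a_1=\cdots=a_d=0} = x(x^4-1)(x^{2g-4}+1),
\]
and I claim this polynomial is separable, so that $\Delta(0,\dots,0)\neq 0$ and hence $\Delta$ is not identically zero as a polynomial in the $a_j$.

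To verify separability, I would analyse the three factors. The roots of $x(x^4-1)$ are $\{0,\pm 1,\pm i\}$, which are pairwise distinct. The polynomial $x^{2g-4}+1$ has derivative $(2g-4)x^{2g-5}$; since its only common root with the derivative would be $x=0$ (which is not a root of $x^{2g-4}+1$), it is separable, so its roots form a set of $2g-4$ distinct nonzero complex numbers. It remains to check that these two root sets are disjoint. Clearly $0$ is not a root of $x^{2g-4}+1$. For $x=\pm 1$ we have $(\pm 1)^{2g-4}+1 = 2 \neq 0$. For $x=\pm i$, using that $g$ is even so $g-2$ is even, we compute $(\pm i)^{2g-4} = ((\pm i)^2)^{g-2} = (-1)^{g-2} = 1$, so $(\pm i)^{2g-4}+1 = 2 \neq 0$. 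Hence all roots of $f^{(g)}(x)|_{\aunder=0}$ are distinct.

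This step is entirely routine; there is no real obstacle. The only point one must be mindful of is the parity assumption $g$ even in \Cref{setup}, which is essential for checking that $\pm i$ are not roots of $x^{2g-4}+1$.
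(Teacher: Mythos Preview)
Your proof is correct and follows exactly the same approach as the paper: specialize at the origin $a=(0,\dots,0)$ and observe that $x(x^4-1)(x^{2g-4}+1)$ has no repeated roots since $g$ is even. You have simply spelled out the separability check in more detail than the paper does.
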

  
\begin{proof}
The polynomial $\Delta$ does not vanish identically: indeed, specializing at $a=(0,\dots,0)$ gives $x(x^4-1)(x^{2g-4}+1)$ which, since $g \geq 2$ is even, has no repeated roots.
\end{proof}

\begin{remark}
We will not need it in what follows, but the family can be extended over the base ring $\Z[1/2]$ in place of $\Q$.
\end{remark}

We will make extensive use of the $Q_8$-automorphisms of $\calC_{U_{\Q(i)}}$ generated by
\begin{equation}\label{eq: fundamental automorphisms}
\alpha(x,y) = (-x, iy), \quad \beta(x,y) = \left( \frac{1}{x}, \frac{iy}{x^{g+1}} \right)
\end{equation}
satisfying $\alpha^2=\beta^2=\iota$ and $\beta\alpha=\iota\alpha\beta$ where $\iota(x,y)=(x,-y)$ is the hyperelliptic involution.  For each $a \in \Q^d$, the action of the automorphisms $\alpha,\beta$ over $\Q(i)$ on the subspace 
\[ \Q(i)\,\frac{\mathrm{d}x}{y} \oplus \Q(i)\, x^{g-1}\,\frac{\mathrm{d}x}{y} \subseteq H^0(\calC_a^{(g)},\Omega^1) \]
of regular differentials of $\calC_a^{(g)}$ is given by the representation of~\cref{eqn:Q8rho}.  The automorphisms $\alpha,\beta$ generate a subring of endomorphisms of $\calA_a = \Jac \calC_a$ isomorphic to the Lipschitz order 
\begin{equation} \label{lemma: endomorphisms generic fiber}
\mathcal{O} \colonequals \mathbb{Z} \oplus \mathbb{Z} i \oplus \mathbb{Z} j \oplus \mathbb{Z} ij  \subset \quat{-1,-1}{\Q}
\end{equation}
under the map $i,j \mapsto \alpha^*,\beta^*$ and defined over $\Q(i)$.  

The essential task of the rest of the paper will be to investigate the Jacobians $\calJ_a$ for $a \in \Q^d$, in particular their endomorphism field and connected monodromy field.

\begin{remark}
We could also consider $g=2$ ($d=0$) in \Cref{setup}: the base is a single point and the curve is
\[ y^2 = x(x^4-1). \]
This well-studied curve has geometric automorphism group $\GL_2(\F_3)$ (a group of order $48$), and its Jacobian is isogenous over $\Q(i)$ to the square of an elliptic curve with complex multiplication by $\Q(i)$.  
\end{remark}

\subsection{A base change}\label{subsect:BaseChanges}

It will be convenient to consider a cover of the base of the family in \Cref{setup} where the defining hyperelliptic polynomial factors (labelling $2$-torsion), defined as follows.  
Consider the (dominant, quasi-finite) morphism
\[
\varphi : \Spec \Q[b_1,b_1^{-1},(b_1^4-1)^{-1},\dots,b_d,b_d^{-1},(b_d^4-1)^{-1}] \to \Spec \Q[a_1,\ldots,a_d]
\]
described functorially on points as follows: given a $\Q$-algebra $R$ and a point $(b_1,\ldots,b_d) \in (R^\times)^d$, we consider the polynomial
\begin{equation} \label{eqn:rootsbi}
x(x^4-1)\prod_{j=1}^d (x^2-b_j^2)\left(x^2-\frac{1}{b_j^2} \right) \in R[x]
\end{equation}
expand out and equate coefficients with the polynomial $f^{(g)}(x)$ in \Cref{setup}, and then define $\varphi(b_1,\ldots,b_d) = (a_1,\ldots,a_d)$.  

\begin{example}
For $g=4$, we have $-2a_1=b_1^2+1/b_1^2$.
\end{example}

The pullback $\varphi^*(\Delta)$ of the discriminant $\Delta$ from \Cref{setup} yields a nonempty open $V \colonequals \Spec \Q[b_1,\dots,b_d,\varphi^*(\Delta)^{-1}]$ and the restriction $\varphi \colon V \to U$ is again dominant and surjective.  By construction, we have a Cartesian diagram
\begin{equation}\label{eq:BtoAdiagram}
\begin{aligned}
\xymatrix{
\Cprim^{(g)} \ar[r] \ar[d] & \calC^{(g)}  \ar[d] \\
\BB \ar[r]^{\varphi} & \AAp 
}
\end{aligned}
\end{equation}
where $\Cprim^{(g)} \to \BB$ is the family of nice hyperelliptic curves with affine equation
\begin{equation}\label{eq:HyperellipticEquationGeneralgFactored}
\Cprim^{(g)} \colon y^2 = x(x^4-1) \prod_{j=1}^d (x^2-b_j^2)\left(x^2-\frac{1}{b_j^2} \right).
\end{equation}
Let $\calL \colonequals \Q(\BB) = \Q(b_1,\ldots,b_d)$ be the function field of $\BB$. The map $\varphi$ induces an injection $\varphi^*(\calK) \hookrightarrow \calL$ that we will simply write as an inclusion.

Adjoining the roots of $f(x)$ gives $2$-torsion classes in the Jacobian, as follows.  
In view of \Cref{eq:HyperellipticEquationGeneralgFactored}, let
\begin{equation} \label{eqn:weierstrasspoints}
S \colonequals \{0, \pm 1, \pm i\} \cup \{\pm b_j, \pm 1/b_j\}_{j=1,\dots,r} \subset \Q(i)[b_1,1/b_1, \dots, b_d, 1/b_d]
\end{equation}
so that $(x,0) \in \Cprim^{(g)}(V_{Q(i)})$ for $x \in S$, together with the unique point $\infty$ at infinity, are the Weierstrass points.  The points with $x \in \{0,\pm 1,\pm i\}$ are distinguished from others by their stabilizers under the $Q_8$-automorphism group \eqref{eq: fundamental automorphisms}: 
\begin{equation} \label{eqn:stabilizersQ8}
\begin{aligned}
\operatorname{Stab}_{Q_8}(x,0) = 
\begin{cases}
\langle \alpha \rangle, & \text{ if $x=0$;} \\
\langle \beta \rangle, & \text{ if $x=\pm 1$;} \\
\langle \alpha\beta \rangle, & \text{ if $x=\pm i$;} \\
\langle \iota \rangle, & \text{ otherwise.}
\end{cases}
\end{aligned}
\end{equation}
(The point $\infty$ like $(0,0)$ has stabilizer $\langle \alpha \rangle$.)  In particular, the points with $x \in \{0,\pm 1,\pm i\}$ have $Q_8$-stabilizers of order $4$ (the others have order $2$).  

For $x \in S$, let $e_x$ be the divisor class $[(x,0) - \infty]$ on $\calJprim^{(g)}$.
It is well-known that $\calJ^{(g)}[2]$ has a basis consisting of the divisor classes $e_x$ for $x \in R \smallsetminus \{0\}$ and $\sum_{x \in R} e_x = 0$.  In particular, $\calJprim^{(g)}$ has full level $2$ structure over $V_{\Q(i)}$.  

The action of the $Q_8$-automorphism group on $\calJ^{(g)}[2]$ is similar: for $x \in R$, we have
\begin{equation}\label{eq:ActionOfI}
\alpha(e_x)=e_{-x} 
\end{equation}
and since $-e_0=e_0$ is $2$-torsion, we have
\begin{equation}\label{eq:ActionOfJ}
\beta(e_x)=\begin{cases} 
e_{1/x} + e_0, & \text{ if $x \neq 0$;} \\
e_{0}, & \text{ if $x=0$.}
\end{cases}
\end{equation}

\begin{remark}
We could also write $f(x)/x^{g-2}=x(x^4-1)h(x^2+1/x^2)$ where $h(y)=y^d + \sum_{j=1}^d a_j' y^{d-j}$ with $a_j' \in a_j+\Q[a_1,\dots,a_{j-1}]$; by universality, $h(y)$ has Galois group $S_d$, so by back substitution $f(x)$ has generic Galois group $D_2 \wr S_d = D_2^d \rtimes S_d$ of order $4^d d!$ (over $\Q(i)(a_1,\dots,a_d)$), where $D_2 \simeq \Z/2\Z \times \Z/2\Z$ is the dihedral group of order $4$.  
\end{remark}

\begin{remark}
One could similarly consider the intermediate base change with defining equation $y^2 = x(x^4-1) \prod_{i=1}^d (x^4+c_ix^2+1)$, which corresponds to a (partial) level-2 structure on the Jacobian.
\end{remark}

\subsection{Moduli} 
Let $\mathcal{M}_g$ be the moduli space of curves of genus $g$ and let $M_g$ be its coarse space.  There is a morphism $\AAp \to \mathcal{M}_g \to M_g$ (with image in the hyperelliptic locus).  The next lemma describes this map (generically).

\begin{lemma} \label{lemma:MapToModuliSpace}
The following statements hold.
\begin{enumalph}
\item The geometric automorphism group of the generic fiber $C$ is $Q_8$, generated by $\alpha,\beta$ in \Cref{eq: fundamental automorphisms} and defined over $\calK(i)$.  
\item The map $\AAp \times_\Q \Q(i) \to M_g$ is generically Galois over its image, with Galois group $S_3$; in particular, the map has degree $6$ and the image has dimension $d$.  
\item For $g=4$ and $a \in \C \smallsetminus \{\pm 1\}$, we have $\Aut \calC_a \simeq Q_8$ except for the roots of
\[ x^{24} - 20x^{20} - 475x^{16} + 475x^8 + 20x^4 - 1; \]
among the remaining values, the curves $\calC_a$ and $\calC_{a'}$ are isomorphic if and only if $a' \in \{\pm a, \pm(a+3)/(a-1), \pm(a-3)/(a+1) \}$.  
\end{enumalph}
\end{lemma}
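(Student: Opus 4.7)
My plan is to analyze automorphisms of $\calC^{(g)}_a$ via the induced Möbius transformations on $\PP^1$ under the hyperelliptic quotient, which must permute the Weierstrass set. After the base change $\BB \to \AAp$ of \S\ref{subsect:BaseChanges} this set reads $\{0,\pm 1,\pm i,\infty\}\cup\{\pm b_j,\pm 1/b_j\}_{j=1}^d$. The $\PGL_2$-stabilizer of the parameter-independent ``privileged sextuple'' $\{0,\pm 1,\pm i,\infty\}$ is the classical octahedral group $S_4$ of order $24$, and its subgroup fixing each of the three pairs $\{0,\infty\}, \{\pm 1\}, \{\pm i\}$ as sets is the Klein four $D_2$, namely the image of $Q_8$ modulo the hyperelliptic involution $\iota$.

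For part (a), I would argue that any coset $\mu D_2 \subset S_4$ other than $D_2$ itself permutes the three pairs nontrivially, so for $\mu$ to also preserve $\{\pm b_j,\pm 1/b_j\}_{j=1}^d$ one needs a genuine algebraic condition on the $b_j$. These conditions fail at the generic point of $\BB$, giving $\Aut(C_{\overline{\calK}}) \leq \langle D_2,\iota\rangle \simeq Q_8$; combined with the containment~\eqref{eq: fundamental automorphisms}, equality follows.

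For part (b), the normalizer quotient $N_{\PGL_2}(D_2)/D_2 \simeq S_4/D_2 \simeq S_3$ permutes the three pairs. Each coset representative $\mu \in S_4$ is a Möbius transformation carrying the Weierstrass set of $\calC_a$ to that of some $\calC_{a'}$ in the family, inducing an $S_3$-action on $\BB$ that descends to $\AAp$. Two fibers $\calC_a,\calC_{a'}$ are isomorphic iff $a,a'$ are $S_3$-equivalent: one direction is by construction and the converse uses part~(a). Hence $\AAp_{\Q(i)} \to M_g$ is generically $S_3$-Galois onto its image, of degree $|S_3|=6$, with image of dimension $\dim \AAp = d$.

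For part (c), with $g=4$ and $d=1$ I would tabulate the six cosets of $D_2$ in $S_4$ using concrete representatives such as $\mu_1\colon x\mapsto(x-1)/(x+1)$ and $\mu_2\colon x\mapsto ix$, compute the image of $b$ under each, and substitute into $a = -(b^2+1/b^2)/2$, yielding the six expressions $\{\pm a,\pm(a+3)/(a-1),\pm(a-3)/(a+1)\}$. Extra automorphisms of $\calC_a$ correspond exactly to coincidences among these six values. Setting $y = b^4$, such coincidences reduce to vanishing of the sextic $y^6-20y^5-475y^4+475y^2+20y-1$, whose six roots $y \in \{\pm 1,\, 17\pm 12\sqrt 2,\, -7\pm 4\sqrt 3\}$ correspond via $a = \pm(y+1)/(2\sqrt y)$ to the exceptional $a$-values $\{0,\pm 3,\pm i\sqrt 3\}$ (together with the excluded $a=\pm 1$); resubstituting $y=x^4$ recovers the stated polynomial of degree $24$. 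The principal obstacle is executing this case analysis faithfully and matching the resulting polynomial to the claimed form, which is best confirmed with computer algebra.
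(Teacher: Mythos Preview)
Your overall strategy---reducing to M\"obius transformations on the Weierstrass locus and organizing them via the octahedral group $S_4$ stabilizing the sextuple $\{0,\pm 1,\pm i,\infty\}$---matches the paper's, and your treatment of (b) and (c) is essentially the same as the paper's once (a) is in hand.

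However, your argument for (a) has a genuine gap. You restrict attention to $\mu \in S_4$, i.e.\ to M\"obius transformations that already preserve the privileged sextuple, and then argue that the nontrivial $S_4/D_2$-cosets impose algebraic conditions on the $b_j$. But an automorphism of the generic fiber could \emph{a priori} induce a $\mu \in \PGL_2(\overline{\calL})$ that does \emph{not} preserve the sextuple: it might send $0$ to some $b_j$, for instance. Nothing in your outline rules this out. The stabilizer characterization~\eqref{eqn:stabilizersQ8} you allude to in your setup would do the job if $Q_8$ were known to be normal in $\Aut(C^{\al})$, but that normality is exactly what is at stake, so invoking it is circular. (Equivalently: $N_{\PGL_2}(D_2)=S_4$, but you have not shown that $\Aut(C^{\al})/\langle\iota\rangle$ normalizes $D_2$.)

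The paper closes this gap by brute force for $g=4$: it loops over all $10\cdot 9\cdot 8$ ordered triples $(z_1,z_2,z_3)$ in the Weierstrass set, forms the unique $\gamma\in\PGL_2$ with $\gamma(z_1,z_2,z_3)=(0,1,\infty)$, and checks in \textsf{Magma} that $\gamma$ permutes the full set only when $\gamma\in D_2$. For $g\geq 6$ it then specializes the extra parameters $b_2,\dots,b_d$ to $\{\pm b_1,\pm 1/b_1\}$ to reduce to the $g=4$ case. The same gap recurs in your (c): ``extra automorphisms correspond exactly to coincidences among these six values'' only detects extra automorphisms coming from $S_4$; the paper's computation for (c) again ranges over \emph{all} M\"obius transformations. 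Your explicit computations (the sextic in $y=b^4$, the exceptional $a$-values) are correct, but the word ``exactly'' needs the missing case.
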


For the proof of \Cref{thm:MainHigherGenus}, we will only need the weaker statement in (b) that the map is finite-to-one, and we will not make use of part (c)---but we find the additional explicitness to be quite agreeable.

\begin{proof}
Moduli of hyperelliptic curves (in terms of binary forms) are well understood, and automorphism groups of hyperelliptic curves have been completely and explicitly classified (for characteristic zero, see Brandt--Stichtenoth \cite{BrandtStichtenoth} for the determination and Shaska \cite{Shaska} for explicit moduli).  From these results, one can deduce the lemma---we take a direct, self-contained approach.  

First, part (a).  There is a natural map
\begin{equation}
\Aut C^{\textup{al}} \to \Aut(C^{\textup{al}}/\langle \iota \rangle) \simeq \Aut(\PP_{\calK^{\textup{al}}}^1) \simeq \PGL_2(\calK^{\textup{al}})
\end{equation}
with kernel $\langle \iota \rangle$; let $G$ be its image, the reduced, geometric, generic automorphism group of $C$.  Since any (geometric) automorphism of $C$ stabilizes the set of Weierstrass points, any $\gamma \in G$ permutes the set $S \cup \{\infty\}$ of their $x$-coordinates in \eqref{eqn:weierstrasspoints}.  

For the case $g=4$, we verify (using a simple loop in \textsf{Magma} \cite{Magma}) that the only linear fractional transformations that stabilize the set $S^{(4)} \colonequals \{0,\infty,\pm 1, \pm i, \pm b_1, \pm 1/b_1\}$ are the maps $\gamma(x)=\pm x, \pm 1/x$ arising from the image $V_4 \simeq C_2^2$ of $Q_8$.  More precisely, we check that for any three distinct $z_1,z_2,z_3 \in S^{(4)}$, the unique linear transformation
\begin{equation}  \label{eqn:gammaz}
\gamma(x) = \left(\frac{z_2-z_3}{z_2-z_1}\right)\frac{x-z_1}{x-z_3} 
\end{equation}
with $\gamma(z_1)=0$, $\gamma(z_2)=1$, and $\gamma(z_3)=\infty$, has $\gamma(S^{(4)}) \subseteq S^{(4)}$ if and only if $\gamma(x)=\pm x, \pm 1/x$ if and only if $\gamma$ stabilizes $\{0,\pm 1,\infty\}$.

The general case $g \geq 4$ follows by specialization, as follows.  Given $\gamma$ with $\gamma(S)=S$, we have $\gamma(z_1)=0$, $\gamma(z_2)=1$, and $\gamma(z_3)=\infty$ with $z_1,z_2,z_3 \in S$ distinct.  Assume for purposes of contradiction that $\gamma(x) \neq \pm x, \pm 1/x$.  Then the set $\{0,\pm 1,\infty\}$ is not stabilized by $\gamma$.  We specialize as follows:
\begin{itemize}
\item If $z_1 \in S^{(4)}$, we continue to the next step; otherwise $z_1= \pm b_j, \pm 1/b_j$ for some $j>1$, and we specialize $b_j=\pm b_1,\pm 1/b_1$ not equal to $z_2$ or $z_3$.  
\item If the specialization of $z_2$ lies in $S^{(4)}$, we continue; otherwise, $z_2= \pm b_j, \pm 1/b_j$ with $j>1$ and we specialize $b_j$ to an element of $\{\pm b_1, \pm 1/b_1\}$ not equal to the specialization of $z_1$ or $z_3$. 
\item We repeat the previous step with $z_3$: if its specialization lies in $S^{(4)}$ we continue, else we specialize $b_j$ to $\{\pm b_1, \pm 1/b_1\}$ not the specialization of $z_1$ or $z_2$.  
\item We specialize the remaining variables to $b_1$.
\end{itemize}
This yields a specialization homomorphism $\Q[b_1,\dots,b_d,b_1^{-1},\dots,b_d^{-1}] \to \Q[b_1,b_1^{-1}]$, restricting to a map $S \to S^{(4)}$, such that $\{z_1,z_2,z_3\}$ restricts to a subset which is not contained in $\{0,\pm 1,\infty\}$.  Specializing $\gamma$, we obtain a linear fractional transformation which stabilizes $S^{(4)}$, a contradiction.  

For (b), from part (a) there exists a nonempty Zariski open subset $\AAp' \subseteq \AAp$ such that the family of nice curves $C_{U'}$ has automorphism group $Q_8$ over $U' \times_{\Q} \Spec \Q(i)$.  We examine the fibers of the map $U' \to M_g$.  For a $\Q(i)$-algebra $R$, let $a,a' \in U'(R)$ be such that $\varphi \colon \calC_a \simeq \calC_{a'}$.  Then $\varphi$ commutes with the hyperelliptic involution, so induces an isomorphism $\overline{\varphi} \colon \PP^1 \to \PP^1$ mapping the branch locus of $\calC_a$ to that of $\calC_{a'}$.  But as we already observed in \eqref{eqn:stabilizersQ8}, the points with $x$-coordinates $\{0,\pm 1,\infty,\pm i\}$ have stabilizers of order $4$, whereas the other points have stabilizer of order $2$.  Thus $\overline{\varphi}$ permutes $\{0,\pm 1,\infty,\pm i\}$.  The subgroup of linear fractional transformations with this property form a group isomorphic to $S_4$ (naturally a subgroup of $S_6$) defined over $\Q(i)$ generated by $x \mapsto ix,1/x,(x-1)/(x+1)$.  The stabilizer of $\calC_a$ is $Q_8/\langle \iota \rangle \simeq V_4 \trianglelefteq S_4$.  So the map $U' \times_\Q \Spec \Q(i) \to M_g$ is Galois with Galois group $S_4/V_4 \simeq S_3$. 

Finally, for (c) we just carry out parts (a) and (b) explicitly.  For (a), we ask for the values of $b=b_1$ such that there is a new transformation \eqref{eqn:gammaz} permuting $S$, giving 
\begin{equation} 
\begin{aligned}
&x^{24} - 20x^{20} - 475x^{16} + 475x^8 + 20x^4 - 1 \\
&\qquad = (x^2+1)(x^4+1)(x^2-2x-1)(x^2+2x-1) \\
&\qquad\qquad \cdot (x^4-2x^3+2x^2+2x+1)(x^4+2x^3+2x^2-2x+1)(x^4+6x^2+1).
\end{aligned}
\end{equation}
For (b), we apply the representatives of the $5$ nontrivial classes in $S_3 \simeq S_4/V_4$ to $f(x)$ and compute the corresponding values of the parameter $a$.
\end{proof}

\section{Genus four}\label{sect:g4}

In this section, we mostly study a special case: the case $g=4$ of the family in~\Cref{setup} specialized to $a=1/2$.  (In section \ref{sec:familyg4}, we consider some consequences for other fibers in the family in genus $g=4$.)  Throughout this section, we abbreviate
\[ C \colonequals \calC_{1/2}^{(4)} : y^2 = x(x^4-1)(x^4+x^2+1) \]
and let $A \colonequals \calA_{1/2}^{(4)}=\Jac \calC_{1/2}^{(4)}$ be its Jacobian.  The discriminant of $C$ is $2^{28} 3^6$, so $C$ is nice of genus $4$ and $A$ has good reduction away from $2$ and $3$.  

\subsection{Endomorphism algebra}

Running the algorithm of Costa--Mascot--Sijsing--Voight \cite{costa-mascot-sijsling-voight-18}, we obtain the following theorem.  

\begin{theorem}\label{thm:EndomorphismsGenus4}
The Jacobian $A$ has geometric endomorphism algebra 
\[ (\End A^{\textup{al}})_\Q \simeq \quat{-1,-1}{\Q}; \]
in particular, $A$ is geometrically simple.  
\end{theorem}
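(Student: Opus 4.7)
The plan is to establish the equality $(\End A^{\textup{al}})_\Q \simeq \HH$ by matching upper and lower bounds, with geometric simplicity following as a formal consequence. The lower bound is immediate from Section~\ref{sect:models}: the $Q_8$-action on $C$ defined over $\Q(i)$ produces an injection $\mathcal{O} \hookrightarrow \End(A \times_\Q \Q(i))$ via $\cref{lemma: endomorphisms generic fiber}$, so after tensoring with $\Q$ we obtain $\HH \hookrightarrow (\End A^{\textup{al}})_\Q$, an algebra of $\Q$-dimension at least $4$. It remains to prove the matching upper bound $\dim_\Q (\End A^{\textup{al}})_\Q \leq 4$.

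For the upper bound, I see two natural approaches. The first, which the paper invokes, is the algorithm of Costa--Mascot--Sijsling--Voight: one computes the period matrix of $A$ to high precision, uses LLL to enumerate all endomorphisms of the rational Hodge structure compatible with the principal polarization, and then certifies each candidate as algebraic by reconstructing it as a divisor correspondence and verifying its action numerically. This yields a rigorous and efficient answer. The second, more arithmetic approach is reduction modulo primes of good reduction. For any prime $p \neq 2, 3$, compute the $L$-polynomial of $C/\F_p$ by point-counting $\#C(\F_{p^k})$ for $1 \leq k \leq g = 4$; this determines the characteristic polynomial $P_p(T)$ of geometric Frobenius on $V_\ell(A_{\F_p})$. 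Via Tate's theorem and Honda--Tate theory, $P_p(T)$ completely determines the $\Q$-algebra $\End(A_{\F_p} \times_{\F_p} \overline{\F}_p)_\Q$, and the reduction map provides an injection $(\End A^{\textup{al}})_\Q \hookrightarrow \End(A_{\F_p} \times \overline{\F}_p)_\Q$. By choosing $p$ such that $P_p(T) = Q(T)^2$ with $Q(T) \in \Z[T]$ irreducible of degree $4$ and having ordinary Newton slopes (the generic behaviour for Type~III with center $\Q$ and relative dimension~$2$), the target algebra becomes a quaternion algebra with center $\Q$, forcing $(\End A^{\textup{al}})_\Q$ to be a subalgebra of dimension at most $4$. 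If necessary, one combines data from two or three good primes to pin down the algebra exactly.

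Once $(\End A^{\textup{al}})_\Q = \HH$ is established, geometric simplicity is automatic: $\HH$ is a division algebra over $\Q$, so contains no nontrivial idempotents; any nontrivial decomposition of $A^{\textup{al}}$ up to isogeny would produce a pair of orthogonal idempotents in $(\End A^{\textup{al}})_\Q$, a contradiction.

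The main obstacle is the upper bound. A pure reduction argument must exhibit at least one prime whose Frobenius data pins the geometric endomorphism algebra down to exactly $\HH$, ruling out in particular that the center is a real quadratic field (which, for a fourfold, would correspond to Type~III of relative dimension~$1$, i.e.\ a QM-abelian-surface factor structure); this may require combining information from several primes, or supplementing with a finer invariant such as the Galois action on $A[2]$. The CMSV route trades this obstacle for reliance on interval-arithmetic and LLL certification, which is far more efficient in practice and is what produces the claimed identification.
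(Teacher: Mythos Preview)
Your overall architecture is right (lower bound from the $Q_8$-action, upper bound from reduction), but the key claim in your reduction argument is false. If $P_p(T)=Q(T)^2$ with $Q$ irreducible of degree $4$ and ordinary Newton slopes, then by Honda--Tate the reduction is isogenous over $\overline{\F}_p$ to $B^2$ with $B$ a simple ordinary abelian surface and $\End(B^{\al})_\Q=\Q(\pi)$; hence $\End(A_{\F_p}^{\al})_\Q\simeq \M_2(\Q(\pi))$, which is $16$-dimensional over $\Q$, not a $4$-dimensional quaternion algebra. The injection $(\End A^{\al})_\Q\hookrightarrow \End(A_{\F_p}^{\al})_\Q$ therefore only gives $\dim_\Q(\End A^{\al})_\Q\leq 16$, which is useless by itself. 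No single good prime will do better: the endomorphism algebra always strictly grows under reduction for Type~III varieties.

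The paper's proof does use reduction mod~$p$, but through a finer mechanism. From the data at $p=41$ (where indeed $c_p(T)=g_p(T)^2$ with $g_p$ irreducible), the $\eta$-invariant comparison in the Costa--Mascot--Sijsling--Voight framework narrows the possibilities for $A^{\al}$ to exactly two: either simple with $(\End A^{\al})_\Q$ a quaternion algebra over $\Q$, or isogenous to $B^2$ with $B$ a simple surface having commutative endomorphism algebra $L$. One then computes the \emph{center} of $(\End A^{\al})_\Q$: it must lie in $\Q[T]/(g_p(T))$ for every good $p$, and the fields at $p=41$ and $p=73$ share no subfield except $\Q$. In the second alternative this forces $L=\Q$, so $(\End A^{\al})_\Q=\M_2(\Q)$, which cannot contain the division algebra $\HH$; contradiction. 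Your hedge ``combine data from two or three primes'' is pointing in the right direction, but the missing idea is that one is pinning down the center, not shrinking the ambient algebra. (Your description of CMSV as period-matrix/LLL certification is also slightly off: that is one module of the algorithm, but the argument the paper actually runs is the reduction-mod-$p$ side with the $\eta$-invariants.)
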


\begin{proof}
For the reader's convenience, we elaborate a bit on the mechanism which underlies the algorithm \cite[\S 7]{costa-mascot-sijsling-voight-18}---see also Lombardo \cite{MR3882288}. 

We begin by computing the (geometric) endomorphism algebra of the reduction $A_{\F_p}$ of $A$ modulo a good prime $p$~\cite[Lemma 7.2.4]{costa-mascot-sijsling-voight-18}, as follows.
Let
\[ c_p(T) \colonequals \det(1- \Frob_p T\,|\, H^1_{\textup{\'et}}(A_{\F_p^{\textup{al}}},\Q_\ell)) \in 1 + T\Z[T] \]
be the characteristic polynomial of the $p$-power Frobenius endomorphism $\Frob_p$ acting on \'etale cohomology (independent of $\ell \neq p$), computed as the numerator of the zeta function of $A_{\F_p}$ by counting points.  Let $c_p^{\otimes 2}(T) = \operatorname{Res}_z(c(z), z^{2g}c(T/z))$ (the characteristic polynomial of Frobenius acting on $(H^1)^{\otimes 2}$).  We factor
\begin{equation} \label{eqn:cphp}
c_p^{\otimes 2}(T) = h(pT) \prod_{i=1}^{n} \Phi_{k_i}(pT) 
\end{equation}
where $\Phi_{k_i}$ is the $k_i$-th cyclotomic polynomial and no root of $h(pT)$ is a root of unity. Then
\begin{equation}
\dim_{\Q} \End(A_{\F_p^r})_\Q = \sum_{k_i \mid r} \deg \Phi_{k_i},
\end{equation}
and the endomorphism field of $A_{\F_p}$ is $\F_{p^k}$ where $k$ is the least common multiple of the $k_i$.  

Let $p=41$.  Computing using \textsf{Magma} \cite{Magma}, we find that $c_p(T)=g_p(T)^2$ where
\begin{equation}
g_p(T) = 1-2T-30T^2-2pT^3+p^2T^4
\end{equation}
and 
\begin{equation} \label{eqn:cp2t}
c_p^{\otimes 2}(T) = h(pT)(1-pT)^{16} 
\end{equation} 
with $h(T)$ as in \eqref{eqn:cphp}.  
Thus all endomorphisms of $A_{\F_{p}}^{\al}$ are already defined over $\F_p$ and $\dim_\Q \End(A_{\F_p})_\Q = 16$.
(By Honda--Tate theory \cite[Remark 7.2.13]{costa-mascot-sijsling-voight-18}, since $c_p(T)$ is the square of an irreducible polynomial $g_p(T)$ with middle coefficient not divisible by $p$, we conclude that $A_{\F_p}$ is isogenous (over $\F_p$) to the square of a geometrically simple, ordinary abelian surface over $\F_p$ with endomorphism algebra $\Q(\pi)=\Q[T]/(g_p(T))$.)

We now match certain lower and upper bounds.  Since $A^{\al}$ has quaternionic multiplication \Cref{eq: fundamental automorphisms}, we have
\[ \eta(A^{\al}) \geq 2\cdot 1^2 \cdot 4 = 8 \]
\cite[(7.3.16)]{costa-mascot-sijsling-voight-18} and
\[ \eta(A_{\F_p^{\al}}) = 2^2 \cdot 4 = 16 = 2\eta(A^{\al}) \]
\cite[(7.3.18)]{costa-mascot-sijsling-voight-18}.  Therefore \cite[Corollary 7.3.19(b)--(c)]{costa-mascot-sijsling-voight-18} (from $(e_1n_1,n_1\dim A_1)=(2,4)$) we conclude that either $A^{\textup{al}}$ is simple (with geometric endomorphism algebra a quaternion algebra over $\Q$) or isogenous to the square of a simple abelian surface whose endomorphism algebra is commutative (a field).  (This refines the conclusion that $A^{\al}$ is simple or isogenous to the product of two abelian surfaces, which follows from the injectivity of endomorphisms under reduction modulo $p$.)

To rule out the latter, we first compute the center $B=\End(A^{\al})_\Q$ \cite[\S 7.4]{costa-mascot-sijsling-voight-18}.  We repeat the calculation above for $p=73$, and we get 
\[ c_{73}(T) = g_{73}(T)^2=(1+8T-2T^2-8p T^3 + p^2 T^4)^2.  \]  
We compute that the only subfield common to $\Q[T]/(g_{41}(T))$ and $\Q[T]/(g_{73}(T))$ is $\Q$, so $Z(B)=\Q$ \cite[Corollary 7.4.4]{costa-mascot-sijsling-voight-18}.  (It is no coincidence that looking at two primes is enough~\cite{costa-lombardo-voight-19}.) 
So we cannot have $A^{\al}$ isogenous to the square of an abelian surface: for then its endomorphism algebra would be $B=\M_2(L)$ where $L=Z(B)=\Q$, but we have an embedding $(-1,-1\,|\,\Q) \hookrightarrow B$, a contradiction.  
\end{proof}

\begin{corollary}\label{cor:FieldsOfDefinition}
The Jacobian $A$ has endomorphism field $\Q(\End A)=\Q(i)$ and connected monodromy field $\Q(\varepsilon_A)=\Q(\zeta_8)$.
\end{corollary}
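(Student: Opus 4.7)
For part (i), the inclusion $\QEnd \subseteq \Q(i)$ is essentially immediate from \Cref{thm:EndomorphismsGenus4} combined with the construction of \Cref{sect:models}: the Lipschitz order $\calO$, generated by the $Q_8$-automorphisms $\alpha^*,\beta^*$ pulled back from \Cref{eq: fundamental automorphisms}, is defined over $\Q(i)$ and, by \Cref{thm:EndomorphismsGenus4}, already spans the full geometric endomorphism algebra $(-1,-1\,|\,\Q)$ rationally. To conclude $\QEnd = \Q(i)$, I would show that the nontrivial element $c \in \Gal(\Q(i)\,|\,\Q)$ acts nontrivially on $\Endzero{A^{\al}}$. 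A direct computation suffices: applying $c$ to the formula $\alpha(x,y)=(-x,iy)$ yields the morphism $(x,y) \mapsto (-x,-iy) = \iota \circ \alpha$ on $C$, and pulling back to $\End(A^{\al})$, together with $\iota^{*} = [-1]_{A}$, gives $c(\alpha^{*}) = -\alpha^{*}$. Hence $\alpha^{*} \notin \End(A_{\Q})$, so $\QEnd \neq \Q$ and therefore $\QEnd = \Q(i)$.

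For part (ii), the plan is to combine the upper bound from \Cref{thm:UpperBoundConnectedField} with a direct computation of the character $\omega_{A}$ introduced in \Cref{sect:MoreGenus4}. Applying \Cref{thm:UpperBoundConnectedField} requires $A$ to be fully of Lefschetz type: the Mumford--Tate conjecture for simple abelian fourfolds of Type III is due to Moonen--Zarhin~\cite{MR1324634}, while the equality $\Hg(A)=L(A)$ is anticipated by part (v) of \Cref{thm:MainHigherGenus} (proved later in the paper) or can be read off directly from the explicit Hodge structure of this particular fibre. Granted this, \Cref{thm:UpperBoundConnectedField} gives $[\Qconn:\Q(i)] \leq 2$, so $\Qconn$ is either $\Q(i)$ itself or a quadratic extension of $\Q(i)$.

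To pin down $\Qconn$ precisely, I would analyse $\omega_{A}$ on $\Gal_{\Q(i)}$. By \Cref{lem:unramprimes}, $\omega_{A}$ is unramified outside the primes of bad reduction of $A$, which are only $(1+i)$ and $(3)$ since $\mathrm{disc}(C) = 2^{28}3^{6}$; the candidate quadratic extensions of $\Q(i)$ thus form a finite explicit list, among which $\Q(\zeta_{8}) = \Q(i,\sqrt{i})$ is the extension corresponding to the square class of $i$. For a split prime $\frakp$ of $\Q(i)$ above a rational prime $p \equiv 1 \pmod{4}$ with $p \notin \{2,3\}$, \Cref{lem:isinGOplus} computes $\omega_{A}(\Frob_{\frakp})$ as the sign of the leading coefficient of the `square root' $g_{p}(T)$ of the Frobenius polynomial $c_{p}(T) = g_{p}(T)^{2}$, normalised so that $g_{p}(0)=1$. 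The factorisations at $p=41,73$ obtained in the proof of \Cref{thm:EndomorphismsGenus4} (both primes $\equiv 1 \pmod{8}$, hence split in $\Q(\zeta_{8})$) yield $\omega_{A}(\Frob_{\frakp}) = +1$, consistent with the $\Q(\zeta_{8})$ character, while a further computation at a split prime with $p \equiv 5 \pmod{8}$ (for example $p=5$ or $p=13$) should produce the opposite sign, uniquely identifying $\Qconn = \Q(\zeta_{8})$. The main obstacle is carrying out these Frobenius calculations with the correct sign convention for $g_{p}(T)$; an alternative route, which sidesteps this subtlety, is to exhibit an algebraic cycle on a power of $A$ whose minimal field of definition is $\Q(\zeta_{8})$ and invoke \Cref{prop:LowerBoundConnectedField} to force $\Q(\zeta_{8}) \subseteq \Qconn$, concluding equality via the upper bound.
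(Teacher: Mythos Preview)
Your proposal is correct and follows essentially the same route as the paper: endomorphism field from \Cref{thm:EndomorphismsGenus4} plus the explicit $\Q(i)$-model of $\alpha,\beta$; upper bound from \Cref{thm:UpperBoundConnectedField} (Moonen--Zarhin \cite{MR1324634} already gives both the Mumford--Tate conjecture and $\Hg(A)=L(A)$ for simple Type~III fourfolds, so there is no circularity with \Cref{thm:MainHigherGenus}(v)); and identification of the quadratic extension via ramification control (\Cref{lem:unramprimes}) plus Frobenius values of $\omega$. The only tactical difference is in the elimination step: the paper first uses that $\Qconn$ is Galois over $\Q$ to cut the candidate list to $\Q(\zeta_8)$, $\Q(i,\sqrt{3})$, $\Q(i,\sqrt{3i})$, and then discards the last two because $13$ splits completely in each while the explicit computation $g_{13}(T)=1-2T+2pT^3-p^2T^4$ gives $\omega(\Frob_{13})=-1$; your proposed use of $\omega(\Frob_{41})=\omega(\Frob_{73})=+1$ together with $\omega(\Frob_{13})=-1$ achieves the same end (since $41\equiv 2$ and $73\equiv 1\pmod 3$ already kill the $\sqrt{3}$-component), but you should make that bookkeeping explicit rather than asserting it ``uniquely identifies'' the field.
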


\begin{proof}
By \Cref{thm:EndomorphismsGenus4}, the endomorphism algebra of $A$ is generated by the automorphisms of $C$, which by inspection are minimally defined over $\Q(i)$, so $\Q(\End A)=\Q(i)$.  

By~\eqref{eqn:KendKconn} we have $\Q(i) \subseteq \Q(\varepsilon_A)$ and by 
 \Cref{thm:UpperBoundConnectedField} we have $[\Q(\varepsilon_A):\Q(i)] \leq 2$. We first show that equality holds. Let $p=13$ and let $\mathfrak{p}$ be one of the two primes of $\Q(i)$ lying over $p$.  We compute 
\[ \det(1 - \rho_{A,\ell}(\Frob_\frakp) T) = \det(1-\rho_{A,\ell}(\Frob_p)T) = g_p(T)^2 \]
where 
\[ g_p(T) = 1-2T+2pT^3-p^2T^4 \] 
so recalling \Cref{eqn:omegaell} we get $\omega(\Frob_\frakp)=-1$ and then by \Cref{lem:isinGOplus} we have $\rho_{A,\ell}(\Frob_\frakp) \not\in G_{A,\ell}^0$ (for any $\ell$), so $[\Q(\varepsilon_A):\Q(i)]=2$.  

To determine this quadratic extension, we use Lemma \ref{lem:unramprimes}: $\omega$ can only be ramified at places of bad reduction of $A$, hence it is ramified at most at the places of $\Q(i)$ lying over $2$ and $3$. This means that $\Q(\varepsilon_A)=\Q(i)(\sqrt{\beta})$ with $\beta \in \langle i, 1+i, 3\rangle$, the multiplicative subgroup of $\Q(i)^\times/\Q(i)^{\times 2}$ generated by the units and the (possibly) ramified primes.  The condition that $\Q(\varepsilon_A)$ is Galois over $\Q$ implies that 
$\beta \in \langle i, 3\rangle$, so the field $\Q(\varepsilon_A)$ is one of $L_1=\Q(\sqrt{i})=\Q(\zeta_8)$, $L_2=\Q(i, \sqrt{3})$, $L_3=\Q(i, \sqrt{3i})$.  Finally, since $13$ splits completely in both $L_2$ and $L_3$, for any prime $\mathfrak{P}$ above $13$ in $L_2$ or $L_3$, we would again have $\omega(\Frob_{\mathfrak{P}})=\omega(\Frob_p)=-1$, contradicting that $G_{A_{L_i},\ell}$ is connected; so we must have $\Q(\varepsilon_A)=L_1=\Q(\zeta_8)$.
\end{proof}

\Cref{cor:FieldsOfDefinition} shows that \Cref{thm:UpperBoundConnectedField} is sharp, in general.  

\begin{remark}
The proof of \Cref{thm:EndomorphismsGenus4} ultimately relies on explicit calculations at the primes $41$ and $73$.  Accordingly, for all $a \in \Z$ such that $a \equiv 1 \pmod{41\cdot 73}$, the same conclusion holds for the Jacobian $\calJ_a$, already giving not just one example but infinitely many.  (For \Cref{cor:FieldsOfDefinition} we also use $p=13$, but we also need to restrict the primes of bad reduction.)

These primes might look comparatively large, but in fact they are among the smallest choices that are compatible with what we now know about $A$: we need primes that are totally split in $\Q(\varepsilon_A)=\Q(\zeta_8)$ (congruent to $1$ modulo $8$, the smallest are $17,41,73$).  For \Cref{cor:FieldsOfDefinition}, we need primes that split in $\Q(\End A)=\Q(i)$ but not $\Q(\varepsilon_A)$ (so congruent to $5$ modulo $8$, the smallest are $5,13$).  
\end{remark}

Before moving on, we pause briefly to obtain a similar result for $g=6$.  

\begin{proposition}\label{prop:Genus6}
For $g=6$ and $a=(1/2,0)$, we have $(\End \calJ_a^{\al})_\Q \simeq (-1,-1\,|\,\Q)$ and $\Q(\End \calJ_a)=\Q(i)$.  
\end{proposition}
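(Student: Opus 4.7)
The plan is to mirror closely the proof of \Cref{thm:EndomorphismsGenus4} and the first part of \Cref{cor:FieldsOfDefinition}, with $g=6$ in place of $g=4$. By construction of the family (see \cref{eq: fundamental automorphisms} and \cref{lemma: endomorphisms generic fiber}), the specialisation $\calA_a$ carries the action of the Lipschitz order $\calO \subset \HH$ defined over $\Q(i)$, so we have an inclusion $\HH \hookrightarrow (\End \calA_a^{\al})_\Q$, defined over $\Q(i)$. The task is therefore to prove the reverse inclusion and then identify the minimal field of definition.

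First, I would specialise explicitly: for $(a_1,a_2)=(1/2,0)$ the defining polynomial becomes
\[
f(x) = x(x^4-1)\bigl(x^8 + \tfrac12 x^6 + \tfrac12 x^2 + 1\bigr),
\]
whose discriminant is a nonzero rational number supported on a small set of primes, so $\calA_a$ is an abelian sixfold over $\Q$ with good reduction outside that finite set. I would then pick a small prime $p$ of good reduction, count points on $\calC_a$ over $\F_p$ and $\F_{p^k}$ for a few $k$, and read off $c_p(T)=\det(1-\Frob_p T \mid H^1_{\et})$. Following the procedure of Costa--Mascot--Sijsling--Voight~\cite[\S 7]{costa-mascot-sijsling-voight-18} as recapitulated in the proof of \Cref{thm:EndomorphismsGenus4}, I would compute $c_p^{\otimes 2}(T)$, extract the cyclotomic factors, and thereby compute $\dim_\Q \End(\calA_{a,\F_p})_\Q$ and the degree of the endomorphism field of $\calA_{a,\F_p}$. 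By Honda--Tate theory, one hopes the situation is as clean as in genus $4$: namely, $c_p(T) = g_p(T)^2$ for an irreducible $g_p$ of degree $6$, so $\calA_{a,\F_p}$ is isogenous over $\F_p$ to the square of a geometrically simple abelian threefold with commutative endomorphism algebra, giving $\eta(\calA_{a,\F_p^{\al}})=2^2\cdot 6 = 24$.

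Second, I would apply the lower bound coming from the embedded quaternion order: since $\HH$ acts on $\calA_a^{\al}$, we have $\eta(\calA_a^{\al}) \geq 2\cdot 1^2 \cdot 6 = 12$, and hence the inequality $\eta(\calA_{a,\F_p^{\al}}) \leq 2\,\eta(\calA_a^{\al})$ of \cite[(7.3.18)]{costa-mascot-sijsling-voight-18} becomes an equality. The dichotomy \cite[Corollary 7.3.19]{costa-mascot-sijsling-voight-18} then forces $\calA_a^{\al}$ to be either geometrically simple with endomorphism algebra a quaternion algebra over $\Q$, or isogenous over $\Qbar$ to the square of a geometrically simple abelian threefold with commutative (hence field) endomorphism algebra of appropriate degree. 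To rule out the second possibility I would compute $c_q(T)$ at a second prime $q$ of good reduction and determine the centre of the geometric endomorphism algebra by intersecting the CM fields $\Q[T]/(g_p(T))$ and $\Q[T]/(g_q(T))$, as in the proof of \Cref{thm:EndomorphismsGenus4}; if this intersection is $\Q$, the centre is $\Q$, which is incompatible with $\M_2(F)$ for any nontrivial field $F$, leaving only $\End(\calA_a^{\al})_\Q \simeq \HH$.

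Finally, for the field of definition: the generators $\alpha,\beta$ of the quaternionic action are defined over $\Q(i)$, giving $\Q(\End \calA_a) \subseteq \Q(i)$; conversely, $\Q(\End \calA_a) \neq \Q$ because $\HH$ is a division algebra and cannot be embedded in $\End(B)_\Q$ for any abelian variety $B$ over $\Q$ (the action of complex conjugation on $H_1(B(\mathbb{C}),\Q)$ would split the action), so $\Q(\End \calA_a)=\Q(i)$. The main obstacle is the usual one with this method: the explicit Frobenius computations at the chosen primes $p,q$ must produce characteristic polynomials of the expected form (irreducible $g_p$ of degree $6$ with squared appearance, and compatible centres), and finding such primes may require trying several candidates---but since totally split primes of $\Q(\zeta_8)$ are plentiful and the family is generic, I expect this to go through in the same style as genus $4$.
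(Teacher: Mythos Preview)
Your approach is correct and essentially identical to the paper's: the paper carries out exactly the computation you describe, using the primes $p=17$ and $p=41$, finding $c_p(T)=g_p(T)^2$ with $g_p$ irreducible of degree $6$ in each case, factorisations as in \eqref{eqn:cp2t}, and no common subfield of the two sextic fields, which pins down the centre as $\Q$ and forces $\End(\calA_a^{\al})_\Q \simeq \HH$.

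One small correction to your final paragraph: the claim that $\HH$ cannot embed in $\End(B)_\Q$ for an abelian variety $B$ over $\Q$ is false as stated (Type III abelian varieties over $\Q$ do exist, and indeed your $\calA_a$ is one). The correct argument, which the paper uses in \Cref{cor:FieldsOfDefinition}, is simpler: once you know $\End(\calA_a^{\al})_\Q = \HH$ \emph{exactly}, the algebra is generated by $\alpha^*,\beta^*$, and these specific automorphisms are visibly not defined over $\Q$ (the formulas in \cref{eq: fundamental automorphisms} involve $i$; complex conjugation sends $\alpha$ to $\alpha^{-1}$), so $\Q(\End \calA_a)=\Q(i)$.
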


\begin{proof}
We perform entirely analogous computations: in this case for $p=17$ we get
\[ g_{17}(T) = 1-2T-13T^2+44T^3-13pT^4-2p^2T^5+p^3T^6 \]
and for $p=41$ we get
\[ g_{41}(T) = 1-14T+91T^2-540T^3+91pT^4-14p^2T^5+p^3T^6 \]
both with factorizations like \eqref{eqn:cp2t}, and no common subfield.
\end{proof}

\subsection{Homology}\label{sec:HomologyBasis}

In this section, we describe the integral homology of $C$ over $\C$.  Let
\[
S=\{
0, 1, \zeta_6, i, \zeta_3, -1, -\zeta_6, -i, -\zeta_3, \infty
\} \subseteq \mathbb{P}^1(\mathbb{C}).
\]
The coordinate function $x \colon C_\C \to \mathbb{P}^1(\mathbb{C})$ realises $C_\C$ as a $2$-to-$1$ cover of $\mathbb{P}^1(\mathbb{C})$, branched over the points in $S$.
We consider paths $\delta_0, \ldots, \delta_4$ in the complex plane $\mathbb{C}$, thought of as the $x$-plane, as described in~\Cref{fig:Homology}. 

  \begin{minipage}{\linewidth}
      \centering
      \begin{minipage}{0.45\linewidth}
\begin{figure}[H]
\centering
\begin{tikzpicture}[scale=3.2,cap=round,>=latex,
        hatch distance/.store in=\hatchdistance,
        hatch distance=10pt,
        hatch thickness/.store in=\hatchthickness,
        hatch thickness=2pt
    ]
    \makeatletter
    \pgfdeclarepatternformonly[\hatchdistance,\hatchthickness]{flexible hatch}
    {\pgfqpoint{0pt}{0pt}}
    {\pgfqpoint{\hatchdistance}{\hatchdistance}}
    {\pgfpoint{\hatchdistance-1pt}{\hatchdistance-1pt}}%
    {
        \pgfsetcolor{\tikz@pattern@color}
        \pgfsetlinewidth{\hatchthickness}
        \pgfpathmoveto{\pgfqpoint{0pt}{0pt}}
        \pgfpathlineto{\pgfqpoint{\hatchdistance}{\hatchdistance}}
        \pgfusepath{stroke}
    } 
 
\begin{scope}
\clip (0,0) to (1cm,0cm) to (30:1.2cm) to (60:1cm) to (0,0);
\draw[pattern=flexible hatch, pattern color=red!25, hatch distance=10pt] (0,0) circle(1cm);
\end{scope}

\draw (30:0.6cm) node {\contour{white}{$A_1$}};
\draw (75:0.6cm) node {$A_2$};
\draw (105:0.6cm) node {$A_3$};
\draw (150:0.6cm) node {$A_4$};

        \draw[thick] (0cm,0cm) circle(1cm);

        \foreach \x in {0,60,90,120,180,240,270,300,360} {
                \draw[black] (0cm,0cm) -- (\x:1cm);
                \filldraw[black] (\x:1cm) circle(0.4pt);
        }

        \foreach \x/\xtext in {
            60/\zeta_6,
            90/i,
            120/\zeta_3,
            180/-1,
            240/-\zeta_6,
            270/-i,
            300/-\zeta_3,
            360/1}
                \draw (\x:1.1cm) node {$\xtext$};

\node[regular polygon, regular polygon sides=3, inner sep=1.25pt, fill=black, shape border rotate=30, label=right:$\delta_1$] at (30:1cm) {};

\node[regular polygon, regular polygon sides=3, inner sep=1.25pt, fill=black, shape border rotate=75, label=above:$\delta_2$] at (75:1cm) {};

\node[regular polygon, regular polygon sides=3, inner sep=1.25pt, fill=black, shape border rotate=105, label=above:$\delta_3$] at (105:1cm) {};

\node[regular polygon, regular polygon sides=3, inner sep=1.25pt, fill=black, shape border rotate=150, label=left:$\delta_4$] at (150:1cm) {};

\node[regular polygon, regular polygon sides=3, inner sep=1.25pt, fill=black, shape border rotate=270, label=below:$\delta_0$] at (0:0.5cm) {};

\node[regular polygon, regular polygon sides=3, inner sep=1.25pt, fill=black, shape border rotate=90, label=below:$\alpha\delta_0$] at (180:0.5cm) {};

\node[regular polygon, regular polygon sides=3, inner sep=1.25pt, fill=black, shape border rotate=210, label=left:$\alpha\delta_1$] at (210:1cm) {};

\node[regular polygon, regular polygon sides=3, inner sep=1.25pt, fill=black, shape border rotate=255, label=below:$\alpha\delta_2$] at (255:1cm) {};

\node[regular polygon, regular polygon sides=3, inner sep=1.25pt, fill=black, shape border rotate=285, label=below:$\alpha\delta_3$] at (285:1cm) {};

\node[regular polygon, regular polygon sides=3, inner sep=1.25pt, fill=black, shape border rotate=330, label=right:$\alpha\delta_4$] at (330:1cm) {};

\draw (60:0.5cm) node {\contour{white}{$\varphi_{\zeta_6}$}};
\draw (90:0.5cm) node {\contour{white}{$\varphi_{i}$}};
\draw (120:0.5cm) node {\contour{white}{$\varphi_{\zeta_3}$}};

\node[regular polygon, regular polygon sides=3, inner sep=1.25pt, fill=black, shape border rotate=330] at (60:0.7cm) {};
\node[regular polygon, regular polygon sides=3, inner sep=1.25pt, fill=black, shape border rotate=0] at (90:0.7cm) {};
\node[regular polygon, regular polygon sides=3, inner sep=1.25pt, fill=black, shape border rotate=30] at (120:0.7cm) {};
\end{tikzpicture}
\caption{\label{fig:Homology} Image of the homology basis on the $x$-plane}
\end{figure}
      \end{minipage}
      \hspace{0.05\linewidth}
      \begin{minipage}{0.45\linewidth}
\begin{figure}[H]
\centering
\begin{tikzpicture}[scale=0.5]
\foreach \i in {0,...,3}
\draw[red, very thick] (3*\i,0) ellipse (1 and 0.25);	
\draw[thick] (12,0) ellipse (1 and 0.25);	
\begin{scope}
\clip (-1,0.3) rectangle (13,0.6);
\foreach \i in {0,...,4}
\draw[fill=white, dashed, thick] (3*\i,0.3) ellipse (1 and 0.25);
\end{scope}
\begin{scope}
\clip (-1,0.3) rectangle (13,0);
\foreach \i in {0,...,4}
\draw[fill=white, thick] (3*\i,0.3) ellipse (1 and 0.25);
\end{scope}
\foreach \i in {0,...,3}{
\draw[bend left=90, looseness=3, blue, very thick] (3*\i+1,0.3) to (3*\i+2,0.3);
\draw[bend right=90, looseness=3, blue, very thick] (3*\i+1,0) to (3*\i+2,0);}
\draw[in=90, out=90, looseness=.9, thick] (-1,0.3) to (13,0.3);
\draw[in=-90, out=-90, looseness=.9, thick] (-1,0) to (13,0);

\draw[red] node at (0,-0.7) {$\gamma_3$};
\draw[red] node at (3,-0.7) {$\gamma_1$};
\draw[red] node at (6,-0.7) {$\alpha\gamma_0$};
\draw[red] node at (9,-0.7) {$\alpha\gamma_2$};

\draw[blue] node at (1.5,1.55) {$\gamma_2$};
\draw[blue] node at (4.5,1.55) {$\gamma_0$};
\draw[blue] node at (7.5,1.55) {$\alpha\gamma_1$};
\draw[blue] node at (10.5,1.55) {$\alpha\gamma_3$};

\end{tikzpicture}
\caption{Homology basis for $X$}\label{fig:HomologyX}
          \end{figure}
      \end{minipage}
  \end{minipage}

The inverse image in $C_\C$ of each $\delta_i$ is homeomorphic to the union of two closed intervals with common endpoints, so it is a copy of $\mathbb{S}^1$. We parametrize each of these copies of $\mathbb{S}^1$ with functions $\gamma_i : [0,1]\to X$. To fix the orientation, we require that $y \circ \gamma_i(t)$ has positive imaginary part for $t \in [0,1]$ sufficiently close to $0$; this uniquely determines the paths $\gamma_i$.
Notice that $\iota \circ \gamma_i$ is the path $-\gamma_i$, that is, $\gamma_i$ with the opposite orientation. 
Recall the automorphism $\alpha$ from~\cref{eq: fundamental automorphisms}.
By standard results on the homology of real orientable surfaces, the eight paths
\begin{equation}\label{eq:basis}
  \gamma_0, \alpha \gamma_0, \gamma_1, \alpha \gamma_1, \gamma_2, \alpha \gamma_2, \gamma_3, \alpha \gamma_3  
\end{equation}
form a basis of the integral homology $H_1(C_\C,\mathbb{Z})$, see also~\Cref{fig:HomologyX}.
The action of $\alpha$ in terms of this basis is immediate to write down. Together with a straightforward calculation for $\beta$, this yields the matrices representing the action of $\alpha, \beta$ on homology:
\begin{equation}\label{eq:alpha*beta*}
\alpha_* = \begin{pmatrix}
0 & -1 \\
1 & 0 \\
&& 0 & -1 \\
&& 1 & 0 \\
&&&& 0 & -1 \\
&&&& 1 & 0 \\
&&&&&& 0 & -1 \\
&&&&&& 1 & 0 \\
\end{pmatrix}, \quad \beta_* = \begin{pmatrix}
 1 & 0 & -1 & -1 &  \\
 0 & -1 & -1 & 1 & \\
 1 & 1 & 0 & -1 & \\
 1 & -1 & -1 & 0 & \\
 0 & 0 & 0 & -1 & 0 & 0 & 0 & 1 \\
 0 & 0 & -1 & 0 & 0 & 0 & 1 & 0 \\
 1 & 1 & 0 & -1 & 0 & -1 & 0 & 0 \\
 1 & -1 & -1 & 0 & -1 & 0 & 0 & 0 \\
\end{pmatrix}.
\end{equation}
Finally, the Gram matrix of the intersection form is given by
\begin{equation}\label{eq:IntersectionForm}
\begin{pmatrix}
 0 & -1 & -1 & 0 & 0 & 0 & 0 & 0 \\
 1 & 0 & 0 & -1 & 0 & 0 & 0 & 0 \\
 1 & 0 & 0 & 0 & -1 & 0 & 0 & 0 \\
 0 & 1 & 0 & 0 & 0 & -1 & 0 & 0 \\
 0 & 0 & 1 & 0 & 0 & 0 & -1 & 0 \\
 0 & 0 & 0 & 1 & 0 & 0 & 0 & -1 \\
 0 & 0 & 0 & 0 & 1 & 0 & 0 & 0 \\
 0 & 0 & 0 & 0 & 0 & 1 & 0 & 0 \\
\end{pmatrix};
\end{equation}
apart from the signs, the structure of this matrix can easily been gleaned from~\Cref{fig:HomologyX}.

\subsection{Complex lattice}
\label{sec: complex structure}

We now discuss the period lattice $\Lambda$ attached to $C$.
Let $c_1, \ldots, c_8$ be the (ordered) basis of the integral homology of $C$ in~\cref{eq:basis}.
The period matrix of the abelian variety $A=\Jac(C)$ (over $\C$) with respect to this homology basis and to the basis $\{ x^{\ell-1}\, \mathrm{d}x/y \}_{\ell=1,\ldots,4}$ for the regular differentials is by definition the $4 \times 8$ matrix 
\[
\Pi=\left( \Pi_{\ell j} \right)_{\substack{\ell=1,\ldots,4 \\ j=1,\ldots,8}} = \int_{c_j} \frac{x^{\ell-1}\,\textrm{d}x}{y}.
\]
The columns of $\Pi$ span a lattice $\Lambda \simeq \Z^8$ in $\mathbb{C}^4$, and we have an isomorphism $\mathbb{C}^4/\Lambda \simeq A(\C)$ of complex abelian varieties. Clearly the lattice $\Lambda$ carries an action of the Lipschitz order $\mathcal{O}$: in fact, we will show that equality holds (and more) below.

This complex description comes with two representations of the endomorphism ring.  First is the \emph{analytic} (or \emph{tangent}) \emph{representation} $M(\varphi)$ obtained by letting $\varphi \in \End A^{\al}$ act on the above basis for the space of regular differentials.  With respect to the natural embedding $\Aut C^{\al} \hookrightarrow \End A^{\al}$,
\begin{equation}\label{eqn:mimj}
M(\alpha) \colonequals i \begin{pmatrix}
1 \\
& -1 \\
&& 1 \\
&&& -1
\end{pmatrix}
\text{ and }
M(\beta) \colonequals i \begin{pmatrix}
&&& 1 \\
&& 1 \\
& 1 \\
1
\end{pmatrix}.
\end{equation}
Along the way, since
\begin{equation} \label{eqn:alphastar}
\alpha^*\left(\frac{x^{\ell-1}\,\textrm{d}x}{y}\right) = (-1)^{\ell+1} \frac{x^{\ell-1}\,\textrm{d}x}{y} 
\end{equation}
we also conclude that $\alpha^*$ in \eqref{eq:alpha*beta*} also provides the complex structure on $\Lambda$, i.e., the action of $\alpha^*$ on $\Lambda \subset \C^4$ corresponds to multiplication by $i$.  

The \emph{rational representation} of an endomorphism $\varphi \in \End(J)$ is the unique matrix $R(\varphi) \in \operatorname{Mat}_{8 \times 8}(\mathbb{Z})$ such that
\[
M(\varphi) \Pi = \Pi R(\varphi),
\]
Given the definition of the period matrix, it is easy to see that for an endomorphism of $A$ induced by an automorphism $\varphi$ of $X$ the matrix $R(\varphi)$ coincides with the matrix of $\varphi_*$ acting on $H_1(X, \mathbb{Z})$ (with respect to the homology basis used to construct the period matrix). In particular, since we are using the basis of integral homology given in~\cref{eq:basis}, the matrices $R(\alpha)$ and $R(\beta)$ coincide with those given in~\cref{eq:alpha*beta*}.

\begin{proposition} \label{prop:aQendnotfree}
We have $\End A^{\al} = \mathcal{O}$ and the $\mathcal{O}$-module $\Lambda$ is not free.
\end{proposition}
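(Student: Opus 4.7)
My plan is to handle the two assertions of the proposition separately.

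For the equality $\End A^{\al} = \mathcal{O}$: by \Cref{thm:EndomorphismsGenus4} we have $(\End A^{\al})_\Q \simeq \quat{-1,-1}{\Q}$, and by construction $\mathcal{O} \subseteq \End A^{\al}$. Since $\mathcal{O}$ is maximal at every odd prime and has index $2$ in the unique maximal order of $\quat{-1,-1}{\Q_2}$, the only orders of $\quat{-1,-1}{\Q}$ containing $\mathcal{O}$ are $\mathcal{O}$ itself and the Hurwitz maximal order $\mathcal{O}_{\textup{Hur}} = \mathcal{O} + \Z\omega$, where $\omega = \tfrac12(1+i+j+ij)$. I will rule out $\mathcal{O}_{\textup{Hur}}$ by showing $\omega$ does not preserve $\Lambda$: the rational representation $R(\omega) = \tfrac12\bigl(I + R(\alpha) + R(\beta) + R(\alpha)R(\beta)\bigr)$ must be integer-valued for $\omega$ to be an actual endomorphism, but a direct inspection (e.g., the $(5,5)$-entry equals $1/2$) using \cref{eq:alpha*beta*} shows otherwise.

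For the non-freeness of $\Lambda$ as an $\mathcal{O}$-module: since $\Lambda$ has $\Z$-rank $8$ and $\mathcal{O}$ has $\Z$-rank $4$, a free $\Lambda$ would be isomorphic to $\mathcal{O}^2$. The plan is to show by Nakayama's lemma that $\Lambda$ requires at least three $\mathcal{O}$-generators. At every odd prime $p$ the order $\mathcal{O}_p$ is isomorphic to $\M_2(\Z_p)$, so by Morita equivalence $\Lambda_p$ is automatically free of rank $2$; any obstruction must live at $p=2$.

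At $p=2$, the order $\mathcal{O}_2 = \mathcal{O} \otimes \Z_2$ is a non-maximal order (of index $2$) in the maximal order of the quaternion division algebra $\quat{-1,-1}{\Q_2}$. It is a local ring with residue field $\F_2$, whose unique maximal two-sided ideal $\mathfrak{m}$ is generated by $1+i$ and $1+j$ (and automatically contains $2 = (1+i)(1-i)$). By Nakayama's lemma, the minimal number of $\mathcal{O}_2$-generators of $\Lambda_2$ equals
\[
\dim_{\F_2} \Lambda \big/ \bigl(2\Lambda + (I + R(\alpha))\Lambda + (I + R(\beta))\Lambda\bigr).
\]
Reducing the matrices in \cref{eq:alpha*beta*} modulo $2$, the image of $I + R(\alpha)$ is the $4$-dimensional $\F_2$-subspace spanned by $e_{2k-1} + e_{2k}$ for $k = 1, \dots, 4$, and in the resulting $4$-dimensional quotient every nonzero column of $I + R(\beta)$ collapses to a common single vector. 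Hence $\Lambda/\mathfrak{m}\Lambda \simeq \F_2^3$, so $\Lambda$ requires at least three $\mathcal{O}$-generators and cannot be free.

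The main subtlety I expect is identifying the correct invariant at $p=2$: because $\mathcal{O}_2$ is non-maximal, classical tools such as Morita equivalence and Jacobinski-style cancellation do not apply, and a direct Nakayama computation with the two-sided maximal ideal $\mathfrak{m}$ is needed. Once this is set up, the remainder is routine linear algebra over $\F_2$ using the explicit matrices in \cref{eq:alpha*beta*}.
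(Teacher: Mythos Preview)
Your proposal is correct and follows essentially the same argument as the paper: rule out the Hurwitz order by checking that $R(\omega)$ has a non-integral entry (the paper does not specify which entry, but your $(5,5)$ check is valid), and establish non-freeness by computing $\dim_{\F_2}\Lambda/\mathfrak{m}\Lambda = 3$ via Nakayama at $p=2$, where $\mathfrak{m}$ is the Jacobson radical of $\mathcal{O}_2$. The paper lists three generators $1+i,1+j,1+k$ for $\mathfrak{m}$ while you use only $1+i,1+j$ (which suffice since $1+k=(1+i)+(1+j)-(1+j)(1+i)$), and the paper phrases the contradiction as ``free of rank $3$ would give $\Z_2^{12}$'' rather than ``free would force rank $2$'', but these are cosmetic differences.
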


\begin{proof}
For the first statement (generalized in \Cref{lemma:FromEndoAlgebraToEndoRing}), recall that the unique maximal order containing the Lipschitz order is the Hurwitz order, generated by the element $\omega=(-1+i+j+ij)/2$ over $\mathcal{O}$.  But from the rational representation \Cref{eq:alpha*beta*}, we see that $R(\omega) \not \in \M_8(\Z)$, a contradiction.  

For the second statement, in fact we will show that $\Lambda_2 \colonequals \Lambda \otimes_{\Z} \Z_2 \simeq \Z_2^8$ is not free over $\calO_2 \simeq \Z_2^4$, so $\Lambda$ is not even locally free!  The Jacobson radical 
(see Voight \cite[\S 11.1, \S\S 20.4--20.5, \S 24.3]{Voight}) 
of $\calO_2$ is $J_2 \colonequals \operatorname{rad}(\calO_2)=(1+i,1+j,1+k)$. By Nakayama's lemma, it suffices to show that $\Lambda_2/J_2 \Lambda_2$ is not free over $\calO_2/J_2 \simeq \F_2$.  Since $J_2/2\calO_2$ is spanned by $1+i,1+j,1+k$ as an $\F_2$-vector space, $J_2 \Lambda_2/2 \Lambda_2$ is spanned over $\F_2$ by the columns of the matrices $1+R(i),1+R(j),1+R(k)$.  
Given the explicit form of the matrices $R(i)$ and $R(j)$ given in~\cref{eq:alpha*beta*}, we compute that $\dim_{\F_2}(J_2\Lambda_2/2\Lambda_2)=5$ so 
\[ \dim_{\F_2}(\Lambda_2/J_2\Lambda_2)=\dim_{\F_2}(\Lambda_2/2\Lambda_2) - \dim_{\F_2}(J_2\Lambda_2/2\Lambda_2)=8-5=3. \]  
So if $\Lambda_2$ was free over $\calO_2$, it would be free of rank $3$, so $\Lambda_2 \simeq \calO_2^3 \simeq \Z_2^{12}$, a contradiction. 
\end{proof}


\begin{remark} \label{rmk:lackoffree}
The lack of freeness of $\Lambda$ may seem undesirable, but we cannot have it all: if $\Lambda$ is free of rank $2$ over $\mathcal{O}$ and equipped with a natural principal polarization, then necessarily $\C^4/\Lambda$ is isomorphic to the square of an abelian surface \cite[Example 9.5.6]{MR2062673}.  For more, see Birkenhake--Lange \cite[Exercise 9.10(2)]{MR2062673} (and more generally Shimura \cite{Shimura-fam}) for a criterion which determines when an abelian variety of Type III is geometrically simple or not.
\end{remark}

\subsection{Conclusions about the family} \label{sec:familyg4}

Although we will pursue this more generally in the next sections, we pause to conclude with a few remarks about our family in genus $g=4$.  We have
\begin{equation}  \label{eqn:x9x7x3}
\calC_a^{(4)} \colon y^2 = f(x) = x(x^4-1)(x^4+2ax^2+1).
\end{equation}
The discriminant of $f(x)$ is $\Delta=-2^{12}(a-1)^6(a+1)^6$, so we obtain a nice family of genus $4$ curves over $\A^1 \smallsetminus \{\pm 1\}$.  

Some of the results we proved for a specialization can be extended as follows.
\begin{itemize}
\item Since endomorphism rings inject under specialization, from~\Cref{thm:EndomorphismsGenus4} we conclude that the generic geometric endomorphism algebra of $\calJ^{(4)}$ is $(-1,-1\,|\,\Q)$ and accordingly the generic endomorphism field is $\Q(i)$.  
\item Since the rational representations~\cref{eq:alpha*beta*} and intersection form~\Cref{eq:IntersectionForm} are a discrete invariant, they are constant over the continuous parameter $a \in \C \smallsetminus \{\pm 1\}$.  Since we kept a fixed basis of holomorphic differentials, the analytic representations \Cref{eqn:mimj} are also constant.
\item The conclusion of~\Cref{prop:aQendnotfree} then holds for all $a \in \C$ such that $(\End \calJ_a^{\al})_\Q=(-1,-1\,|\,\Q)$, in particular it holds generically.
\end{itemize}

Another interesting feature (not used in the sequel) of the family in genus $g=4$ is the following.

\begin{lemma} \label{lem:puregood}
Let $K$ be a number field and let $a \in K \smallsetminus \{\pm 1\}$.  Then the abelian variety $\calA_{a}$ over $K$ has everywhere potentially good reduction away from the primes of $\Z_K$ above $2$.
\end{lemma}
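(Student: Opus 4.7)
The plan is to reduce the assertion to a local computation at each prime of bad reduction and to exhibit a smooth proper model after a suitable ramified base change. Since the discriminant of $f(x) = x(x^4-1)(x^4+2ax^2+1)$ is $-2^{12}(a^2-1)^6$, the curve $\calC_a$ already has good reduction at every prime $\mathfrak p \nmid 2(a^2-1)$, and so does $\calA_a$. It therefore suffices to treat primes $\mathfrak p \nmid 2$ with $a \equiv \epsilon \pmod{\mathfrak p}$ for some $\epsilon \in \{\pm 1\}$. The change of variable $x \mapsto \sqrt{-1}\,x$ converts $f_a(x)$ into $-f_{-a}(x)$, exhibiting $\calC_a$ as a quadratic twist of $\calC_{-a}$; since quadratic twists do not affect potentially good reduction, one may assume $\epsilon = +1$.

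So suppose $a-1 = \pi u$ with $\pi$ a uniformiser of $K_\mathfrak p$ and $u \in \mathcal O_{K_\mathfrak p}^\times$. Over any extension containing $\sqrt{-1}$, the four roots of $x^4 + 2ax^2 + 1 = (x^2+1)^2 + 2(a-1)x^2$ cluster around $\pm\sqrt{-1}$ at $\mathfrak p$-adic distance comparable to $|\pi|^{1/2}$, so I will work over the totally ramified extension $L = K_{\mathfrak p}(\sqrt{-1}, \varpi)$ with $\varpi^4 = a-1$ and analyse the naive singularities via the change of coordinates
\[
x = \sqrt{-1} + \varpi^2 z, \qquad y = \varpi^3 w
\]
near $x = \sqrt{-1}$. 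A direct expansion gives $f(x) = -8\,\varpi^6\, z(2z^2+1) + O(\varpi^7)$, so the defining equation becomes
\[
w^2 \equiv -8\,z(2z^2 + 1) \pmod{\varpi}.
\]
The right-hand side has three distinct simple roots in the residue field of $L$, so this is a smooth affine chart resolving the naive singularity at $x = \sqrt{-1}$. The symmetric substitution handles $x = -\sqrt{-1}$, and together with the evident smoothness of the hyperelliptic model away from $\{\pm \sqrt{-1}\}$, these charts assemble into a smooth proper model of $\calC_a$ over $\mathcal O_L$ whose relative Jacobian is an abelian scheme extending $\calA_a$. Hence $\calA_a$ has good reduction over $L$.

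The main obstacle is the patching: verifying that the local charts above glue into a genuinely smooth proper model of $\calC_a$ over $\mathcal O_L$, which comes down to a careful (but routine) blow-up computation at the two singular points of the naive reduction. A conceptual cross-check comes from the quaternionic multiplication: once $\sqrt{-1} \in L$, the Lipschitz order $\mathcal O \subseteq \End(\calA_a/L)$ (cf.~\Cref{lemma: endomorphisms generic fiber}) extends by the N\'eron mapping property to an action on any semistable N\'eron model. Consequently the character group of the toric part $T$ of the special fibre becomes a module over $\mathcal O \otimes \mathbb Q = \HH$, a $4$-dimensional division algebra over $\mathbb Q$, so $\dim T$ is a multiple of $4$; hence $\dim T \in \{0, 4\}$. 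The explicit model above settles that the possibility $\dim T = 4$ does not occur, completing the argument.
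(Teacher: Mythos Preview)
Your local computation near $x=\sqrt{-1}$ is correct, but the conclusion you draw from it is not: the charts do \emph{not} assemble into a smooth proper model of $\calC_a$ over $\mathcal{O}_L$. On the special fibre, the ``main'' chart reduces to $y^2=x(x^2-1)(x^2+1)^3$, whose normalisation is a genus-$2$ curve, while each blow-up at $x=\pm\sqrt{-1}$ contributes a \emph{new} genus-$1$ component (your $w^2=-8z(2z^2+1)$). What you have produced is the semistable (in fact stable) model: a genus-$2$ component with two elliptic tails, meeting in nodes. This is never smooth over $\mathcal{O}_L$; consequently your sentence ``whose relative Jacobian is an abelian scheme'' does not follow from smoothness of the model, and the argument as written has a gap. (There is a secondary gap: you assume $v_{\mathfrak p}(a-1)=1$, whereas the valuation can be arbitrary.)

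The paper's proof uses exactly your ``cross-check'' as the \emph{main} argument: the $\mathcal{O}$-action forces the toric rank $\mu$ of any semistable reduction to lie in $\{0,4\}$, and then one rules out $\mu=4$ (purely toric reduction) directly from the equation. The paper does this in one line by observing that modulo any $\mathfrak p\nmid 2$ the set $\{\pm b,\pm 1/b\}$ can collide with at most two of $\{0,\infty,\pm 1,\pm i\}$, so at least four branch points survive in the reduction and the normalisation of the special fibre has genus $\geq 1$---hence not purely toric. Your blow-up can be repurposed to the same end (it exhibits a genus-$1$ component, so $\mu\leq 3$, hence $\mu=0$), but you would then need to argue via the semistable Jacobian rather than a smooth curve model, handle all valuations of $a\mp 1$, and drop the incorrect smoothness claim. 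The paper's root-counting is both shorter and uniform in $v_{\mathfrak p}(a\mp 1)$.
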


\begin{proof}
We follow the proof of Oort \cite[Proof of Lemma (3.9)]{Oort}.  Let $\frakp$ be a nonzero prime of $\Z_K$ and let $K_\frakp$ be the completion of $K$ at $\frakp$.  Then  there is an extension $L_\frakP \supseteq K_\frakp$ such that $(\calA_a)_{L_\frakP}$ has semistable reduction.  Let $\mu \leq 4$ be the dimension of the toric part.  If $\mu=0$, then $\calA_a$ obtains good reduction over $L_\frakP$ as desired; so suppose $\mu>0$.  Then over $L_\frakP(i)$ we have an embedding $B=(-1,-1\,|\,\Q) \hookrightarrow \M_\mu(\Q)$.  Since $B$ is a division algebra, we must have $\mu=4$, which is to say $\calA_a$ has purely toric reduction over $L_\frakP$.  We show that this is impossible when $\frakP$ does not lie above $2$, arguing directly with the equation for $\calC_a$: modulo $\frakP$, the subset $\{\pm b, \pm 1/b\}$ reduces to a subset which can only intersect with the set $\{0,\infty,\pm 1, \pm i\}$ in at most two points, so the reduction of this set of roots has at least $4$ distinct elements, so the normalization of the reduction of the curve $\calC_a$ has normalization of genus $g \geq 1$ and hence the Jacobian cannot have purely toric reduction.
\end{proof}

\begin{remark}
Neither the statement nor the proof of \Cref{lem:puregood} generalizes to $g \geq 6$: see \Cref{rmk:oomanybadred}.  

It seems likely that the proof of \Cref{lem:puregood} can be extended to primes above $2$ by using the methods of Fiore--Yelton \cite{fiore2022clusters}, combined with a combinatorial study of the possible cluster pictures of the hyperelliptic curves $\calC_a$ in residue characteristic $2$.  (Knowing that the
toric rank is either $0$ or $4$ helps to reduce the combinatorial complexity.) 
\end{remark}

\begin{remark}
We sketch two other proofs of \Cref{lem:puregood}.

First, a less computational route to \Cref{lem:puregood} (at least for primes not above $2$) may be obtained by looking at the action of the inertia group on the $2$-adic Tate module: by Grothendieck \cite[Theorem 2.4]{Grothendieck SGA 7 I}, if the toric rank is $4$, then there is a free rank-$4$ submodule that is stable under the endomorphism ring, totally isotropic for the Weil pairing, and on which the inertia group acts trivially.  It is possible to verify explicitly that no such submodule exists.

A second conceptual proof, one which also applies to places of residue characteristic $2$, argues via moduli spaces.  In \Cref{sec: a Shimura variety}, we show that the Shimura varieties which parametrizes principally polarized abelian fourfolds with an action of $\mathcal{O}$ compatible with the polarization, with respect to any level structure, are projective: more precisely, they possess a finite-degree correspondence with $\mathbb{P}^1$ (\Cref{thm: from the moduli space to the Neron model}).  These Shimura varieties correspond to the reductive group $G \colonequals \MT(\calA_a)$ for generic $a$.  By work of Borel--Harish-Chandra \cite[Theorem 12.3, Corollary 12.4]{Borel-Harish-Chandra}, the compactness of the associated Shimura varieties implies that the adjoint group $G^{\operatorname{ad}}$ is anisotropic over $\mathbb{Q}$, i.e., $G(\mathbb{Q})$ contains no unipotent element.  (We suspect that these facts could also be read off directly.)  The Mumford--Tate group of $\calA_a$ is contained in $G$, so it is also $\Q$-anisotropic.
The main result of Lee \cite{MoritaConjecture} then shows that $\calA_a$ has potentially good reduction everywhere.
\end{remark}

\section{Endomorphisms and classes}\label{sec: endosclasses}

In this section, we establish results on endomorphisms and algebraic classes on the Jacobians in our family, under certain hypotheses.  In the next section, we will combine these with an inductive argument to prove our main result,~\Cref{thm:MainHigherGenus}.  Throughout, we work with \Cref{setup}: let $K$ be a number field and let $a=(a_1,\dots,a_d) \in \AAp(K)$.

\subsection{Endomorphism ring and endomorphism field}

We get started by showing that properties (i) and (ii) in~\Cref{thm:MainHigherGenus} on the endomorphism ring is implied by the (formally weaker) statement concerning the endomorphism algebra.

\begin{lemma}\label{lemma:FromEndoAlgebraToEndoRing}
Suppose that the Jacobian $\calA_{\aunder}$ of $\calC_\aunder$ has geometric endomorphism \textit{algebra} $(-1,-1\,|\,\Q)$.  Then its geometric endomorphism \textit{ring} is $\End \calA_{\aunder}^{\al} = \mathcal{O}$ and its endomorphism field is $K(i)$.

More precisely, let $\alpha, \beta$ be the automorphisms of $\calC_\aunder$ given in~\cref{eq: fundamental automorphisms}, and let $\alpha^*, \beta^*$ be the corresponding automorphisms of $\calA_{\aunder}$.
Then the inclusion $\mathcal{O} \hookrightarrow \End \calA_{\aunder}^{\al}$ induced by $i,j \mapsto \alpha^*,\beta^*$ is an isomorphism.
\end{lemma}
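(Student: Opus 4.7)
The plan is to show that the Hurwitz generator $\omega = (-1+i+j+ij)/2$ of the quaternion algebra $\HH$ cannot act as an endomorphism of $\calA_\aunder^{\al}$. Since the Hurwitz order $\calO_{\textup{Hur}} = \calO + \Z\omega$ is the unique order in $\HH$ strictly containing the Lipschitz order $\calO$, and the hypothesis forces $\End \calA_\aunder^{\al}$ to be a $\Z$-order in $\HH$ containing the image of $\calO$, the first assertion reduces to showing $\omega \notin \End \calA_\aunder^{\al}$.

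To rule out $\omega$, I would test on the $2$-torsion. If $\omega \in \End \calA_\aunder^{\al}$, then $2\omega = -\mathrm{id} + \alpha^* + \beta^* + \alpha^*\beta^*$ lies in $2\End \calA_\aunder^{\al}$, hence annihilates $\calA_\aunder[2]$; using that $-\mathrm{id}$ and $\mathrm{id}$ agree on $2$-torsion, this is equivalent to $(\mathrm{id} + \alpha^* + \beta^* + \alpha^*\beta^*)\calA_\aunder[2] = 0$. Passing to $\Kbar$ (or lifting $\aunder$ to a point of $V$), I would invoke the description of $\calA_\aunder[2]$: it has an $\F_2$-basis of divisor classes $e_x = [(x,0)-\infty]$ indexed by the roots $x$ of $f$, subject to the single relation $\sum_{x \in R} e_x = 0$, and the action of $\alpha^*, \beta^*$ is given by \cref{eq:ActionOfI,eq:ActionOfJ}. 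Choosing any root $x_0 \notin \{0, \pm 1, \pm i\}$---such a root exists because $|R| = 2g+1 \geq 9$ for $g \geq 4$---a short computation yields
\[
(\mathrm{id} + \alpha^* + \beta^* + \alpha^*\beta^*)(e_{x_0}) = e_{x_0} + e_{-x_0} + e_{1/x_0} + e_{-1/x_0}.
\]
The four subscripts are distinct and form a proper nonempty subset of $R$, so the sum is nonzero modulo the unique relation, giving the desired contradiction and establishing $\End \calA_\aunder^{\al} = \calO$.

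For the endomorphism field, the inclusion $K(\End A) \subseteq K(i)$ is immediate from \cref{eq: fundamental automorphisms}, which defines $\alpha, \beta$ over $K(i)$. For the reverse inclusion, the nontrivial $\sigma \in \Gal(K(i)/K)$ satisfies $\sigma(i) = -i$, so applying $\sigma$ to the formulas for $\alpha, \beta$ yields $\sigma(\alpha) = \iota \circ \alpha$ and $\sigma(\beta) = \iota \circ \beta$, hence $\sigma(\alpha^*) = -\alpha^*$ and $\sigma(\beta^*) = -\beta^*$ (using $\iota^* = -\mathrm{id}$ on the Jacobian). This is a nontrivial action on $\calO$, so $\End A_K$ is the proper subring $\Z + \Z\alpha^*\beta^*$, and $K(\End A) = K(i)$.

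The main potential obstacle is making the $2$-torsion computation uniform across all fibers $\aunder \in \AAp(K)$, but this is handled by simply passing to $\Kbar$, where the structure of $\calA_\aunder[2]$ and the action of the $Q_8$ automorphisms is already explicit from \cref{sect:models}; the combinatorial nonvanishing is then elementary given $|R| \geq 9$.
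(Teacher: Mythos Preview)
Your proof is correct and follows essentially the same approach as the paper's: reduce to excluding the Hurwitz generator $\omega$ via its action on $2$-torsion, then use the explicit description of $\calA_\aunder[2]$ from \cref{eq:ActionOfI}--\cref{eq:ActionOfJ}. You make the verification explicit by picking a specific class $e_{x_0}$ with $x_0 \notin \{0,\pm 1,\pm i\}$ and checking nonvanishing, and you spell out the Galois conjugation $\sigma(\alpha^*)=-\alpha^*$ for the endomorphism-field claim, whereas the paper leaves both as direct checks; but the argument is the same.
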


\begin{proof}
We start with $\mathcal{O} \hookrightarrow \End \calA_{\aunder}^{\al} \subset (\End \calA_{\aunder}^{\al})_{\Q} = (-1,-1\,|\,\Q)$, and for simplicity we identify $i=\alpha^*$, $j=\beta^*$, and $k=ij=\alpha^* \beta^*$ with their images.  The endomorphism field is $K(i)$ since this is the minimal field of definition of $i,j$.

The Lipschitz order $\mathcal{O}$ is contained in a unique maximal order of index $2$, the Hurwitz order $\Z \oplus \Z i \oplus \Z j \oplus \Z \omega$, where $\omega=(-1+i+j+k)/2$.  So to prove the lemma, we need only show that $\omega \not\in \End \calJ_a^{\al}$; that is to say, that the element $2\omega=-1+i+j+k$ is not divisible by $2$ in $\End \calJ_a^{\al}$, which is equivalent to it acting nontrivially on the $2$-torsion $A^{\al}[2]$.  To this end, we recall~\cref{subsect:BaseChanges}, in particular the calculation of the action of $i,j$ on $A^{\al}[2]$ \cref{eq:ActionOfI}--\cref{eq:ActionOfJ}.  We then verify directly that $2\omega$ acts nontrivially, as desired.
\end{proof}

\subsection{Connected monodromy field: lower bounds}\label{sect:SatoTateField}

In this section we prove the following lower bound on the connected monodromy field.

\begin{theorem}\label{thm:LowerBoundSatoTate}
Suppose $i \in K$ and  $a_d \neq 0$.  Then 
$H^g(\calA_\aunder, \Q_\ell(g/2))$ contains algebraic classes whose field of definition is $K(\zeta_8)$. 
In particular, the connected monodromy field of $\calA_\aunder$ contains $K(\zeta_8)$.
\end{theorem}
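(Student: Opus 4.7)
The plan is to construct algebraic Weil-type classes using the quadratic subfield $\Q(\sqrt{-2}) \subset \quat{-1,-1}{\Q}$ of the quaternionic endomorphism algebra. The identity $K(\sqrt{-2}) = K(\zeta_8)$, valid because $i \in K$ (so $\zeta_8 = (1+i)/\sqrt{2}$), links the quaternion structure to the target field.

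Set $\delta \colonequals \alpha^* + \beta^* \in \End(\calA_\aunder)$, with $\alpha, \beta$ the automorphisms from~\cref{eq: fundamental automorphisms}. Since $i \in K$, both $\alpha$ and $\beta$ are defined over $K$, hence so is $\delta$; and the quaternion relations $\alpha^2 = \beta^2 = -1$, $\alpha\beta = -\beta\alpha$ give $\delta^2 = -2$. After extending scalars so that $\sqrt{-2}$ is available, the Tate module $V_\ell(\calA_\aunder)$ decomposes into $\pm\sqrt{-2}$-eigenspaces $V^+ \oplus V^-$ of $\delta$, each of dimension $g$. The balance $\dim V^+ = \dim V^- = g$ follows from a trace computation on $H^0(\calA_\aunder, \Omega^1)$ generalizing~\cref{eqn:mimj}: the analytic action of $\alpha^* + \beta^*$ on the tangent space squares to $-2$ and has trace zero, so its eigenvalues $\pm\sqrt{-2}$ appear with equal multiplicity $g/2$. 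Consequently the one-dimensional line
\[
\xi \colonequals \tbigwedge^g V^+ (g/2) \subset H^g(\calA_\aunder, \Q_\ell(g/2))
\]
is of Hodge type $(g/2, g/2)$, i.e., a Hodge class.

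Next, I would invoke the algebraicity result of Schoen~\cite{MR930145} on Hodge classes for abelian varieties carrying an automorphism of the relevant type to produce an algebraic cycle $Z$ whose class lies in $\xi \setminus \{0\}$. Its field of definition is contained in $K(\zeta_8)$ because the eigenspace decomposition of $\delta$ is defined over $K(\zeta_8) = K(\sqrt{-2})$. To see that this field is not smaller, note that the nontrivial element $\sigma \in \Gal(K(\zeta_8)/K)$ sends $\sqrt{-2} \mapsto -\sqrt{-2}$ and therefore swaps $V^+ \leftrightarrow V^-$; hence $\sigma$ sends $\xi$ to the distinct line $\bar\xi \colonequals \tbigwedge^g V^- (g/2)$. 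Since $V^+ \cap V^- = 0$, the lines $\xi$ and $\bar\xi$ meet only at zero, so no nonzero element of $\xi$ can be $\sigma$-fixed. The hypothesis $a_d \neq 0$ is used here to exclude degenerate fibers where extra endomorphisms could collapse the construction (for example, $g=4$ and $a_1=0$ gives $y^2 = x^9 - x$, whose Jacobian is of CM type and already has $\zeta_8$ in its endomorphism field). The final assertion on the connected monodromy field then follows from~\Cref{prop:LowerBoundConnectedField}.

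The main obstacle I anticipate is importing Schoen's algebraicity result in the exact form needed---namely, producing a genuinely algebraic cycle representing the Hodge-theoretic Weil class $\xi$ on $\calA_\aunder$ itself. Once such a cycle is in hand, the field-of-definition argument reduces to the elementary Galois computation above, and the statement about $K(\varepsilon_{\calA_\aunder})$ is then a direct application of~\Cref{prop:LowerBoundConnectedField}.
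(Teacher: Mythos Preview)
Your framework is appealing—the identity $K(\sqrt{-2})=K(\zeta_8)$ gives a clean conceptual reason for $\zeta_8$ to appear, and your Galois argument that $\sigma$ swaps $\xi$ and $\bar\xi$ is correct and efficient. But the proposal has a genuine gap at exactly the point you flag, and it is more serious than you indicate.

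Schoen's result~\cite{MR930145} produces algebraic cycles on a variety $X$ equipped with an \emph{automorphism} of finite order, via an explicit quotient construction on $X^g$. Your element $\delta=\alpha^*+\beta^*$ is not induced by any automorphism of the curve $\calC_\aunder$: it is only an endomorphism of the Jacobian (indeed $\nrd(i+j)=2$, so $\delta$ is an isogeny of degree $2^g$, not an automorphism). Schoen's machinery therefore does not apply to $\delta$, and there is no evident way to manufacture an algebraic cycle whose class lands in the specific line $\xi=\tbigwedge^g V^+$. The paper instead runs Schoen's construction with the genuine order-$4$ automorphism $\alpha$; this yields Weil classes for $\Q(i)$, not $\Q(\sqrt{-2})$, and since $i\in K$ the eigenspaces of $\alpha^*$ are already $K$-rational, so your style of Galois argument is unavailable for them. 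The paper then carries out a substantial explicit computation (rewriting $\calC_\aunder$ as $z^4=f(t)$, analyzing the branch locus of a map $W_0\to\Sym^g\PP^1$, and proving that $-\hat f|_{P_0}$ is a fourth power in $K(P_0)$) to show directly that $\tau(z_\chi)=-z_\chi$. This computation is the real content of the theorem; your proposal does not replace it.

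Your account of the hypothesis $a_d\neq 0$ is also off. It is not there to exclude extra endomorphisms—your eigenspace argument would survive those intact. In the paper's construction it enters at a specific technical point: one needs the coefficient $Z_{g/2}(q_1)=2a_d$ to be nonzero in order to write down $K$-rational linear equations for the subspace $P_0\subset\Sym^g\PP^1$ (\Cref{lemma:EquationsP0}), without which the field-of-definition computation cannot proceed.
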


\begin{example}\label{rmk: ExampleZywina}
\Cref{thm:LowerBoundSatoTate} justifies the computation of the connected monodromy field in Zywina \cite[Example 1.8]{Zywina2020}.  This example is in our family in genus $g=10$, with parameter $a=(7,1,7,1/2)$.  We recall \eqref{eqn:KendKconn} that $\Q(\End A)=\Q(i) \subseteq \Q(\varepsilon_A)$. By~\Cref{thm:UpperBoundConnectedField} over $K=\Q(i)$, the degree $[\Kconn : \KEnd]$ is at most 2. Now~\Cref{thm:LowerBoundSatoTate} implies that $\Kconn = \Q(\zeta_8)$.
\end{example}

The proof of~\Cref{thm:LowerBoundSatoTate} will occupy this section: we will construct some nontrivial cohomology classes in $H_{\textup{\'et}}^{g}(\calA_\aunder, \Q_\ell(g/2))$ (with $\ell$ an auxiliary prime) and show that they are minimally defined over $K(\zeta_8)$.  This implies~\Cref{thm:LowerBoundSatoTate}, with the final statement following from~\Cref{prop:LowerBoundConnectedField}.  We follow closely Schoen~\cite[Proof of Theorem 2.0]{MR930145}.

We begin by finding a (singular) model of $\calC_\aunder$ on which the action of an automorphism of order 4 takes a particularly simple form. 
Up to birational equivalence, we may take several models for $\calC_a$ of the form
\begin{equation}\label{eq:CaSingularGeneral}
z^4 = f_\ell(t) \colonequals t (t^2-1)^2 \left( t^{g-2}+1+ \sum_{i=1}^{d} a_i( t^{g-2-i} + t^{i} ) \right)^2 (t-\gamma_\ell)^{-2g-4},
\end{equation}
where $t=x^2$, $z=y \cdot (t-\gamma_\ell)^{-g/2-1}$ and the elements $\gamma_\ell \in K$, for $\ell=1, \ldots, g+1$, will be chosen below. Note that the field $K(t, z)$ coincides with $K(x,y)$, so~\cref{eq:CaSingularGeneral} is indeed a model of $\calC_a$. The automorphism $\alpha$ of $\calC_\aunder$ given in~\cref{eq: fundamental automorphisms} corresponds to
\begin{equation}\label{eq:Sigma}
\sigma \colon (t,z) \mapsto (t,iz).
\end{equation}
The quotient $\QuotC=\calC_\aunder / \langle \sigma \rangle$ is isomorphic to $\mathbb{P}^1$, with the quotient map $\psi : \calC_a \to \mathbb{P}^1$ given by $(x, y) \mapsto x^2$, which corresponds to $(t,z) \mapsto t$. In particular, $\psi$ has $g+2$ branch points, the roots of $f_\ell(t)$ together with $\infty$.  
We choose $\gamma_1, \ldots, \gamma_{g+1} \in K$ to be distinct and different from all of these branch points.
The topological Euler characteristic of $\mathbb{P}^1 \setminus \{ f_\ell=0\}$ is therefore $2-(g+2)=-g$, and so the invariant denoted $h$ by Schoen~\cite{MR930145} is equal to $g$ ~\cite[Lemma 1.5]{MR930145}.

\begin{remark}
The choice of $\gamma_1, \ldots, \gamma_{g+1}$ ensures that the open subschemes $\operatorname{Sym}^{g}\left( \calC_a \setminus \{\gamma_i\} \right)$ cover all of $\operatorname{Sym}^{g} \calC_a$. Every point in $\operatorname{Sym}^g(\calC_a)$ is a (multi-)set of $g$ points in $\calC_a$, and every such multi-set will avoid at least one of the $\gamma_\ell$.
\end{remark}

Write $N \colonequals \{ (\sigma_1,\ldots,\sigma_g) \in (\mathbb{Z}/4\mathbb{Z})^g : \sum_i \sigma_i=0\}$ and let $G$ be the subgroup of $\operatorname{Aut}(\calC_\aunder^g)$ generated by $N$ and the group $S_g$ acting by permuting factors. In our situation,~\cite[Diagram (2.1)]{MR930145} reads
\begin{equation}\label{eq:SchoenDiagram}
\begin{aligned}
\xymatrix{
\calC_\aunder^g \ar[r] \ar[d] & \Sym^g \calC_\aunder \ar[d] \\
N \backslash \calC_\aunder^g  \ar[r] \ar[d] & W_0 \ar[d]^{\pi_0} \\
\QuotC^g \ar[r]_(0.4){\gamma_\QuotC} & \Sym^g \QuotC,
}
\end{aligned}
\end{equation}
where $W_0 \colonequals G \backslash \calC_\aunder^g$ and all arrows except $\Sym^g \calC_\aunder \to W_0$ are quotient maps for the obvious (finite) group actions. It will be useful to identify $\Sym^g \QuotC \simeq \Sym^g \mathbb{P}^1$ with the projectivization of the space of homogeneous polynomials of degree $g$ in two variables $X, Y$. In these coordinates, the horizontal map $\gamma_\QuotC$ becomes
\begin{equation}\label{eq:gammaX}
\begin{array}{cccc}
\gamma_\QuotC : & \left(\mathbb{P}^1\right)^g & \to & \Sym^g \mathbb{P}^1\\
& ([x_1 : y_1], \ldots, [x_g : y_g]) & \mapsto & [(y_1 X-x_1 Y) \cdots (y_g X-x_g Y)].
\end{array}
\end{equation}
We may consider $f_\ell(t)$ as a function on $\mathbb{P}^1$ and write its divisor as
\[
(f_\ell) = 2(1) + 2(-1) + (0) + 3(\infty) + 2 \left( \sum_{i=1}^d (\beta_i) + \sum_{i=1}^d (1/\beta_i) \right)  +4D_\ell,
\] 
where $D_\ell \colonequals -((g/2)+1)(\gamma_\ell)$. We order the points $b_1, \ldots, b_{g+2}$ that appear in the support of $(f_\ell)$ with positive coefficient as $1, -1, 0, \infty$, followed by the $g-2$ roots of $t^{g-2}+1+ \sum_{i=1}^{d} a_i( t^{g-2-i} + t^{i} )$ (in any order). We will denote these roots by $\beta_1, 1/\beta_1, \ldots, \beta_d, 1/\beta_d$, so that
\begin{equation}\label{eq:Factorisation}
t(t^2-1)^2\left(t^{g-2}+1+ \sum_{i=1}^{d} a_i( t^{g-2-i} + t^{i} )\right)^2 = t(t^2-1)^2 \prod_{j=1}^{d} (t-\beta_j)^2(t-1/\beta_j)^2.
\end{equation}
Notice that it is not necessarily the case that each point $b_i$ is rational over $K$, but the sum $\sum_{j=5}^{g+2} (b_j)$ is certainly stable under $\Gal_K$.

To make it easier to make the comparison with Schoen's article, we point out that, in the notation of the equation $(f_\ell) = \sum_{1 \leq j \leq 2r} \alpha_jb_j + m D_\ell$ \cite[bottom of page 12]{MR930145}, we have $m=4$, $r=g/2+1$, $\alpha_j=2$ for all $j \neq 3,4$, $\alpha_3=1$, and $\alpha_4=3$. 

The branch locus of $\pi_0$ is the union of the linear subspaces $\gamma_{\QuotC}(b_j \times \QuotC^{g-1})$ of $\Sym^g \QuotC \simeq \mathbb{P}^g$, for $j=1,\ldots,g+2$ \cite[Lemma 2.2]{MR930145}.  This union is defined over $K$.
Looking at~\cref{eq:gammaX}, one sees immediately that $\gamma_\QuotC( b_j \times (\mathbb{P}^1)^{g-1} )$ is the linear subspace $S_j \subseteq \mathbb{P}^g$ of homogeneous polynomials that vanish at $b_j$. Following~\cite[Page 13]{MR930145}, let $V_i = \bigcap_{j \neq 2i-1, 2i} S_j$ for $i=1,\ldots,g/2+1$ (notice that, in our setting, $(P_\infty)$ is the whole $\Sym^g\QuotC$ since $\QuotC$ is of genus $0$). Each $V_i$ is a single point: it is the projective point corresponding to the one-dimensional linear subspace of degree-$g$ homogeneous polynomials that vanish at the $g$ points $\{ b_j : j \neq 2i-i, 2i\}$. For $i=1,\ldots,g/2+1$ we can write down an obvious explicit generator $q_i(X,Y)$ of the linear subspace corresponding to $V_i$: homogenizing, we see that we may take
\begin{equation}
\begin{aligned}
q_1(X,Y) &= XY \prod_{j=1}^{d} (X-\beta_jY)(X-1/\beta_j Y), \\
q_2(X,Y) &= (X^2-Y^2) \prod_{j=1}^{d} (X-\beta_jY)(X-1/\beta_j Y),
\end{aligned}
\end{equation}
and
\begin{equation}
q_i(X,Y) = XY(X^2-Y^2) \prod_{j \neq i} (X-\beta_jY)(X-1/\beta_j Y).
\end{equation}
Notice that $q_1(X,Y)$ is symmetric under the exchange $X \leftrightarrow Y$, while $q_i(X,Y)$ is anti-symmetric for $i \geq 2$. Once again following~\cite[Page 13]{MR930145}, we let $P_0$ denote the projective span of the points $V_1, \ldots, V_{g/2+1}$; then $P_0$ has the expected dimension $g/2$ \cite[Lemma 2.3(v)]{MR930145}. We now describe the linear equations (with rational coefficients) that vanish on the subspace $P_0$. We let $Z_0,\ldots,Z_g$ denote linear coordinates on the space of homogeneous polynomials of degree $g$ in $X, Y$, where $Z_i$ is the coefficient of $X^iY^{g-i}$. By a slight abuse of language, we also denote by $Z_0, \ldots, Z_g$ the projective coordinates that these functions induce on $\Sym^g(\QuotC) \simeq \mathbb{P}^g$.

Recall that $q_2,\ldots,q_{g/2+1}$ are anti-symmetric polynomials, so they satisfy $(Z_j + Z_{g-j})(q_i) = 0$ (in particular, $Z_{g/2}(q_i)=0$) for $j=0,\ldots,g$ and $i=2,\ldots, g/2+1$. The quantity $Z_{g/2}(q_1)$ is nonzero: indeed, this is the coefficient of $X^{g/2-1}Y^{g/2-1}$ in
\[
\prod_{j=1}^d (X-\beta_j Y)(X- 1/\beta_j Y) = X^{g-2} + Y^{g-2} + \sum_{i=1}^d a_i(X^{g-2-i}Y^{i} + X^i Y^{g-2-i}),
\]
so it is equal to $2a_d \neq 0$ by assumption. It follows that for $j=0,\ldots,g$ there exists a unique $c_j \in K$ such that
\begin{equation}\label{eq:defci}
(Z_j + Z_{g-j})(q_1) = c_j Z_{g/2}(q_1).
\end{equation}
For $j=0,\ldots,g/2-1$, we see that the $g/2$ homogeneous linear equations given by $Z_j+Z_{g-j}-c_j Z_{g/2}=0$ are satisfied by $q_1,\ldots,q_{g/2+1}$, so that we get linear equations that vanish on $P_0$. Since these $g/2$ equations are clearly independent, they define a subspace of dimension $g/2 = \dim P_0$.  We have proven the following lemma.

\begin{lemma}\label{lemma:EquationsP0}
The following holds:
\[
P_0 = \{ [Z_0 : \cdots : Z_g] \in \mathbb{P}^g : Z_j + Z_{g-j} = c_j Z_{g/2} \textup{ for } j=0,\ldots,g/2-1 \}.
\]
\end{lemma}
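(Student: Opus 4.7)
The plan is to prove both inclusions, using the symmetric/anti-symmetric decomposition of the polynomials $q_1, \ldots, q_{g/2+1}$ under the involution $X \leftrightarrow Y$, and then close with a dimension count.

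For the inclusion ``$\subseteq$'', I would write each $q_i = \sum_{k=0}^{g} e_{i,k} X^k Y^{g-k}$. For $i \geq 2$, the polynomial $q_i$ is anti-symmetric, so $e_{i,k} + e_{i,g-k} = 0$ for every $k$ and in particular $e_{i,g/2} = 0$; hence every linear form $Z_j + Z_{g-j} - c_j Z_{g/2}$ vanishes at $V_i$. For $i = 1$, the polynomial $q_1$ is symmetric, so $e_{1,k} = e_{1,g-k}$, and the equations hold at $V_1$ by the very definition~\eqref{eq:defci} of $c_j$. This is precisely where the assumption $a_d \neq 0$ enters: it guarantees $Z_{g/2}(q_1) = 2 a_d \neq 0$, so that the constants $c_j$ are well-defined. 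Since the locus on the right-hand side is cut out by linear equations and $P_0$ is by definition the linear span of the $V_i$, this gives the inclusion.

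For the reverse inclusion I would argue by dimension. The $g/2$ linear forms $Z_j + Z_{g-j} - c_j Z_{g/2}$, indexed by $j = 0, \ldots, g/2-1$, involve pairwise disjoint pairs of variables $\{Z_j, Z_{g-j}\}$ (together with the shared variable $Z_{g/2}$), so they are linearly independent; consequently they cut out a projective subspace of $\mathbb{P}^g$ of dimension exactly $g/2$. By the already-cited Lemma~2.3(v) of Schoen~\cite{MR930145}, this matches $\dim P_0$, and containment between subspaces of equal dimension forces equality.

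The argument is essentially bookkeeping, and I do not anticipate any serious obstacle. The only points requiring care are (i) pinpointing where the hypothesis $a_d \neq 0$ is genuinely needed, namely in guaranteeing that $Z_{g/2}(q_1) \neq 0$ so that the $c_j$ are defined, and (ii) verifying the pair-disjointness that yields the independence of the $g/2$ linear equations.
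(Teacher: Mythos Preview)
Your proposal is correct and follows essentially the same approach as the paper: verify the equations hold at each $V_i$ using the (anti-)symmetry of the $q_i$, invoke $a_d \neq 0$ to ensure $Z_{g/2}(q_1) \neq 0$, observe that the $g/2$ linear forms are independent, and conclude by matching dimensions via Schoen's Lemma~2.3(v). Your justification of independence via disjoint variable pairs is slightly more explicit than the paper's ``clearly independent'', but the argument is the same.
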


\begin{remark}\label{rmk:coefficients}
We remark that by symmetry we have $c_j = c_{g-j}$ for all $j=0,\ldots,g$, and that $c_{g/2} = 2$. Also notice that we have $c_0=c_g=0$: indeed, $q_1(X,Y)$ has no term in $X^g$ and no term in $Y^g$, because by construction it is divisible by $XY$.
\end{remark}

The coefficients $c_j$ also satisfy the following:
\begin{lemma}\label{lemma:Coefficientsci}
The equality
\[
\sum_{j=0}^g c_j u^j = \frac{2}{Z_{g/2}(q_1)} q_1(u,1)
\]
holds in the ring of polynomials $K[u]$.
\end{lemma}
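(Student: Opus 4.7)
The plan is to unwind the definitions and exploit the symmetry of $q_1(X,Y)$ under the swap $X \leftrightarrow Y$. Write $q_1(X,Y) = \sum_{j=0}^g a_j X^j Y^{g-j}$, so that by definition of the coordinates, $Z_j(q_1) = a_j$. The key observation, already recorded in the text before \Cref{lemma:EquationsP0}, is that $q_1$ is invariant under $X \leftrightarrow Y$, which translates to $a_j = a_{g-j}$ for all $j$.

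Substituting into the defining equation \eqref{eq:defci} for $c_j$, I get
\[
c_j \, Z_{g/2}(q_1) = (Z_j + Z_{g-j})(q_1) = a_j + a_{g-j} = 2a_j,
\]
so $c_j = 2 a_j / Z_{g/2}(q_1)$. Summing against $u^j$ then gives
\[
\sum_{j=0}^g c_j u^j = \frac{2}{Z_{g/2}(q_1)} \sum_{j=0}^g a_j u^j = \frac{2}{Z_{g/2}(q_1)} \, q_1(u,1),
\]
where the last equality uses the evaluation $q_1(u,1) = \sum_j a_j u^j$. This proves the claim.

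There is essentially no obstacle: the statement is a direct bookkeeping consequence of the symmetry of $q_1$, and the nonvanishing of $Z_{g/2}(q_1) = 2 a_d$ (noted just before \Cref{lemma:EquationsP0}) ensures that the scalar $2/Z_{g/2}(q_1)$ makes sense. The only thing worth double-checking is consistency with \Cref{rmk:coefficients}: the formula yields $c_{g/2} = 2$ and $c_j = c_{g-j}$ automatically, and $c_0 = c_g = 0$ follows from the fact that $q_1(X,Y)$ is divisible by $XY$, hence $a_0 = a_g = 0$.
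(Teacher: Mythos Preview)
Your proof is correct and follows essentially the same approach as the paper's own proof: both use the definition of $c_j$ together with the symmetry $Z_j(q_1)=Z_{g-j}(q_1)$ of $q_1$ under $X\leftrightarrow Y$, then recognise $\sum_j Z_j(q_1)u^j=q_1(u,1)$. One minor note: your choice of the letter $a_j$ for the coefficients of $q_1$ clashes with the paper's parameters $a_1,\ldots,a_d$ from \Cref{setup}, so it would be cleaner to use a different symbol.
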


\begin{proof}
By construction we have 
\[ c_j = \frac{(Z_j + Z_{g-j})(q_1)}{Z_{g/2}(q_1)}, \] 
so
\begin{equation}
Z_{g/2}(q_1) \sum_{j=0}^g c_j u^j = \sum_{j=0}^g (Z_j + Z_{g-j})(q_1) u^j.
\end{equation}
Recalling that $q_1(X,Y)$ is symmetric in $X, Y$, we have the tautological identity
\[
\sum_{j=0}^g Z_j(q_1) u^j = q_1(u,1)
\]
and the claim follows. 
\end{proof}

Recall that $t$ is a coordinate on $\QuotC$. We denote by $t_1, \ldots, t_{g}$ the corresponding coordinates on $g$ copies of $\QuotC$.  Let $\ell \in \{1,\ldots, g+1\}$ and let $f(t) \colonequals f_\ell(t)$, $\gamma \colonequals \gamma_\ell$, and
\[
\hat{f} \colonequals \prod_{i=1}^g f(t_i),
\]
which we may consider as an element of the function field $K( \Sym^g(\QuotC) )$. 

We now need a slightly involved, but completely algebraic computation; rather than inserting it here, we relegate it to its own section: in~\cref{appendix:A} we show explicitly that
\begin{equation} 
-\hat{f}|_{P_0} = \hat{u}^4 
\end{equation}
for a suitable $\hat{u}$ in $K(P_0)$.
It follows that the minimal field $F$ containing $K$ and such that $\hat{f}$ is a fourth power in $F(P_0)$ is $F = K(\sqrt[4]{-1}) = K(\zeta_8)$.

Then the key result of Schoen \cite[Theorem 2.0]{MR930145} gives nontrivial algebraic cohomology classes in $H^g(\Sym^g \calC_a, \Q_\ell(g/2))$ which are defined \cite[Remark 2.13]{MR930145} over the smallest extension $F'$ of $K$ for which $\hat{f}$ is the fourth power of a function in $F'(P_0)$, that is, $F' = K(\zeta_8)$. Note that we are not yet claiming that $F'$ is the \textit{minimal} field of definition of these classes, even though this is true and will be checked below.
More precisely, Schoen claims the desired conclusion only under the assumption that the branch points $\beta_j$ be separately rational over $K$. However, the only place where this hypothesis is used in the argument is to ensure that the projective space $P_0$ be defined over $K$, which is true in our case even when the $\beta_j$ are not rational over $K$, since we have explicitly found $K$-rational equations for $P_0$: the fact that $P_0$ is defined over $K$ can be explained \textit{a priori} because of the special structure of the ramification locus, in turn induced by the presence of a second automorphism of order 4 of $\calC_\aunder$. Finally, one observes~\cite[Remark 1.4]{MR930145} that the cycles thus constructed on $\Sym^g \calC_\aunder$ actually come (via the isomorphism $\Xi^*$~\cite[Remark 1.4]{MR930145}, composed with the isomorphism $\operatorname{Alb}(\calC_a) \simeq \calA_a$ given by the canonical principal polarization) from $\calA_\aunder$---which, as it is well known, is birational to $\Sym^g \calC_\aunder$. 

It only remains to show that the cohomology classes in question are not defined over $K$ when $\zeta_8 \not \in K$. In order to do this we need to say something about the construction of these classes. Let
\begin{equation}
\begin{aligned}
\delta \colon \mathbb{Z}/4\mathbb{Z} & \to  \Aut(\calC_\aunder^g) \\
t & \mapsto ( \sigma^t, \operatorname{id},\dots,\operatorname{id}).
\end{aligned}
\end{equation}
Referring to \cref{eq:SchoenDiagram}, we may take as generators for the function field $K(\Sym^g(\QuotC))$ the elementary symmetric functions $e_1, \ldots, e_g$ in the $g$ variables $t_1, \ldots, t_g$.
So the function field of $W_0$ is $K(e_1,\ldots,e_g, \hat{z})$\ \cite[page 12]{MR930145}, where
\[
\hat{z}^4 = \prod_{i=1}^g f(t_i) = \hat{f}.
\]

Notice that $\delta(t)$ descends to the automorphism of $W_0$ induced by $\hat{z} \mapsto i^t \hat{z}$, and that our computation of $\hat{f}|_{P_0}$ implies that $P_0 \times_{\pi_0} W_0$ consists of four geometrically irreducible components, each defined over $K(\zeta_8)$: the fibre of $W_0$ over the generic point of $P_0$ has equation $\hat{z}^4 = - \hat{u}^4$. If $\zeta_8 \not \in K$, the extension $K(\zeta_8) / K$ has degree 2 (recall that by assumption $K$ contains a primitive fourth root of unity): let $\tau$ be the nontrivial automorphism of $K(\zeta_8)/K$. The action of $\tau$ on the four irreducible components of $P_0 \times_{\pi_0} W_0$ exchanges them in pairs: this can be checked on the generic points, and the four components have equations $\hat{z} = \zeta_8^i \hat{u}$ for $i \in \{1,3,5,7\}$. Letting $Q_i$ denote the component corresponding to $\hat{z} = \zeta_8^i \hat{u}$, for $i \in \{1,3,5,7\}$, we have $\tau(Q_i) = Q_{i+4 \bmod 8}$. 

Let now $\chi$ be a primitive character of $\mathbb{Z}/4\mathbb{Z}$ and consider the cycle, with coefficients in $\Q(i)$, given by \cite[page 15]{MR930145}
\[
z_\chi \colonequals \sum_{t \in \mathbb{Z}/4\mathbb{Z}} \chi(-t)\delta(t)_* Q_1.
\]
Noticing that $\delta(t)_*(Q_1) = Q_{1-2t \bmod 8}$ (check this on generic points), we obtain  that $\tau(z_\chi) = -z_\chi$, hence that the same holds for the cohomology class defined by $z_\chi$. Then Schoen \cite[Lemma 2.6]{MR930145} shows that 
\begin{equation}
[z_\chi]\in H^g(W_0^\al, \Q_\ell(g/2)) = H^g((C_{\aunder}^{\al})^g, \Q_\ell(g/2))^{G} \subset H^g(\Sym^g C_{\aunder}^{\al}, \Q_\ell(g/2))
\end{equation}
is nonzero. Since $\tau$ acts nontrivially on these nonzero classes (we have constructed two of them, corresponding to the two group isomorphisms $\chi : \mathbb{Z}/4\mathbb{Z} \to \{\pm 1, \pm i\}$), we have shown as desired that they are not defined over $K$. This completes the proof of~\Cref{thm:LowerBoundSatoTate}. 

For later use we also record the following lemma.

\begin{lemma}\label{lemma:TwoDimensionalSubspaceAlgebraicClasses}
The subspace of $H^g(\calA_\aunder(\mathbb{C}), \mathbb{C})$ consisting of algebraic classes is at least $2$-dimensional.
\end{lemma}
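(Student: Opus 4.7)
The plan is to recycle the classes $z_\chi$ already produced inside the proof of \Cref{thm:LowerBoundSatoTate}. Recall that $\Z/4\Z$ has exactly two primitive characters $\chi,\overline{\chi}\colon \Z/4\Z \to \{\pm 1,\pm i\}$, and that Schoen's construction attaches to each the explicit $\C$-linear combination
\[
z_\chi = \sum_{t \in \Z/4\Z} \chi(-t)\,\delta(t)_* Q_1
\]
of the four irreducible components of $P_0 \times_{\pi_0} W_0$. First I would note that each $z_\chi$ is, by construction, a $\C$-linear combination of algebraic cycles, so its cohomology class is algebraic in $H^g(\Sym^g \calC_\aunder^{\mathrm{al}},\C)$. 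Transporting along the isomorphism $\Xi^*$ of Schoen \cite[Remark 1.4]{MR930145} and the canonical principal polarization, and then comparing $\ell$-adic with Betti cohomology via the Artin comparison, preserves algebraicity; hence $[z_\chi]$ and $[z_{\overline{\chi}}]$ yield algebraic classes in $H^g(\calA_\aunder(\C),\C)$.

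Next I would prove these two classes are linearly independent by a character argument. The group $\delta(\Z/4\Z)$ acts on $H^g(W_0,\C)$ and a simple reindexing $t \mapsto t-s$ gives
\[
\delta(s)_*\, z_\chi \;=\; \sum_{t \in \Z/4\Z} \chi(-t)\,\delta(s+t)_* Q_1 \;=\; \chi(s)\,z_\chi,
\]
so $[z_\chi]$ is a $\chi$-eigenvector. Since the two primitive characters of $\Z/4\Z$ are distinct, $[z_\chi]$ and $[z_{\overline{\chi}}]$ are eigenvectors for distinct characters of the same finite cyclic group action, and therefore linearly independent. This produces a subspace of algebraic classes of dimension at least $2$.

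The main technical input---that both $[z_\chi]$ are actually nonzero---has already been dispatched via Schoen \cite[Lemma 2.6]{MR930145} in the proof of \Cref{thm:LowerBoundSatoTate}. There is essentially no further obstacle: the lemma is best viewed as a bookkeeping extraction of content already produced along the way, with the only genuinely new ingredient being the one-line eigenvalue computation above.
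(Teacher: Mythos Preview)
Your proposal is correct and is essentially the same approach as the paper's, which simply cites Schoen \cite[Theorem 2.0, Lemma 1.2, Remark 1.4]{MR930145} together with the computations already done in \cref{sect:SatoTateField}. The only difference is cosmetic: where the paper defers the $2$-dimensionality directly to Schoen's results, you make the linear independence of $[z_\chi]$ and $[z_{\overline{\chi}}]$ explicit via the eigencharacter argument, which is exactly what underlies Schoen's own dimension count.
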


\begin{proof}
Combine results of Schoen \cite[Theorem 2.0, Lemma 1.2, and Remark 1.4]{MR930145} with the calculation above.
\end{proof}

\subsection{Algebraic classes}

In this section, we determine the connected monodromy field of $\calA_\aunder$, under the assumption that its geometric endomorphism algebra is as small as possible and that its associated Galois representations are as large as possible.
\begin{theorem}\label{thm:UpperBoundSatoTate}
Suppose that the following conditions hold:
\begin{enumroman}
    \item $a_d \neq 0$, 
    \item $\End(\calA_\aunder^\al)_{\Q} = (-1,-1\,|\,\Q)$ (in particular, $\calA_\aunder$ is geometrically simple), 
    \item $\operatorname{Hg}(\calA_\aunder)_{\mathbb{C}}\simeq \SO_{g,\mathbb{C}}$, and 
    \item $\calA_\aunder$ satisfies the Mumford--Tate conjecture. 
\end{enumroman}
Then the following statements hold:
\begin{enumalph}
\item The connected monodromy field of $\calA_\aunder$ is $K(\zeta_8)$.
\item The Tate conjecture and the Hodge conjecture are true for $\calA_\aunder$ and all its powers.
\end{enumalph}
\end{theorem}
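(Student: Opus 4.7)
The plan is to combine the a priori upper bound on $[\Kconn:\KEnd]$ from \Cref{thm:UpperBoundConnectedField} with the explicit algebraic classes constructed in \Cref{thm:LowerBoundSatoTate}, together with Schoen's cycles and a standard ``Mumford--Tate plus Hodge implies Tate'' argument.

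\emph{For (a),} I would first use hypothesis~(ii) and \Cref{lemma:FromEndoAlgebraToEndoRing} to identify $\End \calA_\aunder^\al = \mathcal{O}$ and $\KEnd = K(i)$, and then invoke \Cref{lemma:Lefschetz} together with hypothesis~(iii) to obtain $\Hg(\calA_\aunder) = L(\calA_\aunder) \simeq \SO_g$. Combined with (iv), this makes $\calA_\aunder$ fully of Lefschetz type, so \Cref{thm:UpperBoundConnectedField} will give $[\Kconn:K(i)] \leq 2$. For the matching lower bound I would base change to $K(i)$---noting that the connected monodromy field of $\calA_{\aunder, K(i)}$ coincides with $\Kconn$, since $K(i) \subseteq \Kconn$ by~\eqref{eqn:KendKconn}---and then apply \Cref{thm:LowerBoundSatoTate} (using $i \in K(i)$ and hypothesis~(i) $a_d \neq 0$) to exhibit algebraic classes in $H^g$ with minimal field of definition $K(i)(\zeta_8) = K(\zeta_8)$. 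Via \Cref{prop:LowerBoundConnectedField}, this forces $\Kconn \supseteq K(\zeta_8)$, and since $[K(\zeta_8):K(i)] \leq 2$, the two bounds match to give $\Kconn = K(\zeta_8)$.

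\emph{For (b),} I would invoke Murty's structure theorem for the Hodge ring of a Type~III abelian variety with equal Hodge and Lefschetz groups~\cite{murty-84}: under~(iii), every Hodge class on $\calA_\aunder^n$ is a polynomial combination of divisor classes and exceptional Weil classes associated to the quadratic subfield $\Q(i) \subset B = \End(\calA_\aunder^{\al})_\Q$. Divisors are algebraic by the Lefschetz $(1,1)$ theorem, while the exceptional Weil classes on $\calA_\aunder$ itself are algebraic by Schoen's construction from~\Cref{sect:SatoTateField} (cf.\ \Cref{lemma:TwoDimensionalSubspaceAlgebraicClasses}), whose cycles are designed precisely to span the two-dimensional space of exceptional classes in $H^g(\calA_\aunder)$. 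This will establish the Hodge conjecture for $\calA_\aunder$ and all its powers. For the Tate conjecture I would argue formally: by the Mumford--Tate conjecture~(iv), $G_{\ell}^0 = \MT(\calA_\aunder)_{\Q_\ell}$, so Tate classes in $H^*(\calA_\aunder^n, \Q_\ell)$ coincide under the comparison isomorphism with Hodge classes in $H^*(\calA_\aunder^n(\C), \Q)$, which are algebraic by the preceding step.

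The hard part will be the passage in~(b) from $\calA_\aunder$ to its higher powers: one must upgrade Schoen's construction---which, as carried out in~\Cref{sect:SatoTateField}, produces algebraic classes on $\calA_\aunder$ alone---to algebraic representatives of \emph{every} exceptional Hodge class on every power $\calA_\aunder^n$. This requires tracking the $\SO_g$-representation-theoretic structure of the exceptional part of the Hodge ring of $\calA_\aunder^n$, and verifying that taking products of Schoen's cycles with divisor classes on $\calA_\aunder^n$, together with pullbacks along the projection maps, exhausts all Weil classes predicted by Murty's theorem. Provided this accounting goes through, the rest of the argument is essentially bookkeeping on top of the results already assembled in Sections~\ref{sec:prelim} and~\ref{sect:SatoTateField}.
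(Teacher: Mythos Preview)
Your route to (a) is correct and in fact more direct than the paper's: you combine the upper bound from \Cref{thm:UpperBoundConnectedField} with the lower bound from \Cref{thm:LowerBoundSatoTate}, whereas the paper first establishes the Tate conjecture for all powers of $\calA_\aunder$ and then appeals to \Cref{cor:apres proposition}. Your argument that hypotheses (iii) and (iv) make $\calA_\aunder$ fully of Lefschetz type is sound.

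Part (b) has a genuine gap. The space of exceptional Hodge classes in $H^g(\calA_\aunder(\C),\Q)$ is not $2$-dimensional: by Abdulali~\cite[Theorem~4.1]{MR1715177} and van Geemen--Verra~\cite[Proposition~4.7]{MR1928644}, the relevant space $W_B$ has dimension $g+1$. Schoen's construction (\Cref{lemma:TwoDimensionalSubspaceAlgebraicClasses}) only supplies a $2$-dimensional subspace of algebraic classes. The paper bridges this as follows: the subspace of algebraic classes is stable under the action of $B^\times = \End(\calA_\aunder^\al)_\Q^\times$, and $W_B$ is an irreducible $B$-module, so once the algebraic locus meets $W_B$ nontrivially it must contain all of $W_B$. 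You need this irreducibility step; without it, your argument for the Hodge conjecture on $\calA_\aunder$ itself is incomplete. Also, the structure result you attribute to Murty is Abdulali's~\cite[Theorem~4.1]{MR1715177}.

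Finally, the passage to powers that you flag as the hard part is in fact a black box: Abdulali~\cite[Theorem~4.1]{MR1715177} proves that once all Hodge classes on $\calA_\aunder$ are algebraic, the Hodge conjecture holds for every $\calA_\aunder^k$. Combined with Mumford--Tate for $\calA_\aunder$ (which is equivalent to Mumford--Tate for each $\calA_\aunder^k$, since the Galois and Hodge groups act diagonally), Moonen~\cite[Proposition~2.3.2]{MR3735565} then gives the Tate conjecture for all powers. So no ad hoc bookkeeping with products of cycles is required.
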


We begin by a slightly more general discussion, so let $A$ be any $g$-dimensional abelian variety over $K$. Suppose that $\Endzero{\Abar} = B = (-1,-1\,|\,\Q)$ and that $\operatorname{Hg}(A)_{\mathbb{C}} \simeq \SO_{g, \mathbb{C}}$. Let $F$ be a quadratic imaginary subfield of $B$. 
The group $\left(B \otimes_{\Q} \mathbb{C}\right)^\times \simeq \operatorname{GL}_{2}(\mathbb{C})$ acts on $H^1(A(\C), \mathbb{C})$. This action makes $H^1(A(\C), \mathbb{C})$ into a representation of $\operatorname{GL}_{2}(\mathbb{C})$ isomorphic to $g$  copies of the standard $2$-dimensional representation. Upon restriction to the subgroup $(F \otimes \mathbb{C})^\times \simeq (\mathbb{C}^\times)^2  \subset \operatorname{GL}_{2}(\mathbb{C})$, the space $H^1(A(\C), \mathbb{C})$ decomposes as the direct sum of two subspaces $Z_\sigma$ and $Z_{\overline{\sigma}}$ on which $x \in F^\times$ acts respectively via the conjugate embeddings $\sigma, \overline{\sigma} : F \hookrightarrow \mathbb{C}$. One can show that $\dim Z_{\sigma} = \dim Z_{\overline{\sigma}}=g$ (see~Abdulali \cite[\S 4]{MR1715177} or~van Geemen--Verra \cite[Lemma 4.5]{MR1928644}).

It is not hard to show that $\tbigwedge^g(Z_\sigma)$ and $\tbigwedge^g(Z_{\overline{\sigma}})$, considered as subspaces of $\tbigwedge^g(Z_\sigma \oplus Z_{\overline{\sigma}}) = \tbigwedge^g H^1(A(\C), \mathbb{C}) = H^g(A(\C), \mathbb{C})$, are invariant under the action of the Hodge group, and therefore consist of Hodge classes. The subspace $\tbigwedge^g(Z_\sigma) \oplus \tbigwedge^g(Z_{\overline{\sigma}})$ of $H^g(A(\C), \mathbb{C})$ is defined over $\Q$~\cite[\S 4.3]{MR1928644}. We denote by $W_F \subset H^g(A(\C), \Q)$ the corresponding rational subspace and call it the space of Weil classes for the quadratic field $F$. Finally, we let $W_{B}$ be the subspace of $H^g(A(\C),\Q)$ spanned by $x \cdot W_F$ for $x \in B$. By~\cite[Proposition 4.7]{MR1928644}, the subspace $W_{B}$ is independent of the choice of the field $F$, consists of Hodge classes, and has dimension $g+1$.

Let now $\mathcal{A}^\bullet$ be the the subalgebra of $H^\bullet(A(\C), \mathbb{C})$ given by Hodge classes and let $\mathcal{D}^\bullet$ be the subalgebra of $\mathcal{A}^\bullet$ generated by the classes of divisors. A straightforward computation with Hodge groups shows that $\mathcal{A}^2 = \mathcal{D}^2$ is $1$-dimensional, with generator given by the class $E$ of a polarization. Theorem 4.1 in~\cite{MR1715177} shows that
\begin{equation}\label{eq:HodgeClasses}
\mathcal{A}^{2i} = \langle \tbigwedge^i E \rangle \quad \text{for } i \neq g/2, \quad \mathcal{A}^{g} = \langle \tbigwedge^{g/2} E \rangle \oplus W_B.
\end{equation}
Finally, the proof of~\cite[Lemma 4.7]{MR1928644} shows that $W_B \otimes \mathbb{C}$ is an irreducible submodule of $H^g(A(\C), \mathbb{C})$ for the action of $(B \otimes \mathbb{C})^\times$, and this clearly implies that $W_B$ is an irreducible submodule for the action of $B^\times$.

\begin{remark}\label{rmk:DeterminantIsAlgebraicClass}
The previous discussion shows that the Weil classes for a subfield $F$ of $B$ are essentially geometric incarnations of the determinant of a certain sub-$F$-representation of $H_1(\calA_\aunder(\mathbb{C}), \C)$ of dimension $g$. The proof of~\Cref{cor:FieldsOfDefinition} proceeds precisely by looking at a certain determinant (a coefficient of a reduced characteristic polynomial) and showing that it is not stabilised by the whole absolute Galois group of $\Q(i)$. Thus, the proof of~\Cref{cor:FieldsOfDefinition} essentially amounts to explicitly verifying that certain Weil classes are not defined over $\Q(i)$.
\end{remark}

\begin{remark}\label{rmk: exceptional class as Weil class}
Schoen \cite[Corollary 3.1]{MR930145} shows that the algebraic classes considered in~\cref{sect:SatoTateField} are essentially the Weil classes for the subfield $\Q(i)$ of $B$, where $i$ is identified with the automorphism of $\Jac(\calC_\aunder)$ induced by the automorphism $\sigma$ of order 4 described in~\cref{eq:Sigma}.
\end{remark}

We can now prove~\Cref{thm:UpperBoundSatoTate}.

\begin{proof}[Proof of~\textup{\Cref{thm:UpperBoundSatoTate}}]
We specialize the above discussion to $A=\calA_\aunder$. 
The $2$-dimensional linear subspace $U \subseteq H^g(\calA_\aunder(\C), \C)$ of algebraic classes given by~\Cref{lemma:TwoDimensionalSubspaceAlgebraicClasses} cannot be contained in $\mathcal{D}^g$, which is 1-dimensional. Since algebraic classes are Hodge, this shows that $U\subset \mathcal{A}^g$ intersects $W_B$ nontrivially. Moreover, the subspace of algebraic classes is stable under the action of $B=\End (\calA_\aunder^\al)_{\Q}$, so the whole $W_{B}$ (which is an irreducible $B$-module) consists of algebraic classes. Using~\cref{eq:HodgeClasses} we have then proved that the Hodge conjecture holds for $\calA_\aunder$, and since by assumption the Mumford--Tate conjecture is true for $\calA_\aunder$ we obtain that the Tate conjecture also holds for $\calA_\aunder$~\cite[Proposition 2.3.2]{MR3735565}.

We have proved that $\mathcal{A}^\bullet$ is generated by $\mathcal{D}^\bullet$ and by the translates of Weil classes under the action of endomorphisms. In particular, the minimal field of definition of the ($\ell$-adic) cohomology classes of all the algebraic cycles on $\calA_\aunder$ 
is the minimal extension of $K(\End A_\aunder)$ over which at least one Weil class is defined. Note that by~\Cref{lemma:FromEndoAlgebraToEndoRing} we have $K(\End \calA_a)=K(i) \subseteq K(\zeta_8)$. \Cref{thm:LowerBoundSatoTate} shows that some Weil classes are defined over $K(\zeta_8)$, hence they all are. This implies that the minimal field of definition of all algebraic classes on $\calA_a$ is $K(\zeta_8)$.

Abdulali~\cite[Theorem 4.1]{MR1715177} shows that, since all Hodge classes on $\calA_\aunder$ are algebraic, the Hodge conjecture is true for all powers $\calA_\aunder^k$ of $\calA_\aunder$. Since the Mumford--Tate conjecture holds for $\calA_\aunder$, by the main theorem of Commelin~\cite{MR4009176} the same is true for $\calA_\aunder^k$ for all $k \geq 1$. As a consequence, there is a canonical identification between Hodge classes and Tate classes on any power of $\calA_\aunder$. Moreover, as above, Moonen \cite[Proposition 2.3.2]{MR3735565} implies that the Tate conjecture holds for all powers of $\calA_\aunder$.

We claim that all cohomology classes of algebraic cycles on all powers of $\calA_\aunder$ are defined over $K(\zeta_8)$. Write as above $H^1(\calA_\aunder(\C), \C)$ as the direct sum of two $2g$-dimensional subspaces $Z_{\sigma}, Z_{\overline{\sigma}}$ on which $\Hg(\calA_\aunder)_\C$ acts via the standard representation. The proof of Abdulali~\cite[Theorem 4.1]{MR1715177} shows that the Hodge classes in the cohomology of $\calA_\aunder^k$ are generated by divisors and by $\left(\tbigwedge^{g}Z_\sigma \right)^k \oplus \left(\tbigwedge^{g}Z_{\overline{\sigma}} \right)^k$. We have already seen that the classes in $\tbigwedge^{g}Z_\sigma, \tbigwedge^{g}Z_{\overline{\sigma}} \subset H^g(\calA_\aunder(\C), \C)$ can be represented by cycles defined over $K(\zeta_8)$, so it suffices to show that the same is true for all divisor classes on $\calA_\aunder^k$. To see this, recall that divisor classes are precisely the Hodge classes in $H^2(\calA_\aunder^k(\C), \C) \simeq \tbigwedge^2\left(H^1(\calA_\aunder(\C), \C)^{\oplus k} \right)$. This space is a direct sum of copies of $\tbigwedge^2 H^1(\calA_\aunder(\C), \C) \simeq H^2(\calA_\aunder(\C), \C)$ and $H^1(\calA_\aunder(\C), \C) \otimes H^1(\calA_\aunder(\C), \C)$. Hodge classes in $H^2(\calA_\aunder(\C), \C)$ are in particular Hodge classes on $\calA_\aunder(\C)$, hence they are defined over $K(\zeta_8)$ by the above (and in fact, even over $K$, since the class of the polarization of $\calA_\aunder$, which generates $\mathcal{A}^2$, is defined over $K$). Hodge classes in $H^1(\calA_\aunder(\C), \C) \otimes H^1(\calA_\aunder(\C), \C)$ can be identified with endomorphisms of $\calA_\aunder(\C)$, and these are all defined over $K(i)$. This concludes the proof of the claim.

Since we have already shown the Tate conjecture for all powers of $\calA_\aunder$, we are in a position to apply~\Cref{cor:apres proposition}, which finally proves $K(\varepsilon_{A_\aunder}) = K(\zeta_8)$.
\end{proof}

\subsection{A function field computation}\label{appendix:A}

We complete the argument of~\cref{sect:SatoTateField} by proving that $-\hat{f}|_{P_0}$ is a fourth power in the function field $K(P_0)$. We keep all the notation from~\cref{sect:SatoTateField}.

As (natural) generators for the function field $K( \Sym^g(\QuotC) )$ we may take the elementary symmetric functions $e_1, \ldots, e_g$ in the $g$ variables $t_1,\ldots,t_g$. To make the notation more uniform we also set $e_0=1$. From the well-known relation between coefficients of a polynomial and symmetric functions of its roots we obtain $e_i = (-1)^i \frac{Z_{g-i}}{Z_0}$, so that the equations defining $P_0$ inside $\Sym^g(\QuotC)$ give
\begin{equation}\label{eq:EquationsP0SymmetricFunctions}
e_i + e_{g-i} = (-1)^{g/2+i} c_i e_{g/2}
\end{equation}
upon restriction to $K(P_0)$. Notice in particular that the equation corresponding to $i=0$ (or equivalently $i=g$) gives $e_g = -1$ (by~\Cref{rmk:coefficients} we have $c_g=c_0=0$). In order to compute $\hat{f}|_{P_0}$ we then need to write $\hat{f}$ as a rational function $h(e_1,\ldots,e_g)$ of the elementary symmetric functions, and then evaluate it at
\[
e_{g-i} = -e_i + (-1)^{g/2+i} c_i e_{g/2} \quad \text{ for } i=0,\ldots,\frac{g}{2}-1.
\]
We start by expressing $\hat{f}$ in terms of symmetric functions. Set for simplicity $\delta_j = -\beta_j$.
We have
\begin{equation}\label{eq:main equation}
\begin{aligned}
\prod_{i=1}^g (t_i-\gamma)^{2g+4} \cdot\hat{f} & =  \prod_{i=1}^g \left( t_i(t_i^2-1)^2 \prod_{j=1}^{d} (t_i-\beta_j)^2\left(t_i-\frac{1}{\beta_j} \right)^2 \right) \\
& =  e_g \prod_{i=1}^g (1+t_i)^2 \cdot  \prod_{i=1}^g(1-t_i)^2 \cdot \prod_{j=1}^{d}\prod_{i=1}^g  (t_i + \delta_j)^2 \left(t_i+\frac{1}{\delta_j} \right)^2 \\
& = e_g \left( \sum_{i=0}^g e_i \right)^2 \cdot  \left( \sum_{i=0}^g (-1)^i e_i \right)^2 \cdot \prod_{j=1}^{d}\left( \sum_{i=0}^g \delta_j^i e_{g-i} \right)^2 \left( \sum_{i=0}^g \delta_j^{-i} e_{g-i} \right)^2.
\end{aligned}
\end{equation}

Notice that, as $g$ is even, $\prod_{i=1}^g (t_i-\gamma)^{2g+4}$ is obviously a fourth power.
We now evaluate every factor on the right upon pull-back to $K(P_0)$. Clearly $e_g$ is simply $-1$. Our task is therefore to show that all the remaining factors multiply up to a fourth power.
Next we have
\[
\begin{aligned}
\left( \sum_{i=0}^g e_i \right) \cdot  \left( \sum_{i=0}^g (-1)^i e_i \right) & = \left( \frac{1}{2} \sum_{i=0}^g (e_i+e_{g-i}) \right) \cdot  \left( \frac{1}{2} \sum_{i=0}^g (-1)^i (e_i+e_{g-i}) \right) \\
& = \left( \frac{1}{2} \sum_{i=0}^g (-1)^{g/2+i}c_i e_{g/2} \right) \cdot  \left( \frac{1}{2} \sum_{i=0}^g (-1)^{g/2}c_i e_{g/2} \right) \\
& =  \left( \sum_{i=0}^g (-1)^{i}c_i   \right)  \left(  \sum_{i=0}^g c_i \right) \cdot \frac{e_{g/2}^2}{4} \\
& = q_1(1,1)q_1(-1,1) \frac{e_{g/2}^2}{Z_{g/2}(q_1)^2},
\end{aligned}
\]
where in the last equality we used~\Cref{lemma:Coefficientsci}. Finally, from the definition of $q_1(X,Y)$ we obtain
\[
\begin{aligned}
\left( \sum_{i=0}^g e_i \right) \cdot  \left( \sum_{i=0}^g (-1)^i e_i \right) & =\prod_{j=1}^d (1-\delta_j^2)(1-\delta_j^{-2}) \cdot \frac{e_{g/2}^2}{Z_{g/2}(q_1)^2} \\
& =(-1)^d \prod_{j=1}^d (\delta_j-\delta_j^{-1})^2 \cdot \frac{e_{g/2}^2}{Z_{g/2}(q_1)^2}.
\end{aligned}
\]
Finally we consider
\[
\begin{aligned}
\left( \sum_{i=0}^g \delta_j^i e_{g-i} \right) & \left( \sum_{i=0}^g \delta_j^{-i} e_{g-i} \right)= \\
& = \left( \sum_{i=0}^d (\delta_j^i e_{g-i} + \delta_j^{g-i}e_i) + \delta_j^{g/2} e_{g/2} \right)
\left( \sum_{i=0}^d (\delta_j^{-i} e_{g-i} + \delta_j^{i-g}e_i) + \delta_j^{-g/2} e_{g/2} \right).
\end{aligned}
\]
Replacing $e_{g-i}$ by $-e_i + (-1)^{g/2+i} c_i e_{g/2}$ and multiplying the first (resp.~second) term by $\delta_j^{-g/2}$ (resp.~$\delta_j^{g/2}$) we obtain
\[
\begin{aligned}
\left( \sum_{i=0}^g \delta_j^i e_{g-i} \right) & \left( \sum_{i=0}^g \delta_j^{-i} e_{g-i} \right)= \\
& = \left( \sum_{i=0}^d e_i (\delta_j^{g/2-i} - \delta_j^{i-g/2}) + e_{g/2} + (-1)^{g/2}e_{g/2} \sum_{i=0}^d \delta_j^{i-g/2} (-1)^{i} c_i \right) \cdot \\
& \quad \, \left( \sum_{i=0}^d e_i (\delta_j^{i-g/2} - \delta_j^{g/2-i}) + e_{g/2} + (-1)^{g/2} e_{g/2} \sum_{i=0}^d \delta_j^{g/2-i} (-1)^{i} c_i \right).
\end{aligned}
\]
We will show that the two factors in this last expression are negative of each other, so that their product is minus a square. It is clear that
\[
\sum_{i=0}^d e_i (\delta_j^{g/2-i} - \delta_j^{i-g/2}) = - \sum_{i=0}^d e_i (\delta_j^{i-g/2} - \delta_j^{g/2-i}).
\]
We now claim that
\begin{equation}\label{eq:SurprisingSymmetry}
e_{g/2} + (-1)^{g/2}e_{g/2} \sum_{i=0}^d \delta_j^{i-g/2} (-1)^{i} c_i = -\left(  e_{g/2} + (-1)^{g/2} e_{g/2} \sum_{i=0}^d \delta_j^{g/2-i} (-1)^{i} c_i \right).
\end{equation}
To show this, we begin by factoring out $e_{g/2}$, multiplying both sides by $(-1)^{g/2} \delta_j^{g/2}$, and bringing everything to the left hand side: our claim can then be rewritten as
\[
2 (-1)^{g/2} \delta_j^{g/2} +  \sum_{i=0}^d \delta_j^{i} (-1)^{i} c_i + \sum_{i=0}^d \delta_j^{g-i} (-1)^{i} c_i = 0.
\]
Recalling that $c_{g-i}=c_i$ and $c_{g/2}=2$, and using~\Cref{lemma:Coefficientsci} again, the above expression may be rewritten as
\[
\sum_{i=0}^g c_i(-\delta_j)^{i} = \frac{2}{Z_{g/2}(q_1)} q_1(-\delta_j,1) = \frac{2}{Z_{g/2}(q_1)} q_1(\beta_j,1)=0,
\]
as claimed. Inserting this information in our previous computations we obtain
\[
\begin{aligned}
\left( \sum_{i=0}^g \delta_j^i e_{g-i} \right) & \left( \sum_{i=0}^g \delta_j^{-i} e_{g-i} \right)= \\
& = - \left( \sum_{i=0}^d e_i (\delta_j^{g/2-i} - \delta_j^{i-g/2}) + e_{g/2} + (-1)^{g/2}e_{g/2} \sum_{i=0}^d \delta_j^{i-g/2} (-1)^{i} c_i \right)^2.
\end{aligned}
\]
Using~\cref{eq:SurprisingSymmetry} again, we further rewrite this as
\begin{equation} \label{eqn:iodeltagta}
- \left( \sum_{i=0}^d e_i \frac{\delta_j^{g/2-i} - \delta_j^{i-g/2}}{\delta_j - \delta_j^{-1}} + \frac{1}{2}(-1)^{g/2}e_{g/2} \sum_{i=0}^d  (-1)^{i} c_i \frac{\delta_j^{i-g/2} - \delta_j^{g/2-i}}{\delta_j - \delta_j^{-1}} \right)^2 (\delta_j - \delta_j^{-1})^2.
\end{equation}
Recall the well-known fact that for every $k \geq 0$ the expression $\frac{\delta_j^k - \delta_j^{-k}}{\delta_j - \delta_j^{-1}}$ is a polynomial in $(\delta_j + \delta_j^{-1})$. Notice that the coefficients of this polynomial are independent of $j$, so we write the first factor in \eqref{eqn:iodeltagta} as $\tilde{Q}(\delta_j+\delta_j^{-1})$, where $\tilde{Q}$ is a polynomial in $K(P_0)[x]$.
It follows that the product
\[
\begin{aligned}
Q & \colonequals \prod_{j=1}^g (-1)\left( \sum_{i=0}^d e_i \frac{\delta_j^{g/2-i} - \delta_j^{i-g/2}}{\delta_j - \delta_j^{-1}} + \frac{1}{2}(-1)^{g/2}e_{g/2} \sum_{i=0}^d  (-1)^{i} c_i \frac{\delta_j^{i-g/2} - \delta_j^{g/2-i}}{\delta_j - \delta_j^{-1}} \right) 
\\&= \prod_{j} \tilde{Q}(\delta_j+\delta_j^{-1})
\end{aligned}
\]
is a symmetric function of the variables $\{\delta_1+\delta_1^{-1},\ldots,\delta_g+\delta_g^{-1}\}$, and therefore, it is a rational function (with coefficients in $K(e_1,\ldots,e_d)=K(P_0)$) of the coefficients of the polynomial $\prod_{j=1}^g (t+\delta_j)(t+\delta_j^{-1})$. But this polynomial is in $K[t]$ by definition, so $Q \in K(P_0)$. Putting everything together,~\cref{eq:main equation} gives
\[
\prod_{i=1}^g (t_i-\gamma)^{2g+4} \cdot \hat{f}|_{P_0} = - \frac{e_{g/2}^4}{Z_{g/2}(q_1)^4} Q^4 \prod_{j=1}^d (\delta_j - \delta_j^{-1})^8.
\]
It just remains to notice that
\[
R \colonequals \prod_{j=1}^d (\delta_j - \delta_j^{-1})^2
\]
is invariant under $\delta_j \leftrightarrow \delta_j^{-1}$ for all $j$ and under all permutations of the indices, hence lies in $K$. Therefore $\prod_{j=1}^d (\delta_j - \delta_j^{-1})^8=R^4$ is a fourth power in $K(P_0)$, as claimed.

\section{Proof of main result}\label{sec: higher genera}

\subsection{Generic monodromy: overview and setup}\label{subsec: generic monodromy}

Recall from~\Cref{setup} the family $\calC^{(g)} \to \AAp$, its generic fiber $C^{(g)}$ over $\Q(a)$, and the Jacobian $\calA^{(g)}=\Jac C^{(g)}$.  (Notice that $\calK$ can be considered as a finitely-generated subfield of $\C$, so all the constructions and considerations of \cref{subsec: abelian varieties} apply.)

Recalling~\Cref{def:FullyLefschetz}, our main goal in this section is to prove the following theorem.

\begin{theorem}\label{thm:GenericMonodromy}
For every even integer $g \geq 4$ the geometric endomorphism algebra of $A^{(g)}$ is $(-1,-1\,|\,\Q)$ and $A^{(g)}$ is fully of Lefschetz type.
\end{theorem}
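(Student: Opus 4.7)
The plan is to establish both assertions---that $(\End A^{(g),\al})_\Q \simeq (-1,-1\,|\,\Q)$ and that $A^{(g)}$ is fully of Lefschetz type---jointly by induction on the even integer $g$, with $g \in \{4, 6\}$ handled as base cases by direct computation and $g \geq 8$ treated by propagation along a degeneration.

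For the base cases the endomorphism algebra claim combines two inputs: the $Q_8$-action \eqref{eq: fundamental automorphisms} already provides $(-1,-1\,|\,\Q) \hookrightarrow (\End A^{(g),\al})_\Q$, and endomorphism algebras inject under specialization of the generic fiber to a closed point, so Theorem \ref{thm:EndomorphismsGenus4} (for $g = 4$, via the fiber at $a = 1/2$) and Proposition \ref{prop:Genus6} (for $g = 6$, via the fiber at $a = (1/2, 0)$) pin down the generic endomorphism algebra. For the Mumford--Tate conjecture, I would invoke Moonen--Zarhin \cite{MR1324634} in the case $g = 4$; for $g = 6$ the relative dimension is $3$, which is odd, and one checks that our case is not on the exceptional list of Banaszak--Gajda--Krasoń \cite{MR2663452}. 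Finally, the equality $\Hg(A^{(g)}) = L(A^{(g)})$ reduces to a dimension count: Lemma \ref{lemma:Lefschetz} identifies $L(A^{(g)})$ as a $\Q$-form of $\SO_g$ of dimension $(g^2-g)/2$, and the explicit complex structure of Section \ref{sec: complex structure} (together with the $Q_8$-action) forces $\Hg$ to be at least this large.

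For the induction step, assume the theorem for $g - 2$ and choose a closed point $a^\circ \in U(\Qbar)$ where the polynomial $f^{(g)}(x)$ of \Cref{setup} degenerates in a controlled way---concretely, so that $\calC^{(g)}_{a^\circ}$ acquires, up to normalization, a component whose $\calO$-action identifies with a curve of the form $\calC^{(g-2)}_{a'}$ together with an auxiliary factor of dimension $2$ with $\calO$-action already understood. The goal is to arrange that $\calA^{(g)}_{a^\circ}$ is $\calO$-equivariantly isogenous (after base extension) to $\calA^{(g-2)}_{a'} \times E$, so that the inductive hypothesis on $\calA^{(g-2)}_{a'}$ and Commelin's compatibility of MT with products \cite{MR4009176} describe both invariants at $a^\circ$. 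To transfer these conclusions to the generic fiber I would invoke Zywina's density theorem on $\ell$-adic monodromy in families over a rational base \cite{ZywinaFamilies}: the $\ell$-adic monodromy group of a density-$1$ set of rational specializations coincides with that of the generic fiber, and combined with specialization-injection for endomorphisms, this forces $(\End A^{(g),\al})_\Q = (-1,-1\,|\,\Q)$ and then, by dimension against Lemma \ref{lemma:Lefschetz}, $\Hg(A^{(g)}) = L(A^{(g)})$; MT for $A^{(g)}$ follows from the analogous monodromy-transfer applied to $\calG_{\calA^{(g)}_{a^\circ},\ell}^0$ and $\MT(A^{(g)})_{\Q_\ell}$.

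The main obstacle I anticipate is producing a genuine $\calO$-equivariant degeneration of $\calC^{(g)}$ onto $\calC^{(g-2)}$-type data: for $g \geq 8$ the parameters $a_j$ must be tuned so that $f^{(g)}(x)$ specializes into a compatible product, and one has to verify on the normalization of the resulting stable curve that the Jacobian actually splits in a manner respecting both the $Q_8$-action and the principal polarization, rather than merely becoming isogenous to a product after losing the symmetry. A second, finer technical point is that the Mumford--Tate conjecture for simple Type III abelian varieties of even relative dimension $\geq 2$ is not in the literature, so the induction genuinely needs to carry MT forward; Commelin's compatibility together with Zywina's density gives the mechanism, but one must confirm their hypotheses apply at each stage and that no enlargement of the Hodge or monodromy groups sneaks in along the specialization.
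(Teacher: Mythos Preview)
Your outline has the right architecture---base cases by explicit computation plus citation of \cite{MR1324634} and \cite{MR2663452}, then induction via a degeneration---but the inductive step has a genuine gap: a single degeneration to $\calA^{(g-2)}_{a'} \times E$ with $\dim E = 2$ does not determine the generic endomorphism algebra. Specialization-injection gives only $\End(A^{(g),\al})_\Q \hookrightarrow (-1,-1\,|\,\Q) \times \End(E^\al)_\Q$, and together with the inclusion $(-1,-1\,|\,\Q) \hookrightarrow \End(A^{(g),\al})_\Q$ this still permits $A^{(g),\al} \sim X \times Y$ with $X$ simple of dimension $g-2$ and $Y$ an abelian surface (necessarily $Y \sim E_0^2$ for the CM curve $E_0 \colon y^2=x^3-x$, since $(-1,-1\,|\,\Q)$ must act on $Y$). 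Nothing in your single degeneration rules this out. The paper closes this gap with a \emph{second}, incompatible degeneration, to $X' \times E_c^4$ with $X'$ simple of dimension $g-4$ and $E_c$ an elliptic curve whose $j$-invariant depends on a free parameter $c$: if $E_0$ were a generic factor it would persist in every specialization, but for suitable $c$ the curve $E_c$ is non-CM and $X'$ has no elliptic factors, a contradiction. This is why the induction must reach back to both $g-2$ and $g-4$.

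There is also a subtler issue with what you are inducting on. Your hypothesis ``the theorem for $g-2$'' speaks only of the generic fiber of $\calA^{(g-2)}$, but your degeneration lands on a \emph{specific} fiber $\calA^{(g-2)}_{a'}$; you need a rational point $a'$ at which the endomorphism algebra and Mumford--Tate conjecture are already as desired. That is the content of \Cref{thm:MainHigherGenus} (a density-$1$ statement about closed fibers), not of \Cref{thm:GenericMonodromy}. The paper therefore runs the induction on \Cref{thm:MainHigherGenus}, proving \Cref{thm:GenericMonodromy} as an intermediate step and then deducing \Cref{thm:MainHigherGenus} in genus $g$ from it via Zywina's theorem. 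Finally, once simplicity is in hand, the paper does not use Commelin to transfer Mumford--Tate upward; instead it bounds the rank of $\calG^0_{A^{(g)},\ell}$ from below by the rank at the $(g-2)$-degeneration (which is $g/2+1$), matches this against $\operatorname{rank}(L(A^{(g)})\cdot\mathbb{G}_m)$, and invokes Wintenberger's lemma \cite[Lemma~7]{MR1944805} on connected reductive subgroups with equal rank and equal centralizer.
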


We will conclude the main result~\Cref{thm:MainHigherGenus} from this theorem, as follows.
The proof of~\Cref{thm:GenericMonodromy} is by induction, but in the inductive step we will rely on~\Cref{thm:MainHigherGenus} for genera $g-2$ and $g-4$. The reasoning is not circular: we will show below (\cref{sect:VariationOfGaloisInFamilies}) that~\Cref{thm:GenericMonodromy} for genus $g$ implies~\Cref{thm:MainHigherGenus} for genus $g$, see~\Cref{fig:flow}. Finally, in~\Cref{lemma: thm generic monodromy holds for 4 and 6} we will also show~\Cref{thm:GenericMonodromy} for $g=4, 6$ without relying on~\Cref{thm:MainHigherGenus}, which will complete the inductive proof of both results.

\begin{figure}[ht]
    \centering
    \begin{tikzpicture}
        \node[draw,rectangle,rounded corners=3pt] (0) at (0,3)  {\Cref{thm:MainHigherGenus} in genus $g-2, g-4$ };
        \node[draw,rectangle,rounded corners=3pt] (1) at (0,0)  {\Cref{prop:GeometricallySimpleHigherGenus}};
        \node[draw,rectangle,rounded corners=3pt] (2) at (8,0)  {\Cref{thm:GenericMonodromy} in genus $g$};
        \node[draw,rectangle,rounded corners=3pt] (3) at (8,3)  {\Cref{thm:MainHigherGenus} in genus $g$};
        \draw [-{Implies},double distance=3pt,line width=1pt] (0)--(1);
        \draw [-{Implies},double distance=3pt,line width=1pt] (1)--(2);
        \draw [-{Implies},double distance=3pt,line width=1pt] (2)--(3);
    \end{tikzpicture}
    \caption{Flow of the proof of~\Cref{thm:MainHigherGenus}}
    \label{fig:flow}
\end{figure}

Before embarking, we discuss specializations of abelian schemes (and their endomorphisms) in our geometric setting.  Let $K$ be a number field, let $U$ be a normal integral scheme over $K$, let $\mathcal{A} \to U$ be an abelian scheme of relative dimension $g \geq 1$, and let $\overline{\eta}$ be the geometric generic point of $U$.
There is a representation $\rho_{\mathcal{A}}$ of the \'etale fundamental group $\pi_1(U, \overline{\eta})$ with values in $\operatorname{Aut}( \textstyle{\varprojlim}_m \mathcal{A}_{\overline{\eta}}[m]) \simeq \operatorname{GL}_{2g}(\widehat{\mathbb{Z}})$ \cite[Introduction]{ZywinaFamilies}.  For a prime $\ell$, we may also construct an $\ell$-adic representation of the fundamental group
\begin{equation}
\rho_{\mathcal{A}, \ell} \colon \pi_1(U, \overline{\eta}) \to \operatorname{Aut}\Bigl( \varprojlim_m \mathcal{A}_{\overline{\eta}}[\ell^m]\Bigr) \simeq \operatorname{GL}_{2g}(\mathbb{Z}_\ell).
\end{equation}
The two constructions are compatible in an obvious sense: $\rho_{\mathcal{A}, \ell}$ is the composition of $\rho_{\mathcal{A}}$ with the natural projection $\operatorname{GL}_{2g}(\widehat{\mathbb{Z}}) \to \operatorname{GL}_{2g}(\mathbb{Z}_\ell)$.
For every $u \in U(K)$ there is a homomorphism $u_* : \Gal_K =\pi_1(\Spec u, \Spec \overline{u}) \to \pi_1(U,\overline{\eta})$, well-defined up to conjugacy, such that the adelic Galois representation attached to the abelian variety $\mathcal{A}_u$ may be identified (up to conjugacy) with the composition $\rho_\mathcal{A} \circ u_*$. Similarly, we have $\rho_{\mathcal{A}, \ell} \circ u_* = \rho_{\mathcal{A}_u, \ell}$.

\begin{lemma}\label{lem:specialization}
The image of the adelic representation $\pi_1(U, \overline{\eta}) \to \Aut(\varprojlim_m \mathcal{A}_{\overline{\eta}}[\ell^m] )$ attached to the abelian scheme $\mathcal{A} \to U$ coincides with the image of the adelic representation $\rho_A : \Gal_{K(U)} \to \Aut(\varprojlim_m \mathcal{A}_{\overline{\eta}}[\ell^m] )$ attached to the abelian variety $A/K(U)$, where $A$ is the generic fiber of $\mathcal{A}$.
\end{lemma}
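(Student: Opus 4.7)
The plan is to exhibit a canonical factorization $\rho_A = \rho_{\mathcal{A},\ell} \circ \pi$ through a surjection $\pi \colon \Gal_{K(U)} \twoheadrightarrow \pi_1(U,\overline{\eta})$; once this is in hand, the equality of images is immediate.

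First I would set up the surjection. Fix a geometric point $\overline{\eta} \to U$ above the generic point $\eta = \Spec K(U)$; concretely, choose an embedding $K(U) \hookrightarrow \overline{K(U)}$ and let $\overline{\eta} = \Spec \overline{K(U)}$. Since $U$ is normal and integral, the functor ``pull back to $\overline{\eta}$'' identifies the category of finite \'etale covers of $U$ with a full subcategory of finite separable $K(U)$-algebras, namely those that are unramified along $U$. This classical fact (see SGA 1, Expos\'e V) yields a canonical continuous surjection
\begin{equation*}
\pi \colon \Gal_{K(U)} = \pi_1(\eta, \overline{\eta}) \twoheadrightarrow \pi_1(U, \overline{\eta}),
\end{equation*}
whose kernel is the closed normal subgroup generated by the inertia groups at the geometric points of $U$ of codimension one (and their specializations).

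Next I would invoke the \'etale nature of torsion in an abelian scheme. Since $K$ has characteristic $0$, the prime $\ell$ is invertible on $U$, so the multiplication-by-$\ell^m$ map $[\ell^m] \colon \mathcal{A} \to \mathcal{A}$ is a finite \'etale isogeny; consequently the $\ell^m$-torsion subscheme $\mathcal{A}[\ell^m] \to U$ is a finite \'etale cover for every $m \geq 1$. The Galois action of $\Gal_{K(U)}$ on the geometric fiber $\mathcal{A}_{\overline{\eta}}[\ell^m]$ is, by definition, the action induced on this fiber by the covering $\mathcal{A}[\ell^m] \to U$; by the description of $\pi_1(U,\overline{\eta})$ as the automorphism group of the fiber functor on finite \'etale covers, this action factors through $\pi(\Gal_{K(U)}) = \pi_1(U,\overline{\eta})$. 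Passing to the inverse limit over $m$, we obtain a commutative diagram
\begin{equation*}
\xymatrix{
\Gal_{K(U)} \ar@{->>}[r]^-{\pi} \ar[rd]_-{\rho_A} & \pi_1(U, \overline{\eta}) \ar[d]^-{\rho_{\mathcal{A}, \ell}} \\
 & \Aut\bigl(\varprojlim_m \mathcal{A}_{\overline{\eta}}[\ell^m]\bigr).
}
\end{equation*}

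Finally, since $\pi$ is surjective, the commutativity of this diagram gives $\rho_A(\Gal_{K(U)}) = \rho_{\mathcal{A},\ell}(\pi_1(U,\overline{\eta}))$, which is exactly the stated equality of images. I do not foresee a real obstacle here; the only care needed is in fixing compatible base points so that both representations land in $\Aut(\varprojlim_m \mathcal{A}_{\overline{\eta}}[\ell^m])$ literally (not merely up to conjugacy), which is automatic once $\overline{\eta}$ is used as the base point for both $\pi_1(\eta,-)$ and $\pi_1(U,-)$.
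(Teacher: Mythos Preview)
Your proof is correct and follows essentially the same approach as the paper: use the surjectivity of the natural map $\pi_1(\eta,\overline{\eta}) \to \pi_1(U,\overline{\eta})$ (the paper cites Stacks Project tag 0BQM, you cite SGA~1 via normality of $U$) together with the factorization $\rho_A = \rho_{\mathcal{A}} \circ \eta_*$. Your version simply unpacks more of the details (why the torsion is finite \'etale, why the action factors through $\pi_1(U)$) that the paper leaves implicit.
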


\begin{proof}
Let $\eta = \Spec K(U)$ be the generic point of $U$ and $\overline{\eta}$ be the corresponding geometric generic point.
By~\cite[\href{https://stacks.math.columbia.edu/tag/0BQM}{Proposition 0BQM}]{Stacks}, the natural map $\eta_* : \pi_1( \eta, \overline{\eta} ) \to \pi_1( U, \overline{\eta} )$ is surjective. The claim follows from the formula
$
\rho_{A} = \rho_{\mathcal{A}} \circ \eta_*.
$
\end{proof}

In particular, one may view the image of the Galois representation attached to $\mathcal{A}_u$ as a subgroup of $\rho_{\mathcal{A}}(\pi_1(U, \overline{\eta}))$. When $U$ is an open subscheme of $\mathbb{P}^n_K$, for most $u \in U(K)$ this inclusion has small index, as made precise by the following result.

\begin{theorem}[{Zywina~\cite[Theorem 1.1]{ZywinaFamilies}}]\label{thm:ZywinaFamilies}
Let $U$ be an open subscheme of $\mathbb{P}^n_K$. Let $H$ be the absolute multiplicative height on $\mathbb{P}^n(K)$. Then there exists a constant $c$ such that
\begin{equation}\label{eq:FullDensity}
\lim_{x \to \infty} \frac{\# \{u \in U(K) : H(u) \leq x, [\rho_{\mathcal{A}}( \pi_1(U,\overline{\eta}) )  : \rho_{\mathcal{A}_u} (\Gal_K) ] \leq c\} }{\# \{u \in U(K) : H(u) \leq x \}} = 1.
\end{equation}
\end{theorem}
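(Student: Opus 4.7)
The plan is to combine a Hilbert-irreducibility-type argument with a large sieve over residue characteristics, using the structure of the full monodromy image as a key organizing principle. Set $G \colonequals \rho_{\mathcal{A}}(\pi_1(U, \overline{\eta}))$, a closed subgroup of $\operatorname{GL}_{2g}(\widehat{\mathbb{Z}})$, and for each prime $\ell$ let $G_\ell$ be its projection to $\operatorname{GL}_{2g}(\mathbb{Z}_\ell)$ and $\overline{G}_\ell$ the further reduction modulo $\ell$. For a rational point $u \in U(K)$, the containment $\rho_{\mathcal{A}_u}(\Gal_K) \leq G$ from \Cref{lem:specialization} reduces the problem to bounding, uniformly in $u$, the index $[G : \rho_{\mathcal{A}_u}(\Gal_K)]$. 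The first step would be to show that it suffices to bound the index at each prime $\ell$, i.e., the index $[G_\ell : \rho_{\mathcal{A}_u, \ell}(\Gal_K)]$, and moreover that for all but finitely many $\ell$ this index is controlled by the mod-$\ell$ index $[\overline{G}_\ell : \overline{\rho}_{\mathcal{A}_u, \ell}(\Gal_K)]$; this reduction uses the fact that, for large $\ell$, the groups $G_\ell$ are as reductive/maximal as possible (a quantitative open image theorem à la Serre applied to the generic fibre).

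Next, for each fixed $\ell$, I would invoke Hilbert's irreducibility theorem applied to the finite étale cover of $U$ corresponding to a proper maximal subgroup of $\overline{G}_\ell$: this shows that the density of $u \in U(K)$ with $\overline{\rho}_{\mathcal{A}_u, \ell}(\Gal_K) \neq \overline{G}_\ell$ is zero, and more precisely gives an effective bound
\begin{equation*}
\#\{u \in U(K) : H(u) \leq x,\ \overline{\rho}_{\mathcal{A}_u, \ell}(\Gal_K) \subsetneq \overline{G}_\ell\} \ll \frac{x^{n+1}}{|\overline{G}_\ell|^{\kappa}} + (\text{lower order})
\end{equation*}
for some $\kappa > 0$, via counting points in the various fibres of the cover. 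Combined with the Lang--Weil bound for the number of conjugacy classes of proper subgroups of $\overline{G}_\ell$ (using again the structure of $\overline{G}_\ell$ as the $\mathbb{F}_\ell$-points of a fixed algebraic group), one obtains quantitative estimates depending on $\ell$ in a controllable way.

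The final and most technical step is to combine these $\ell$-by-$\ell$ bounds into a single uniform statement via the large sieve inequality over number fields (in the form developed by Serre and refined by Zywina and others). One partitions primes into a finite "bad" set (of size $O(1)$, absorbed into the constant $c$) together with a tail of good primes, and uses the sieve to show that the set of $u \in U(K)$ whose mod-$\ell$ image is deficient for \emph{some} $\ell$ outside the bad set has density zero, with the density bound saving a power of $x$. The main obstacle will be obtaining the uniformity in $\ell$: one needs that the contribution from each individual $\ell$ decays fast enough in $|\overline{G}_\ell|$ to be summable after applying the sieve, and this in turn rests on having a uniform description of the proper subgroups of $\overline{G}_\ell$ and a uniform Chebotarev-type input. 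Bookkeeping a single constant $c$ that simultaneously bounds the index at every $\ell$ for a density-$1$ set of $u$ is then a matter of packaging these estimates.
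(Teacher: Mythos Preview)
The paper does not prove this theorem at all: it is quoted verbatim as \cite[Theorem 1.1]{ZywinaFamilies} and used as a black box. So there is no ``paper's own proof'' to compare against; your proposal is an attempt to reconstruct Zywina's argument, not to match anything the authors wrote.

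That said, your sketch is broadly in the right spirit for this circle of ideas (Hilbert irreducibility plus a large-sieve argument over primes), but a few points deserve caution. First, the reduction from the adelic index $[G:\rho_{\mathcal{A}_u}(\Gal_K)]$ to a product of $\ell$-adic indices is not automatic: $G$ need not be an open subgroup of $\prod_\ell G_\ell$, so controlling each $G_\ell$-index separately does not by itself bound the adelic index. One typically needs a Goursat-type argument together with the fact that, for all $\ell$ outside a finite set, the groups $\overline{G}_\ell$ have no common simple quotients (this is where the structure theory of the images, via Larsen--Pink or Serre's results on Frattini covers, enters). Second, the displayed Hilbert-irreducibility bound with a saving of $|\overline{G}_\ell|^{-\kappa}$ is optimistic as stated; the usual quantitative versions save a factor depending on the degree of the relevant \'etale cover, and one must sum over maximal subgroups of $\overline{G}_\ell$, whose number grows with $\ell$. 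Getting a summable bound is exactly the delicate point, and it requires the uniform subgroup structure you allude to. None of this is wrong in outline, but the proposal underestimates the work needed to pass from ``density zero for each $\ell$'' to ``bounded index adelically for density one of $u$''.
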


Suppose now that $U \subseteq \A_\Q^d$ is a nonempty open subscheme.  Write $A$ for the generic fiber of $\mathcal{A}$ (over $\Q(U)$).  Consider further an integral, closed, normal subscheme $T \subseteq U$.  Let $\mathcal{A}_T \colonequals \mathcal{A} \times_U T$ and its special fibre $A_T$ over the function field of $T$.  We collect in the next lemma some basic properties of the specialization homomorphism.

\begin{lemma}\label{lem:specialization2}
The following statements hold.
\begin{enumalph}
\item The endomorphism ring of $\mathcal{A}$ coincides with the endomorphism ring of its generic fiber $A$.
\item The endomorphism ring of $\mathcal{A}$ injects into the endomorphism ring of $\mathcal{A}_T$.
\item The image of the $\ell$-adic monodromy representation attached to $\mathcal{A}_T$ (resp.~$A_T$) is contained in the image of the $\ell$-adic monodromy representation attached to $\mathcal{A}$ (resp.~$A$).
\item The endomorphism ring of $A$ (resp.~$A^{\al}$) injects into the endomorphism ring of $A_T$ (resp.~$A_T^{\al}$).
\end{enumalph}
\end{lemma}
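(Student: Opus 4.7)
The plan is to handle the four parts in order; (a) and (b) contain the main content, while (c) and (d) follow formally from them combined with \Cref{lem:specialization}. The unifying theme is that for an abelian scheme over a normal integral base, endomorphisms behave rigidly: they are determined by the generic fiber, and their specialization to any nonempty closed subscheme is injective.

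For part (a), I would invoke the standard fact that for an abelian scheme $\mathcal{A} \to U$ with $U$ normal and Noetherian, the restriction map $\End_U(\mathcal{A}) \to \End_{K(U)}(A)$ is a bijection (see e.g.\ Faltings--Chai, or Moret-Bailly). The nontrivial direction is extension: given an endomorphism on the generic fiber, one takes the closure of its graph inside $\mathcal{A} \times_U \mathcal{A}$ and uses normality of $U$ together with properness of $\mathcal{A}$ to show this closure is again the graph of a morphism of abelian schemes over $U$.

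For part (b), the key input is the classical rigidity result that $\underline{\Hom}_U(\mathcal{A}, \mathcal{A})$ is representable by an unramified, separated $U$-scheme. Two $U$-sections of this scheme that agree on any nonempty closed subscheme must agree on the connected components they meet, so by integrality (hence connectedness) of $U$ they agree globally. Applied to an endomorphism $\phi$ whose restriction to $\mathcal{A}_T$ vanishes, this forces $\phi = 0$, giving the desired injectivity.

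Part (c) follows from functoriality: the inclusion $T \hookrightarrow U$ (after choosing compatible geometric points) induces a map $\pi_1(T, \overline{\eta}_T) \to \pi_1(U, \overline{\eta})$, and the lisse $\Z_\ell$-sheaf $T_\ell(\mathcal{A})$ restricts along $T \hookrightarrow U$ to $T_\ell(\mathcal{A}_T)$; commutativity of the resulting square yields the containment of images at the level of the abelian schemes, and \Cref{lem:specialization} (applied to both $\mathcal{A}\to U$ and $\mathcal{A}_T \to T$, using that $T$ is normal) transfers it to the generic-fibre Galois representations. Part (d) then reduces to the chain $\End(A) \stackrel{\mathrm{(a)}}{=} \End(\mathcal{A}) \stackrel{\mathrm{(b)}}{\hookrightarrow} \End(\mathcal{A}_T) \stackrel{\mathrm{(a)}}{=} \End(A_T)$; for the geometric version, every $\phi \in \End(A^{\al})$ is defined over a finite extension $L$ of $K(U)$, so extends (by (a) applied to the normalization $U_L \to U$) to an endomorphism of the pullback of $\mathcal{A}$ to $U_L$, and (b) applied on $U_L$ relative to a component of $T \times_U U_L$ yields injectivity. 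The main obstacle is the rigidity statement underlying (b); once that is granted, the rest is essentially formal bookkeeping built on \Cref{lem:specialization}.
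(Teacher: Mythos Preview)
Your proposal is correct and follows essentially the same route as the paper: (a) via extension of endomorphisms over a normal base (the paper cites Faltings \cite[Lemma~1]{MR718935}), (b) via rigidity (the paper cites \cite[Corollary~6.2]{GIT} directly, whereas you phrase the same fact through representability of $\underline{\Hom}$ by an unramified separated $U$-scheme), (c) via functoriality of $\pi_1$ together with \Cref{lem:specialization}, and (d) by chaining (a) and (b) and, for the geometric case, passing to normalizations in a finite extension of $K(U)$. The only difference is the packaging of (b), and the two formulations are equivalent.
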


\begin{proof}
For (a), it suffices to show that any endomorphism of $\mathcal{A}$ that is defined over a nonempty open subscheme $V$ of $U$ extends to all of $U$, and this follows immediately from work of Faltings \cite[Lemma 1]{MR718935}.

For (b), we apply the rigidity lemma~\cite[Corollary 6.2]{GIT} (in the notation there, take $X=G=A$, $f=\phi$, and $g=0$).

For (c), we start with the case of $\mathcal{A}_T$ and $\mathcal{A}$. Notice that the given embedding $T \hookrightarrow U$ induces a map of fundamental groups $\pi_1(T, \overline{\eta_T}) \to \pi_1(U, \overline{\eta})$, well-defined up to conjugation (which takes care of the arbitrary choice of the base point $\overline{\eta_T}$), such that the following diagram commutes:
\[
\xymatrix{
\pi_1(T, \overline{\eta_T}) \ar[r] \ar[d]_{\rho_{\mathcal{A}_T, \ell}} & \pi_1(U, \overline{\eta_U}) \ar[d]^{\rho_{\mathcal{A}, \ell}} \\
\Aut( T_\ell \mathcal{A}_T ) \ar[r] & \Aut( T_\ell \mathcal{A} )
}
\]
The claim follows immediately.

The statement for $A_T$ and $A$ then follows from~\Cref{lem:specialization}, since the images of the Galois representations are the same for the scheme and the generic fiber.

Finally, we prove (d).  The statement concerning $A$ and $A_T$ is an immediate consequence of (a) and (b).  Let now $L$ be the subfield of $\Q(U)^\al$ over which all endomorphisms of $A$ are defined. Letting $U_L$ and $T_L$ be the normalizations of $U, T$ in $L$, we can apply (a) and (b) to $\mathcal{A}_{U_L}$ and $\mathcal{A}_{T_L}$ to get injectivity at the level of the geometric endomorphism rings.
\end{proof}

Suppose furthermore that $T$ is an open subscheme of $\mathbb{A}^1_{\Q}$ (in particular, $T$ is normal). Then $\mathcal{A}_T$ admits a N\'eron model $\mathfrak{A}$ over all of $\mathbb{A}^1_{\Q}$.
In this more specific situation we have the following lemma.

\begin{lemma}\label{lem:specialization3}
Let $y \in \mathbb{A}^1(\Qbar)$ be such that $\mathfrak{A}_y$ is an abelian variety.  Then the following statements hold.
\begin{enumalph}
\item The endomorphism ring of $A_T$ (resp.~$A_T^{\al}$) injects into the endomorphism ring of the special fibre $\mathfrak{A}_y$ of $\mathfrak{A}$ at $y$ (resp.~$\mathfrak{A}^{\al}_y$).
\item The image of the $\ell$-adic Galois representation attached to $\mathfrak{A}_y$ injects into the image of the $\ell$-adic Galois representation attached to $\mathcal{A}_T$.
\end{enumalph}
\end{lemma}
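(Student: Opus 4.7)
The plan is to leverage that $\mathfrak{A}_y$ being an abelian variety forces $\mathfrak{A}$ to have good reduction at $y$. By openness of the good reduction locus (or directly by the N\'eron mapping property), there is an open neighbourhood $V \subseteq \mathbb{A}^1_\Q$ of $y$ over which $\mathfrak{A}|_V$ is itself an abelian scheme. Uniqueness of N\'eron models gives $\mathfrak{A}|_W = \mathcal{A}_T|_W$ on the dense open $W \colonequals T \cap V$ of both $T$ and $V$. This geometric setup reduces both parts to combining standard specialisation theorems for abelian schemes with the same sort of base-change arguments already used in the proof of Lemma~\ref{lem:specialization2}.

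For part (a), I would first invoke Lemma~\ref{lem:specialization2}(a) (applied to the abelian schemes $\mathfrak{A}|_V \to V$ and $\mathcal{A}_T \to T$, both of which have normal bases) to obtain canonical identifications $\End(\mathfrak{A}|_V) \xrightarrow{\sim} \End(\mathfrak{A}|_W) \xleftarrow{\sim} \End(\mathcal{A}_T) \xrightarrow{\sim} \End(A_T)$. Composing with the specialisation map $\End(\mathfrak{A}|_V) \to \End(\mathfrak{A}_y)$, obtained by pulling endomorphisms back to the fibre at $y$, yields the desired homomorphism $\End(A_T) \to \End(\mathfrak{A}_y)$. Injectivity is the classical specialisation theorem for an abelian scheme with good reduction over a (henselisation of the) DVR $\mathcal{O}_{V,y}$: under the canonical isomorphism of $\ell$-adic Tate modules between generic and special fibre, an endomorphism reducing to zero acts as zero on the Tate module and is therefore itself zero. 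For the geometric version, I would pass to the finite extension $L \subseteq \Q(T)^{\al}$ over which all endomorphisms of $A_T^{\al}$ are defined, work on the normalisation of $W$ in $L$, and repeat the argument verbatim.

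For part (b), I would use the commutative diagram of \'etale fundamental groups attached to the open immersions $W \hookrightarrow T$ and $W \hookrightarrow V$, combined with the specialisation map at $y$. Since $T$ and $V$ are normal and $W$ is a dense open in each, both induced maps $\pi_1(W, \bar\eta) \twoheadrightarrow \pi_1(T, \bar\eta)$ and $\pi_1(W, \bar\eta) \twoheadrightarrow \pi_1(V, \bar\eta)$ are surjective. Under the canonical identification of Tate modules coming from the equality $\mathcal{A}_T|_W = \mathfrak{A}|_W$, the pullbacks of $\rho_{\mathfrak{A}|_V, \ell}$ and of $\rho_{\mathcal{A}_T, \ell}$ to $\pi_1(W, \bar\eta)$ agree. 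The specialisation map $\Gal_{\kappa(y)} \to \pi_1(V, \bar\eta)$ composed with $\rho_{\mathfrak{A}|_V, \ell}$ is precisely $\rho_{\mathfrak{A}_y, \ell}$ (up to the canonical isomorphism $T_\ell \mathfrak{A}_y \simeq T_\ell \mathfrak{A}|_V$), so its image sits inside $\mathrm{Im}(\rho_{\mathfrak{A}|_V, \ell})$, which by surjectivity of $\pi_1(W) \to \pi_1(V)$ equals $\mathrm{Im}(\rho_{\mathcal{A}_T,\ell}|_{\pi_1(W)}) \subseteq \mathrm{Im}(\rho_{\mathcal{A}_T,\ell})$. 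One finally identifies the latter with the image of the $\ell$-adic Galois representation attached to the generic fibre $A_T$ via Lemma~\ref{lem:specialization}.

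The main difficulty is not conceptual but one of bookkeeping: all of the Tate modules appearing in the argument (at the generic point of $T$, the generic point of $V$, over $W$, and at the closed point $y$) must be identified compatibly so that the group-theoretic containments hold on the nose once a common ambient group is fixed. A very minor secondary point is, in part (a), checking that the passage to the field of definition of geometric endomorphisms does not interfere with the hypothesis of good reduction used in the specialisation theorem, but this is automatic since the extension $L/\Q(T)$ is finite and everywhere unramified above a neighbourhood of $y$.
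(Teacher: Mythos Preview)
Your approach is correct and follows a somewhat different route from the paper's. Where you first pass to an open neighbourhood $V$ of (the image of) $y$ over which $\mathfrak{A}$ is an abelian scheme---thereby reducing everything to the setting of Lemma~\ref{lem:specialization2} plus a standard specialisation argument over a DVR---the paper instead works directly with the N\'eron model over the full $\mathbb{A}^1_\Q$: it extends endomorphisms from $\mathcal{A}_T$ to $\mathfrak{A}$ via the N\'eron mapping property, and proves injectivity of $\End(\mathfrak{A}) \to \End(\mathfrak{A}_y)$ by showing that any $\varphi$ in the kernel annihilates every torsion point (invoking injectivity of specialisation on torsion from \cite[\S 7.3, Proposition~3]{MR1045822}), hence is zero. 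For part~(b) the paper similarly obtains a Galois-equivariant identification of geometric torsion directly, rather than via the $\pi_1$ formalism. Your approach is more modular in that it recycles Lemma~\ref{lem:specialization2}; the paper's is more self-contained and sidesteps the (easy, but extra) verification that the good-reduction locus of the N\'eron model is open. Note incidentally that with $V$ chosen as $T \cup \{\bar y\}$ one always has $W = T \cap V = T$, so your setup simplifies slightly.

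One small correction: in your treatment of the geometric case in~(a), the claim that $L/\Q(T)$ is ``everywhere unramified above a neighbourhood of $y$'' is neither obviously true nor needed. Even if there is ramification above $y$, the normalisation $V_L$ of $V$ in $L$ is still a smooth curve (we are in characteristic zero), the base change $\mathfrak{A}|_V \times_V V_L$ is still an abelian scheme, and the specialisation argument at any point of $V_L$ lying over $y$ goes through unchanged.
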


\begin{proof}
By Faltings~\cite[Lemma 1]{MR718935}, the endomorphism ring of $\mathcal{A}_T$ is the endomorphism ring of $A_T$.
The universal property of N\'eron models implies that endomorphisms of $\mathcal{A}_T$ extend to $\mathfrak{A}$, so that $\End (A_T) = \End (\mathcal{A}_T) = \End(\mathfrak{A})$.

We now prove the desired injectivity. Let $R$ be a Dedekind domain containing $\Q$, $F=\operatorname{Frac}(R)$, let $A$ be an abelian variety over $F$, and let $\mathfrak{A}$ its N\'eron model over $R$. Let $y$ be a closed point of $\Spec R$ and let $\kappa(y)$ be the residue field at $y$. Suppose that $\mathfrak{A}_y$ is an abelian variety.
The specialization map $A(F) = \mathfrak{A}(R) \to \mathfrak{A}_y(\kappa(y))$ is injective on torsion points~\cite[Proposition 3 in \S 7.3]{MR1045822} (notice that all positive integers are invertible in $R$). We will prove in general that $\End( \mathfrak{A} ) \to \End(\mathfrak{A}_y)$ is injective. Note that the universal property of N\'eron models shows $\End( \mathfrak{A} ) = \End(A)$. To prove injectivity, take an element $\varphi$ in the kernel. Let $P \in A(F^\al)_{\tors}$ and let $L$ be a finite extension of $F$ over which $P$ is defined. Denote by $R_L$ the integral closure of $R$ in $L$ and by $\mathfrak{A}_L$ the N\'eron model of $A_L$. Fix a point $y_L$ of $\Spec R_L$ over $y$. Since $\mathfrak{A}_y$ is an abelian variety, $(\mathfrak{A}_L)_{y_L}$ is simply the base-change of $\mathfrak{A}_y$ to $\kappa(y_L)$. In particular, $\varphi$ induces zero also on $(\mathfrak{A}_L)_{y_L}$. Hence, the image of $\varphi(P))$ in $(\mathfrak{A}_L)(\kappa(y_L))$ is zero, but $\varphi(P) \in \mathfrak{A}_L(L)=A(L)$ is torsion, and specialization is injective on torsion points, so $\varphi(P)=0$. Since this holds for all torsion points $P$, the endomorphism $\varphi$ is zero, as claimed in part (a). The statement about geometric endomorphism rings is proved exactly in the same way.

For part (b), arguing as in part (a) and passing to the limit over all $L$ (equivalently, over all torsion points), we get canonical identifications $A(K^\al)_{\tors} \simeq (\mathfrak{A}_y)(\kappa(y)^\al)_{\tors}$. These identifications are clearly Galois-equivariant, and the claim follows.
\end{proof}

\Cref{lem:specialization2,lem:specialization3} imply the following.

\begin{proposition}\label{prop:SpecialisationArguments}
Let $U$ be an open subscheme of $\mathbb{A}^d_\Q$, and let $\mathcal{A}$ be an abelian scheme over $U$, with generic fibre $A$ over $\Q(U)$. Let $T$ be a closed subscheme of $U$ isomorphic to an open subscheme of $\A^1_\Q$. Let $\mathfrak{A} / \A^1_\Q$ be the N\'eron model for the restriction $\mathcal{A}_T$ of $\mathcal{A}$ to $T$. For every $x \in \A^1_\Q(\Q)$ such that the special fibre $\mathfrak{A}_x / \Q$ is an abelian variety, 
there are injective specialization maps $\End(A) \to \End(\mathfrak{A}_x)$ and $\End(A^\al) \to \End(\mathfrak{A}_x^\al)$. The image of the $\ell$-adic Galois representation attached to $\mathcal{A}$ contains the image of the $\ell$-adic Galois representation attached to $\mathfrak{A}_x$.
\end{proposition}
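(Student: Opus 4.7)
The plan is to deduce the proposition by concatenating the two preceding lemmas along the chain of abelian schemes
\[
\mathfrak{A}_x \;\longleftarrow\; \mathcal{A}_T \;\longleftarrow\; \mathcal{A},
\]
where the first arrow arises from specialization inside the N\'eron model over $\A^1_\Q$ and the second from restriction along the closed immersion $T \hookrightarrow U$. In particular, no new geometric input is required beyond what has already been established: the substance of the proposition is simply the observation that the two reduction steps compose.

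First I would apply \Cref{lem:specialization2} to the inclusion $T \subseteq U$, viewing $T$ as an integral, closed, normal subscheme of the open subscheme $U \subseteq \A^d_\Q$ (normality of $T$ is immediate since $T$ is an open subscheme of $\A^1_\Q$). Part (d) of that lemma yields injective specialization maps $\End(A) \hookrightarrow \End(A_T)$ and $\End(A^{\al}) \hookrightarrow \End(A_T^{\al})$, where $A_T$ denotes the generic fiber of $\mathcal{A}_T$ over the function field of $T$. Part (c) of the same lemma gives the containment of the image of the $\ell$-adic Galois representation of $A_T$ inside that of $A$.

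Next I would apply \Cref{lem:specialization3} to $T$ (an open subscheme of $\A^1_\Q$) and the N\'eron model $\mathfrak{A}$ of $\mathcal{A}_T$, at the point $x \in \A^1_\Q(\Q)$. By hypothesis $\mathfrak{A}_x$ is an abelian variety, so part (a) produces injective maps $\End(A_T) \hookrightarrow \End(\mathfrak{A}_x)$ and $\End(A_T^{\al}) \hookrightarrow \End(\mathfrak{A}_x^{\al})$, while part (b) shows that the image of the $\ell$-adic Galois representation attached to $\mathfrak{A}_x$ is contained in that of $\mathcal{A}_T$ (equivalently, by \Cref{lem:specialization}, of $A_T$).

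Finally, composing the two sets of injections gives the desired injective maps $\End(A) \hookrightarrow \End(\mathfrak{A}_x)$ and $\End(A^{\al}) \hookrightarrow \End(\mathfrak{A}_x^{\al})$, and chaining the two monodromy containments yields the stated containment of Galois images. There is no genuine obstacle here; the only point to verify carefully is the compatibility of the identifications used in each lemma (the N\'eron-model endomorphism ring with the generic-fiber endomorphism ring, and the fundamental-group monodromy image with the generic-fiber Galois image), both of which are already recorded in the proofs of \Cref{lem:specialization,lem:specialization2,lem:specialization3}.
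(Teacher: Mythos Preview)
Your proposal is correct and matches the paper's approach exactly: the paper simply states that \Cref{lem:specialization2,lem:specialization3} imply the proposition, and your argument is precisely the expected unpacking of that implication via the chain $\mathcal{A} \rightsquigarrow \mathcal{A}_T \rightsquigarrow \mathfrak{A}_x$.
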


\begin{remark}\label{rmk:SpecialisationSplittings}
Decompositions up to isogeny of an abelian variety $A$ correspond to idempotents in the endomorphism algebra of $A$. As a consequence, whenever endomorphism specialize injectively (such as in the situation of the previous proposition), splittings of abelian varieties also specialize.
\end{remark}

\subsection{Generically geometrically simple: base cases}

Recall the abelian variety $A^{(g)}$ from the beginning of~\cref{subsec: generic monodromy}.  Over the next two sections, we show that $A^{(g)}$ is geometrically simple.  We start by taking care of the special cases $g=4$ and $g=6$.  

\begin{lemma}\label{lemma: geometric endomorphism algebra in genera 4 and 6}
Let $g \in \{4,6\}$. Then the abelian variety $A^{(g)}$ has geometric endomorphism algebra $(-1,-1\,|\,\Q)$.  In particular, $A^{(g)}$ is geometrically simple.
\end{lemma}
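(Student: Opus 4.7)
The plan is to sandwich $\End(A^{(g),\al})_\Q$ between two copies of $(-1,-1\,|\,\Q)$ and then conclude simplicity from the fact that this algebra is a division algebra.

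First I would establish the lower bound. By \Cref{setup} together with the discussion of \cref{lemma: endomorphisms generic fiber}, the automorphisms $\alpha,\beta$ of the generic curve $C^{(g)}$, defined over $\calK(i)$, generate a copy of the Lipschitz order $\mathcal{O}$ inside $\End(A^{(g),\al})$, so
\[
(-1,-1\,|\,\Q) \;\hookrightarrow\; \End(A^{(g),\al})_\Q.
\]
Next I would obtain the matching upper bound by specialization to the fibres studied in \cref{sect:g4}. For $g=4$, \Cref{thm:EndomorphismsGenus4} shows that $\calA_{1/2}^{(4)}$ has geometric endomorphism algebra exactly $(-1,-1\,|\,\Q)$; for $g=6$, \Cref{prop:Genus6} gives the same conclusion for $\calA_{(1/2,0)}^{(6)}$. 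In both cases the chosen parameter lies in $U\subseteq \A^d_\Q$, since the corresponding Jacobian is an abelian variety, so the discriminant does not vanish there. Taking $T\subseteq U$ to be the (integral, closed, normal) reduced closed point cut out by that parameter and applying \Cref{lem:specialization2}(d) yields an injection
\[
\End(A^{(g),\al})_\Q \;\hookrightarrow\; \End(\calA_{a}^{(g),\al})_\Q \;=\; (-1,-1\,|\,\Q).
\]
Chaining the two inclusions forces equality, which proves the claim on the endomorphism algebra.

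Finally, for geometric simplicity I would note that $(-1,-1\,|\,\Q)$ is a division algebra (it is the quaternion algebra over $\Q$ ramified precisely at $\{2,\infty\}$). If $A^{(g)}$ were geometrically isogenous to $B^n$ with $B$ simple and $n\geq 2$, then $\End(A^{(g),\al})_\Q \simeq \M_n\bigl(\End(B^{\al})_\Q\bigr)$ would contain zero divisors, contradicting that it is a division algebra. Hence $n=1$, so $A^{(g)}$ is geometrically simple.

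I do not foresee any real obstacle here: the nontrivial content is hidden in \Cref{thm:EndomorphismsGenus4} and \Cref{prop:Genus6}, and the specialization principle of \Cref{lem:specialization2}(d) is exactly the tool needed to transport their conclusions from the special fibre back to the generic fibre. The only small point to verify is that the chosen rational parameters lie in the locus $U$ where $\Delta\neq 0$, which is immediate from those same calculations.
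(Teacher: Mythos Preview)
Your proposal is correct and follows essentially the same route as the paper: the lower bound from the $Q_8$-automorphisms (\Cref{lemma: endomorphisms generic fiber}), the upper bound by specializing to the explicit fibres of \Cref{thm:EndomorphismsGenus4} and \Cref{prop:Genus6}, and a dimension count to conclude. The only cosmetic differences are that the paper invokes the packaged \Cref{prop:SpecialisationArguments} rather than \Cref{lem:specialization2}(d), and that your simplicity argument should appeal to the existence of a nontrivial idempotent in any product decomposition (not just the isotypic case $B^n$) to get the zero divisor.
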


\begin{proof}
By~\Cref{lemma: endomorphisms generic fiber}, we have an injective map $(-1,-1\,|\,\Q) \hookrightarrow \End (A^{(g),\al})_\Q$.  On the other hand, by~\Cref{thm:EndomorphismsGenus4} for $g=4$ and \Cref{prop:Genus6} for $g=6$, there exist specializations $\calA_a^{(g)}$ with $(\End \calA_a^{(g)})_\Q = (-1,-1\,|\,\Q)$.  By~\Cref{prop:SpecialisationArguments} and a dimension count, we conclude that $\End (A^{(g),\al})_\Q=(-1,-1\,|\,\Q)$.
\end{proof}

In our inductive argument, we will use a degeneration (specialization) argument.  Since the argument is a bit notationally heavy, we briefly work with the special case $g=4$ to introduce some of the key ideas; we use this also in \cref{sec:explrec}.  

\begin{prop} \label{prop:goodreductiong4}
Under the map $\QQ[b] \to \QQ[t]$ by $b \mapsto t^2$, the N\'eron model of the pullback of $\calJprim^{(4)}=\Jac \Cprim^{(4)}$ is an abelian scheme over $\PP^1$ in the variable $t$.
\end{prop}

\begin{proof}
The essential information necessary to understand the reduction properties of a hyperelliptic curve are contained in its \emph{cluster picture} \cite{cluster,best2021users}, observing that the results extend to the situation of a DVR with perfect infinite residue field.

Our family is smooth away from $t^2=0,\pm 1, \infty, \pm i$, so (following the notation in the above articles) we look over the complete local rings $\mathcal{O}_K$ at these points, with $K$ the field of fractions of $\mathcal{O}_K$.  For example, at $t=0$, we have $\mathcal{O}_K=\QQ[[t]]$ and $K=\Q((t))$.  The cluster picture organizes the set of roots $\mathcal{R}=\{0,\pm 1,\pm i, \pm t^2, \pm 1/t^2\}$ according to discs they belong to.  

For example, at $t=0$, we have the proper clusters 
\begin{equation}
\{0,\pm t^2,\pm 1, \pm i, \pm 1/t^2 \} \supset \{0, \pm t^2, \pm 1, \pm i\} \supset \{0, \pm t^2\} 
\end{equation}
of depths $-2, 0, 2$.  The criterion for good reduction of the Jacobian \cite[Theorem 5.4]{best2021users} has three parts:
\begin{enumerate}
\item the set of roots $\mathcal{R}$ is defined over $K(i)$, which is unramified over $K=\QQ((b))$; 
\item all (proper) clusters $\mathfrak{s} \neq \mathcal{R}$ are odd; and 
\item all proper clusters $\mathfrak{s}$ are principal with $v_{\mathfrak{s}} \in 2\Z$, since the leading coefficient $1/t^4$ has even valuation and all depths are even.    
\end{enumerate}
(In particular, repeating the calculation in (3) for the family over $\QQ[b]$ we see it has bad, potentially good, reduction: two clusters have odd depth.)  

Although it is not necessary to prove the lemma, we also obtain a description of the special fiber as coming from maximal principal odd subclusters: we obtain the equations (over $\Q^{\al} \simeq \QQ^{\al}[[t]]/(t)$)
\begin{equation}
\begin{aligned}
y^2 = x(x^2-1) \\
y^2 = -x(x^4-1) \\
y^2 = x(x^2-1)
\end{aligned}
\end{equation}
for each of the three clusters, respectively, attached from top to bottom.  We do a similar calculation, but in a self-contained way, with explicit equations in Lemma \ref{lemma: specialisation to (g-2)-dimensional x E^2}.  

We repeat this at the other valuations.  For $t=\infty$, we get a result symmetric with $t=0$.  For $t^2=1$ we have the clusters
\[ \{0,\pm t^2,\pm 1, \pm i, \pm 1/t^2 \} \supset \{-1,-1/t^2,-t^2\}, \{1,1/t^2,t^2\} \]
with depths $0$ and $2,2$; the same criterion applies, giving $y^2=x(x^4-1)$ with two elliptic curves $y^2=-x(x^2-1)$ attached.  A symmetric argument applies for $t^2=-1,\pm i$.  
\end{proof}

\subsection{Generically geometrically simple: induction}

We now make our key inductive argument.

\begin{proposition}\label{prop:GeometricallySimpleHigherGenus}
Let $g \geq 8$ be even. Suppose \Cref{thm:MainHigherGenus} holds in genera $g-2$ and $g-4$. Then $A^{(g)}$ has geometric endomorphism algebra $(-1,-1\,|\,\Q)$.
\end{proposition}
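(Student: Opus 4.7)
\emph{Plan.} The plan is to construct $\Q$-rational degenerations of $\calA^{(g)}$ whose geometric isogeny decompositions can be determined from the inductive hypothesis, and then to use \Cref{prop:SpecialisationArguments} together with Albert-classification bookkeeping to force $\End(A^{(g),\al})_\Q = B \colonequals (-1,-1\,|\,\Q)$.

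First, generalising the cluster-picture analysis of \Cref{prop:goodreductiong4}, I would construct a line $T_1 \subseteq U$ isomorphic to an open subscheme of $\mathbb{A}^1_\Q$ and a $\Q$-point $y_1$ at which the N\'eron model of $\calA^{(g)}_{T_1}$ over $\mathbb{A}^1_\Q$ is an abelian scheme whose geometric fibre is isogenous to $\bigl(\calA^{(g-2)}_{a'}\bigr)^{\al} \times E^2$ for some $a' \in \Q^{d-1}$ and some elliptic curve $E$. By \Cref{thm:MainHigherGenus} in genus $g-2$, and using that the Jacobians $\calA^{(g-2)}_{a'}$ fall into infinitely many $\Qbar$-isogeny classes, I would arrange $\calA^{(g-2)}_{a'}$ to be geometrically simple with endomorphism algebra $B$. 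Since the two factors have different dimensions they are non-isogenous, so $\End(\mathfrak{A}_{y_1}^{\al})_\Q = B \oplus M_2(F_1)$ where $F_1 \colonequals \End(E^{\al})_\Q$. Writing $E_0 \colonequals \End(A^{(g),\al})_\Q$, \Cref{prop:SpecialisationArguments} yields an injection $E_0 \hookrightarrow B \oplus M_2(F_1)$ whose image contains the copy of $B$ induced by the generic quaternionic multiplication (\Cref{lemma: endomorphisms generic fiber}), so $\dim_\Q E_0 \leq 8$.

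From this bound, the Albert classification leaves few possibilities. If $A^{(g),\al}$ is geometrically simple, then $E_0$ is a division algebra containing $B$; such algebras of $\Q$-dimension at most $8$ are either $B$ itself or a quaternion algebra $B \otimes_\Q F'$ over a real quadratic field $F'$, and in the latter case the injection $E_0 \hookrightarrow B \oplus M_2(F_1)$ would be an equality of dimensions, which is impossible because $E_0$ would be simple and the target is not. If $A^{(g),\al}$ is geometrically reducible, then the isotypic factor supporting the $B$-action must specialise compatibly into the $(g-2)$-dimensional factor of $\mathfrak{A}_{y_1}^{\al}$, and the same dimension/simplicity analysis applied to its endomorphism algebra shows that factor has endomorphism algebra $B$; any complementary isogeny factor $A'$ would specialise into $E^2$, forcing it to be $2$-dimensional of CM type.

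To rule out this last possibility I would construct a second degeneration, now using the inductive hypothesis in genus $g-4$: find $T_2 \subseteq U$ and $y_2 \in \mathbb{A}^1(\Q)$ with $\mathfrak{A}_{y_2}^{\al} \sim \bigl(\calA^{(g-4)}_{a''}\bigr)^{\al} \times A''$, where $\calA^{(g-4)}_{a''}$ is geometrically simple with endomorphism algebra $B$ and $A''$ is a $4$-dimensional abelian variety. The hypothetical complement $A'$ would have to specialise into $A''$ as well, but by choosing the two specialisations so that the CM-structure of $E$ and of the relevant pieces of $A''$ are incompatible (using the infinitely many isogeny classes available in genera $g-2$ and $g-4$), one obtains a contradiction, forcing $A' = 0$ and hence $E_0 = B$. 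The main obstacle is the explicit construction of the two degenerations and the verification that the respective N\'eron models are abelian schemes at the chosen points: the $(g-2)$-case should be a direct extension of \Cref{prop:goodreductiong4}, but the $(g-4)$-case requires a separate cluster-picture analysis to split off a $4$-dimensional factor, and some care is needed in the bookkeeping on the elliptic factors --- particularly when $g = 8$, where the profiles $\{g-2, 2\}$ and $\{g-4, 4\} = \{4, 4\}$ admit nontrivial common refinements and only the CM-incompatibility forces the contradiction.
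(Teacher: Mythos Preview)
Your overall strategy---two specialisations of shapes $(g-2)+2$ and $(g-4)+4$, then a compatibility argument---is exactly what the paper does. But there are genuine gaps in your execution.

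First, the bound $\dim_\Q E_0 \le 8$ is wrong as stated. In the natural $(g-2)+2$ degeneration (send one $b_j$ to $0$), the two genus-$1$ components are copies of $y^2=x^3-x$, which has CM by $\Q(i)$; so the target algebra is $B \times \M_2(\Q(i))$, of $\Q$-dimension $12$, not $8$. Your Albert bookkeeping then does not go through: with bound $12$ there are more cases, and in the reducible case your sentence ``the isotypic factor supporting the $B$-action must specialise compatibly into the $(g-2)$-dimensional factor'' is not right either, because $B$ acts on \emph{every} isotypic factor of $A^{(g),\al}$ (the $Q_8$-action is faithful on each tangent summand, cf.\ the proof of \Cref{lem:jacsimp}).

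The paper sidesteps all of this with one observation you are missing: \emph{splittings specialise} (\Cref{rmk:SpecialisationSplittings}). Since the $(g-2)+2$ special fibre has a simple factor of dimension $g-2$, the generic $A^{(g),\al}$ must have a simple factor $X$ of dimension $\ge g-2$. Now $B$ embeds in $\End(X)_\Q$ and in the endomorphism algebra of the complement $Y$; since $B$ cannot act on an elliptic curve, $\dim X \ne g-1$. If $\dim X=g-2$, then $\dim Y=2$ and $B \hookrightarrow \End(Y)_\Q$ forces $Y \sim E^2$ with $E$ the \emph{fixed} curve $y^2=x^3-x$. This is where the second degeneration enters: the paper constructs a one-parameter family of $(g-4)+4$ specialisations isogenous to $X' \times E_c^4$ with $X'$ simple of dimension $g-4$ and $j(E_c)$ an explicit nonconstant rational function of $c$. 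Choosing $c$ (e.g.\ $c=2$) so that $E_c$ has non-integral $j$-invariant, $E_c$ is not CM and hence not isogenous to $E$; but $E$ would have to appear in $X' \times E_c^4$, contradiction. So $A^{(g),\al}$ is simple. The lever here is the free elliptic parameter $c$, not the ``infinitely many isogeny classes in genera $g-2$ and $g-4$'' as you suggest.

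Finally, you do not address why $E_0$ is exactly $B$ once simplicity is known. The paper's argument is short: the specialisation map $E_0 \hookrightarrow B \times \M_2(\Q(i))$ sends $[-1]$ to $(-1,-1)$, so the projection $E_0 \to B$ is nonzero; since $E_0$ is simple it is injective, and since $B \subseteq E_0$ already, equality follows by dimension.
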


Recall from~\cref{eq:HyperellipticEquationGeneralgFactored} the family $\Cprim^{(g)}$ of nice curves over $\BB$ with Jacobian $\calJprim^{(g)} \colonequals \Jac \Cprim^{(g)}$.  
The generic fiber $\Jprim^{(g)}=\Jac D^{(g)}$ of $\calJprim^{(g)}$ is the base change of $A^{(g)}$ under $\calK \hookrightarrow \calL$. 
Thus it suffices to show that $\Jprim^{(g)}$ is geometrically simple with geometric endomorphism algebra $(-1,-1\,|\,\Q)$. 

Note that $\calJprim^{(g)}$ is an abelian scheme over an open subscheme of $\A^d_\Q$, so we can apply~\Cref{prop:SpecialisationArguments} to $\calJprim^{(g)}$.  Adapting the notation of that proposition, an abelian variety of the form $\frakJprim_x$ will be called a \textit{specialization} of $\calJprim^{(g)}$.

We will prove \Cref{prop:GeometricallySimpleHigherGenus} by constructing two specializations of $\calJprim^{(g)}$ whose structure is sufficiently different to force the geometric endomorphism algebra of $\Jprim^{(g)}$ to be $(-1,-1\,|\,\Q)$. These specializations are constructed in the next two lemmas.

\begin{lemma}\label{lemma: specialisation to (g-4)-dimensional x E^4}
Let $g \geq 8$ be even and suppose that \Cref{thm:MainHigherGenus} holds in genus $g-4$. Let $c \in \Q^\times \setminus \{\pm 1\}$. Then there exists $b \in \calJprim^{(g)}(\Qbar)$ whose decomposition up to isogeny over $\Q^{\al}$ is given by $\calJprim^{(g),\textup{al}} \sim X \times (E_c)^4$, where
\begin{itemize}
\item $X$ over $\Q^{\al}$ is simple with $\dim(X)=g-4$ and $\End(X)_\Q \simeq (-1,-1\,|\,\Q)$, and 
\item $E_c$ is the elliptic curve over $\Q^{\al}$ with $j$-invariant $j(c)$ given by \cref{eq: j invariant}.
\end{itemize}
\end{lemma}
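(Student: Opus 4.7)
The plan is to iterate the preceding \Cref{lemma: specialisation to (g-2)-dimensional x E^2} twice, using the inductive hypothesis on \Cref{thm:MainHigherGenus} in genus $g-4$ to fix a good starting point in the truncated parameter space.

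First, I would invoke \Cref{thm:MainHigherGenus} in genus $g-4$ to choose
\[
  b_0 \colonequals (b_1^\circ, \ldots, b_{d-2}^\circ) \in \BB^{(g-4)}(\Qbar)
\]
so that $X \colonequals \calJprim^{(g-4),\al}_{b_0}$ is geometrically simple of dimension $g-4$ with $\End(X)_\Q \simeq (-1,-1\,|\,\Q)$; such $b_0$ exist, in density one, by the inductive assumption.

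Next, I would apply the preceding \Cref{lemma: specialisation to (g-2)-dimensional x E^2} twice in succession, to peel off two independent factors of $E_c^2$. For every even $g' \geq 6$, that lemma specialises the last coordinate of $\BB^{(g')}$ to an explicit algebraic function $\psi_{g'}(c)$ of $c$ and produces an isogeny decomposition
\[
  \calJprim^{(g'),\al}_{(\tilde{b},\, \psi_{g'}(c))} \sim \calJprim^{(g'-2),\al}_{\tilde{b}} \times E_c^2,
\]
uniformly in $\tilde{b} \in \BB^{(g'-2)}(\Qbar)$, so that the $(g'-2)$-dimensional factor on the right is literally the corresponding fibre of the truncated family. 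Setting
\[
  b \colonequals (b_0,\, \psi_{g-2}(c),\, \psi_g(c)) \in \BB^{(g)}(\Qbar)
\]
and chaining the two resulting decompositions---first in genus $g$ (specialising $b_d = \psi_g(c)$), then in genus $g-2$ (specialising $b_{d-1} = \psi_{g-2}(c)$)---yields
\[
  \calJprim^{(g),\al}_b \;\sim\; \calJprim^{(g-2),\al}_{(b_0,\, \psi_{g-2}(c))} \times E_c^2 \;\sim\; X \times E_c^2 \times E_c^2 \;\sim\; X \times E_c^4,
\]
as required.

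The main (small) obstacle is verifying that the two iterations genuinely cooperate: namely, that the intermediate $(g-2)$-dimensional factor produced by the first application of the preceding lemma is literally the fibre $\calJprim^{(g-2)}_{(b_0,\, \psi_{g-2}(c))}$ of the truncated family (and not some opaque isogenous variety), and that the preceding lemma can be reapplied to this fibre with the same auxiliary parameter $c$. Both points follow from the explicit, geometric $Q_8$-equivariant nature of the specialisation construction in the preceding lemma, which is uniform in the coordinates it does not touch; hence iteration is unobstructed.
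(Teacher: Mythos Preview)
There is a genuine gap. Your proposal rests on a misreading of \Cref{lemma: specialisation to (g-2)-dimensional x E^2}: that lemma carries no auxiliary parameter $c$ at all. In its proof one sends the last coordinate $b_d$ to $t^2$ and degenerates $t \to 0$; the two genus-$1$ components of the stable limit are both $y^2 = x^3 - x$, i.e.\ they have the \emph{fixed} $j$-invariant $1728$. There is no ``explicit algebraic function $\psi_{g'}(c)$'' producing $E_c^2$ with variable $j$-invariant. Iterating that construction twice would at best give $X \times E^4$ with $j(E)=1728$, which is useless for the role this lemma plays later (one needs $E_c$ \emph{not} isogenous to the CM curve with $j=1728$). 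Moreover, even the iteration itself is illicit under the stated hypotheses: \Cref{lemma: specialisation to (g-2)-dimensional x E^2} in genus $g$ requires \Cref{thm:MainHigherGenus} in genus $g-2$, which you are not assuming here---only genus $g-4$ is available.

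The actual argument is different in kind. One specialises \emph{two} coordinates at once, sending $(b_{d-1}, b_d) \mapsto (t^2, ct^2)$, and lets $t \to 0$. The parameter $c$ enters precisely as the limiting ratio $b_d/b_{d-1}$; it survives the degeneration and lands in the equations of the two genus-$2$ components of the stable fibre, namely $v^2 = \pm z(z^2-1)(z^2-c^2)$. Each of these genus-$2$ Jacobians then splits (via the explicit maps $u = z + c/z$) as $E_c^+ \times E_c^-$ with $j$-invariant given by \cref{eq: j invariant}. This simultaneous two-variable degeneration is the missing idea; it cannot be recovered by composing two one-variable degenerations to $t=0$, since each of those separately kills the ratio and forces $j=1728$.
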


\begin{proof}
Let $L$ be a number field and $\overline{b}  \colonequals  (\overline{b_1}, \ldots, \overline{b_{d-2}}) \in L^{d-2}$ be such that the Jacobian of the hyperelliptic curve over $L$ given by
\[
\calC^{(g-4)}_{\overline{b}} :  y^2 = x(x^4-1)\prod_{n=1}^{d-2} \left(x^2-\overline{b_n}^2\right)\left(x^2-1/\overline{b_n}^{2}\right)
\]
is geometrically simple with endomorphism ring $(-1,-1\,|\,\Q)$. 
Such $\overline{b_i}$ exist by~\Cref{thm:MainHigherGenus} in genus $g-4$, which holds by assumption.
To ease the notation, we will denote $\calC^{(g-4)}_{\overline{b}}$ simply by $\Cbfour$.

Further fix $c \in \Q^\times \setminus \{\pm 1\}$. Consider now the family $\mathcal{X}_c$ of nice curves given by the Cartesian diagram
\[
\xymatrix{
\mathcal{X}_c \ar[r] \ar[d] & \Cprim^{(g)}_L \ar[d] \\
\mathbb{A}_{L}^1  \setminus \Delta' \ar[r] & \BB_L,
}
\]
where the bottom arrow sends $t$ to $(\overline{b_1}, \ldots, \overline{b_{d-2}}, t^2, ct^2)$, and $\Delta'$ is a suitable proper closed subscheme of $\mathbb{A}_{L}^1$ such that $\calX_c$ is smooth over its complement. Concretely, $\mathcal{X}_c$ is the hyperelliptic curve with equation
\[
y^2 = x(x^4-1) (x^2-t^4)(x^2-t^{-4})(x^2-(ct^2)^2)(x^2-(ct^2)^{-2}) \prod_{n=1}^{d-2} \left(x^2-\overline{b_n}^2\right)\left(x^2-1/\overline{b_n}^{2}\right),
\]
where we have identified $\mathbb{A}^1_{L}$ with $\operatorname{Spec} {L}[t]$.

Let $\calA_{\mathcal{X}_c}$ be the Jacobian of $\mathcal{X}_c$, and let $\frakA_{\mathcal{X}_c}$ be its N\'eron model over the Dedekind domain $L[t]$. We claim that the special fibre of $\frakA_{\mathcal{X}_c}$ at $t=0$ is the product of $\Jac(\Cbfour)$ by $A_c^{+} \times A_c^{-}$, where $A_c^{\pm}$ is the Jacobian of the genus 2 hyperelliptic curve with equation $y^2=\pm z(z^2-1)(z^2-c^2)$. To see this, we compute the special fibre of the stable model $\mathcal{Z}_c$ of $\mathcal{X}_c$ at $t=0$. Over the open locus where $t$ is invertible (that is, the complement of the closed subscheme defined by $t$), the curve $\mathcal{X}_c$ is isomorphic to
\[
\mathcal{X}'_c : \, Y^2 = x(x^4-1) (x^2-t^4)(t^4x^2-1)(x^2-(ct^2)^2)(t^4x^2-c^{-2}) \prod_{n=1}^{d-2} \left(x^2-\overline{b_n}^2\right)\left(x^2-1/\overline{b_n}^{2}\right),
\]
with $Y=t^4y$. The curve $\mathcal{X}'_c$ clearly extends over the closed subscheme defined by $(t)$, and its special fibre at $t=0$ is given by
\[
\overline{\Cbfour} : Y^2 = c^{-2} x^4 x(x^4-1) \prod_{n=1}^{d-2} \left(x^2-\overline{b_n}^2\right)\left(x^2-1/\overline{b_n}^{2}\right).
\]
Reabsorbing the square factor $c^{-2} x^4$ into $Y^2$, we see that the normalisation of this curve is simply $\Cbfour$. To understand the other components of the special fibre of the stable model we blow-up the special fibre at $x=0$, $y=0$. Writing $x=t^2z$ and $y=t v/c$ we obtain
\begin{equation}\label{eq:BlowUpOrigin}
\mathcal{X}'_c : \, (v/c)^2 = z(t^8z^2-1) (z^2-1)(t^8z^2-1)(z^2-c^2)(t^8z^2-c^{-2}) \prod_{n=1}^{d-2} \left(t^4z^2-\overline{b_n}^2\right)\left(t^4z^2-1/\overline{b_n}^{2}\right).
\end{equation}
This maps to the blow-up at the origin of $\mathbb{A}^1_{L[t]}$ along $(t^2)$. Let $E$ be the exceptional divisor of the blow-up of $\A^1_{L[t]}$ at $0$; the function $z$ is naturally a coordinate on $E$. Specialising the above equation at $t=0$, we obtain the equation of a double cover of $E$: it is the genus-2 hyperelliptic curve given explicitly by $v^2 = -z(z^2-1)(z^2-c^2)$. It meets $\overline{\Cbfour}$ at the point $x=0$ of the latter.

The automorphism $(x,y) \mapsto (1/x,iy/x^{g+1})$ of the family restricts to the special fibre of the stable model, and acts on $\overline{\Cbfour}$ by sending $x \mapsto 1/x$. In particular, this automorphism sends the genus-$2$ component at $x=0$ to an isomorphic one at $x=\infty$. Note that the isomorphism is only defined over $L(i)$, since this holds for the automorphism (the same result can also be obtained by a direct calculation completely analogous to the above, which also gives the equation for the component at $x=\infty$).
Thus, the stable model $\mathcal{Z}_c$ of the special fibre of $\mathcal{X}_c'$ is given by the union of the three curves $\overline{\Cbfour}$ (with normalisation $\Cbfour$), $v^2=-z(z^2-1)(z^2-c^2)$ and $v^2=z(z^2-1)(z^2-c^{-2})$, meeting as in~\Cref{fig:SpecialFibreStableModel}.
\begin{figure}\label{fig:SpecialFibreStableModel}
\tikzset{every picture/.style={line width=0.75pt}} 
\begin{center}
\begin{tikzpicture}[x=0.75pt,y=0.75pt,yscale=-1,xscale=1]

\draw    (45,66) .. controls (85,36) and (182,98) .. (200,67) ;
\draw    (74,11) .. controls (99,31) and (65,124) .. (95,162) ;
\draw    (166,6) .. controls (191,26) and (158,123) .. (188,161) ;
\draw  [fill={rgb, 255:red, 0; green, 0; blue, 0 }  ,fill opacity=1 ] (81,57.5) .. controls (81,56.12) and (82.12,55) .. (83.5,55) .. controls (84.88,55) and (86,56.12) .. (86,57.5) .. controls (86,58.88) and (84.88,60) .. (83.5,60) .. controls (82.12,60) and (81,58.88) .. (81,57.5) -- cycle ;
\draw  [fill={rgb, 255:red, 0; green, 0; blue, 0 }  ,fill opacity=1 ] (173,76.5) .. controls (173,75.12) and (174.12,74) .. (175.5,74) .. controls (176.88,74) and (178,75.12) .. (178,76.5) .. controls (178,77.88) and (176.88,79) .. (175.5,79) .. controls (174.12,79) and (173,77.88) .. (173,76.5) -- cycle ;

\draw (5,59.4) node [anchor=north west][inner sep=0.75pt]    {$\overline{\Cbfour}$};
\draw (86,62.4) node [anchor=north west][inner sep=0.75pt]    {$0$};
\draw (154,77) node [anchor=north west][inner sep=0.75pt]    {$\infty $};
\draw (-105,120) node [anchor=north west][inner sep=0.75pt]    {$v^{2} =-z\left( z^{2} -1\right)\left( z^{2} -c^{2}\right)$};
\draw (180,30) node [anchor=north west][inner sep=0.75pt]    {$v^{2} =z\left( z^{2} -1\right)\left( z^{2} -c^{-2}\right)$};
\end{tikzpicture}
\caption{The special fibre of the stable model}
\end{center}
\end{figure}

As a consequence (see~\cite[Example 8 on p.~246]{MR1045822}), the special fibre of the N\'eron model $\frakA_{\calC_x}$ at $t=0$ is the Jacobian of the (reducible) stable curve $\mathcal{Z}_c$, hence it is isomorphic to the product of the Jacobians of the curves in~\Cref{fig:SpecialFibreStableModel}. We denote by $A_c^+, A_c^-$ the Jacobians of the two genus-2 components. Notice that $A_c^+, A_c^-$ are isomorphic over $L(i)$.

We claim that $A_c^{\pm}$ is isogenous over $\Qbar$ to $E_c^+ \times E_c^-$, with $E_c^{\pm}$ the elliptic curve 
\begin{equation}\label{eq:Ec}
E_c^{\pm} : w^2 = (u \pm 2\sqrt{c})(u^2-(c+1)^2).
\end{equation}
To prove this it is enough to notice that there are maps
\[
v^2 = -z(z^2-1)(z^2-c^2) \to w^2 = (u \pm 2\sqrt{c})(u^2-(c+1)^2)
\]
given by 
\[
u = z + \frac{c}{z}, \quad w = iv\frac{z \pm \sqrt{c}}{z^2}.
\]
The $j$-invariant of each curve $E_c^{\pm}$ is given by the non-constant rational function in $c$
\begin{equation}\label{eq: j invariant}
j(E_c^{\pm}) = 1728 \frac{(c+1/3)^3(c+3)^3}{(c-1)^4(c+1)^2}.
\end{equation}
In particular, $E_c^{+}$ and $E_c^{-}$ are geometrically isomorphic. 
The lemma follows upon taking $E=E_c^{+, \al} \simeq E_c^{-, \al}$ and $X = \Jac((\Cbfour)^\al)$.
\end{proof}
We now look for a different specialization whose geometric endomorphism algebra is sufficiently different from what we just found.

\begin{lemma}\label{lemma: specialisation to (g-2)-dimensional x E^2}
Let $g \geq 6$ be even and assume that \Cref{thm:MainHigherGenus} holds in genus $g-2$.  Then the following statements hold.
\begin{enumalph}
\item There is a specialization $Z/\Q$ of $\Jprim^{(g)}$ isomorphic to $Y \times E \times E'$, where $Y$ is a geometrically simple abelian variety over $\Q$ of dimension $g-2$, with $\End(Y^\al)_\Q \simeq (-1,-1\,|\,\Q)$ and $G_{Y, \ell}^0$ isomorphic to a form of $\mathbb{G}_m \cdot \SO_{g-2}$ for every prime $\ell$, and $E, E'$ are 
elliptic curves with $j$-invariant $1728$.
\item There exists an injection of $\End(\Jprim^{(g), \al})_\Q$ into $(-1,-1\,|\,\Q) \times \operatorname{Mat}_{2 \times 2}(\Q(i))$ which sends $[-1] \in \End(\Jprim^{(g), \al})_0$ to $(-1,-1) \in (-1,-1\,|\,\Q) \times \operatorname{Mat}_{2 \times 2}(\Q(i))$.
\item The abelian variety $\Jprim^{(g), \al}$ has a simple isogeny factor of dimension at least $g-2$.
\end{enumalph}
\end{lemma}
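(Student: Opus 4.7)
My approach mirrors \Cref{lemma: specialisation to (g-4)-dimensional x E^4}, with a single-parameter degeneration $b_d = t^2$ replacing the two-parameter one used there. For part (a), by the inductive hypothesis (\Cref{thm:MainHigherGenus} in genus $g-2$) I fix $\overline{b} = (\overline{b_1}, \dots, \overline{b_{d-1}})$ so that $\Cbtwo \colonequals \Cprim^{(g-2)}_{\overline{b}}$ has Jacobian $Y \colonequals \Jac(\Cbtwo)$ geometrically simple with $\End(Y^{\al})_\Q \simeq \HH$, satisfying the Mumford--Tate conjecture and with Mumford--Tate group a $\Q$-form of $\GSO_{g-2}$; combining these gives $G_{Y,\ell}^0 \simeq \mathbb{G}_m \cdot \SO_{g-2}$ over $\Q_\ell$ for every $\ell$. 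I then form the one-parameter family over an open subscheme of $\A^1_\Q$ with coordinate $t$ by setting $b_n = \overline{b_n}$ for $n < d$ and $b_d = t^2$, and study the special fibre $Z$ at $t = 0$ of the N\'eron model of its Jacobian over $\A^1_\Q$.

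The stable reduction computation follows the template of \Cref{lemma: specialisation to (g-4)-dimensional x E^4}. After a substitution of the form $y \mapsto y/t^2$, the defining equation extends regularly to $t = 0$, and the normalisation of its special fibre is (a quadratic twist by $-1$ of) $\Cbtwo$. Blowing up at $x = 0$ produces an elliptic curve $E : V^2 = u(u^2 - 1)$ of $j$-invariant $1728$; the symmetry $\beta$ from \eqref{eq: fundamental automorphisms} yields a companion elliptic curve $E'$ with $j = 1728$ at $x = \infty$. The three components of the resulting stable model meet at two separate nodes, so the dual graph is a tree; hence there is no toric contribution and the special fibre of the N\'eron model is $Z \simeq Y \times E \times E'$, proving (a).

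Part (b) then follows directly from \Cref{prop:SpecialisationArguments}: the specialisation gives an injection $\End(\Jprim^{(g),\al})_\Q \hookrightarrow \End(Z^{\al})_\Q$ sending $[-1]$ to $[-1]$. Since $Y$ is simple of dimension $g - 2 \geq 4$, there are no nontrivial morphisms between $Y^{\al}$ and the two-dimensional $E^{\al} \times E'^{\al}$, so $\End(Z^{\al})_\Q \simeq \HH \times \operatorname{Mat}_2(\Q(i))$ (using $E^{\al} \sim E'^{\al}$ with CM by $\Z[i]$), with $[-1]$ mapping to $(-1, -1)$ under this identification. For part (c), suppose for contradiction that every simple isogeny factor of $\Jprim^{(g), \al}$ has dimension strictly less than $g - 2$, and write $\Jprim^{(g), \al} \sim \prod_j B_j^{m_j}$ with $B_j$ simple. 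By \Cref{rmk:SpecialisationSplittings}, this decomposition specialises to $Z^{\al} \sim \prod_j \overline{B_j}^{m_j}$ with $\dim \overline{B_j} = \dim B_j < g - 2$; consequently no $\overline{B_j}$ admits a simple factor of dimension $\geq g - 2$, so neither does $Z^{\al}$, contradicting the fact that $Y$ is a simple factor of $Z^{\al}$ of dimension $g - 2$.

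The main obstacle is the stable reduction computation at $t = 0$: one must verify that the two blow-ups at $x = 0, \infty$ each yield a single smooth genus-1 component with $j = 1728$ meeting $\Cbtwo$ transversally at a single point, so that the dual graph is a tree. Once this is in hand, the identification of $Z$ with a product via the tree structure of the dual graph is standard, and parts (b) and (c) follow formally from specialisation of endomorphism algebras and isogeny decompositions.
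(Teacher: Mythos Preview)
Your proposal is correct and follows essentially the same approach as the paper: fix $\overline{b}$ via the inductive hypothesis, degenerate $b_d = t^2$, compute the stable reduction at $t=0$ to obtain (a twist of) $\Cbtwo$ glued at $x=0,\infty$ to two elliptic curves with $j=1728$, and then deduce (b) and (c) from \Cref{prop:SpecialisationArguments} and \Cref{rmk:SpecialisationSplittings}. The paper is slightly terser---it simply asserts the stable reduction is ``completely analogous'' to \Cref{lemma: specialisation to (g-4)-dimensional x E^4} and works geometrically (where the quadratic twist and the distinction between $E$ and $E'$ disappear)---but the content is the same.
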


\begin{proof}
The proof is similar to that of~\Cref{lemma: specialisation to (g-4)-dimensional x E^4}, so we give fewer details.
 Fix $\overline{\beta}  \colonequals  (\overline{\beta_1}, \ldots, \overline{\beta_{d-1}}) \in \Q^{d-1}$ such that the Jacobian $Y/\Q$ of
\[
\calC_{\overline{\beta}}^{(g-2)} : y^2 = x(x^4-1) \prod_{n=1}^{d-1} \left(x^2-\overline{\beta_n}^2 \right)\left(x^2-\overline{\beta_n}^{-2} \right)
\]
is geometrically simple, has geometric endomorphism ring $(-1,-1\,|\,\Q)$ and monodromy group $G_{Y, \ell}$ isomorphic to a form of $\mathbb{G}_m \cdot \operatorname{SO}_{g-2}$ for every prime $\ell$. The existence of $\overline{\beta}$ is guaranteed by~\Cref{thm:MainHigherGenus} in genus $g-2$, which we have assumed. As in the proof of~\Cref{lemma: specialisation to (g-4)-dimensional x E^4}, to ease the notation we write $\Cbtwo$ for $\calC_{\overline{\beta}}^{(g-2)}$.

Proceeding as in that proof, consider the family $\mathcal{Y}$ of nice curves of genus $g-2$ given by the Cartesian diagram
\[
\xymatrix{
\calY \ar[r] \ar[d] & \Cprim^{(g)} \ar[d] \\
\mathbb{A}_{\Q}^1  \setminus \Delta'' \ar[r] & \BB,
}
\]
where the bottom arrow sends $t$ to $(\overline{\beta_1}, \ldots, \overline{\beta_{d-1}}, t^2)$, and $\Delta''$ is a suitable proper closed subscheme of $\mathbb{A}_{\Q}^1$ such that $\calY$ is smooth over its complement. Denote by $\frakA_{\calY}$ the N\'eron model over $\A^1_\Q$ of the Jacobian of $\calY$.
By a calculation completely analogous to that in the proof of~\Cref{lemma: specialisation to (g-4)-dimensional x E^4}, the special fibre at $t=0$ of $\mathcal{J}_{\mathcal{Y}}$ is geometrically the product of the simple abelian variety $Y^\al = \Jac(\Cbtwo)^\al$ and $E^2$, where $E/\Q^\al$ is the elliptic curve $y^2=x^3-x$. Notice that $\Jac(\Cbtwo)^\al$, being simple of dimension $g-2 > 1$, has no common isogeny factors with $E^2$.
The desired injection 
\[
\End(\Jprim^{(g), \al})_\Q \hookrightarrow \End(\Jac(\Cbtwo)^\al \times E^2)_\Q \simeq (-1,-1\,|\,\Q) \times \operatorname{Mat}_{2 \times 2}(\Q(i))
\]
is obtained by specialization (\Cref{prop:SpecialisationArguments}), and takes multiplication by $[-1]$ to $(-1, -\operatorname{Id})$.
Moreover, since any (geometric) splitting of $\Jprim^{(g)}$ would be reflected by a splitting of the geometric special fibre of $\frakA_{\mathcal{Y}}$ at $t=0$ (see~\Cref{rmk:SpecialisationSplittings}), this implies that $\Jprim^{(g), \al}$ has a simple isogeny factor of dimension at least $g-2$.
\end{proof}

We are now ready to prove~\Cref{prop:GeometricallySimpleHigherGenus}.

\begin{proof}[Proof of~\Cref{prop:GeometricallySimpleHigherGenus}]
By~\Cref{lemma: specialisation to (g-2)-dimensional x E^2}, $\Jprim^{(g), \al}$ has a simple isogeny factor of dimension at least $g-2$:
\begin{equation}  \label{eqn:Bxy}
B^{(g),\textup{al}} \sim X \times Y 
\end{equation}
where $X$ is simple over $\calL^\al$ with $\dim X = g-2,g-1,g$ (and $\dim(X)+\dim(Y)=g$, but $Y$ may not be simple).  For dimension reasons ($g \geq 8$) and since $X$ is simple, there is no isogeny factor in common between $X$ and $Y$, so 
\[ \End(\Jprim^{(g),\al})_\Q \simeq \End(X)_\Q \times \End(Y)_\Q. \]

Since $\mathcal{O} \subset (-1,-1\,|\,\Q) \hookrightarrow (\End B^{(g),\textup{al}})_\Q$ and since $(-1,-1\,|\,\Q)$ acts nontrivially on every subspace in its tangent representation (see the proof of~\Cref{lem:jacsimp}), there is an embedding of $(-1,-1\,|\,\Q)$ in both $\End(X)_\Q$ and in $\End(Y)_\Q$. 
There is no nontrivial action of $(-1,-1\,|\,\Q)$ on an elliptic curve in characteristic $0$, so to conclude that $\Jprim^{(g)}$ is geometrically simple, we assume for purposes of contradiction that $\dim(X)=g-2$ in \eqref{eqn:Bxy}.  Again, we have $(-1,-1\,|\,\Q) \hookrightarrow (\End Y)_\Q$; but over an algebraically closed field of characteristic $0$, the only abelian surface $Y$ (up to isogeny) that admits such an action is $Y \sim E^2$, where $E$ is the elliptic curve $y^2=x^3-x$.  Thus, $E$ is a  (geometric) isogeny factor of any specialization of $\Jprim^{(g)}$ (see~\Cref{rmk:SpecialisationSplittings}, and notice that $E$ can only specialize to $E$, since it is already defined over $\Q$). 

However, for each $c \in \Q^\times \setminus \{\pm 1\}$, \Cref{lemma: specialisation to (g-4)-dimensional x E^4} gives a specialization of $\Jprim^{(g)}$ which is geometrically isogenous to $X \times E_c^4$, with $X$ (geometrically) simple of dimension $g-4>1$ and where $E_c$ has $j$-invariant given by \cref{eq: j invariant}. Choosing $c$ in such a way that $E_c$ is not geometrically isogenous to $E$ we obtain a contradiction. Such $c$ are easily seen to exist: for example, $c=2$ will do, because $j(E_2)$ is not an integer by \cref{eq: j invariant}, so $E_2$ does not have potential complex multiplication and is therefore not (potentially) isogenous to $E$. This proves as desired that $\Jprim^{(g)}$ is geometrically simple.

Knowing this, we conclude the proof as follows.  We have $\Q$-algebra maps
\[ \End(\Jprim^{(g),\al})_\Q \hookrightarrow (-1,-1\,|\,\Q) \times \M_2(\Q(i)) \to (-1,-1\,|\,\Q); \]
the first map comes from the specialization in \Cref{lemma: specialisation to (g-2)-dimensional x E^2}(b), the second by projection onto the first factor.  The composition of these maps is nonzero, since \Cref{lemma: specialisation to (g-2)-dimensional x E^2}(b) provides that $[-1] \mapsto (-1,-1) \mapsto -1$.  Since $\End(\Jprim^{(g),\al})_\Q$ is a simple $\Q$-algebra, this composition is injective.  But by construction $(-1,-1\,|\,\Q) \hookrightarrow \End(\Jprim^{(g),\al})_\Q$, so by dimensions this must be an isomorphism.
\end{proof}

\subsection{Proof of generic monodromy}

We now have all the ingredients to prove the main result of this section,~\Cref{thm:GenericMonodromy}, conditionally on~\Cref{thm:MainHigherGenus} holding for genera $g-2$ and $g-4$. We will also prove~\Cref{thm:GenericMonodromy} for $g=4, 6$ without relying on~\Cref{thm:MainHigherGenus}.

\begin{proposition}
If the statement of \Cref{thm:MainHigherGenus} holds for $g-2, g-4$, then \Cref{thm:GenericMonodromy} holds for $g$.  
\end{proposition}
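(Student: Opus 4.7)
The plan is to combine Proposition~\ref{prop:GeometricallySimpleHigherGenus} with a specialization argument on $\ell$-adic monodromy groups. First, I would invoke Proposition~\ref{prop:GeometricallySimpleHigherGenus}, whose hypotheses are exactly our inductive assumptions in genera $g-2$ and $g-4$, to obtain that $\End(A^{(g),\textup{al}})_\Q \simeq \HH$. In particular $A^{(g)}$ is geometrically simple of Type III with center $\Q$, so by Lemma~\ref{lemma:Lefschetz} the Lefschetz group $L(A^{(g)})$ is a $\Q$-form of $\SO_g$ and $\MT(A^{(g)})$ is contained in a $\Q$-form of $\GSO_g$.

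It then remains to establish that $A^{(g)}$ is fully of Lefschetz type, i.e., that $\Hg(A^{(g)})=L(A^{(g)})$ and that the Mumford--Tate conjecture holds for $A^{(g)}$. By the chain of inclusions $G^0_{A^{(g)},\ell,1}\leq \Hg(A^{(g)})_{\Q_\ell}\leq L(A^{(g)})_{\Q_\ell}$ from Theorem~\ref{thm:ContainmentsGlMTLefschetz}, it will suffice to prove that $G^0_{A^{(g)},\ell,1}$ coincides geometrically with $\SO_g$ for a single prime $\ell$; equality throughout then follows since all three groups will be connected reductive subgroups of a common ambient group of the same dimension.

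For the required lower bound on the monodromy I would apply the two specializations constructed in Lemmas~\ref{lemma: specialisation to (g-2)-dimensional x E^2} and~\ref{lemma: specialisation to (g-4)-dimensional x E^4}, together with Proposition~\ref{prop:SpecialisationArguments}. The inductive hypothesis in genus $g-2$ applied to the simple factor $Y$ of the first specialization gives that $G^0_{Y,\ell,1}$ is a $\Qbar_\ell$-form of $\SO_{g-2}$; pushing forward via the specialization map produces an inclusion $\SO_{g-2}\hookrightarrow G^0_{A^{(g)},\ell,1}$ (over $\Qbar_\ell$) that stabilizes a particular orthogonal decomposition $W_\ell(A^{(g)})\simeq W_\ell(Y)\oplus (W_\ell(E)\oplus W_\ell(E'))$. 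Similarly, the inductive hypothesis in genus $g-4$ applied to the factor $X$ of the second specialization produces an embedding $\SO_{g-4}\hookrightarrow G^0_{A^{(g)},\ell,1}$ preserving a different orthogonal decomposition (with a $4$-dimensional complement coming from the four elliptic isogeny factors).

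The main obstacle is the concluding group-theoretic step. Classifying the connected reductive subgroups $H$ of $\SO_g$ (over $\Qbar_\ell$) that contain a standard copy of $\SO_{g-2}$, one finds only the possibilities $\SO_{g-2}$, $\SO_{g-2}\cdot \SO_2$, $\SO_{g-1}$, and $\SO_g$. The additional embedded $\SO_{g-4}$ from the independent second specialization should rule out all of the proper options, since the stabilizers of orthogonal decompositions of different shapes cannot simultaneously contain both subgroups. One would then conclude that $G^0_{A^{(g)},\ell,1}=\SO_g$ over $\Qbar_\ell$, hence equals $L(A^{(g)})_{\Q_\ell}$ over $\Q_\ell$, yielding both $\Hg(A^{(g)})=L(A^{(g)})$ and the Mumford--Tate conjecture, and completing the inductive step.
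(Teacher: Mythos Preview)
Your opening is correct and matches the paper: Proposition~\ref{prop:GeometricallySimpleHigherGenus} (together with the base cases) gives the endomorphism algebra, and the chain of inclusions from Theorem~\ref{thm:ContainmentsGlMTLefschetz} reduces the problem to a lower bound on $G^0_{A^{(g)},\ell}$.

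The final group-theoretic step, however, has a genuine gap. You obtain two subgroups $\SO_{g-2}$ and $\SO_{g-4}$ of $G^0_{A^{(g)},\ell,1}$ from two different specializations, and assert that their coexistence forces the ambient group to be $\SO_g$. But there is no control over how these two copies sit relative to each other inside $\SO_g$: the identifications of Tate modules coming from the two degenerations are a priori unrelated, and nothing prevents the $\SO_{g-4}$ from landing inside the $\SO_{g-2}$ factor of $\SO_{g-2}\cdot\SO_2$. Your sentence ``stabilizers of orthogonal decompositions of different shapes cannot simultaneously contain both subgroups'' is not justified, and in fact $\SO_{g-2}\cdot\SO_2$ obviously contains many copies of $\SO_{g-4}$ preserving a $(g-4)+4$ decomposition. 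So the second specialization, used only for its $\SO_{g-4}$, buys nothing.

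The paper avoids this difficulty entirely by using \emph{only} the specialization $Z\simeq Y\times E^2$ of Lemma~\ref{lemma: specialisation to (g-2)-dimensional x E^2}, and arguing via \emph{rank} rather than via a subgroup classification. The key point is that $E$ has CM by $\Q(i)$, so \cite[Lemma 3.4]{MR3494170} applies and gives
\[
\operatorname{rank} G^0_{Z,\ell} \;=\; \bigl(\operatorname{rank} G^0_{Y,\ell}-1\bigr)+\bigl(\operatorname{rank} G^0_{E,\ell}-1\bigr)+1 \;=\; \tfrac{g-2}{2}+1+1 \;=\; \tfrac{g}{2}+1,
\]
which already equals the rank of $(L(A^{(g)})\cdot\mathbb{G}_m)_{\Q_\ell}$. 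One then has two connected reductive groups of the same rank, one contained in the other, with the same centralizer (both equal to $\End(A^{(g),\al})_{\Q_\ell}$, by Faltings for $G^0_\ell$ and by the double centralizer theorem for the Lefschetz group). Wintenberger's lemma \cite[Lemma~7]{MR1944805} then forces equality. This rank-plus-centralizer argument is both shorter and more robust than trying to pin down the position of subgroups coming from independent degenerations.
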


\begin{proof}
The first part follows from~\Cref{lemma: geometric endomorphism algebra in genera 4 and 6} (for $g \leq 6$) and~\Cref{prop:GeometricallySimpleHigherGenus} (for $g \geq 8$). 
Consider now the specialization $Z = Y \times E \times E'$ of $\calJprim^{(g)}$ given by~\Cref{lemma: specialisation to (g-2)-dimensional x E^2}. Since $E, E'$ have the same $j$-invariant, there is a finite extension $F$ of $\Q$ such that $Z_F \simeq Y_F \times E^2$.  By~\Cref{prop:SpecialisationArguments}, the image of the $\ell$-adic Galois representation attached to $Z_F$ is contained in the image of the $\ell$-adic Galois representation attached to $\Jprim^{(g)}$ (hence the same holds for the Zariski closures). By~\cite[Lemma 3.4]{MR3494170}, since $E$ is a CM elliptic curve, the rank of the $\ell$-adic monodromy group of $Z_F \simeq Y_F \times E^2$ is equal to
\[
\left( \operatorname{rank} (\calG^0_{Y, \ell})-1 \right) + \left( \operatorname{rank} (\calG^0_{E, \ell})-1 \right) + 1,
\]
where the summands $\pm 1$ take into account the torus of homotheties. 
By~\Cref{lemma: specialisation to (g-2)-dimensional x E^2}(a) we have $\operatorname{rank} (\calG^0_{Y, \ell}) = \operatorname{rank} (\mathbb{G}_m \cdot \SO_{g-2}) = 1 +\frac{g-2}{2}$, and since $E$ is CM it is well-known that $\operatorname{rank} \calG_{E, \ell}^0 = 2$, so we obtain that the rank of $\calG_{\Jprim^{(g)}, \ell}^0$ 
is at least $g/2+1$. On the other hand, by~\Cref{lemma:Lefschetz} we know that $L(\Jprim^{(g)})$ has rank $g/2$.
Thus $\calG_{\Jprim^{(g)}, \ell}^0$ and $( L(\Jprim^{(g)}) \cdot \mathbb{G}_m)_{\Q_\ell}$ are connected reductive groups of the same rank, with the former contained in the latter, and they have the same centralizer (in both cases this is $\End(\Jprim^{(g),\al})_{\Q_\ell} = (-1,-1\,|\,\Q) \otimes_{\Q} \Q_\ell$: for $\calG^0_{\Jprim^{(g)}, \ell}$ this follows from Faltings's proof of Tate's conjecture, for $L(\Jprim^{(g)}) \cdot \mathbb{G}_m$ it follows from the definition and the double centralizer theorem). By~\cite[Lemma 7]{MR1944805}, this implies $\calG_{\Jprim^{(g)}, \ell}^0 = (L(\Jprim^{(g)}) \cdot \mathbb{G}_m)_{\Q_\ell}$, and since $\calG_{\Jprim^{(g)}, \ell}^0 \subseteq  \MT(\Jprim^{(g)})_{\Q_\ell} \subseteq (L(\Jprim^{(g)}) \cdot \mathbb{G}_m)_{\Q_\ell}$ we obtain the final statement of the theorem, i.e., $\Jprim^{(g)}$ (and hence $A^{(g)}$) is fully of Lefschetz type. 
\end{proof}

\subsection{Variation of the Galois image in the family $\calA_\aunder$}\label{sect:VariationOfGaloisInFamilies}

In this section we conclude the inductive step by showing that the truth of~\Cref{thm:GenericMonodromy} for a certain genus $g$ implies the truth of~\Cref{thm:MainHigherGenus} for the same genus.
We apply~\Cref{thm:ZywinaFamilies} to the abelian scheme $\calJ^{(g)} \to \AAp$.
Let $c$ be the corresponding constant. The set
\[
X_g = \{ \aunder \in \AAp(\Q) : [\rho_{\calJ^{(g)}}( \pi_1(\AAp,\overline{\eta}) )  : \rho_{\calA^{(g)}_\aunder} (\Gal_\Q) ] \leq c \}
\]
has full density inside $\AAp(\Q)$ in the sense of~\cref{eq:FullDensity}.

\begin{lemma}\label{lemma:FromGalRepToEndoAlgebra}
Suppose that~\Cref{thm:GenericMonodromy} holds in genus $g$.  Then for every $\aunder \in X_g$ we have the following:
\begin{enumalph}
\item $\End(\calA_\aunder^\al)_\Q = (-1,-1\,|\,\Q)$;
\item $\Hg(\calA_\aunder)_\C \cong \SO_{g,\C}$; and
\item $\calA_\aunder$ satisfies the Mumford--Tate conjecture.
\end{enumalph}
\end{lemma}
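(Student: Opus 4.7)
The plan is to leverage \Cref{thm:GenericMonodromy} for the generic fiber $A^{(g)}$ and to transport each of its conclusions to the specialization $\calA_{\aunder}$ via a comparison of $\ell$-adic monodromy groups. The starting point is to translate the defining condition $\aunder \in X_g$ into monodromy data: by \Cref{lem:specialization} and the definition of $X_g$, the image $\rho_{\calA_{\aunder}}(\Gal_{\Q})$ has index at most $c$ in the image $\rho_{A^{(g)}}(\Gal_{\calK})$ of the adelic representation attached to the generic fiber. Projecting to each $\ell$-adic factor and taking Zariski closures (finite index of a subgroup persists upon Zariski closure), for every prime $\ell$ the group $G_{\calA_{\aunder}, \ell}$ is a finite-index closed subgroup of $G_{A^{(g)}, \ell}$, so their identity components agree:
\[ G_{\calA_{\aunder}, \ell}^0 = G_{A^{(g)}, \ell}^0. \]

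For part (a), I would combine a specialization injection with a commutant calculation. \Cref{thm:GenericMonodromy} gives $\End(A^{(g),\al})_\Q = (-1,-1\,|\,\Q)$, and \Cref{prop:SpecialisationArguments} produces an injection of this algebra into $\End(\calA_{\aunder}^\al)_\Q$. For the reverse inclusion, Faltings's theorem (combined with passing to a finite extension of $\Q$ trivializing the component group) identifies $\End(\calA_{\aunder}^\al)_\Q \otimes \Q_\ell$ with the commutant of $G_{\calA_{\aunder}, \ell}^0$ in $V_\ell$, and similarly for $A^{(g)}$. Since the two identity components coincide by the first paragraph, so do the commutants, and a dimension count forces equality $\End(\calA_{\aunder}^\al)_\Q = (-1,-1\,|\,\Q)$.

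For (b) and (c), I would exploit that $A^{(g)}$ is fully of Lefschetz type. By \Cref{lemma:Lefschetz} and part (a), both $L(A^{(g)})_\C$ and $L(\calA_{\aunder})_\C$ are $\Q$-forms of $\SO_{g,\C}$, in particular of the same dimension. Applying \Cref{thm:ContainmentsGlMTLefschetz} to $\calA_{\aunder}$ yields
\[ G_{\calA_{\aunder}, \ell, 1}^0 \leq \Hg(\calA_{\aunder})_{\Q_\ell} \leq L(\calA_{\aunder})_{\Q_\ell}, \]
and the leftmost group equals $G_{A^{(g)}, \ell, 1}^0 = L(A^{(g)})_{\Q_\ell}$ by the first paragraph combined with the fully-Lefschetz property of $A^{(g)}$. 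A dimension count collapses the chain to equalities, proving (b). For (c), the inclusion $G_{\calA_{\aunder}, \ell}^0 \leq \MT(\calA_{\aunder})_{\Q_\ell}$ from \Cref{thm:ContainmentsGlMTLefschetz}(a) has left-hand side equal to $\MT(A^{(g)})_{\Q_\ell}$ (by the Mumford--Tate conjecture for the generic fiber), whose dimension matches that of $\MT(\calA_{\aunder})_{\Q_\ell} = \mathbb{G}_m \cdot \Hg(\calA_{\aunder})_{\Q_\ell}$ by (b); connectedness then yields equality.

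The delicate point, I expect, is the careful setup of the first paragraph: one must check that the finite-index bound on adelic images descends cleanly to the same bound on each $\ell$-adic factor, and that specialization gives a canonical $\Gal_{\Q}$-equivariant identification of $V_\ell(\calA_{\aunder})$ with the generic fiber's Tate module, so that the two monodromy groups sit naturally inside a common $\GL_{V_\ell}$. Once this identification is in place, the remainder of the argument is essentially dimension bookkeeping within the standard framework relating endomorphism rings, Hodge and Lefschetz groups, and $\ell$-adic monodromy groups of abelian varieties.
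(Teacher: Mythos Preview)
Your proposal is correct and follows essentially the same approach as the paper: both arguments first use the finite-index condition defining $X_g$ to conclude that $G_{\calA_{\aunder},\ell}^0 = G_{A^{(g)},\ell}^0$, then deduce (a) from Faltings's identification of the endomorphism algebra with the commutant of the connected monodromy group, and collapse the chain $G^0 \leq \MT \leq L \cdot \mathbb{G}_m$ by a dimension count for (b) and (c). The only cosmetic differences are that the paper obtains the injection $(-1,-1\,|\,\Q) \hookrightarrow \End(\calA_{\aunder}^{\al})_\Q$ directly from the curve automorphisms rather than via specialization, and it invokes \cite[Theorem~4.3]{MR1339927} to pass from one prime to all primes for Mumford--Tate, whereas your argument already works uniformly in $\ell$.
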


\begin{proof}
It is easy to see that, given two subgroups $G_1 \subseteq G_2 \subseteq \GL_{2g}(\Q_\ell)$ with $[G_2 : G_1] < \infty$, the Zariski closures of $G_1, G_2$ in $\GL_{2g, \Q_\ell}$ have the same dimension (consider their Lie algebras).
As a consequence, the defining property of $X_g$ shows that, for every $a \in X_g$ and every prime $\ell$, the $\ell$-adic monodromy group of $\calA_a$ has the same dimension as the $\ell$-adic monodromy group of the generic fibre $A^{(g)}$. In particular, both groups have the same connected component of the identity.
By Tate's conjecture on endomorphisms for abelian varieties over finitely generated fields of characteristic 0 (proved by Faltings~\cite{MR766574}), the connected component of the $\ell$-adic monodromy group determines the dimension of the geometric endomorphism algebra. More explicitly, if $A$ is an abelian variety over a field $K$ of characteristic $0$ and $\ell$ is a prime number, we have
\[
\End(A^{\al})_{\Q_\ell} \simeq \End_{G_{A,\ell}^0}(T_\ell A) \otimes_{\Z_\ell} \Q_\ell.
\]
It follows from the above that $\dim_{\Q} \End(\calA_a^{\al})_\Q = \dim_{\Q_\ell} \End(\calA_a^{\al})_{\Q_\ell} = \dim_{\Q_\ell} \End(A^{(g),\al})_{\Q_\ell} = 4$, where the last equality follows from the fact that $\End(A^{(g),\al})_\Q \simeq (-1,-1\,|\,\Q)$ by~\Cref{thm:GenericMonodromy}, which we are assuming. By~\Cref{lemma: endomorphisms generic fiber} there is an injection of $(-1,-1\,|\,\Q)$ in $\End(\calA_\aunder^{\al})_\Q$, so we obtain the desired equality $\End(\calA_\aunder^{\al})_\Q = (-1,-1\,|\,\Q)$, that is, (a).

Now observe that $\calA_\aunder$ has the same polarization and geometric endomorphism algebra, and therefore the same Lefschetz group, as the generic fibre $A^{(g)}$. 
By definition of the set $X_g$, the image $\rho_{\calA_\aunder, \ell}(\Gal_{\Q})$ has finite index inside $\rho_{\calA^{(g)},\ell}( \pi_1(\AAp,\overline{\eta}) ) = \rho_{A^{(g)}, \ell}(\Gal_{\calK})$ (see~\Cref{lem:specialization} for the last equality). By~\Cref{thm:GenericMonodromy}, $A^{(g)}$ satisfies the Mumford--Tate conjecture, so $\rho_{A^{(g)}, \ell}(\Gal_{\calK})$ has finite index inside the $\Q_\ell$-points of $L(A^{(g)}) \cdot \mathbb{G}_{m} = L(\calA_\aunder)\cdot \mathbb{G}_{m}$. By~\Cref{thm:ContainmentsGlMTLefschetz}(a)--(b) we also have the containments
\[
\rho_{\calA_\aunder, \ell}(\Gal_{\Q}) \subseteq \MT(\calA_\aunder)(\Q_\ell) \subseteq (L(\calA_\aunder) \cdot \mathbb{G}_m) (\Q_\ell).
\]
It follows from the above discussion that these inclusions all have finite index. In particular, $\MT(\calA_\aunder)$ and $L(\calA_\aunder)\cdot \mathbb{G}_{m}$ have the same dimension, so the inclusion $\Hg(\calA_\aunder) \subseteq L(\calA_\aunder)$ is an equality. 
Since $L(\calA_\aunder)_{\mathbb{C}}=L(A^{(g)})_{\mathbb{C}}$ is isomorphic to $\SO_{g,\C}$ by~\Cref{lemma:Lefschetz}, this proves (b). 

Finally, the Zariski closure of $\rho_{\calA_\aunder, \ell}(\Gal_{\Q})$ is $\MT(\calA_\aunder)(\Q_\ell)$ (the former is open in the latter even for the $\ell$-adic topology, since it has finite index), so the Mumford--Tate conjecture holds at the prime $\ell$. By~\cite[Theorem 4.3]{MR1339927}, this implies that Mumford--Tate holds for all primes, that is, (c). 
\end{proof}

We now give a proof of~\Cref{thm:GenericMonodromy} for $g=4, 6$ that does not rely on~\Cref{thm:MainHigherGenus}: this will establish the base cases of our induction to show~\Cref{thm:MainHigherGenus}.
\begin{lemma}\label{lemma: thm generic monodromy holds for 4 and 6}
\Cref{thm:GenericMonodromy} holds for $g=4, 6$.
\end{lemma}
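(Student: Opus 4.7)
The plan is to dispose of the endomorphism-algebra conclusion using \Cref{lemma: geometric endomorphism algebra in genera 4 and 6} (which already gives $\End(A^{(g),\al})_\Q \simeq (-1,-1\,|\,\Q)$ and geometric simplicity for $g\in\{4,6\}$), and then to show that $A^{(g)}$ is fully of Lefschetz type by two separate arguments, since the inductive machinery of this section cannot be run at the base of the induction.

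For $g=4$, I would appeal directly to external results. Moonen--Zarhin \cite{MR1324634} establish the Mumford--Tate conjecture for every simple abelian fourfold of Type III, and their accompanying classification of Hodge groups shows that when the endomorphism algebra is a definite quaternion algebra over $\Q$ (as is the case for $A^{(4)}$), the Hodge group is forced to be the full $\SO_4$, matching the Lefschetz group identified in \Cref{lemma:Lefschetz}. This yields simultaneously $\Hg(A^{(4)}) = L(A^{(4)})$ and the Mumford--Tate conjecture, so $A^{(4)}$ is fully of Lefschetz type.

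For $g=6$, the plan is to bootstrap from the genus-$4$ case just handled. Once \Cref{thm:GenericMonodromy} is known for $g=4$, the variation argument of \cref{sect:VariationOfGaloisInFamilies} promotes it to \Cref{thm:MainHigherGenus} in genus $4$. With that in hand I would invoke \Cref{lemma: specialisation to (g-2)-dimensional x E^2} applied to $g=6$ (whose hypothesis requires \Cref{thm:MainHigherGenus} only in genus $g-2=4$) to produce a specialization $Z = Y\times E\times E'$ of $\calJprim^{(6)}$ in which $Y$ is a $4$-dimensional geometrically simple abelian variety of Type III with $\calG^0_{Y,\ell} \simeq \mathbb{G}_m\cdot\SO_4$ and $E,E'$ are CM elliptic curves of $j$-invariant $1728$. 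From there the argument mirrors the proof immediately following \Cref{prop:GeometricallySimpleHigherGenus}: a rank computation via \cite[Lemma 3.4]{MR3494170} yields $\mathrm{rank}(\calG^0_{Z_F,\ell}) \geq g/2+1=4$ over a suitable finite extension $F/\Q$, this lower bound transfers to $\calG^0_{\calJprim^{(6)},\ell}$ by \Cref{prop:SpecialisationArguments}, and comparing it with the Lefschetz upper bound of the same rank (with common centralizer $(-1,-1\,|\,\Q)\otimes\Q_\ell$) forces equality $\calG^0_{\calJprim^{(6)},\ell} = (L\cdot \mathbb{G}_m)_{\Q_\ell}$ via \cite[Lemma 7]{MR1944805}. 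This simultaneously establishes MT and $\Hg=L$ for $A^{(6)}$.

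The main obstacle is clearly the genus-$4$ case: the family-based specialization strategy that powers the higher-genus argument collapses there, because there is no smaller genus in the tower from which to bootstrap. All of the substantive input therefore has to come from Moonen--Zarhin's deep analysis of simple abelian fourfolds of Type III rather than from the techniques developed in this section; if that input were unavailable, one would instead have to either argue directly via Frobenius computations at the specialization $\calA^{(4)}_{1/2}$ (using the explicit endomorphism and connected-monodromy-field information of \Cref{thm:EndomorphismsGenus4} and \Cref{cor:FieldsOfDefinition}) to lower-bound the dimension of the $\ell$-adic monodromy group to match the Lefschetz upper bound, or devise a specialization outside the family $\calC^{(g)}$ itself.
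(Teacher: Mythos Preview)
Your proposal is correct. For $g=4$ you match the paper exactly, invoking Moonen--Zarhin \cite{MR1324634} to conclude that a simple abelian fourfold of Type III with endomorphism algebra $(-1,-1\,|\,\Q)$ is fully of Lefschetz type.

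For $g=6$ you take a genuinely different route. The paper dispatches this case in one line by citing Banaszak--Gajda--Kraso\'n \cite{MR2663452}: since $A^{(6)}$ is simple of Type III with center $\Q$ and relative dimension $3$ (odd), their results apply directly. Your approach instead avoids this external reference by bootstrapping from $g=4$. This works because \Cref{lemma: geometric endomorphism algebra in genera 4 and 6} already supplies the endomorphism algebra of $A^{(6)}$, so of the two specialization lemmas only \Cref{lemma: specialisation to (g-2)-dimensional x E^2} is needed, and its hypothesis is \Cref{thm:MainHigherGenus} in genus $g-2=4$ alone (not $g-4=2$). You then observe that the implication \Cref{thm:GenericMonodromy}$(g)\Rightarrow$\Cref{thm:MainHigherGenus}$(g)$ established in \cref{sect:VariationOfGaloisInFamilies} is independent of the base cases, so it may be applied at $g=4$ immediately after the Moonen--Zarhin input, yielding the required genus-$4$ specializations; the rank argument via \cite[Lemma 3.4]{MR3494170} and \cite[Lemma 7]{MR1944805} then goes through verbatim for $g=6$. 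This is valid and non-circular. What your approach buys is self-containment (one fewer deep external citation, and the observation that really only \emph{one} base case is needed); what the paper's approach buys is brevity and an organization in which \Cref{lemma: thm generic monodromy holds for 4 and 6} makes no forward reference to \cref{sect:VariationOfGaloisInFamilies}.
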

\begin{proof}
The first statement in \Cref{thm:GenericMonodromy} follows from~\Cref{lemma: geometric endomorphism algebra in genera 4 and 6}. An abelian variety of dimension 4 or 6 with geometric endomorphism algebra $(-1,-1\,|\,\Q)$ is fully of Lefschetz type, see~\cite{MR1324634} for $g=4$, and~\cite{MR2663452} for $g=6$.
\end{proof}

We are finally ready to prove~\Cref{thm:MainHigherGenus}. 

\begin{proof}[Proof of~\Cref{thm:MainHigherGenus}]
The proof is by induction, see~\Cref{fig:flow}.
We have shown in~\cref{subsec: generic monodromy} that~\Cref{thm:MainHigherGenus} in genera $g-2$ and $g-4$ implies~\Cref{thm:GenericMonodromy} in genus $g$. Since~\Cref{lemma: thm generic monodromy holds for 4 and 6} takes care of the base cases $g=4$ and $g=6$ of~\Cref{thm:GenericMonodromy}, it suffices to show that~\Cref{thm:GenericMonodromy} in genus $g$ implies~\Cref{thm:MainHigherGenus} in genus $g$.

From~\Cref{thm:ZywinaFamilies}, \Cref{lemma:FromGalRepToEndoAlgebra}, and the definition of the set $X_g$ it follows that for a density $1$ subset of points $\aunder \in \AAp(\Q)$ when ordered by height all of the following hold: the curve $\calC_\aunder^{(g)}$ is smooth, $a_d$ is nonzero, the Jacobian $\calA_\aunder^{(g)}$ has geometric endomorphism algebra $(-1,-1\,|\,\Q)$, satisfies the Mumford--Tate conjecture, and has complex Hodge group isomorphic to $\SO_{g, \C}$. For any such $\aunder$, properties (i) and (ii) in~\Cref{thm:MainHigherGenus} follow from~\Cref{lemma:FromEndoAlgebraToEndoRing}, 
(iii) and (iv) follow from~\Cref{thm:UpperBoundSatoTate}, and (v) is clear. Finally, the last part of the statement is a consequence of~\Cref{cor:InfinitelyManyIsogenyClasses} below.
\end{proof}

\subsection{Infinitely many isogeny classes}

We conclude by showing the final statement in~\Cref{thm:MainHigherGenus}.

\begin{proposition}\label{prop:InfinitelyManyIsogenyClasses}
Let $S$ be a variety defined over a number field $K$, and let $\calC \to S$ be a family of smooth, projective, geometrically connected curves of genus $g$. Suppose that the induced moduli map $S(\overline K) \to M_g(\Kbar)$ has finite fibres. The map
\[
\begin{array}{ccc}
S(K) & \to & \{ \text{abelian varieties over }\Kbar \} / \sim \\
s & \mapsto & \operatorname{Jac}(\calC_s)^\al,
\end{array}
\]
where $\sim$ denotes isogeny over $\Kbar$, has finite fibres. In particular, if $S(K)$ is an infinite set, the abelian varieties $\{\Jac(\calC_s) : s \in S(K) \}$ fall into infinitely many distinct isogeny classes over $\Kbar$.
\end{proposition}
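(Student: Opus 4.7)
The strategy is to factor the map through $M_g(\Kbar)$ and reduce the finiteness of fibres to a Faltings-type finiteness statement. Fix a $\Kbar$-isogeny class $\mathcal I$ of $g$-dimensional abelian varieties, and set $F\colonequals \{s\in S(K):\Jac(\calC_s)^\al\in \mathcal I\}$; the proposition will follow once we prove that $F$ is finite, since the fibres of our map partition $S(K)$ and the final sentence then follows by contrapositive. The map $\phi\colon S(\Kbar)\to M_g(\Kbar)$ has finite fibres by assumption, and the Torelli morphism $M_g\to \calA_g$ is injective on geometric points, so it suffices to bound the image of $F$ in the set of $\Kbar$-isomorphism classes of principally polarised abelian varieties.

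Each $A_s\colonequals \Jac(\calC_s)$ is a principally polarised abelian variety of dimension $g$ defined over $K$. The key input is Faltings's theorem on the Shafarevich conjecture for abelian varieties \cite{MR766574}: for a fixed number field $K$, a positive integer $g$, and a fixed finite set $\Sigma$ of places of $K$, only finitely many $K$-isomorphism classes of principally polarised abelian varieties over $K$ of dimension $g$ have good reduction outside $\Sigma$. To apply it, fix a representative $A_0\in \mathcal I$ (defined over $K$ after enlarging if necessary); for each $s\in F$ there is a $\Kbar$-isogeny $A_s^\al\to A_0^\al$, realised over some finite extension $L_s/K$. Using either an isogeny estimate of Masser--Wustholz type (bounding $[L_s:K]$ and the degree of the isogeny in terms of invariants of $\mathcal I$), or a direct analysis of the $\Gal_K$-representation on $V_\ell(A_s)$ coming from the isomorphism $V_\ell(A_s)|_{\Gal_{L_s}}\simeq V_\ell(A_0)|_{\Gal_{L_s}}$, combined with the N\'eron--Ogg--Shafarevich criterion, one deduces that the bad reduction set of $A_s/K$ lies in a fixed finite set $\Sigma$ depending only on $\mathcal I$. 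Faltings's theorem then yields finitely many $K$-iso classes of the $A_s$, a fortiori finitely many $\Kbar$-iso classes of PPAVs $(\Jac(\calC_s)^\al, \theta_s)$, and hence (by Torelli and the finite-fibre hypothesis on $\phi$) finitely many $s\in F$.

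The main obstacle is the uniform bound on the bad reduction of $A_s$ as $s$ varies over $F$. Na\"ively twisting a fixed PPAV by a quadratic character introduces arbitrary new bad primes, so such a bound cannot come from abstract Galois-cohomological considerations alone: the argument must exploit that the $A_s$ arise from a family $\calC\to S$ over a base of finite type and that the moduli map is quasi-finite. Concretely, the twists of $A_0$ that appear as $A_s$ for $s\in F$ are constrained by the geometry of $S$ together with the Galois descent data of the finite map $\phi$, so that the relevant cocycles are controlled by a fixed field of definition of the moduli-theoretic data. Making this rigorous, presumably by combining a uniform Masser--Wustholz estimate with the finiteness of $\Gal_K$-twists of a finite-type scheme unramified outside a fixed finite set, is the technical crux of the argument; once established, Torelli, Faltings, and the finite-fibre hypothesis combine to give the finiteness of $F$.
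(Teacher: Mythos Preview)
Your primary approach via the Shafarevich conjecture has a genuine gap that you correctly identify but do not resolve: there is no reason the bad reduction of $A_s$ over $K$ should be confined to a fixed finite set as $s$ ranges over a $\Kbar$-isogeny class. Your own observation about quadratic twists shows this fails abstractly, and your attempted fix via the geometry of $S$ does not work either: for example, in the paper's own families $\calC^{(g)}\to U\subset \A^d_\Q$, the discriminant $\Delta(a)$ varies unboundedly over $U(\Q)$, so distinct $K$-points in the same $\Kbar$-isogeny class can have arbitrarily different bad-reduction sets. The finite-fibre hypothesis on $S\to M_g$ constrains $\Kbar$-isomorphism classes, not $K$-twists, so it cannot recover a uniform $\Sigma$.

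The paper avoids bad reduction entirely. Fixing $s_0$ with $\calT(s_0)=\{s:A_s\sim_{\Kbar}A_{s_0}\}$ infinite, it argues: (i) by Silverberg, any $\Kbar$-isogeny $A_{s_0}\to A_s$ is already defined over $F_s=K(A_s[3],A_{s_0}[3])$, and $[F_s:\Q]$ is bounded by a constant depending only on $g$; (ii) by the explicit isogeny theorem of Gaudron--R\'emond, there is an $F_s$-isogeny $A_{s_0}\to A_s$ of degree bounded in terms of $h(A_{s_0})$, $g$, and $[F_s:\Q]$---crucially, the bound depends on the height of the \emph{fixed} $A_{s_0}$, not of $A_s$, so it is uniform in $s$; (iii) $A_{s_0}$ has only finitely many subgroup schemes of bounded order, so the $A_s^{\al}$ fall into finitely many isomorphism classes; (iv) each carries finitely many principal polarizations up to isomorphism; (v) Torelli then forces infinitely many $\calC_s$ to be geometrically isomorphic, contradicting the finite-fibre hypothesis.

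You do gesture at isogeny estimates of Masser--W\"ustholz type as an alternative, and that is indeed the right tool; the point you are missing is that the uniformity comes for free once you bound the degree in terms of $h(A_{s_0})$ rather than $h(A_s)$, and that this replaces the Shafarevich step entirely rather than feeding into it.
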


\begin{proof}
For the sake of simplicity write $A_s$ for $\Jac(\calC_s)$. It is an abelian variety over $K$.
By contradiction, let $s_0 \in K$ be such that $\calT(s_0) \colonequals \{s \in S(K) : A_s \sim_{\Kbar} A_{s_0} \}$ is an infinite set. By~\cite{MR1154704}, every homomorphism between $A_s$ and $A_{s_0}$ is defined over the field $F_s \colonequals K(A_s[3], A_{s_0}[3])$, whose degree over $\Q$ is bounded independently of $s_0$ and $s$. Hence, for all $s \in \calT(s_0)$, the abelian varieties $A_s$ and $A_{s_0}$ are isogenous over $F_s$, and by the explicit isogeny theorem~\cite[Th\'eor\`eme 1.4]{MR3263028}, there exists an $F_s$-isogeny $A_{s_0} \to A_s$ of degree at most $b( h(A_{s_0}), [F_s:\Q], g )$, where $b( h(A_{s_0}), g, [F_s:\Q] )$ depends only on the stable Faltings height $h(A_{s_0})$ of $A_{s_0}$, on the dimension $g$, and on the degree $[F_s : \Q]$. Since $h(A_{s_0})$, $g$ and $[F_s:\Q]$ are bounded independently of $s \in \calT(s_0)$, we obtain that there exists an integer $N=N(s_0)$ such that for every $s \in \calT(s_0)$ there exists a $\Kbar$-isogeny $A_{s_0} \to A_s$ of degree at most $N$. Since $A_{s_0}$ has only finitely many subgroups of order at most $N$, the infinitely many abelian varieties $A_s^\al$ for $s \in \calT(s_0)$ fall into finitely many isomorphism classes. Since every abelian variety carries at most finitely many isomorphism classes of principal polarizations~\cite{MR653950}, we see that infinitely many $s \in \calT(s_0)$ give rise to (geometrically) isomorphic \textit{principally polarized} abelian varieties $A_s$. Let $\calT'(s_0)$ be an infinite subset of $\calT(s_0)$ for which $A_s, A_{s'}$ are geometrically isomorphic principally polarized abelian varieties for all $s, s' \in \calT'(s_0)$.
By Torelli's theorem, the curves $\calC_s$ are then geometrically isomorphic for all $s \in \calT'(s_0)$. This contradicts the assumption that the moduli map $S(\Kbar) \to M_g(K^\al)$ has finite fibres and finishes the proof of the proposition.
\end{proof}

\begin{remark}
For a given $s_0 \in S(K)$ one could ask for a bound on the cardinality of the set 
\[
\{ s \in S(K) : \Jac (\calC_s) \sim_{\Kbar} \Jac(\calC_{s_0}) \}.
\]
 Following the proof above, one would need to bound the number of (isomorphism classes of) principal polarizations on the abelian varieties $\Jac (\calC_s)$. As shown in~\cite{MR904944}, there is no uniform bound on this quantity depending only on the genus of $\calC$, and in fact, Rotger~\cite{MR1998611} proved that for every positive integer $N$ there are $N$ non-isomorphic genus-2 curves with isomorphic unpolarized Jacobian.
\end{remark}

The following corollary implies the last statement in~\Cref{thm:MainHigherGenus}.

\begin{corollary}\label{cor:InfinitelyManyIsogenyClasses}
As $\aunder = (a_1,\ldots,a_d)$ varies in $\AAp(\Q)$, the Jacobians $\calA_\aunder$ over $\Q$ of the curves $\calC_\aunder$
fall into infinitely many distinct isogeny classes over $\Q^{\al}$.
\end{corollary}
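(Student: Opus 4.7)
The plan is to deduce~\Cref{cor:InfinitelyManyIsogenyClasses} as a direct application of~\Cref{prop:InfinitelyManyIsogenyClasses} to the family $\calC = \calC^{(g)} \to U$ over $K = \Q$ introduced in~\Cref{setup}. This reduces the claim to checking the two hypotheses of that proposition: that $U(\Q)$ is infinite, and that the induced moduli map $U(\Qbar) \to M_g(\Qbar)$ has finite fibres.

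The first hypothesis is immediate: by~\Cref{lem: discriminant}, $U = \Spec \Q[a_1,\ldots,a_d,\Delta^{-1}]$ is a nonempty Zariski open subscheme of $\A^d_{\Q}$, so $U(\Q)$ contains the complement of the zero locus of $\Delta$ in $\Q^d$ and is in particular infinite.

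For the second hypothesis, I would invoke~\Cref{lemma:MapToModuliSpace}(b), which asserts that, after base change to $\Q(i)$, the moduli map $U_{\Q(i)} \to M_g$ is generically Galois of degree $6$ onto its image. Since this map is generically finite and $U$ is irreducible, upper semi-continuity of fibre dimension (applied to the morphism from $U_{\Q(i)}$ onto the closure of its image inside $M_g$) produces a nonempty open subscheme $U' \subseteq U$ on which the moduli map has finite fibres on $\Qbar$-points. The complement $U \smallsetminus U'$ is a proper closed subscheme, so the rational points $U'(\Q) \subseteq U(\Q)$ form a Zariski-dense, and in particular infinite, subset.

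Applying~\Cref{prop:InfinitelyManyIsogenyClasses} to the restricted family $\calC^{(g)}|_{U'} \to U'$ then yields infinitely many distinct $\Qbar$-isogeny classes among the $\calA_{\aunder}$ for $\aunder \in U'(\Q)$, which gives the corollary. I do not foresee any genuine obstacle: the geometric content (generic finiteness of the moduli map, finiteness results of Faltings and Masser--W\"ustholz underlying~\Cref{prop:InfinitelyManyIsogenyClasses}) has already been established; the corollary is a packaging step that merely confirms that our explicit family parametrizes pairwise geometrically non-isogenous Jacobians on a set of full density.
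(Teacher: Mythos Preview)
Your proposal is correct and follows the same route as the paper: apply \Cref{prop:InfinitelyManyIsogenyClasses} with the finite-fibre hypothesis supplied by \Cref{lemma:MapToModuliSpace}(b). Your extra care in restricting to an open $U'$ where the fibres are genuinely finite is appropriate (the paper's proof of \Cref{lemma:MapToModuliSpace}(b) itself works over such a $U'$), and the paper's one-line proof should be read the same way.
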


\begin{proof}
Follows from~\Cref{prop:InfinitelyManyIsogenyClasses} and~\Cref{lemma:MapToModuliSpace}(b) (finite-to-one).
\end{proof}

\begin{remark} \label{rmk:oomanybadred}
Let $g>4$ and $d=g/2-1$. We sketch a different proof that the Jacobians $\calA_\aunder$ fall into infinitely many distinct isogeny classes over $\Q^{\al}$. 
It is straightforward to see that for every prime $p$ sufficiently large relative to $d$ there are distinct rational numbers $b_1,\ldots,b_d$ different from $0, \pm 1$ such that $v_p(b_1-b_2)>0$, but
$
v_p(b_1 - r) = 0
$ for all roots $r \neq b_1, b_2$ of the separable polynomial defining
\eqref{eq:HyperellipticEquationGeneralgFactored}.
We take $\aunder=(a_1,\ldots,a_d)=\phi(b_1,\dots,b_d)$ as in \cref{subsect:BaseChanges} (just expanding out the polynomial).  Then by the criterion given by the cluster picture \cite[Theorem 5.5]{best2021users}, the Jacobian $\calA_\aunder$ does \emph{not} have potentially good reduction at $p$: the cluster $\{ x \in \overline{\Q_p} : v_p(x-b_1) \geq 1 \}$ is even.  

It follows that for all sufficiently large primes $p$, there exists a specialization $\calA_\aunder$ that does not have potentially good reduction at $p$.  Since the set of places of potential good reduction is a $\Q^{\al}$-isogeny invariant, the Jacobians $\calA_\aunder$ must belong to infinitely many distinct $\Q^{\al}$-isogeny classes.  (When $g=4$, an analogue of the previous argument is prevented by \Cref{lem:puregood}.)
\end{remark}

\section{Moduli interpretation and final remarks}\label{sec: a Shimura variety}

We conclude by relating the family $\calC^{(4)}$ of~\Cref{setup} to moduli.  We begin in \cref{subsec:complexunif} by a complex analytic description of the moduli space; we then represent the moduli functor in \cref{subsec: moduli space} and compare it to our family in \cref{sec:explrec}.  

\subsection{Complex uniformization} \label{subsec:complexunif}

We begin by relating our family to the analytic (complex) uniformization, following the notation in Birkenhake--Lange~\cite[\S 9.5]{MR2062673}.  (See also Krieg \cite[Chapter II]{Krieg:quat} more generally for quotients of quaternionic upper half-space.) 

The dimension of the moduli space of principally polarized complex abelian fourfolds equipped with definite quaternion multiplication by $\mathcal{O}$ (and a fixed polarization) is $1=(e/2)m(m-1)$ \cite[p.~261]{MR2062673} (where $e=1$ as the degree of the base field $\Q$ and $m=g/(2e)=2$).  The complex analytic moduli space is therefore obtained as the quotient of the quaternionic half-space of complex dimension $1$; this may be identified with the more familiar complex upper half-plane \cite[Example 9.5.6]{MR2062673}.  

In the rest of this section, we work out more precisely the rest of the analytic description.  Let $\varphi \colon (-1,-1\,|\,\Q)_{\mathbb{R}}^2 \to \Lambda \otimes_{\mathbb{Z}} \mathbb{R}$ be an $(-1,-1\,|\,\Q)$-equivariant isomorphism of $\mathbb{R}$-vector spaces, for example $\varphi\begin{pmatrix}
1 \\ 0
\end{pmatrix} = e_1 \otimes 1$ and $\varphi \begin{pmatrix}
0 \\ 1
\end{pmatrix} = e_5 \otimes 1$, where $e_1, \ldots, e_8$ is the basis of $\Lambda$ given by the columns of the period matrix $\Pi$ from~\cref{sec: complex structure}. 

We consider the pullback to $(-1,-1\,|\,\Q)^2_{\mathbb{R}}$ of the (imaginary part of the) polarization on $\Lambda \otimes_{\mathbb{Z}} \mathbb{R}$, that is, the intersection form given in~\cref{eq:IntersectionForm}. There exists $T \in \operatorname{Mat}_2((-1,-1\,|\,\Q)_{\mathbb{R}})$ such that
\[
\operatorname{trd} ( {}^t \underline{a} T \underline{b}') = (\operatorname{Im} H)(\varphi(\underline{a}), \varphi(\underline{b}) ) \quad \forall \underline{a}, \underline{b} \in (-1,-1\,|\,\Q)_{\mathbb{R}}^2,
\]
where $\operatorname{trd}$ denotes the reduced trace,
$\operatorname{Im} H$ is the anti-symmetric bilinear form on $\Lambda \otimes_{\mathbb{Z}} \mathbb{R}$ corresponding to the imaginary part of the polarization, and for a vector $\underline{a}=\begin{pmatrix}
h_1 \\ h_2 
\end{pmatrix} \in (-1,-1\,|\,\Q)_{\mathbb{R}}^2$ we have set $
\underline{a}' = \begin{pmatrix}
\overline{h_1} \\ \overline{h_2}
\end{pmatrix}$.
For every $\alpha \in (-1,-1\,|\,\Q)_{\mathbb{R}}$ we have
\[
2\alpha = \trd(\alpha) + \trd(-\alpha i) i + \trd(-\alpha j) j+ \trd(-\alpha k) k;
\]
applying this identity to the $(1,1)$-coefficient of $T$, denoted by $t_{11}$, and using the fact that we have $
\trd ( {}^t\underline{a} T \underline{b}') = (\operatorname{Im} H)(\varphi(\underline{a}), \varphi(\underline{b}) )$, we obtain
\[
\begin{aligned}
2t_{11} &=  (\operatorname{Im} H) \left( \varphi \begin{pmatrix}
1 \\ 0
\end{pmatrix}, \varphi \begin{pmatrix}
1 \\ 0
\end{pmatrix} \right) + (\operatorname{Im} H) \left( \varphi \begin{pmatrix}
1 \\ 0
\end{pmatrix}, \varphi \begin{pmatrix}
i \\ 0
\end{pmatrix} \right)i \\
 & + (\operatorname{Im} H) \left( \varphi \begin{pmatrix}
1 \\ 0
\end{pmatrix}, \varphi \begin{pmatrix}
j \\ 0
\end{pmatrix} \right)j + (\operatorname{Im} H) \left( \varphi \begin{pmatrix}
1 \\ 0
\end{pmatrix}, \varphi \begin{pmatrix}
k \\ 0
\end{pmatrix} \right)k
\end{aligned}.
\]
Using the explicit matrix representations of the imaginary part of the polarization \cref{eq:IntersectionForm} and of the action of $i$ and $j$ \cref{eq:alpha*beta*} we obtain $2t_{11}=-(i+j)$. Repeating the same calculation for the other coefficients of $T$ yields
\begin{equation}
T = \frac{1}{2} \begin{pmatrix} 
 -(i+j) & 0 \\
 0 & k \\
\end{pmatrix}.
\end{equation}
We observe that $T$ satisfies $T' = -T$, as predicted by the general theory, where $'$ denotes the conjugate transpose.  We have $\nrd(T)=\nrd(-(i+j)/2)\nrd(k/2)=1/8$, which is not a rational square \cite[Exercise 9.10 (2)(a)]{MR2062673}.

We conclude that there is only one connected component in the complex moduli space.  

\subsection{Moduli space}\label{subsec: moduli space}

We now represent the associated moduli functor of abelian varieties.  We follow Lan~\cite[\S 1.2]{MR3186092}, including his notation.

As above, let $B \colonequals \quat{-1,-1}{\Q}$ and let $\mathcal O \colonequals \Z + \Z i + \Z j + \Z ij \subset B$ be the Lipschitz order.  
Let $L \simeq \Z^8$ be the (left) $\mathcal O$-lattice defined by the action in \eqref{eq:alpha*beta*}; equip $L$ with the symplectic (nondegenerate) pairing $\langle \,,\, \rangle$ defined by \eqref{eq:IntersectionForm} and the polarization defined by $h=\alpha^*$. The tuple $(L,\langle \,,\,\rangle,h)$ is called a \emph{PEL-type $\mathcal{O}$-lattice} (and elsewhere, an \emph{integral PEL datum}, taking the standard involution as the positive involution of $\mathcal{O}$).  
As Lan elaborates \cite[\S 5.1.1--5.1.2]{Lanexample}, a PEL-type $\mathcal{O}$-lattice yields a complex abelian variety with PEL (polarization, endomorphism, and level) structure: in our case, 
\begin{itemize}
\item the abelian variety $A_0 \colonequals (L \otimes_\Z \R)/L$ whose complex structure is given by $h$, 
\item the principal polarization $\lambda_0 \colon A_0 \to (L \otimes_\Z \R)/L^\#$ where $L^{\#} \simeq L$ is the dual under $\langle \,,\,\rangle$,
\item endomorphisms $\mathcal{O} \to \End(A_0)$, and
\item full level $2$-structure given by $(\frac{1}{2}L)/L \xrightarrow{\sim} A_0[2]$.  
\end{itemize}

Moving from one object to the general case, we define an associated moduli problem $\mathcal{M}_2(L,\langle\,,\,\rangle,h)$ following Lan~\cite[Definition 1.4.1.2]{MR3186092} as follows (taking for $\square$ the set of odd primes).  Let $S_{0} \colonequals \Spec (\Z[1/2])$.  Define the category fibered in groupoids over the category $\mathsf{Sch}_{S_{0}}$ of schemes over $S_0$, whose fiber over each scheme $S$ is the groupoid $\mathcal{M}_2(L,\langle\,,\,\rangle,h)(S)$ described as follows.  The objects of $\mathcal{M}_2(L,\langle\,,\,\rangle,h)(S)$ are tuples $(A,\lambda,\iota,\alpha_2)$, where:
\begin{enumerate}
    \item $A$ is an abelian scheme over $S$;
    \item $\lambda \colon A\to A^{\vee}$ is a $\Z[1/2]^{\times}$-polarization of $A$ \cite[Definition 1.3.2.19]{MR3186092}; 
    \item $\iota \colon \mathcal O\to \End_S(A)$ defines an $\mathcal O$-structure of $(A,\lambda)$;
    \item $\underline{\Lie}_{A/S}$ with its $\mathcal O \otimes_{\Z} \Z[1/2]$-module structure given naturally by $\iota$ satisfies the Kottwitz determinantal condition \cite[Definition 1.3.4.1]{MR3186092} given by $(L\otimes_{\Z} \R, \langle\,,\,\rangle,h)$;
    \item $\alpha_n \colon (L/2L)_S \xrightarrow{\sim} A[2]$ is an (integral) principal level-$2$ structure of $(A,\lambda,\iota)$ of type $L\otimes_{\Z} \Z_{2},\langle\,,\,\rangle$.
\end{enumerate}
The reason for the polarization structure as it is given in (2) is a bit technical, see \cite[\S 1.3.1]{MR3186092}.  However, the degree of the polarization $\lambda$ must be the index $[L^\#:L]$; here, we have $[L^\#:L]=1$ so the polarizations in (2) are principal.  The notion of isomorphism \cite[Definition 1.4.1.2]{MR3186092} is omitted.  

\begin{proposition} \label{prop:M2Ld}
The moduli problem $\mathcal{M}_2(L,\langle\,,\,\rangle,h)$ is a Deligne--Mumford stack of dimension $1$ which is separated, smooth, connected, and of finite type over $S_0$.  
\end{proposition}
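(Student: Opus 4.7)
The plan is to apply Lan's general representability theorem for PEL moduli problems at levels coprime to the residue characteristic, then extract the dimension and connectedness from the complex uniformization of \cref{subsec:complexunif}.

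For the stack-theoretic properties, apply \cite[Theorem 1.4.1.11]{MR3186092} to the PEL-type $\mathcal{O}$-lattice $(L,\langle\,,\,\rangle,h)$. Since the level $N=2$ is invertible on $S_0 = \Spec \Z[1/2]$, the hypotheses for smoothness are satisfied, and $\mathcal{M}_2(L,\langle\,,\,\rangle,h)$ is a separated, smooth, finite-type Deligne--Mumford stack over $S_0$; the DM property is consistent with automorphisms of moduli objects being bounded by the finite group $\mathcal{O}^\times$. For the relative dimension, apply Shimura's formula $(e/2)m(m-1)$ with $e=1$ and $m=g/(2e)=2$, or equivalently compute the tangent space at the distinguished point $(A_0, \lambda_0, \iota_0, \alpha_{2,0})$ using Kottwitz's signature condition on $\Lie(A_0)$; either way, the complex analytic moduli space has dimension $1$, as already seen in the quaternionic upper-half-space description of \cref{subsec:complexunif}, and smoothness over $S_0$ transports this to relative dimension $1$.

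The main obstacle is connectedness. Over $\C$, \cref{subsec:complexunif} verifies directly that the moduli space is a connected quotient of the complex upper half-plane: the invariant $\nrd(T) = 1/8$ is not a rational square, which rules out secondary components coming from inequivalent $\mathcal{O}$-equivariant symplectic forms on $B^2$. To transfer connectedness to the whole stack over $S_0$, we invoke the adelic description of geometric connected components of PEL moduli \cite[Chapter~5]{MR3186092}: they are in bijection with a fixed adelic double coset associated to the compact open subgroup encoding the level-$2$ structure on the reductive $\Q$-group of the PEL datum, and this count is independent of the residue characteristic away from $2$. Since this double coset consists of a single point over $\C$, the same is true over every geometric point of $S_0$; combined with smoothness of $\mathcal{M}_2(L,\langle\,,\,\rangle,h) \to S_0$ over the connected base $S_0$, this forces the stack itself to be connected.
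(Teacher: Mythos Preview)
Your approach is essentially the same as the paper's: invoke Lan's representability results for PEL moduli problems, then read off dimension and connectedness from the complex uniformization of \cref{subsec:complexunif}. The paper cites \cite[Corollary 1.4.1.12]{MR3186092} rather than Theorem 1.4.1.11, and uses Lan's explicit comparison \cite[Lemma 2.5.6]{Lan:comparison} in place of your adelic double-coset argument for connectedness, but these are minor variations on the same strategy.

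There is one step the paper makes explicit that you skip. Before invoking Lan's representability, the paper verifies the condition of \cite[Definition 1.2.5.4]{MR3186092}: writing $L \otimes_\Z \C \simeq V_0 \oplus \overline{V_0}$ as in \cite[(1.2.5.1)]{MR3186092}, one must check that $V_0 \simeq V_0 \otimes_{\C,\sigma} \C$ as $B \otimes_\Q \C$-modules for every $\sigma \in \Aut(\C\,|\,\Q)$. This is what ensures the reflex field of the PEL datum is $\Q$, so that the moduli problem is genuinely defined over $S_0 = \Spec \Z[1/2]$ rather than only over the ring of $S_0$-integers in a larger reflex field. The verification is easy here---since $B \otimes_\Q \C \simeq \M_2(\C)$ has a unique simple module, any two $B \otimes_\Q \C$-modules of the same dimension are isomorphic---but it is a hypothesis that should be addressed before citing Lan's theorem.
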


\begin{proof}
As in Lan \cite[(1.2.5.1)]{MR3186092}, we have a decomposition $L \otimes_{\mathbb{Z}} \mathbb{C} \simeq V_0 \oplus \overline{V_0}$ for a certain $\mathbb{C}$-submodule $V_0$ of dimension equal to $\frac{1}{2} \operatorname{rk}_{\mathbb{Z}} L=4$.  We check the definition \cite[Definition 1.2.5.4]{MR3186092}, and show that for every $\sigma \in \operatorname{Aut}(\mathbb{C}\,|\,\Q)$, we have an isomorphism $V_0 \simeq V_0 \otimes_{\mathbb{C}, \sigma} \mathbb{C}$ of $B \otimes_{\Q} \mathbb{C}$-modules. Indeed, as $B \otimes_{\Q} \mathbb{C} \simeq \operatorname{M}_{2}(\mathbb{C})$, every $B \otimes_{\Q} \mathbb{C}$-module is a direct sum of copies of the standard module $\mathbb{C}^2$. Since $V_0$ and $V_0 \otimes_{\mathbb{C}, \sigma} \mathbb{C}$ have the same dimension over $\mathbb{C}$, they are both isomorphic to $(\mathbb{C}^2)^{(\dim V_0)/2} =(\C^2)^2$.

Lan also gives an explicit comparison \cite[Lemma 2.5.6]{Lan:comparison} to the moduli problem over the complex numbers, defined analytically as in Birkenhake--Lange as a normal complex analytic space (good complex orbifold) as in \cref{subsec:complexunif}: it therefore has dimension $1$ as a stack and is connected.

The result now follows from~\cite[Corollary 1.4.1.12]{MR3186092}.
\end{proof}

\begin{remark}
Moduli stacks of abelian varieties with PEL structure are also Shimura stacks (see Deligne~\cite{MR0498581} and Milne~\cite[\S 8]{MilneShimuraVarieties}): for the explicit description of the Mumford--Tate datum $(G,X)$, we refer also to Abdulali~\cite[\S 4]{MR1715177}.
\end{remark}

\begin{remark}
Although Lan makes an assumption (Condition 1.2.5) in the analytic comparison \cite{Lan:comparison} which does not hold in our case---the lattice $L$ does not have an action by the Hurwitz order (more generally see \Cref{rmk:lackoffree})---this assumption is only used in the treatment of the toroidal compactification, and so is not relevant for the above result.
\end{remark}

\subsection{Explicit recognition} \label{sec:explrec}

We now recall \cref{subsect:BaseChanges}, in particular the family of nice curves defined by \cref{eqn:rootsbi}.  We saw in \Cref{prop:goodreductiong4} that the Néron model of the pullback of $\calJprim^{(4)}$ by $b=t^2$ is an abelian scheme of dimension $4$ over $\PP^1$ (in the parameter $t$).  
Away from $t^2=b=0,\pm 1,\infty, \pm i$, the fibers are smooth, principally polarized, equipped with endomorphisms by $\mathcal{O}$ defined over $\Q(i)$ and full level $2$-torsion; these extend to the entire N\'eron model, which gives a point of $\mathcal{M}_2(L,\langle\,,\,\rangle,h)(\PP^1_{\Q(i)})$ and so defines a morphism
\[ \PP^1_{\Q(i)} \to \mathcal{M}_2(L,\langle\,,\,\rangle,h)_{\Q(i)}. \]
Every object parametrized by $\mathcal{M}_2(L,\langle\,,\,\rangle,h)$ has as automorphism $-1$.  We let 
\[ \mathcal{X} \colonequals \mathcal{M}_2(L,\langle\,,\,\rangle,h) /\!\!/ \{\pm 1\} \]
be the rigidification, a stacky curve, and let \begin{equation}
\begin{aligned} \label{eqn:pp1qi}
\varsigma \colon \PP^1_{\Q(i)} \to \mathcal{X}_{\Q(i)}
\end{aligned}
\end{equation}
be the composition of the moduli map with  rigidification.
For properties of stacky curves, we refer to Voight--Zureick-Brown \cite[Chapter 5]{VZB}.

\begin{theorem}\label{thm: from the moduli space to the Neron model}
The following statements hold.
\begin{enumalph}
\item The map $\varsigma$ in \eqref{eqn:pp1qi} is finite \'etale of degree $2$.
\item $\mathcal{X}_{\Q(i)}$ is a stacky curve (in particular, has trivial generic stabilizer) with coarse space $\PP^1_{\Q(i)}$ and two points with nontrivial stabilizer $\mu_2$.
\item $\mathcal{M}_2(L,\langle\,,\,\rangle,h)_{\Q(i)}$ is a $\mu_2$-gerbe over the stacky curve in \textup{(b)}. 
\end{enumalph}
\end{theorem}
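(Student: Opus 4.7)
The plan is to proceed in three stages: construct the classifying morphism and dispatch part (c), compute the generic degree of $\varsigma$, and then deduce the stacky structure of $\mathcal{X}_{\Q(i)}$ together with étaleness.

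\emph{Stage 1 (Construction and (c)).} By \Cref{prop:goodreductiong4}, the pullback of $\calJprim^{(4)}$ along $b = t^2$ extends to an abelian scheme $\frakJprim$ over $\PP^1_{\Q(i)}$; its canonical principal polarization, the $\mathcal{O}$-action via $i,j \mapsto \alpha^*,\beta^*$, and the level-$2$ structure from the $\Q(i)$-rational Weierstrass basis $\{(0,0),(\pm 1,0),(\pm i,0),(\pm t^2,0),(\pm 1/t^2,0),\infty\}$ all spread out globally and define the classifying morphism $\sigma\colon \PP^1_{\Q(i)} \to \mathcal{M}_2(L,\langle\,,\,\rangle,h)_{\Q(i)}$, of which $\varsigma$ is the composition with the rigidification. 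For (c), multiplication by $-1$ is a central PEL automorphism of every object ($[-1]$ preserves $\lambda$ and $\iota$, and preserves $\alpha_2$ since $-1 \equiv 1 \pmod 2$ on $A[2]$), so by definition $\mathcal{M}_2 \to \mathcal{X}$ is a neutral $\mu_2$-gerbe.

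\emph{Stage 2 (Degree two).} I would show that the coarse-space map $|\varsigma|$ has generic degree $2$. Suppose $t_1, t_2$ yield isomorphic PEL tuples. By Torelli (the hyperelliptic-involution ambiguity being absorbed into the rigidified $\{\pm 1\}$), the induced isomorphism of principally polarized Jacobians comes from a linear fractional transformation $\gamma \in \PGL_2$ permuting the Weierstrass loci of $\calC_{b_1}$ and $\calC_{b_2}$, with $b_i = t_i^2$. Compatibility with the level-$2$ structure forces $\gamma$ to fix each of the globally labeled Weierstrass points $0, \pm 1, \pm i, \infty$ individually; three fixed points of $\PP^1$ already force $\gamma = \mathrm{id}$, and then $\gamma = \mathrm{id}$ must identify the labeled point $(t_1^2,0)$ with $(t_2^2, 0)$, forcing $t_1 = \pm t_2$.

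\emph{Stage 3 (\'Etaleness and stacky structure).} By Stage 2, the coarse-space map $|\varsigma|$ has degree $2$ and is ramified precisely at the fixed points $t = 0, \infty$ of $t \mapsto -t$, each with ramification index $2$. Riemann--Hurwitz on coarse spaces then gives $|\mathcal{X}_{\Q(i)}| \simeq \PP^1_{\Q(i)}$. For $\varsigma$ to be finite \'etale as a morphism of DM stacks, $\mathcal{X}_{\Q(i)}$ must carry $\mu_2$ stabilizers exactly at $\varsigma(0), \varsigma(\infty)$ (absorbing this ramification) and trivial stabilizers elsewhere; conversely, this stacky structure together with stacky Riemann--Hurwitz ($-2 = 2\bigl(-2 + 2(1 - \tfrac12)\bigr)$) forces $\varsigma$ to be \'etale. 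To exhibit the extra $\mu_2$ at $\varsigma(0)$, I would analyze $\Aut_{\mathcal{M}_2}(\frakJprim_0,\ldots)$: the cluster-picture analysis inside the proof of \Cref{prop:goodreductiong4} shows that $\frakJprim_0$ is geometrically isogenous to a product involving the CM elliptic curve $y^2 = x^3 - x$, and the residual CM symmetry of the limiting fibre furnishes an extra order-$4$ PEL automorphism compatible with the $\mathcal{O}$-action, the polarization, and the level-$2$ structure. Modulo $\{\pm 1\}$ this supplies the required $\mu_2$; a symmetric argument handles $t = \infty$. This gives (b), and (a) follows.

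\emph{Main obstacle.} The delicate step is the verification of the extra order-$4$ PEL automorphism at the degenerate fibres $\frakJprim_0$ and $\frakJprim_\infty$: one must rigorously identify this automorphism and exclude anything larger, a computation on highly symmetric CM-type abelian fourfolds where the $\mathcal{O}$-action severely constrains the commutant. A more conceptual alternative I would also pursue is to invoke the complex Shimura-curve description of \Cref{subsec:complexunif}: the Shimura curve attached to the definite quaternion algebra $(-1,-1\,|\,\Q)$ at level~$2$ has genus~$0$ with exactly two elliptic points of order~$2$, which already pins down the predicted structure of $\mathcal{X}_{\Q(i)}$ and bypasses the direct automorphism computation.
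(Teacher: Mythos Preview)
Your Stages 1--2 match the paper's argument. Stage 3 diverges: rather than computing the PEL automorphism group at $t=0,\infty$ directly, the paper observes that $\varsigma$ factors through the \'etale quotient $\PP^1 \to [\PP^1/\mu_2]$ (with $\mu_2$ acting by $t\mapsto -t$, since the fibre depends only on $b=t^2$), and identifies $\mathcal{X}_{\Q(i)}$ with $[\PP^1/\mu_2]$. The only verification needed is that $t\mapsto -t$ acts nontrivially on the special fibre modulo $\pm 1$; the paper reads this off the stable model in \Cref{prop:goodreductiong4}: $t\mapsto -t$ acts trivially on the genus-$2$ component but as the hyperelliptic involution on each genus-$1$ component, hence as $(1,-1,-1)\neq\pm 1$ on the product. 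Once this injectivity is known, the induced map $[\PP^1/\mu_2]\to\mathcal{X}_{\Q(i)}$ is representable, finite, and birational between smooth stacky curves, so it is an isomorphism---and your ``main obstacle'' (bounding the stabilizer from above) never arises. Your proposed CM-based order-$4$ automorphism would instead require checking compatibility with the $\mathcal{O}$-action and level-$2$ structure, which is delicate since $\mathcal{O}$ mixes the factors of the degenerate fibre; the family involution $t\mapsto -t$ comes pre-equipped with all PEL compatibilities for free.
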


\begin{proof}
For part (a), we recall the calculation in the proof of \Cref{lemma:MapToModuliSpace}(c).  Here it is simpler, as the curve is still defined up to isomorphism by the parameter $b$ (giving a degree $2$ map), and on each smooth fiber we have marked the $10$ Weierstrass points, so the only possible automorphism is $\pm 1$; but these automorphisms do not change the parameter $b=t^2$.  Thus $\varsigma$ has degree $2$.

For (b), since the codomain of $\varsigma$ is a smooth Deligne--Mumford stack of dimension $1$ by \Cref{prop:M2Ld}, and the image has dimension $1$, it is also smooth and generically a scheme.  The coarse space \cite[Proposition 5.3.3]{VZB} of the image (which exists by Keel--Mori \cite{KeelMori}) is therefore a smooth curve receiving a map from $\PP^1$, so it is isomorphic to $\PP^1$ (over $\Q(i)$).  The map $\varsigma$ is therefore induced by $\PP^1_{\Q(i)} \to [\PP^1_{\Q(i)}/\mu_2]$ where $\mu_2$ acts by $t \mapsto -t$, which introduces $\mu_2$-stabilizers exactly at $0,\infty$.  (We can also see this from the computation of the stable model (\Cref{prop:goodreductiong4}): although the map $t \mapsto -t$ acts trivially over smooth fibers, in the stable limit it still acts trivially on the genus $2$ component but acts by the hyperelliptic involution on the two genus $1$ components.) 

Finally, for (c): as we saw already in (a), the generic stabilizer is $\pm 1$, and the quotient is a stacky curve which must coincide with the image in (b). 
\end{proof}

The above theorem has the following descent to $\Q$.  
\begin{cor}
The stacky curve $\mathcal{X}$ (over $\Q$) has coarse space isomorphic to the conic $x^2+y^2+z^2=0$ and has a single stacky point of degree $2$ defined over $\Q(i)$ with stabilizer $\mu_2$; and the moduli space 
$\mathcal{M}_2(L,\langle\,,\,\rangle,h)$ is a $\mu_2$-gerbe over $\mathcal{X}$.
\end{cor}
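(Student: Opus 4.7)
My strategy is to extend the $\Q(i)$-structural result of Theorem 6.3.1 to $\Q$ via Galois descent. Since $\mathcal{M}_2(L,\langle\,,\,\rangle,h)$ is defined over $\Z[1/2]$ by Proposition 6.2.1, so is its rigidification $\mathcal{X}$, and the base change $\mathcal{X}_{\Q(i)}$ recovers the stacky $\PP^1$ of Theorem 6.3.1. Since formation of coarse spaces commutes with base change for separated Deligne--Mumford stacks, the coarse space $X$ of $\mathcal{X}$ satisfies $X_{\Q(i)} \simeq \PP^1_{\Q(i)}$, so $X$ is a smooth, projective, geometrically irreducible $\Q$-curve of genus $0$, i.e., a conic over $\Q$.

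To identify $X$, I would first show $X(\Q) = \emptyset$. Following Remark~4.5.2, the Shimura variety $\mathcal{X}$ is associated with the reductive group $G \colonequals \MT(\calA^{(4)}_a)$ for generic $a$, which is a $\Q$-form of $\GSO_4$ whose derived group is the norm-$1$ subgroup of $B \colonequals (-1,-1\,|\,\Q)$. Since $B$ is ramified at $\infty$, the adjoint group $G^{\mathrm{ad}}_\Q$ is $\Q$-anisotropic; by Borel--Harish-Chandra the Shimura variety is projective, and more to the point $X(\R) = \emptyset$, whence $X(\Q) = \emptyset$. The $\Q$-isomorphism class of $X$ is thus a nontrivial class in $\mathrm{Br}(\Q)[2]$, which---as is standard for PEL Shimura varieties of quaternionic-unitary type---coincides with the Brauer class of $B$ itself. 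Since $B$ is Hamilton's quaternions, whose Severi--Brauer variety is the conic $x^2+y^2+z^2=0$, we obtain $X \simeq \{x^2+y^2+z^2=0\}$.

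By Theorem 6.3.1(b), the two $\mu_2$-stacky points of $\mathcal{X}_{\Q(i)}$ sit at $b = 0, \infty$; since $X(\Q) = \emptyset$, neither is individually $\Q$-rational, so $\mathrm{Gal}(\Q(i)/\Q)$ must exchange them, yielding a single closed point of $\mathcal{X}$ of degree $2$ with residue field $\Q(i)$ and stabilizer $\mu_2$. Finally, $\mathcal{M}_2(L,\langle\,,\,\rangle,h) \to \mathcal{X}$ is a $\mu_2$-gerbe by construction, since $\mathcal{X}$ is the rigidification of $\mathcal{M}_2(L,\langle\,,\,\rangle,h)$ by the central automorphism $[-1]$, which is present on every PEL-triple.

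The main obstacle will be the Brauer-class identification: rigorously equating the class of $X$ with that of $B$ in a self-contained manner requires either invoking the theory of canonical models and reflex fields of Deligne--Milne, or carrying out a direct Galois-cocycle computation using the family $\calJprim^{(4)}$ as a parametrizing cover. The latter approach is made delicate by the nontrivial interaction between the quaternionic multiplication $\iota$ (defined only over $\Q(i)$) and the full level-$2$ structure: as Galois conjugation twists $\iota$ by inner automorphism by $k = \alpha^*\beta^*$, which does \emph{not} preserve the level-$2$ structure, so the moduli map $\BB \to \mathcal{X}$ coming from the family does not descend naively to $\Q$, and the resulting descent datum encodes the Brauer class of $B$.
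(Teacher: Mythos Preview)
Your overall architecture is correct and matches the paper's: descend the coarse space from $\PP^1_{\Q(i)}$ to a conic over $\Q$, identify that conic, observe that the two $\mu_2$-points cannot be individually $\Q$-rational and hence form a single degree-$2$ point, and note that $[-1]$ is defined over $\Q$ so the gerbe structure descends. Where you diverge is in the identification of the conic. You propose to show $X(\R)=\emptyset$ via anisotropy and then to match the Brauer class of $X$ with that of $B=(-1,-1\,|\,\Q)$ using either the Deligne--Milne theory of canonical models or an explicit cocycle computation, and you correctly flag this as the main difficulty.

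The paper sidesteps this difficulty entirely with an elementary argument using \Cref{lem:Q8 action gives splitting of quaternion algebra}. A $K$-rational point of $X$ corresponds (up to passage through the moduli stack and Torelli) to a hyperelliptic curve over $K$ with $Q_8$-action defined over $K$; \Cref{lem:Q8 action gives splitting of quaternion algebra} then forces $(-1,-1\,|\,K)\simeq \M_2(K)$. Conversely, whenever $K$ splits $B$ the family $\calJprim^{(4)}$ supplies $K$-points. Thus $X(K)\neq\emptyset$ if and only if $K$ splits $B$, which pins down the conic as $x^2+y^2+z^2=0$ without any appeal to Brauer classes of Shimura varieties or cocycle bookkeeping. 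Your route would work, but the paper's is shorter, self-contained, and exploits the explicit family already at hand; in particular it avoids precisely the ``nontrivial interaction between $\iota$ and the level-$2$ structure'' that you anticipate as an obstacle.
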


\begin{proof}
\Cref{thm: from the moduli space to the Neron model} provides that the base extension $\mathcal{M}_2(L,\langle\,,\,\rangle,h)_{\Q(i)}$ has coarse space $\PP^1$, so the coarse space over $\Q$ is a conic (Brauer--Severi variety of dimension $1$) associated to a quaternion algebra over $\Q$.  From \Cref{lem:Q8 action gives splitting of quaternion algebra}, the conic has a $K$-rational point for a number field $K$ if and only if $K$ splits $B$; this uniquely defines the quaternion algebra up to isomorphism \cite[Exercise 14.18]{Voight}, giving the desired conic.  

For the stacky locus, again from \Cref{thm: from the moduli space to the Neron model} we see that over $\Q(i)$ we have two points with $\mu_2$-stabilizer; by the above paragraph, they cannot be defined over $\Q$ and hence must be conjugate over $\Q(i)$.

The final statement follows as the automorphism group $\pm 1$ is already defined over $\Q$.
\end{proof}

\begin{remark}
One could extend the above description to arbitrary genus $g \geq 6$, and one would obtain a map from our family of curves $U$ to an associated moduli stack $\mathcal{M}$ of abelian varieties with PEL structure.  However, the dimension of the image $U \to \mathcal{M}_g \hookrightarrow \mathcal{A}_g$ is $d=g/2-1$ by \Cref{lemma:MapToModuliSpace}(b), whereas the dimension of $\mathcal{M}$ is $(g/2-1)(g/4)$; so only for $g=4$ do these dimensions match.  
Somewhat surprisingly, we find that for $g=4$, the moduli space $\mathcal{M}$ lies \emph{inside} the Torelli locus!  For general (even) $g \geq 4$, the expected dimension of the intersection between the Torelli locus and $\mathcal{M}$ is 
\[ (g/2-1)(g/4) + 3g-3 - g(g+1)/2 = (3/2)(2-g/2)(g/2-1). \]
Already for $g=4$, this gives expected dimension $0$, so we have an unlikely intersection.  For $g=6$, the expected dimension is $-3$, but our family has dimension $2$.  We do not have an explanation for this curious phenomenon.
\end{remark}

\newcommand{\etalchar}[1]{$^{#1}$}

\end{document}